\definecolor{red}{rgb}{0.7,0.15,0.15}
\definecolor{green}{rgb}{0,0.5,0}
\definecolor{blue}{rgb}{0,0,0.7}
\makeatletter \@addtoreset{equation}{section}
\newtheorem{theorem}{Theorem}[section]
\newtheorem{assumption}[theorem]{Assumption}
\newtheorem{corollary}[theorem]{Corollary}
\newtheorem{example}[theorem]{Example}
\newtheorem{lemma}[theorem]{Lemma}
\newtheorem{proposition}[theorem]{Proposition}
\newtheorem{definition}[theorem]{Definition}
\newtheorem{remark}[theorem]{Remark}
\def \E{\mathbb{E}}
\def \F{\mathbb{F}}
\def \H{\mathbb{H}}
\def \L{\mathbb{L}}
\def \M{\mathbb{M}}
\def \N{\mathbb{N}}
\def \P{\mathbb{P}}
\def \Q{\mathbb{Q}}
\def \R{\mathbb{R}}
\def \S{\mathbb{S}}
\def\Ac{{\cal A}}
\def\Bc{{\cal B}}
\def\Cc{{\cal C}}
\def\Dc{{\cal D}}
\def\Fc{{\cal F}}
\def\Gc{{\cal G}}
\def\Ic{{\cal I}}
\def\Mc{{\cal M}}
\def\Nc{{\cal N}}
\def\Pc{{\cal P}}
\def\Rc{{\cal R}}
\def\Vc{{\cal V}}
\def\Wc{{\cal W}}
\def\Yc{{\cal Y}}
\def\IC{{\cal IC}}
\newcommand{\smallfont}[1]{\text{\fontsize{4}{4}\selectfont$#1$}}
\newcommand{\tinyfont}[1]{\text{\fontsize{3}{3}\selectfont$#1$}}
\author{Daniel {\sc Kr{\v{s}}ek} \footnote{ETH Z\"urich, Department of Mathematics, Switzerland, daniel.krsek@math.ethz.ch} \and Dylan {\sc Possama\"{i}} \footnote{ETH Z\"urich, Department of Mathematics, Switzerland, dylan.possamai@math.ethz.ch.} }
\title{Randomisation with moral hazard: a path to existence of optimal contracts}
\date{\today}	
\begin{document}

\maketitle

\begin{abstract} We study a generic principal--agent problem in continuous time on a finite time horizon. We introduce a framework in which the agent is allowed to employ measure-valued controls and characterise the continuation utility as a solution to a specific form of a backward stochastic differential equation driven by a martingale measure. We leverage this characterisation to prove that, under appropriate conditions, an optimal solution to the principal's problem exists, even when constraints on the contract are imposed. In doing so, we employ compactification techniques and, as a result, circumvent the typical challenge of showing well-posedness for a degenerate partial differential equation with potential boundary conditions, where regularity problems often arise.

\medskip
\noindent{\bf Key words:}  Moral hazard, principal--agent problem, relaxed controls, martingale measures.
\end{abstract}

\section{Introduction} 
The principal--agent problem is a standard model in microeconomics with applications in various other fields. It describes a situation in which a principal (`she/her') hires an agent (`he/him') to take control over an asset or a project in exchange for a salary. With a contract in place that describes such rewards, the agent aims to maximise his utility and acts accordingly. The principal, anticipating the behaviour of the agent, looks for a contract maximising her own utility. We consider a situation with moral hazard, that is, it is assumed that the actions of the agent are unobservable to the principal and cannot be contracted upon. As a consequence, the optimal contract is expected to be designed in such a way that it incentivises the agent to work in a way that aligns with the principal's objectives. The primary goal of our work is to provide a general result regarding the existence of an optimal contract, even when potential constraints are in place. We consider a continuous-time framework and generalise the standard Brownian model to a one that incorporates relaxed controls for the agent's problem. This, in turn, provides the principal with more flexibility when choosing the contract and allows us to use compactness and continuity arguments to establish the existence of an optimal solution within a specific class of contracts.

\medskip The principal--agent problem has been extensively studied by many economists and mathematicians. Continuous-time model research was pioneered by  {\rm\citeauthor*{holmstrom1987aggregation} \cite{holmstrom1987aggregation}}, where, despite working in a relatively simple setting, the authors were able to derive an explicit solution to the principal's problem. This research was subsequently followed by many others, including {\rm\citeauthor*{schattler1993first} \cite{schattler1993first,schattler1997optimal}}, {\rm\citeauthor*{muller1998first} \cite{muller1998first,muller2000asymptotic}}, {\rm\citeauthor*{williams2008dynamic} \cite{williams2008dynamic}} and {\rm\citeauthor*{cvitanic2012contract} \cite{cvitanic2012contract}}. They employed various techniques to investigate the principal's problem, one of the main one being analysis of the agent's value process, given a contract offered by the principal, and then maximising over all possible contracts in the principal's problem. The main limitation is that solving the principal's problem is often challenging and there is no unified method due to its non-standard nature. The problem has recently regained the interest of the research community with the seminal paper by {\rm\citeauthor*{sannikov2008continuous} \cite{sannikov2008continuous}}, which studies this problem on an infinite time horizon and analyses the conditions under which contract termination occurs. This work was later formalised and extended by {\rm\citeauthor*{possamai2020there} \cite{possamai2020there}}. This approach using what \citeauthor*{sannikov2008continuous} coined a \emph{martingale method} was then successfully leveraged in several specific examples, including \citeauthor*{biais2007dynamic} \cite{biais2007dynamic}, {\rm\citeauthor*{biais2010large} \cite{biais2010large}}, \citeauthor*{pages2014mathematical} \cite{pages2014mathematical}, or {\rm\citeauthor*{capponi2015dynamic} \cite{capponi2015dynamic}}.

\medskip It was not until 2018 that a systematic method, relying greatly on \citeauthor*{sannikov2008continuous}'s insight---which itself could be traced back to static models, such as in \citeauthor*{spear1987repeated} \cite{spear1987repeated}---for studying a large class of models was rigorously introduced in the work of {\rm\citeauthor*{cvitanic2018dynamic} \cite{cvitanic2017moral,cvitanic2018dynamic}}, which was later extended or made use of in numerous other papers.\footnote{A non-comprehensive list includes \citeauthor*{abi2024gaussian} \cite{abi2024gaussian}, {\rm\citeauthor*{aid2022optimal} \cite{aid2022optimal}}, \citeauthor*{aid2023principal} \cite{aid2023principal}, \citeauthor*{alasseur2020principal} \cite{alasseur2020principal}, {\rm\citeauthor*{aurell2022optimal} \cite{aurell2022optimal}}, {\rm\citeauthor*{baldacci2023design} \cite{baldacci2023design}}, {\rm\citeauthor*{baldacci2021optimal} \cite{baldacci2021optimal}}, \citeauthor*{baldacci2022governmental} \cite{baldacci2022governmental}, {\rm\citeauthor*{baldacci2023market} \cite{baldacci2023market}}, {\rm\citeauthor*{bensalem2023continuous} \cite{bensalem2023continuous}}, {\rm\citeauthor*{cvitanic2018asset} \cite{cvitanic2018asset}}, \citeauthor*{el2021optimal} \cite{el2021optimal}, {\rm\citeauthor*{elie2019tale} \cite{elie2019tale}}, {\rm\citeauthor*{el2021optimal} \cite{el2021optimal}}, {\rm\citeauthor*{elie2021mean} \cite{elie2021mean}}, \citeauthor*{firoozi2021principal} \cite{firoozi2021principal}, \citeauthor*{hajjej2017optimal} \cite{hajjej2017optimal}, \citeauthor*{hajjej2022optimal} \cite{hajjej2022optimal}, \citeauthor*{hernandez2019moral} \cite{hernandez2019moral}, \citeauthor*{hernandez2020bank} \cite{hernandez2020bank}, {\rm\citeauthor*{hernandez2023pollution} \cite{hernandez2023pollution}}, {\rm\citeauthor*{hernandez2024principal} \cite{hernandez2024principal}}, {\rm\citeauthor*{hernandez2024time} \cite{hernandez2024time}}, \citeauthor*{hu2023principal} \cite{hu2023principal}, {\rm\citeauthor*{hubert2023continuous} \cite{hubert2023continuous}}, \citeauthor*{hubert2022incentives} \cite{hubert2022incentives}, {\rm\citeauthor*{keppo2021dynamic} \cite{keppo2021dynamic}}, \citeauthor*{kharroubi2020regulation} \cite{kharroubi2020regulation}, \citeauthor*{lin2022random} \cite{lin2022random}, \citeauthor*{luo2021dynamic} \cite{luo2021dynamic}, {\rm\citeauthor*{martin2023risk} \cite{martin2023risk}}, \citeauthor*{mastrolia2015moral} \cite{mastrolia2015moral}, \citeauthor*{mastrolia2017moral} \cite{mastrolia2017moral}, \citeauthor*{mastrolia2018principal} \cite{mastrolia2018principal}, \citeauthor*{mastrolia2022agency} \cite{mastrolia2022agency}, and \citeauthor*{ren2023entropic} \cite{ren2023entropic}.}
This approach heavily hinged on the theory of backward stochastic differential equations (BSDEs), respectively second order backward stochastic differential equation (2BSDEs), by reformulating the principal's problem as a control problem through the introduction of the continuation utility of the agent as a controlled state process. While this represents a breakthrough result and transforms the problem into a classical optimal control problem, which can then be addressed using standard known methods, there is a lack of a general existence result for an optimal contract. This is mainly due to the complex nature of the resulting optimisation problem, since existence of a solution is then typically equivalent to the existence of a classical solution to the corresponding Hamilton--Jacobi--Bellman (HJB) partial differential equation (PDE). However, even when the agent controls only the drift coefficient, the principal's problem then involves control of the diffusion coefficient for the agent's continuation utility. Consequently, the PDE becomes generally fully non-linear elliptic or parabolic in dimension strictly greater than $1$ (typically at least $2$, and generically, when there is only one agent, in dimension $d$+1 where $d$ is the dimension of the process controlled by the agent), with an inherently degenerate diffusion operator. 

\medskip
Indeed, uniform ellipticity for these type of PDEs is a fundamental assumption to hope for classical solutions, as counter-examples in multi-dimensions exist, see for instance \citeauthor*{nadirashvili2007nonclassical} \cite{nadirashvili2007nonclassical,nadirashvili2008singular,nadirashvili2011octonions,nadirashvili2011singular,nadirashvili2013singular}. Even taking into account the fact that the PDE is of HJB--type, and therefore enjoys more structure, regularity results remain in general elusive, especially when the PDEs are written on a domain, which is the case when constraints are added on payments, and one can at most hope for some form of Sobolev regularity, which is usually too weak for our purpose, see \citeauthor*{krylov1995theorem} \cite{krylov1990smoothness,krylov1995theorem}, \citeauthor*{lions1983optimal} \cite{lions1983optimal}, or more recently \citeauthor*{strulovici2015smoothness} \cite{strulovici2015smoothness} as well as \citeauthor*{durandard2022smoothness} \cite{durandard2022smoothness}. Therefore, we inevitably face regularity issues, and obtaining a satisfactory result in a general framework thus seems extremely unlikely. As an example, let us mention the work of \citeauthor*{possamai2020there} \cite[Sections 6--8]{possamai2020there}, where due to a specific and rather simple framework the equation simplifies to a one-dimensional second-order elliptic PDE---in other words an ODE. Notwithstanding, even in such a case, which seems to be rather simple, the lack of uniform ellipticity makes a rigorous proof of existence of a regular solution is rather lengthy and tedious, and requires a considerable amount of work. Though this is not always the case, the other results we are aware of in the continuous-time literature either concern case with explicit solutions, or very specific models, see for instance {\rm\citeauthor*{cvitanic2018dynamic} \cite{cvitanic2018dynamic}}, {\rm\citeauthor*{hernandez2023pollution} \cite{hernandez2023pollution}}, A slight exception here is found in {\rm\citeauthor*{hernandez2024principal} \cite{hernandez2024principal}}, where the author provides conditions---which are naturally rather stringent---to ensure that the principal's PDE can be transformed into a semi-linear rather than a fully non-linear one, thus avoiding completely the degeneracy problem we mentioned.

\medskip Generally speaking, there are several approaches to tackle stochastic optimal control problems in continuous-time. The classical ones involve either solution of an HJB PDE or, in the weak formulation, solution of a BSDE or a 2BSDEs in the case of controlled volatility coefficient. The direct approach, in the stochastic setting introduced by {\rm\citeauthor*{el1987compactification} \cite{el1987compactification}} and later extended mainly by {\rm\citeauthor*{haussmann1990existence} \cite{haussmann1990existence}}, exploits compactification and continuity arguments to show the existence of an optimal control. See also the works of {\rm\citeauthor*{el1990martingale} \cite{el1990martingale}} and {\rm\citeauthor*{karoui2013capacities} \cite{karoui2013capacities,karoui2013capacities2}} for related results. This typically involves the introduction of so-called relaxed controls, that is, measure-valued controls. In the context of non-cooperative games, commonly used to find equilibria (see, for example, {\rm\citeauthor*{nash1951noncooperative} \cite{nash1951noncooperative}}), these are referred to as mixed strategies, see also \emph{e.g.} {\rm\citeauthor*{ankirchner2023correlation} \cite{ankirchner2023correlation}} for one of the most recent works.

\medskip The idea of randomised contracts was introduced in one-period models in the paper of {\rm\citeauthor*{kadan2017existence} \cite{kadan2017existence}}, where the authors proposed a method for showing the existence of an optimal mechanism within a suitably chosen class of incentive compatible mechanisms. Despite working in a different framework, the idea of randomising actions and then leveraging compactness arguments and appropriate continuity of the functions involved translates to our setting. Their work also provides examples of situations, when randomisation is necessary to obtain existence of optimal contract, making it natural to allow it, see the references therein. In addition to randomisation, compactification through the introduction of appropriate \emph{a priori} bounds on contracts  were also exploited in other one-period models to obtain abstract existence results, as in {\rm\citeauthor*{holmstrom1977incentives} \cite{holmstrom1977incentives,holmstrom1979moral}}, {\rm\citeauthor*{myerson1982optimal} \cite{myerson1982optimal}}, {\rm\citeauthor*{page1987existence} \cite{page1987existence,page1991optimal,page1992mechanism}}, {\rm\citeauthor*{balder1996existence} \cite{balder1996existence}}, {\rm\citeauthor*{jewitt2008moral} \cite{jewitt2008moral}}, {\rm\citeauthor*{renner2015polynomial} \cite{renner2015polynomial}}, {\rm\citeauthor*{backhoff2022robust} \cite{backhoff2022robust}}, {\rm\citeauthor*{ke2017existence} \cite{ke2017existence}}, and {\rm\citeauthor*{gan2022optimal} \cite{gan2022optimal}}.\footnote{There are three slight exceptions to the ideas of compactification and randomisation, namely {\rm\citeauthor*{grossman1983analysis} \cite{grossman1983analysis}}, but there the spaces involved are all finite, so that everything is bounded by default, {\rm\citeauthor*{kahn1993existence} \cite{kahn1993existence}}, which considers only pure adverse selection problems and relies on restrictions on the set of types, distributions and utilities considered, and finally {\rm\citeauthor*{carlier2005existence} \cite{carlier2005existence},} but there the authors are actually proving existence for a relaxed version of the problem which coincides with the original one when the so-called first-order approach is valid.} Among recent publications related to our results, we would like to mention that of {\rm\citeauthor*{djete2023stackelberg} \cite{djete2023stackelberg}}, who studies relaxed controls in the context of principal--multi-agent problem and mean-field games, and then employs a convexity argument, as also addressed in \cite{el1987compactification,haussmann1990existence}, to recover the standard, non-relaxed setting. Further, {\rm\citeauthor*{alvarez2023optimal} \cite{alvarez2023optimal}} consider randomised contracts in a continuous-time framework and show the existence of optimal contracts, albeit in a somewhat restrictive setting.  It is worth noting that neither work uses the reformulation introduced in \cite{cvitanic2018dynamic} and instead directly shows the existence of an optimal contract within a certain class.

\medskip Our main motivation for this work is severalfold. As mentioned earlier, when one considers the optimal control problem that the principal aims to solve in the context of \cite{cvitanic2018dynamic}, the typical approach is to write down the corresponding Hamilton--Jacobi--Bellman PDE, which tends to be already quite involved. The complexity deepens when one imposes constraints on the contract. One such scenario is imposing the condition
\[ 0 \leq \xi \leq \ell(X_{T}), \]  where $\xi$ represents the lump-sum payment, and $\ell(X_{T})$ is the liquidation value of the project at the terminal time $T>0$. This constraint is natural, as a sensible contract should avoid involving payments that exceed the value of the corresponding project or provide negative payments to the agent. In such a situation, solving the problem involves dealing with a PDE with boundary conditions given as solutions to other PDEs. We bypass this obstacle by introducing randomised actions, a concept closely related to those introduced in \cite{kadan2017existence}. However, it is important to emphasise that, while the authors therein allow for measure-valued pay-offs, we consider pay-offs that are real-valued variables, although they are measurable with respect to a somewhat larger $\sigma$-algebra.

\medskip Our main contribution is the introduction of a, to the best of our knowledge, novel framework in which the agent controls a process driven by a martingale measure. This naturally extends the typical Brownian setting, as seen in \cite{cvitanic2018dynamic,hernandez2024principal, djete2023stackelberg}, to the relaxed framework. We provide a description of the agent's problem in terms of a BSDE driven by the martingale measure and characterise the set of all incentive-compatible contracts within this framework. Next, we define the principal's problem and reformulate it as a weak control problem resembling the framework considered in \cite{el1987compactification,haussmann1990existence}.  We manage to establish the existence of a solution to the weak formulation of the principal's problem, provided that certain tightness criteria are met. Furthermore, achieving this is possible even when extremely general---even uncountably many---constraints on the contract are imposed. Notably, the existing literature has not yet treated such a general situation and potentially only boundedness or one-sided boundedness was considered. However, it is important to note that this weak formulation of the problem may not always be equivalent to the original, inherently strong, formulation. This discrepancy stems from measurability issues associated with the weak nature of the problem, as measurability properties are generally not preserved under weak convergence of measures. We propose two approaches to address this issue. In the first one, we assume that there is sufficient `randomness' in the original probability space for the agent, which is then thus also present in the strong principal's problem, thereby making the two formulations equivalent. Alternatively, in the second approach, we require the contract to be of feedback form, and thus to meet the appropriate measurability conditions. This, however, may result in a loss of closedness of the set of admissible controls in the weak formulation, and therefore we might also potentially lose the existence of an optimal contract within the specified class. We also emphasise the fact that these measurability issues are inherent to the problem, and are not linked to the relaxation we propose: they were already present in \cite{cvitanic2018dynamic} for instance, though this was not explicitly pointed out. As far as we know, our work is the first attempt to obtain general existence results within the continuous-time setting.

\medskip The rest of the paper is organised as follows. \Cref{prelim} provides several results and essentials of the theory of martingale measures. In \Cref{agent}, we introduce the relaxed problem of the agent, and in \Cref{sec:principal}, we present the corresponding principal's problem. Within \Cref{sec:principal}, we also introduce two weakened formulations of the problem, show existence of their solutions, and relate them to the original problem. In \Cref{sec:additional}, we comment on some specific features of our setting and discuss possible extensions. The appendix contains technical propositions, proofs, and presents results from the BSDE theory with martingale measures.

\medskip
{\small\textbf{Notations:} Let $\N^\star$ denote the set of positive integers. For an $n$-dimensional vector $v,$ where $n \in \N^\star,$ we denote by $v^1,\ldots,v^n$ its components. For $(u,v) \in \R^n\times\R^n,$ we denote by $u \cdot v$ the standard inner product and by $\lVert u\rVert$ the associated Euclidean norm. For $(n,m) \in \N^\star\times\N^\star,$ $\R^{n \times m}$ denotes the space of all $n \times m$ matrices with real entries. The entries of $A \in \R^{n \times m}$ are denoted by $A^{i,j},$ $i \in \{1,\ldots,n \},$ $j \in \{1,\ldots,m \},$ $A^\top$ denotes the transpose of $A,$ and ${\rm rank}(A)$ denotes the rank of $A$. For any matrices $A \in \R^{n \times m}$ and $B \in \R^{m \times n},$ we use the notation $A:B$ for the trace of $AB.$ We denote the identity matrix of size $n$ by $\text{I}_{n \times n}.$ If $(\Omega,\Fc)$ is a measurable space, $\Gc \subset \Fc$ a sub--$\sigma$-algebra of $\Fc$, we denote by $\L^0_n(\Gc)$ the set of all $\R^n$-valued, $\Gc$-measurable random variables. We write $\L^0(\Gc)$ for $\L^0_1(\Gc).$ If $\P$ is a probability measure on $(\Omega,\Fc)$ and $p>0$, we write $\L^p_n(\Gc,\P)$ for the set of all $\R^n$-valued, $\Gc$-measurable random variables, with finite $p$-th moment under $\P$. We write $\L^p(\Gc,\P)$ for $\L^p_1(\Gc,\P).$}

\section{Preliminaries} \label{prelim}

The aim of this section is to introduce key concepts from stochastic calculus for martingale measures and present some supporting results. For a more in-depth analyses, interested readers are referred to the works of  {\rm\citeauthor*{walsh1986introduction} \cite{walsh1986introduction}} and {\rm\citeauthor*{el1990martingale} \cite{el1990martingale}}. Throughout this section, we consider a fixed time horizon $T>0.$ Let us be further given a filtered probability space $(\Omega, \Fc,\F=(\Fc_t)_{t\in[0,T]},\P)$ satisfying the usual conditions, and let $E$ be a Polish space with its Borel $\sigma$-algebra $\Bc(E).$ We denote by $\Pc(E)$ the set of all Borel probability measures on $E$, endowed with its weak topology. 

\begin{definition} Let $U=\{ U(A) : A \in \Bc(E) \}$ be a family of $\R$-valued random variables and let $t \in [0,T]$. We say that $U$ is an $\L^2(\Fc_t,\P)$-valued measure if the following holds
\begin{enumerate}
\item[$(i)$] $U(A) \in \L^2(\Fc_t,\P)$, for any $A \in \Bc(E);$
\item[$(ii)$] $U(A\cup B)=U(A) + U(B),$ $\P$--{\rm a.s.}, for any $(A,B) \in \Bc(E)\times\Bc(E)$ such that $A \cap B = \emptyset;$
\item[$(iii)$] $\E^{\P} [U(A_n)^2] \searrow 0$ for any sequence $(A_n)_{n \in \N^\smallfont{\star}} \subset \Bc(E)$ such that $A_n \searrow \emptyset.$
\end{enumerate}
\end{definition}
In the terminology of {\rm\citeauthor*{el1990martingale} \cite{el1990martingale}} and {\rm\citeauthor*{walsh1986introduction} \cite{walsh1986introduction}}, $U$ is an `$\L^2$-valued measure{\rm'}, which corresponds to the fact that it is, indeed, a vector measure with values in $\L^2(\Fc_t,\P).$ However, let us point out that an $\L^2(\Fc_t,\P)$-valued measure is not a random measure as understood in the usual sense. One can, nonetheless, verify that if $U(A)$ is uncorrelated with $U(B)$ for any $(A,B) \in \Bc(E)^2$ such that $A \cap B=\emptyset$, and $\E^{\P} [U(A)]=0$ for any $A \in \Bc(E),$ then the function \[A \longmapsto \E^{\P}[U(A)^2],\;A \in \Bc(E),\] defines a deterministic Borel measure on $E.$  This is indeed satisfied for martingale measures characterised in the following definition, where, for a fixed $t,$ the map $A \longmapsto M_t(A)$ satisfies the conditions above.
 \begin{definition} \label{WNMM} We say that a collection of $(\F,\P)$-martingales $M=\{ (M_t(A))_{t \in [0,T]} : A \in \Bc(E) \}$ is a continuous $(\F,\P)$--martingale measure on $E$ if the following holds
\begin{enumerate}
\item[$(i)$] $(M_t(A))_{t \in [0,T]}$ is an $\R$-valued, continuous, square-integrable $(\F,\P)$-martingale with $M_0(A)=0$,  $\P${\rm--a.s.,} for every $A \in \Bc(E);$
\item[$(ii)$] for $(A,B) \in \Bc(E)^2$ such that $A \cap B=\emptyset,$ $M(A)$ and $M(B)$ are orthogonal martingales$;$
\item[$(iii)$] for any $t \in [0,T],$ $M_t(\cdot)$ is an $\L^2(\Fc_t,\P)$-valued measure$;$
\item[$(iv)$] the quadratic variation of $M(A)$ is absolutely continuous with respect to the Lebesgue measure on $[0,T]$ for any $A \in \Bc(A).$
\end{enumerate}
\end{definition} 

\begin{remark}
\begin{itemize}
\item[$(i)$] Let us remark that, while the authors of {\rm\cite{el1990martingale}} and {\rm\cite{walsh1986introduction}} work with more general martingale measures with potentially discontinuous paths and general quadratic variations, we consider only a specific subclass of continuous martingale measures with quadratic variation absolutely continuous with respect to the Lebesgue measure. These can be viewed as mixtures of Wiener processes and appear naturally, \emph{e.g.}, when one considers measure-valued controls in martingale problems driven by a white noise, see also {\rm \Cref{exampleMM}}. We refer to {\rm \Cref{agent}} for more discussion.
\item[$(ii)$] Moreover, while previous works allow for martingale measures defined on a potentially smaller subring of $\Bc(E)$ $($see {\rm\cite[pages 84--85]{el1990martingale}}$)$, we require that $M_t(A)$ be defined and square integrable for any $t \in [0,T]$ and $A \in \Bc(E).$ We shall relax this by localisation techniques later on.
\end{itemize}
\end{remark}

Let us note that orthogonality more specifically means that $[ M(A),M(B) ] \equiv 0.$ In this particular case, this is equivalent to $M(A)M(B)$ being an $(\F,\P)$-martingale. More generally, one can easily verify that necessarily 
\[
[ M(A), M(B) ] = [ M(A \cap B ) ],\; (A,B) \in \Bc(E)^2,
\] and that $M(\emptyset)=0$ up to indistinguishability. As mentioned earlier, $M$ is not a measure-valued process in the standard sense, as defined \emph{e.g.} in \citeauthor*{carmona1999stochastic} \cite[Chapter 2]{carmona1999stochastic} or \citeauthor*{dawson1993measure} \cite{dawson1993measure}, see also \citeauthor*{khoshnevisan2009primer} \cite[page 6, Example 3.16]{khoshnevisan2009primer}. However, the quadratic variation process can be regularised as follows. Let us, for a Polish space $A,$ denote by $\Mc_f(A)$ the set of all finite Borel measures on $A.$

\begin{lemma} \label{disin} 
Let $M$ be a continuous $(\F,\P)$--martingale measure. Then there is a measurable map $m : \Omega \longrightarrow \Mc_f(E \times [0,T])$ such that 
\[
\P \big[ m(A \times (0,t])=[ M(A) ]_t,\; t \in [0,T]\big]=1,\; A \in \Bc(E).
\]
Moreover, $m$ disintegrates as $m(\mathrm{d}e,\mathrm{d}s)=m_s(\mathrm{d}e)\mathrm{d}s$ for some $($regular$)$ $\F$-predictable kernel $m : \Omega \times [0,T] \longrightarrow \Mc_f(E).$ 
\end{lemma}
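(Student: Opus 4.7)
The idea is to first build the random measure $m$ on the product space from the quadratic variations $[M(A)]$ on a countable generating algebra, then extend pathwise by Carathéodory, and finally disintegrate in time by Radon--Nikodym combined with a measurable selection argument to obtain the predictable kernel.

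Since $E$ is Polish, $\Bc(E)$ is countably generated, so I fix a countable algebra $\Ac_0 \subset \Bc(E)$ that generates $\Bc(E)$ and contains $\emptyset$ and $E$. For each $A \in \Ac_0$, the process $M(A)$ is a continuous square-integrable $(\F,\P)$-martingale, so its quadratic variation $[M(A)]$ admits a continuous non-decreasing version which I fix once and for all. The orthogonality relation $[M(A),M(B)]=[M(A\cap B)]$ together with the bilinearity of the bracket yields, for any disjoint $A,B \in \Ac_0$, the identity $[M(A\cup B)]=[M(A)]+[M(B)]$ up to indistinguishability. Because $\Ac_0$ is countable, all such additivity identities hold simultaneously outside a single $\P$-null set $N_1$. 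On $\Omega \setminus N_1$, the map $A \longmapsto [M(A)]_t(\omega)$ is a non-negative finitely additive set function on $\Ac_0$ for every $t \in [0,T]$.

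To upgrade finite additivity to $\sigma$-additivity pathwise, I use the absolute continuity assumption $(iv)$ to introduce a dominating measure. Condition $(iii)$ in the definition of an $\L^2(\Fc_t,\P)$-valued measure together with the martingale property gives $\E^{\P}[[M(A_n)]_T] = \E^{\P}[M_T(A_n)^2] \searrow 0$ whenever $A_n \searrow \emptyset$ in $\Bc(E)$, so the deterministic set function $K(A) := \E^{\P}[[M(A)]_T]$ extends to a finite Borel measure on $E$. Passing to a subsequence via Borel--Cantelli, I obtain, for any fixed decreasing sequence $(A_n) \subset \Ac_0$ with $A_n \searrow \emptyset$, pointwise a.s. convergence $[M(A_n)]_T(\omega)\searrow 0$; monotonicity in $n$ and in $t$ together with the countability of $\Ac_0$ allow me to enlarge $N_1$ into a $\P$-null set $N$ outside of which the set function $A \longmapsto [M(A)]_t(\omega)$ is $\sigma$-continuous at $\emptyset$ on $\Ac_0$ for every $t$. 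By the Carathéodory extension theorem, it then extends uniquely to a finite Borel measure $\mu_t^{\omega}$ on $\Bc(E)$, for every $\omega \notin N$ and $t \in [0,T]$. Continuity in $t$ and monotonicity give joint measurability of $(\omega,e,t) \longmapsto \mu_t^{\omega}(\{e\in\cdot\})$ on $\pi$-systems, and hence on $\Bc(E)$ by a monotone class argument. Define
\[
m(\omega,\mathrm{d}e,\mathrm{d}s) := \mu_s^{\omega}(\mathrm{d}e)\ast \text{(the increment in $s$)},
\]
more precisely as the unique Borel measure on $E\times [0,T]$ such that $m(\omega,A\times (0,t])=[M(A)]_t(\omega)$ for $A \in \Ac_0$; the Carathéodory uniqueness extends this identity to all $A \in \Bc(E)$, giving the first claim.

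For the disintegration, assumption $(iv)$ yields, for every $A \in \Ac_0$, the existence of a non-negative predictable density $\rho^A$ with $[M(A)]_t=\int_0^t \rho^A_s\,\mathrm{d}s$, and the finite additivity transfers to $\rho$ on $\Ac_0$ up to a common $\mathrm{d}\P\otimes\mathrm{d}\lambda$-null set. The genuine obstacle, and the step I expect to be the most delicate, is to obtain a version of $s\longmapsto m_s(\mathrm{d}e)$ that is simultaneously a Borel measure on $E$ for every $(\omega,s)$ and $\F$-predictable as a map into $\Mc_f(E)$ equipped with the weak topology. I would argue as follows: the disintegration $m(\omega,\mathrm{d}e,\mathrm{d}s)=m_s^{\omega}(\mathrm{d}e)\,\mathrm{d}s$ exists $\omega$-by-$\omega$ by the classical disintegration theorem for finite Borel measures on a product of Polish spaces. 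To choose a predictable version, I consider for each $A \in \Ac_0$ a predictable version of the Radon--Nikodym derivative $\rho^A$, and then use the countable generation of $\Bc(E)$ together with Daniell--Stone/monotone class to define $m_s^{\omega}$ on a countable $\pi$-system, extending by the standard measurable selection theorem for transition kernels into $\Mc_f(E)$ (a Polish space under the weak topology). Predictability is preserved throughout because the densities $\rho^A$ are predictable and the extension only uses countable operations. This produces the required predictable kernel $m_s(\mathrm{d}e)$ and completes the proof.
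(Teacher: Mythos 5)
Your overall architecture (pathwise construction on a countable algebra, Carath\'eodory extension, then disintegration in time) is the right one, but the step where you upgrade finite additivity to pathwise $\sigma$-additivity has a genuine gap. For a \emph{fixed} decreasing sequence $(A_n)\subset\Ac_0$ with $A_n\searrow\emptyset$ you do get a null set off which $[M(A_n)]_T(\omega)\searrow 0$, but the collection of such sequences is uncountable (it sits inside $\Ac_0^{\N}$), so ``countability of $\Ac_0$'' does not let you absorb all of them into a single null set $N$; monotonicity in $n$ and $t$ does not rescue this, because continuity at $\emptyset$ is a statement about all sequences, not about the countably many sets. The standard repair---and essentially what the results the paper cites ({\rm\cite[Theorem 2.7]{walsh1986introduction}}, {\rm\cite[Lemma III-1]{el1990martingale}}) do---is to exploit the deterministic finite Borel measure $K(A)=\E^{\P}\big[[M(A)]_T\big]$ on the Polish space $E$: by tightness/inner regularity choose, for each $A\in\Ac_0$ and $k\in\N^\star$, a compact $C_{A,k}\subset A$ with $K(A\setminus C_{A,k})$ summably small, enlarge $\Ac_0$ to the (still countable) algebra containing these compacts, and control the countably many quantities $[M(A\setminus C_{A,k})]_T$ by Markov's inequality and Borel--Cantelli. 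Off the resulting single null set, a compact-class (finite intersection property) argument yields continuity at $\emptyset$ simultaneously for all decreasing sequences, and only then does Carath\'eodory apply pathwise. Without this inner-compact approximation your construction of $\mu_t^{\omega}$ does not go through.

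The same issue resurfaces in your disintegration step: finite additivity of $A\longmapsto\rho^A_s(\omega)$ on $\Ac_0$ only holds off a $\P\otimes\lambda$-null set, and turning the family $(\rho^A)_{A\in\Ac_0}$ into a genuine measure $m_s^{\omega}(\mathrm{d}e)$ for every $(\omega,s)$ again requires the regularization just described (or, more cleanly, disintegrating the already-constructed $m(\omega,\cdot)$ against its time-marginal, which is absolutely continuous in $t$ by condition $(iv)$ applied to $M(E)$, and then checking predictability of the resulting kernel); the appeal to ``the standard measurable selection theorem for transition kernels'' is not an off-the-shelf statement that delivers both the everywhere-countably-additive kernel and its $\F$-predictability. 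Also, a small point: matching $m(A\times(0,t])$ with $[M(A)]_t$ for $A\in\Bc(E)\setminus\Ac_0$ is not a consequence of Carath\'eodory \emph{uniqueness} alone; it needs a monotone class argument using the $\L^2$-continuity in condition $(iii)$. Note finally that the paper itself does not reprove the lemma but defers exactly these points to Walsh and El Karoui--M\'el\'eard, so if you want a self-contained argument, the tightness/compact-class step is the piece you must supply.
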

\begin{proof} 
See \cite[Theorem 2.7]{walsh1986introduction} and \cite[Lemma III-1]{el1990martingale}.
\end{proof}
The measure $m$ from \Cref{disin} shall be referred to as the intensity of $M$ and we will write $[ M(\cdot) ]=m$ for brevity.

\begin{example} \label{exampleMM} Let us have a deterministic measure $\mu \in \Pc(E)$ and let $M$ be a martingale measure with intensity $m(\mathrm{d}e,\mathrm{d}t)=\mu(\mathrm{d}e)\mathrm{d}t.$ It is then easy to see that for $A \in \Bc(E)$ the stochastic process $M(A)$ is a continuous martingale, $M_0(A)=0,$ $\P${\rm--a.s.,} and $[ M(A) ]_t =\mu(A) t,$ $ t \in [0,T].$ Thus, if $\mu(A)=0,$ then $M(A)\equiv 0$ and, if this is not the case, then the process \[ W^A \coloneqq  \frac{M(A)}{\sqrt{\mu(A)}},\] is an $(\F,\P)$--Brownian motion. Similar conclusion can be obtained if the quadratic variation is a general random measure from {\rm\Cref{disin}} by setting \[W^A \coloneqq \int_0^\cdot \frac{1}{\sqrt{m_s(A)}} \mathrm{d}M_s(A). \] This supports the intuition that $M$ is a, potentially random, $`$mixture$\text{'}$ of Wiener processes, which is given by the quadratic variation. It also gives insight into why we restrict our attention to martingale measures with quadratic variations projections of which are absolutely continuous with respect to the Lebesgue measure.
\end{example}

There is, as expected, a localised version of martingale measures.

\begin{definition} 
We say that a collection $M=\{ (M_t(A))_{t \in [0,T]} : A \in \Bc(E) \}$ is a continuous \emph{local} $(\F,\P)$--martingale measure if there exists a non-decreasing sequence of $\F$--stopping times $(\tau_n)_{n \in \N^\smallfont{\star}}$ such that $ \lim_{n \rightarrow \infty} \tau_n=T$, $\P${\rm--a.s.}, and $M^{\tau_\smallfont{n}}=\{ (M_{t\wedge \tau_\smallfont{n}}(A))_{t \in [0,T]} : A \in \Bc(E) \}$ is a continuous $(\F,\P)$--martingale measure for every $n \in \N^\star.$
\end{definition}

Given a continuous local $(\F,\P)$--martingale measure $M$, using continuity of the paths, one can always take 
\[ 
\tau_n\coloneqq\inf \big\{ t \in [0,T] : [ M(E) ]_t \geq n  \big\} \wedge T;\; n \in \N^\star, 
\]
as a localising sequence. Indeed, we then have for any $t \in [0,T]$ and $A \in \Bc(E)$ that
\[ 
[ M(A) ]_{t \wedge \tau_\smallfont{n}} \leq [ M(E) ]_{t \wedge \tau_\smallfont{n}} \leq n,\;\P\text{\rm--a.s.} 
\]

\begin{remark} Since we will be dealing exclusively with continuous---local---martingale measure, we shall simply use the terminology---local---martingale measure instead, even though we consider only a special subclass.
\end{remark}

\begin{definition} We say that a collection $M=\{ (M_t(A))_{t \in [0,T]} : A \in \Bc(E) \}$ is a $d$-dimensional $($local$)$ martingale measure on $E$, if every component of $M$ is a $($local$)$ martingale measure.
\end{definition}

It is possible to construct an integral with respect to a martingale measure by It\^{o}'s construction, this is mainly due to \citeauthor*{walsh1986introduction} \cite[page 292]{walsh1986introduction}. Note that since $M(A)$ is a martingale for any set $A\in \Bc(E),$ it is thus possible to integrate with respect to the time variable in the stochastic sense. However, the map $A \longmapsto M_t(A)$ for a fixed $t \in [0,T]$ and $\omega \in \Omega$ is not a measure \emph{per se}. The stochastic integral rather utilises the $\L^2$-orthogonal structure in time and space and the construction is done simultaneously with respect to both variables as an It\^{o} integral.

\medskip
Let us denote by $\Pc\Mc(\F)$ the $\F$-progressive $\sigma$-algebra and let us, for a local $(\F,\P)$--martingale measure $M$, denote by $\L^2(\F,\P,M(\cdot))$ the set of all $\Pc\Mc(\F) \otimes \Bc(E)$-measurable functions $Z: \Omega \times [0,T] \times \Bc(E) \longrightarrow \R$ such that
\[ 
\E^\P \bigg[ \int_0^T \int_E \lvert Z_s(e) \rvert^2 m(\mathrm{d}e,\mathrm{d}s) \bigg] < \infty. 
\] 
Furthermore, $\L^2_{\rm loc}(\F,\P,M(\cdot))$ denotes the set of all $\Pc\Mc(\F) \otimes \Bc(E)$-measurable functions $Z: \Omega \times [0,T] \times \Bc(E) \longrightarrow \R$ such that there exists a non-decreasing sequence of $\F$--stopping times $(\tau_n)_{n \in \N^\smallfont{\star}}$ with $ \lim_{n \rightarrow \infty} \tau_n=T$, $\P$--a.s., and $Z\mathbf{1}_{\llbracket0,\tau_\smallfont{n}\rrbracket} \in \L^2(\F,\P,M(\cdot))$ for every $n \in \N^\star,$ where $\mathbf{1}_{\llbracket0,\tau_\smallfont{n}\rrbracket}$ is the process defined by  
\[ 
\mathbf{1}_{\llbracket0,\tau_\smallfont{n}\rrbracket}(t,\omega) \coloneqq \begin{cases} 1,\;  {\rm if}\; 0\leq t \leq \tau_n(\omega), \\ 0,\; {\rm otherwise.}
\end{cases} 
\]

Let $M$ be a local $(\F,\P)$--martingale measure, and let $Z \in \L^2_{\rm loc}(\F,\P,M(\cdot)).$ Then there exists a unique continuous local $(\F,\P)$--martingale measure 
\[
(Z \bullet M)_t(A)\coloneqq\int_0^t \int_A Z_s(e)M(\mathrm{d}e,\mathrm{d}s),\; A \in \Bc(E),\; t \in [0,T], \] 
obtained by the standard It\^{o} construction, see \cite[page 292]{walsh1986introduction} and \citep[page 86]{el1990martingale} for the case $Z \in \L^2(\F,\P,M(\cdot))$, while the extension to $\L^2_{\rm loc}(\F,\P,M(\cdot))$ using localisation arguments is straightforward. If $Y \in \L^2_{\rm loc}(\F,\P,M(\cdot)),$ then 
\[ 
\big[ (Y \bullet M)(A), (Z \bullet M)(B)\big]_t=\int_0^t \int_{A \cap B} Y_s(e) Z_s(e)m(\mathrm{d}e,\mathrm{d}s),\; (A,B) \in \Bc(E)^2,\; t \in [0,T].
\] 
Moreover, if $Z \in \L^2(\F,\P,M(\cdot))$, then $Z \bullet M$ is a true $(\F,\P)$--martingale measure in the sense of \Cref{WNMM}.

\medskip
One way of constructing a martingale measure using an existing one is taking the image measure under some measurable mapping. The following lemmata describe their fundamental properties.

\begin{lemma}\label{PF1} 
Let $F$ be a Polish space and let $Z: \Omega \times [0,T] \times E \longrightarrow F$ be $\Pc\Mc(\F) \otimes \Bc(E)$-measurable. Let $M$ be an $(\F,\P)$--martingale measure $($resp. a local $(\F,\P)$--martingale measure$)$ and let us define
\begin{align*}
N_t(A)&\coloneqq\int_0^t \int_E \mathbf{1}_{\{Z_\smallfont{s}(e)\in A\}} M(\mathrm{d}e,\mathrm{d}s),\; A \in \Bc(F),\; t \in [0,T],\\
n((0,t]\times A)&\coloneqq \int_0^t \int_E \mathbf{1}_{\{Z_\smallfont{s}(e)\in A\}} m(\mathrm{d}e,\mathrm{d}s), \; A \in \Bc(F),\; t \in [0,T].
\end{align*}
 Then $N$ is a well-defined $(\F,\P)$--martingale measure $($resp. a local $(\F,\P)$--martingale measure$)$ on $F$ such that
\[
[ N(\cdot) ]=n,\; N(F)=M(E),\; \text{\rm and}\; n_t(F)=m_t(E),\; \mathrm{d}t \otimes \P\text{\rm--a.e.}
\]
\end{lemma}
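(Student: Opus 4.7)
The plan is to verify the four axioms of \Cref{WNMM} for $N$ together with the three stated identities, first under the assumption that $M$ is a true martingale measure, and then extending to the local case via the localising sequence $\tau_k \coloneqq \inf\{t \in [0,T] : [M(E)]_t \geq k\} \wedge T$: one applies the conclusions to the true martingale measure $M^{\tau_k}$ and observes that the construction commutes with stopping, so that $N^{\tau_k}$ coincides with the object built from $M^{\tau_k}$ by the same formula, from which all statements in the local case follow.

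Assume henceforth that $M$ is a true $(\F,\P)$--martingale measure, so that $\E^\P[m(E \times (0,T])] = \E^\P[[M(E)]_T] < \infty$. For each $A \in \Bc(F)$, the integrand $(\omega,s,e) \longmapsto \mathbf{1}_{\{Z_s(e) \in A\}}$ is $\Pc\Mc(\F) \otimes \Bc(E)$-measurable as a composition of the measurable $Z$ with a Borel indicator, and is bounded by one; hence it lies in $\L^2(\F,\P,M(\cdot))$, and the It\^o construction provides $N_t(A) = (\mathbf{1}_{\{Z \in A\}} \bullet M)_t(E)$ as a continuous square-integrable $(\F,\P)$-martingale vanishing at zero, giving axiom $(i)$. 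Orthogonality $(ii)$, the identity $[N(\cdot)] = n$, and the absolute continuity $(iv)$ all drop out of the covariation formula for the $\bullet$-integral:
\[
\big[N(A_1), N(A_2)\big]_t = \int_0^t \int_E \mathbf{1}_{\{Z_s(e) \in A_1 \cap A_2\}} m(\mathrm{d}e,\mathrm{d}s),\; t \in [0,T],\; (A_1, A_2) \in \Bc(F)^2,
\]
which vanishes for disjoint $A_1, A_2$ and reduces to $n((0,t] \times A)$ when $A_1 = A_2 = A$, the resulting $t$-map being absolutely continuous by the disintegration $m(\mathrm{d}e,\mathrm{d}s) = m_s(\mathrm{d}e)\mathrm{d}s$ from \Cref{disin}.

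For axiom $(iii)$, finite additivity of $A \longmapsto N_t(A)$ follows from $\mathbf{1}_{\{Z \in A \cup B\}} = \mathbf{1}_{\{Z \in A\}} + \mathbf{1}_{\{Z \in B\}}$ for disjoint $A, B$ together with linearity of the stochastic integral, while continuity at $\emptyset$ in $\L^2(\Fc_t,\P)$ follows from the It\^o isometry combined with dominated convergence: if $A_n \searrow \emptyset$ in $\Bc(F)$, then $\mathbf{1}_{\{Z \in A_n\}} \searrow 0$ pointwise, with integrable dominant $\mathbf{1}$. The identity $N_t(F) = M_t(E)$ is the special case $A = F$, where the integrand is identically one. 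Finally, \Cref{disin} applied to the just-constructed martingale measure $N$ yields a predictable kernel disintegration $n_s(\mathrm{d}e')\mathrm{d}s$ of its intensity; matching $\int_0^t n_s(F) \mathrm{d}s = n((0,t] \times F) = \int_0^t m_s(E) \mathrm{d}s$ for every $t \in [0,T]$ and invoking uniqueness of the Lebesgue density gives $n_t(F) = m_t(E)$, $\lambda \otimes \P$-a.e. No substantive obstacle arises; the only mildly delicate point is the $\sigma$-continuity of $A \longmapsto N_t(A)$ at $\emptyset$, which as indicated reduces to a single dominated convergence argument on the It\^o isometry.
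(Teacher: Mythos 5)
Your argument is correct, but it is a genuinely different route from the paper: the paper proves \Cref{PF1} simply by citing \citeauthor*{el1990martingale} \cite[Proposition II-4]{el1990martingale}, whereas you give a self-contained verification built entirely from the material already set up in \Cref{prelim} — the It\^o-type integral $\mathbf{1}_{\{Z\in A\}}\bullet M$, the covariation formula for such integrals, and the disintegration of \Cref{disin}. Each step checks out: the bounded indicator lies in $\L^2(\F,\P,M(\cdot))$ (resp.\ $\L^2_{\rm loc}$ after localising with $\tau_k=\inf\{t:[M(E)]_t\geq k\}\wedge T$), the covariation identity simultaneously delivers orthogonality, $[N(\cdot)]=n$ and absolute continuity of the quadratic variations, the $\L^2$-continuity at $\emptyset$ is exactly the It\^o isometry plus dominated convergence with dominant $1$ (integrable because $\E^\P[m(E\times(0,T])]=\E^\P[[M(E)]_T]<\infty$), $N(F)=M(E)$ is the case of the constant integrand, and $n_t(F)=m_t(E)$ follows from matching the two Lebesgue densities of $t\mapsto[N(F)]_t=[M(E)]_t$; the commutation of the construction with stopping handles the local case. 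What your approach buys is transparency and independence from the external reference — the reader sees that the lemma is a routine consequence of the integral calculus already introduced — at the cost of a page of verification; the paper's choice buys brevity by delegating precisely this verification (in somewhat greater generality, since \cite{el1990martingale} treats martingale measures that need not be continuous with Lebesgue-absolutely-continuous intensity) to the cited proposition.
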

\begin{proof}
The proof can be found in \cite[Proposition II-4]{el1990martingale}.
\end{proof}

We shall use the shorthand notation $Z_\#M\coloneqq N$ and $Z_\#m\coloneqq n$ for the objects introduced in \Cref{PF1}. The next lemma is technical, so that we postpone its proof to \Cref{proofApp}.
\begin{lemma} \label{pf2} Let $M$, $N$, and $Z$ be as in {\rm \Cref{PF1}}. Let $g: \Omega \times [0,T] \times F \longrightarrow \R$ be $\Pc\Mc(\F) \otimes \Bc(F)$-measurable. Then
\[
 \int_0^T \int_E g_s^+(Z_s(e)) m(\mathrm{d}e,\mathrm{d}s)<\infty,\; \text{\rm if and only if}\;  \int_0^T \int_F  g_s^+(f)  n(\mathrm{d}f,\mathrm{d}s)<\infty. 
 \]
In the affirmative case
\[ 
\int_0^t \int_{Z_\smallfont{s}^{\smallfont{-}\smallfont{1}}(A)}  g_s(Z_s(e)) m(\mathrm{d}e,\mathrm{d}s)=\int_0^t \int_A  g_s(f) n(\mathrm{d}f,\mathrm{d}s),\; t \in [0,T],\; A \in \Bc(F). 
\]
Similarly, $(g \circ Z) \in \L^2_{\rm loc}(\F,\P,M(\cdot))$ if and only if $g \in \L^2_{\rm loc}(\F,\P,N(\cdot))$ and if this is the case, then
\[
\int_0^t \int_{Z_\smallfont{s}^{\smallfont{-}\smallfont{1}}(A)}  g_s(Z_s(e)) M(\mathrm{d}e,\mathrm{d}s)=\int_0^t \int_A  g_s(f) N(\mathrm{d}f,\mathrm{d}s),\; t \in [0,T],\;\P\text{\rm--a.s.},\; A \in \Bc(F).  \]
\end{lemma}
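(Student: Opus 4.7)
The plan is to reduce both identities to their versions for indicator functions, where they hold by the very definitions of $n$ and $N$, and then extend by a standard monotone class / density argument. Concretely, I would first establish the pathwise identity for $m$/$n$, and then exploit the $\L^2$-isometry of the martingale measure integral to transfer the result to the stochastic integral with respect to $M$/$N$.

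\textbf{Part 1 (integrals against $m$ and $n$).} Observe that by the definition of $n$,
\[
\int_0^t \int_F \mathbf{1}_{(a,b]}(s)\mathbf{1}_A(f)\, n(\mathrm{d}f,\mathrm{d}s)=n\big((a\vee 0,(b\wedge t)\vee 0]\times A\big)=\int_0^t \int_E \mathbf{1}_{(a,b]}(s)\mathbf{1}_{\{Z_\smallfont{s}(e)\in A\}}\, m(\mathrm{d}e,\mathrm{d}s),
\]
for all $0\leq a<b\leq T$, $A\in\Bc(F)$, and $t\in[0,T]$, so the claimed identity holds for $g_s(\omega,f)=H(\omega)\mathbf{1}_{(a,b]}(s)\mathbf{1}_A(f)$ with $H$ bounded and $\Fc_a$-measurable, after pulling $H$ out of the $\omega$-wise integrals. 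By linearity, the identity holds for finite sums of such products, i.e., for all $g$ in an algebra which generates $\Pc\Mc(\F)\otimes\Bc(F)$ and is stable under finite products. A monotone class argument then yields the identity for every bounded, non-negative $\Pc\Mc(\F)\otimes\Bc(F)$-measurable $g$. Monotone convergence extends this to arbitrary non-negative $g$ (applied to $g\wedge k$ as $k\to\infty$), and taking $g^+$ gives the equivalence of the integrability conditions. The signed case follows by considering $g^+$ and $g^-$ separately; taking $g\mathbf{1}_A$ in place of $g$ recovers the formula stated with $Z_s^{-1}(A)$.

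\textbf{Part 2 (stochastic integrals).} Thanks to Part 1, for any $g\in\L^2(\F,\P,N(\cdot))$ one has the isometric identity
\[
\E^{\P}\bigg[\int_0^T\!\int_F g_s(f)^2 n(\mathrm{d}f,\mathrm{d}s)\bigg]=\E^{\P}\bigg[\int_0^T\!\int_E g_s(Z_s(e))^2 m(\mathrm{d}e,\mathrm{d}s)\bigg],
\]
so $g\in\L^2(\F,\P,N(\cdot))$ if and only if $g\circ Z\in\L^2(\F,\P,M(\cdot))$, and the corresponding norms agree. For a simple predictable integrand of the form $g_s(\omega,f)=H(\omega)\mathbf{1}_{(a,b]}(s)\mathbf{1}_A(f)$ with $H$ bounded and $\Fc_a$-measurable, both sides of the stochastic integral identity are equal to $H\big(N_{b\wedge t}(A)-N_{a\wedge t}(A)\big)$ by the definition $(Z_\#M)_t(A)=\int_0^t\int_E \mathbf{1}_{\{Z_\smallfont{s}(e)\in A\}}M(\mathrm{d}e,\mathrm{d}s)$; linearity extends this to the algebra generated by such integrands. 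Since simple predictable integrands are dense in $\L^2(\F,\P,N(\cdot))$ (standard Walsh-type construction, see \cite[page 292]{walsh1986introduction}), and since by the isometry the corresponding sequence $g^{(k)}\circ Z$ is Cauchy in $\L^2(\F,\P,M(\cdot))$, both sides converge in $\L^2(\P)$ to well-defined stochastic integrals, and the identity passes to the limit. Finally, the extension from $\L^2$ to $\L^2_{\rm loc}$ is obtained by a joint localisation along a sequence of stopping times reducing simultaneously $M$, $N$, and the integrals of the form $\int_0^\cdot\!\int g^2\,\mathrm{d}n$ and $\int_0^\cdot\!\int (g\circ Z)^2\,\mathrm{d}m$.

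\textbf{Main obstacle.} The computations themselves are routine once the framework of martingale measures is in place; the only genuinely delicate step is the measure-theoretic bookkeeping, namely checking that the algebra of product simple functions above actually generates the product $\sigma$-algebra $\Pc\Mc(\F)\otimes\Bc(F)$, and---in the stochastic integration part---that simple predictable functions are dense in $\L^2(\F,\P,N(\cdot))$ despite $N$ being a martingale measure rather than a standard semimartingale. Both are folklore for martingale measures in the sense of \cite{el1990martingale,walsh1986introduction} and can be cited, so the main remaining care is to handle the joint localisation in the last step to ensure simultaneous reduction of the integrals on either side of the identity.
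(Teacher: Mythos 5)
Your proposal is correct and follows essentially the same route as the paper: prove the identity for the intensities on indicator/simple integrands and extend by monotone convergence (the paper does this pathwise in $(s,f)$ for fixed $\omega$, which also sidesteps the question of whether your predictable rectangles $\Fc_a$-measurable $\times\,(a,b]\times A$ generate the full progressive product $\sigma$-algebra $\Pc\Mc(\F)\otimes\Bc(F)$), then establish the stochastic-integral identity on elementary predictable integrands and pass to the limit using density in $\L^2(\F,\P,N(\cdot))$ together with the isometry inherited from Part 1, finishing with localisation. There is no substantive difference from the paper's argument.
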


Let us recall that $\Pc(E)$ denotes the set of all Borel probability measures on $E$ endowed with its weak topology. The following lemma shows that every intensity can be written as a push-forward of a diffuse deterministic measure with the same mass.

\begin{lemma} \label{pf} Let $\mu \in \Pc(E)$ be a diffuse probability measure and let $F$ be a Polish space. Then for every $\F$--progressively measurable $\Pc(F)$-valued process $n$, there exists a $\Pc\Mc(\F) \otimes \Bc(E)$-measurable process $Z: \Omega \times [0,T] \times \Bc(E) \longrightarrow F$ such that 
\[
n_t(A)=\int_E \mathbf{1}_{\{Z_\smallfont{t}(e)\in A\}}\mu(\mathrm{d}e),\;A \in \Bc(E),\;\mathrm{d}t \otimes \P\text{\rm--a.e.}
\]
In other words, $n=Z_\#(\mu \otimes \mathrm{d}t).$
\end{lemma}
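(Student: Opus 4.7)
The plan is to reduce the problem to constructing a jointly Borel \emph{measurable parametrisation}, namely a Borel map $\Phi:E\times\Pc(F)\to F$ such that $\Phi(\cdot,\nu)_{\#}\mu=\nu$ for every $\nu\in\Pc(F).$ Once such a $\Phi$ is available, the process $Z_t(\omega,e)\coloneqq\Phi(e,n_t(\omega))$ is $\Pc\Mc(\F)\otimes\Bc(E)$-measurable by composition, since $n$ is $\F$-progressive as a map into the Polish space $\Pc(F)$ and $\Phi$ is Borel. Moreover, by construction one then has $n_t(\omega)(A)=\int_E\mathbf{1}_{\{Z_t(\omega,e)\in A\}}\,\mu(\mathrm{d}e)$ for every $A\in\Bc(F)$ pointwise in $(t,\omega)$, which is strictly stronger than the required $\lambda\otimes\P$-a.e.\ identity and also gives the pushforward reformulation $n=Z_{\#}(\mu\otimes\lambda).$

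To construct $\Phi$, I would first reduce the source space. Since $\mu$ is a diffuse Borel probability measure on the Polish space $E$, Rokhlin's isomorphism theorem for standard probability spaces produces a Borel isomorphism $\psi:E\to[0,1]$ with $\psi_{\#}\mu=\lambda|_{[0,1]},$ so it suffices to build $\tilde\Phi:[0,1]\times\Pc(F)\to F$ with $\tilde\Phi(\cdot,\nu)_{\#}\lambda=\nu$ and then set $\Phi(e,\nu)\coloneqq\tilde\Phi(\psi(e),\nu).$ To reduce the target, the Kuratowski isomorphism theorem yields a Borel isomorphism $\iota$ from $F$ onto a Borel subset $B\subseteq\R$; the pushforward operator $\nu\mapsto\iota_{\#}\nu$ is then a Borel map from $\Pc(F)$ into $\Pc(\R),$ and the prescription $\tilde\Phi(u,\nu)\coloneqq\iota^{-1}\bigl(\Psi(u,\iota_{\#}\nu)\bigr)$ (extending $\iota^{-1}$ arbitrarily outside $B$, which is harmless since $\Psi(\cdot,\iota_{\#}\nu)\in B$ for $\lambda$-a.e.\ $u$) reduces the task to the scalar case $F=\R.$

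For $F=\R,$ I would invoke the classical quantile construction
\[
\Psi(u,\nu)\coloneqq\inf\bigl\{x\in\R:\nu\bigl((-\infty,x]\bigr)\geq u\bigr\},\quad (u,\nu)\in[0,1]\times\Pc(\R),
\]
which satisfies $\Psi(\cdot,\nu)_{\#}\lambda=\nu$ by a standard probabilistic calculation. Its sublevel sets
\[
\bigl\{(u,\nu)\in[0,1]\times\Pc(\R):\Psi(u,\nu)\leq x\bigr\}=\bigl\{(u,\nu)\in[0,1]\times\Pc(\R):\nu\bigl((-\infty,x]\bigr)\geq u\bigr\}
\]
are Borel, because for each fixed $x\in\R$ the evaluation $\nu\mapsto\nu((-\infty,x])$ is Borel on $\Pc(\R)$ (it is the monotone limit of the weakly continuous maps $\nu\mapsto\int\chi_n\,\mathrm{d}\nu$ for bounded continuous truncations $\chi_n\downarrow\mathbf{1}_{(-\infty,x]}$), so $\Psi$ is jointly Borel.

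The main obstacle, in my view, is purely the bookkeeping around joint Borel measurability: one has to certify it for the inverse-quantile map $\Psi,$ for the pushforward operator $\nu\mapsto\iota_{\#}\nu,$ and then verify that the final composition is compatible with the progressive $\sigma$-algebra. These are all folklore facts in descriptive set theory, but they constitute the genuine content of the argument, with the rest being a routine assembly of the two isomorphism-theoretic reductions.
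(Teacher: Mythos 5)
Your construction is correct, but it is a genuinely different route from the paper's: the paper does not prove the lemma at all, it simply invokes the representation theorem of \citeauthor*{el1990martingale} \cite[Theorem III-2]{el1990martingale} for predictable measure-valued processes, whereas you build the representation from scratch via a single jointly Borel parametrisation $\Phi:E\times\Pc(F)\to F$ with $\Phi(\cdot,\nu)_\#\mu=\nu$ for every $\nu$. Your reduction chain (Rokhlin/isomorphism theorem to transport $\mu$ to Lebesgue on $[0,1]$, Kuratowski embedding of $F$ into $\R$ with the Borel pushforward map $\nu\mapsto\iota_\#\nu$, then the quantile map $\Psi$ whose sublevel sets are Borel because $\nu\mapsto\nu((-\infty,x])$ is) is sound, and the measurability of $Z_t(\omega,e)=\Phi(e,n_t(\omega))$ indeed follows by composition since progressive measurability of $n$ is exactly $\Pc\Mc(\F)$-measurability of $(t,\omega)\mapsto n_t(\omega)$. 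What your approach buys is a self-contained, elementary argument that yields the identity $n_t(\omega)(A)=\int_E\mathbf{1}_{\{Z_t(\omega,e)\in A\}}\mu(\mathrm{d}e)$ pointwise in $(t,\omega)$ (stronger than the stated $\lambda\otimes\P$-a.e.\ version) and that is insensitive to which $\sigma$-algebra replaces the progressive one; what the citation buys is brevity and consistency with the martingale-measure toolbox the paper uses throughout. Two cosmetic points you may wish to tidy: the quantile $\Psi(0,\nu)=-\infty$, so restrict $u$ to $(0,1]$ (a $\lambda$-null adjustment), and if you only have the mod-$0$ version of the isomorphism theorem, defining $\Phi$ by a constant on the exceptional $\mu$-null set leaves $\Phi(\cdot,\nu)_\#\mu=\nu$ intact, so nothing is lost.
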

\begin{proof} It is a direct consequence of \cite[Theorem III-2]{el1990martingale}.
\end{proof}

Let us end this section with a result stating that the completed canonical $\sigma$-algebra of an $(\F,\P)$--martingale measure is countably generated. The proof is technical and is postponed to \Cref{proofApp}.

\begin{lemma} \label{FXfiltr} Let $\Nc$ denote the set of all $\P$-null sets and let $M$ be a $d$-dimensional $(\F,\P)$--martingale measure for some $d \in \N^\star$. Then there exists a countable system of open sets $(G_j)_{j \in \N^\smallfont{\star}}$ in $\Bc(E)$ such that the following holds for any $t \in [0,T]$
\[
\sigma \big( M_s(A) : s \in [0,t],\; A \in \Bc(E) \big) \vee \sigma(\Nc)=\bigvee_{(n,k) \in \N^\smallfont{\star}\times \N^\smallfont{\star}} \sigma \big( M_{it/2^\smallfont{n}}(G_j) : i\in\{0,\ldots,2^n\},\; j\in\{1,\ldots,k \}\big)\vee \sigma(\Nc).
\] 
\end{lemma}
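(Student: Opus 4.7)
The plan is to show the non-trivial inclusion $\subseteq$: every $M_s(A)$ with $s\in[0,t]$ and $A\in\Bc(E)$ lies in the right-hand side, after enlargement by $\sigma(\Nc)$. Call this right-hand side $\Gc_t$. For the $d$-dimensional case, since each component is itself an $(\F,\P)$--martingale measure to which the scalar argument applies verbatim, I will carry out the proof in the scalar case $d=1$.

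First I would choose the candidate family $(G_j)_{j\in\N^\star}$. Since $E$ is Polish, hence second countable, it admits a countable base; closing it under finite intersections yields a countable $\pi$-system of open sets $(G_j)_{j\in\N^\star}$ which generates $\Bc(E)$. Next I would eliminate the time variable. For any fixed open set $G_j$, the process $s\longmapsto M_s(G_j)$ is continuous by \Cref{WNMM}$(i)$, so for arbitrary $s\in[0,t]$ one has $M_s(G_j)=\lim_{n\rightarrow\infty}M_{\lfloor 2^\smallfont{n}s/t\rfloor t/2^\smallfont{n}}(G_j)$ pointwise in $\omega$. Each approximant is $\Gc_t$-measurable, so the limit is $\Gc_t$-measurable. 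In particular, $M_s(G_j)\in\Gc_t$ for every $s\in[0,t]$ and every $j\in\N^\star$.

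The heart of the proof is then a Dynkin class argument, applied one $s\in[0,t]$ at a time. Define
\[
\Hc_s\coloneqq\big\{A\in\Bc(E):M_s(A)\;\text{is}\;\Gc_t\text{-measurable}\big\}.
\]
I would verify that $\Hc_s$ is a $\lambda$-system: it contains $E$ (because $E=\bigcup_j G_j$ up to suitable enumeration, and the monotone limit argument below together with the previous paragraph shows $M_s(E)\in\Gc_t$); it is stable under proper differences since $M_s(B\setminus A)=M_s(B)-M_s(A)$ by finite additivity of $L^2$-valued measures; and it is stable under countable increasing unions because, for $A_n\nearrow A$, property $(iii)$ of the $\L^2$-valued measure definition together with the identity
\[
\E^\P\bigl[(M_s(A)-M_s(A_n))^2\bigr]=\E^\P\bigl[m((0,s]\times(A\setminus A_n))\bigr],
\]
and monotone convergence applied to the finite measure $B\longmapsto\E^\P[m(B)]$ on $(0,T]\times E$, yield $M_s(A_n)\longrightarrow M_s(A)$ in $\L^2(\P)$. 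Extracting an a.s.\ convergent subsequence, and invoking the fact that $\Gc_t$ contains $\sigma(\Nc)$, one concludes that $M_s(A)$ is $\Gc_t$-measurable. Since $\Hc_s$ is a $\lambda$-system containing the $\pi$-system $\{G_j:j\in\N^\star\}$, the $\pi$--$\lambda$ theorem gives $\Hc_s=\Bc(E)$, completing the proof for every fixed $s\in[0,t]$.

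The main obstacle I anticipate is ensuring that the monotone-limit step in the Dynkin class argument is uniform across $\omega$ in a way compatible with measurability, which is precisely why the enlargement by $\sigma(\Nc)$ is crucial; without completion, an a.s.\ limit of $\Gc_t$-measurable variables need not be $\Gc_t$-measurable. A secondary technicality is the a priori assumption that $M(E)$ is square integrable (rather than only locally square integrable), which is guaranteed by our standing hypothesis in \Cref{WNMM} and renders $B\longmapsto\E^\P[m(B)]$ a \emph{finite} Borel measure on $(0,T]\times E$, enabling the $L^2$-convergence used above. With these two observations in place, the combination of the time-dyadic approximation, the Dynkin class $\Hc_s$, and the inclusion of all $\P$-null sets closes the argument.
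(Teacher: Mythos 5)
Your proof is correct in substance but takes a genuinely different route from the paper's. After the common first step (eliminating the time variable by path continuity and dyadic approximation), the paper chooses $(G_j)_{j\in\N^\star}$ to be all \emph{finite unions} of a countable base and passes from open sets to arbitrary Borel sets via outer regularity of the finite measure $\nu([0,t]\times A)\coloneqq\E^{\P}\big[m([0,t]\times A)\big]$ on the Polish space: any $A\in\Bc(E)$ is approximated from outside by open sets $\widetilde G_n$ with $\nu\big([0,t]\times\widetilde G_n\big)\searrow\nu([0,t]\times A)$, and the identity $\E^{\P}\big[(M_t(\widetilde G_n)-M_t(A))^2\big]=\E^{\P}\big[m\big([0,t]\times(\widetilde G_n\setminus A)\big)\big]$ yields $\L^2$ and a.s.\ subsequential convergence. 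You instead take the base closed under finite \emph{intersections} and run a $\pi$--$\lambda$ argument on $\Hc_s$, using the $\sigma$-additivity of the $\L^2$-valued measure (property $(iii)$ together with the intensity identity) for the increasing-union step. Both arguments hinge on the same $\L^2$ computation through the intensity $m$ and on completing by $\sigma(\Nc)$; yours is slightly more elementary in that it needs only second countability and $\L^2$-continuity of the vector measure, not regularity of Borel measures on Polish spaces, while the paper's regularity argument reaches every Borel set directly and dispenses with the Dynkin bookkeeping.

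One step in your write-up needs a patch. To get $E\in\Hc_s$ you take $U_n\coloneqq\bigcup_{j\leq n}G_j\nearrow E$ and invoke the increasing-union property, but the $U_n$ are finite \emph{unions} of base sets while your family is only closed under finite \emph{intersections}, so ``the previous paragraph'' does not yet give $M_s(U_n)\in\Gc_t$. This is fixed in one line by inclusion--exclusion, \emph{e.g.} $M_s(G_1\cup G_2)=M_s(G_1)+M_s(G_2)-M_s(G_1\cap G_2)$, $\P$--a.s., and induction (or simply by enlarging the countable family to contain all finite unions, as the paper does). With that fix, and noting that the componentwise reduction for $d>1$ is legitimate because $\sigma(M_s(A))=\sigma\big(M_s^1(A),\ldots,M_s^d(A)\big)$ and each component is a scalar martingale measure, your argument is complete.
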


\section{Relaxed agent's problem} \label{agent}

When dealing with a controlled stochastic differential equation driven by a white noise in the weak sense, such as the state process of an agent, as considered in \citeauthor*{cvitanic2018dynamic} \cite{cvitanic2018dynamic}, one typically works on the canonical space $\Cc([0,T],\R^d)$ equipped with a set of probability measures that correspond to the weak solutions of the controlled equation. These probabilities are either given as solutions to martingale problems or, in cases where the volatility coefficient is uncontrolled, one might, under appropriate assumptions, define this set as a subset of equivalent measures to the law of the solution to the drift-less equation, thus making use of the Girsanov transformation.

\medskip 
However, in the relaxed setting where measure-valued controls are considered, solutions to martingale problems do not correspond to weak solutions of equations driven by a standard Wiener process. Instead, they agree with laws of solutions to equations driven by a mixture of Wiener processes, weighted according to the chosen measure---these are known as martingale measures and were originally introduced by {\rm\citeauthor*{walsh1986introduction} \cite{walsh1986introduction}} and subsequently studied by many others in the context of stochastic partial differential equations. Their  relation to martingale problems was introduced and studied in the work of {\rm\citeauthor*{el1990martingale} \cite{el1990martingale}}. In such cases, if one wishes to define the set of measures through the Girsanov transformation in the situation of uncontrolled volatility, it is thus natural to introduce the reference measure as the law of the solution to the drift-less SDE driven by a martingale measure. It is indeed clear that an appropriate Girsanov transformation of the driving martingale measure needs to be introduced.

\medskip
In this case where there is no control in the dynamics of the drift-less equation, a natural question to ask is what makes an appropriate canonical `mixture' of Wiener processes. The answer is essentially provided in \Cref{pf}, as every measure can be represented as a push-forward of a deterministic reference measure, as long as the latter is diffuse. We thus start with a martingale measure with such a diffuse deterministic intensity, keeping in mind that its specific choice is irrelevant.

\subsection{The model}
Let $V$ be a Polish space with a diffuse Borel probability measure $\mu.$ Let us be given a filtered probability space $(\Omega,\Fc,\F,\P^0)$ satisfying the usual conditions, with $\Fc=\Fc_T$, supporting a $k$-dimensional, $k \in \N^\star$, continuous martingale measure $M^0$ on $V$, with quadratic variation
\[
\big[ M^{0,i}(A),M^{0,j}(B) \big]_t=\mathbf{1}_{\{i=j\}} \int_0^t \int_V \mathbf{1}_{\{v \in A \cap B\}} m^0(\mathrm{d}v,\mathrm{d}t),\; t \in [0,T],\; (A,B) \in \Bc(V)^2,\; (i,j) \in \{1,\ldots,k \}^2,
\]
where $m^0(\mathrm{d}v,\mathrm{d}t)=\mu(\mathrm{d}v)\mathrm{d}t.$ In other words
\[ 
\big[ M^{0,i}(A),M^{0,j}(B)\big ]_t=\mathbf{1}_{\{i=j\}}\mu(A \cap B)t,\; t \in [0,T],\; (A,B) \in \Bc(V)^2,\; (i,j) \in \{1,\ldots,k \}^2.
\]
Let $d \in \N^\star,$ $d \geq k,$ and let $\Cc([0,T],\R^d)$ denote the space of all continuous functions from $[0,T]$ to $\R^d.$ Let $U$ be a control space, which is assumed to be Polish, and let us be given measurable maps
\begin{align*}
\sigma : [0,T] \times \Cc([0,T],\R^d) \longrightarrow \R^{d \times k}, \; \lambda: [0,T] \times \Cc([0,T],\R^d) \times U \longrightarrow \R^k.
\end{align*}

The agent controls the $\R^d$-valued process $X$ given by
\begin{align} \label{x} 
X_t=x_0+\int_0^t \int_V \sigma(s,X_{\cdot \wedge s})M^0(\mathrm{d}v,\mathrm{d}s)=\int_0^t \sigma(s,X_{\cdot \wedge s})\mathrm{d}W^0_s,\; t \in [0,T], 
\end{align} 
where $x_0 \in \R^d$ is a given constant, $W^0\coloneqq M^0(V)$ is a $k$-dimensional, $(\F,\P^0)$--Brownian motion, thanks to L\'{e}vy's characterisation. Even though $X$ could have been introduced using just the Brownian motion $W^0,$ the formulation involving $M^0$ allows more flexibility when doing Girsanov transformation of the noise. This shall become clear later on.

\begin{remark}
Note that, while it would be natural to work on a space that is in certain sense canonical, we work on a general filtered probability space. Though we might restrict to the canonical case later as well, allowing for more generality shall become useful when we treat the principal's problem in {\rm\Cref{relPrincip}}.
\end{remark}

We shall work under the following conditions.
\begin{assumption}\label{asssigmamu} We have
\begin{enumerate}
\item[$(i)$]  $\sigma$ is bounded, such that {\rm\Cref{x}} admits unique strong solution, and ${\rm rank}(\sigma(t,X_{\cdot \wedge t}))=k$, $\mathrm{d}t \otimes \P^0${\rm--a.e.}$;$ 
\item[$(ii)$] $\lambda$ is bounded.
\end{enumerate}
\end{assumption}

\begin{remark} $(i)$ Existence and uniqueness of a strong solution holds if, \emph{e.g.} $\sigma$ is Lipchitz-continuous in $x$ uniformly in $t$ and bounded.

\medskip $(ii)$ Boundedness of the coefficients $\sigma$ and $\lambda$ is, similarly as in \emph{e.g.} {\rm\cite{cvitanic2018dynamic,djete2023stackelberg}}, assumed for simplicity. It can be largely relaxed or adjusted to model-specific assumptions, see for instance {\rm\cite{hernandez2024principal,baldacci2021optimal}}. 

\medskip $(iii)$ The assumption on the rank of the matrix $\sigma$ is, however, fundamental $($see {\rm\Cref{invsigma}} and the proof of {\rm\Cref{ICchar}}$)$ and in a sense reinforces the usual assumption on martingale representation, as assumed \emph{e.g.} in {\rm\cite{cvitanic2018dynamic,hernandez2024principal}}, see  {\rm\Cref{martRep}}. In fact, we can drop this assumption in the case $\F=\F^{X(\cdot)}$ and assume that the martingale representation holds in the sense of {\rm\Cref{martRep}}. See discussions before {\rm\Cref{ICcharFB}} for the definition of $\F^{X(\cdot)}.$

\medskip $(iv)$ Let us also point out that the assumption on the rank is substantially weaker than the ellipticity assumption considered \emph{e.g.} in {\rm\cite{djete2023stackelberg}}, since it allows, for instance, that all coordinates of $X$ are driven by the same one-dimensional Brownian motion. See also {\rm \Cref{rem:rank_mat}} below.
\end{remark}

\begin{remark}\label{rem:rank_mat} The assumption on the rank from {\rm \Cref{asssigmamu}} is, in fact, in certain sense without loss of generality. Indeed, let $\tilde{A} \in \R^{d \times n}$, where $n \in \N^\star,$ be arbitrary. Let us denote $k \coloneqq {\rm rank} (\tilde{A} \tilde{A}^\top).$ Then clearly $d \geq k.$ Let us denote by $ \tilde{A} \tilde{A}^\top=Q \Lambda Q^\top$ the spectral decomposition of $\tilde{A} \tilde{A}^\top$ and let us set \[ A \coloneqq Q\Lambda^\frac{1}{2}_0, \] where $\Lambda_0 \in \R^{d \times k}$ is obtained from $\Lambda$ by omitting potential columns with zeros. Then, $A \in \R^{d \times k}$ and 
\[ A A^\top=Q\Lambda^\frac{1}{2}_0 \Lambda^\frac{1}{2}_0 Q^\top=Q\Lambda^\frac{1}{2} \Lambda^\frac{1}{2} Q^\top=Q\Lambda Q^\top=\tilde{A} \tilde{A}^\top. \] Moreover, ${\rm rank}(A)=k.$ 

\medskip In particular, for every measurable function $\tilde{\sigma} : [0,T] \times \Cc([0,T],\R^d) \longrightarrow \R^{d \times n}$ such that ${\rm rank}(\tilde{\sigma}\tilde{\sigma}^\top(t,x))=k$ for every $(t,x) \in [0,T] \times \Cc([0,T],\R^d),$ there exists a measurable function $\tilde{\sigma} : [0,T] \times \Cc([0,T],\R^d) \longrightarrow \R^{d \times k}$ satisfying  ${\rm rank}(\sigma(t,x))=k$ and $\tilde{\sigma}\tilde{\sigma}^\top(t,x)=\sigma\sigma^\top(t,x)$ for every $(t,x) \in [0,T] \times \Cc([0,T],\R^d).$
\end{remark}

 Abusing notations, we define a $d$-dimensional $(\F,\P^0)$--martingale measure $X$ on $V$ by 
\begin{equation*} 
X_t(\cdot)\coloneqq x_0+\int_0^t \int_\cdot \sigma(s,X_{\cdot \wedge s})M^0(\mathrm{d}v,\mathrm{d}s). 
\end{equation*} 
In particular, $X(V)=X.$ Then, thanks to \Cref{asssigmamu}, we know there exists a measurable function $\sigma_{\rm left}^{-1} : [0,T] \times \Cc([0,T],\R^d) \longrightarrow \R^{k \times d}$ such that
 \[ \sigma_{\rm left}^{-1}(t, X_{\cdot \wedge t})\sigma(t,X_{\cdot \wedge t}) = \text{I}_{k \times k},\; \mathrm{d}t \otimes \P^0\text{\rm--a.e.}  \]
Consequently, we have
\begin{align} \label{invsigma}
M^0_t(\cdot) = \int_0^t \int_\cdot \sigma_{\rm left}^{-1}(t, X_{\cdot \wedge t})\sigma(t,X_{\cdot \wedge t})  M^0(\mathrm{d}v,\mathrm{d}s)=\int_0^t \int_\cdot \sigma_{\rm left}^{-1}(t, X_{\cdot \wedge t})  X(\mathrm{d}v,\mathrm{d}s),\; t\in [0,T],\; \P^0\text{\rm--a.s.}
\end{align}

\begin{remark} \label{rem:corresp} In particular, the augmented filtrations generated by the martingale measures $X$ and $M^0$ coincide. Indeed, the martingale measure $X$ is adapted to the augmented filtration generated by $M^0$ due to existence of a strong solution to {\rm\Cref{x}}. Conversely, $M^0$ is adapted to the augmented filtration generated by $X$ due to equality \eqref{invsigma}.
\end{remark}

\medskip Let us now define the set of controls for the agent. The idea is to make use of \Cref{pf} and define the set of relaxed controls of the agent at time $t \in [0,T]$ as the set of all image measures of the measure $\mu.$ The agent takes control of the process $X$ by means of choosing a $\Pc\Mc(\F) \otimes \Bc(V)$-measurable $U$-valued process
\[
(t,\omega,v) \longmapsto A_t(v)(\omega). 
\]
We denote the set of all controls for the agent by $\Ac$, that is to say
\[ 
\Ac \coloneqq  \big\{ A: [0,T] \times \Omega \times V \longrightarrow U: A\; \text{\rm is}\; \Pc\Mc(\F) \otimes \Bc(V)\text{\rm -measurable}  \big\}.
\]
For every $A \in \Ac,$ we define a measure $\P^A$, equivalent to $\P^0$ (recall that $\lambda$ is bounded) by
\[
\frac{\mathrm{d}\P^A}{\mathrm{d} \P^0}\coloneqq \exp \bigg( \int_0^T\int_V \lambda(s,X_{\cdot \wedge s},A_s(v))\cdot M^0(\mathrm{d}v,\mathrm{d}s) - \frac{1}{2} \int_0^T\int_V \| \lambda(s,X_{\cdot \wedge s},A_s(v))\|^2m^0(\mathrm{d}v,\mathrm{d}s)\bigg). 
\]

Due to Girsanov's theorem, if we define
\[
\widetilde{M}^A_t(\cdot)\coloneqq M^0_t(\cdot)-\int_0^t\int_\cdot \lambda(s,X_{\cdot \wedge s},A_s(v)) m^0(\mathrm{d}v,\mathrm{d}s),\; t \in [0,T],
\] then $\widetilde{M}^A$ is an $(\F,\P^A)$--martingale measure with intensity $m^0.$ One can verify this applying the Girsanov transformation to every process $M^0(A),\; A \in \Bc(V).$ Consequently, $X$ has the following dynamics
\begin{equation*}
X_t=x_0+\int_0^t \int_V\sigma(s,X_{\cdot \wedge s})\lambda(s,X_{\cdot \wedge s},A_s(v)) m^0(\mathrm{d}v,\mathrm{d}s)+\int_0^t\int_V \sigma(s,X_{\cdot \wedge s})\,\widetilde{M}^A(\mathrm{d}v,\mathrm{d}s),\; t \in [0,T],\; \P^{A}\text{\rm--a.s.}
\end{equation*} 

Let us then define $M^A\coloneqq A_\#\widetilde{M}^A$ and $m^A\coloneqq A_\# m^0.$ Then we can write (see \Cref{pf2})
\begin{equation*}
X_t=x_0+\int_0^t \int_U \sigma(s,X_{\cdot \wedge s})\lambda(s,X_{\cdot \wedge s},u) \,m^A(\mathrm{d}u,\mathrm{d}s)+\int_0^t\int_U \sigma(s,X_{\cdot \wedge s})\,M^A(\mathrm{d}u,\mathrm{d}s),\; t \in [0,T],\; \P^{A}\text{\rm--a.s.}
\end{equation*}

\begin{remark}
$(i)$ Due to {\rm \Cref{pf}}, we have that every $\Pc(U)$-valued, $\F$--progressively measurable process can be written as a push-forward of $m^0.$ Hence, our setting exactly corresponds to the usual relaxed control formulation $($see for instance {\rm\citeauthor*{el1990martingale} \cite{el1990martingale}} or {\rm\citeauthor*{haussmann1990existence} \cite{haussmann1990existence}}$)$. We shall from now on interchangeably use the term control both for a function $A \in \Ac$ and for the corresponding measure $m^A$.

\medskip $(ii)$ If the agent should choose a control in the standard sense, that is, choose an $\F$--progressively measurable process $(\omega,t) \longmapsto A_t(\omega)$ or, equivalently, $m^A(\mathrm{d}a,\mathrm{d}s)(\omega)=\delta_{A_s(\omega)}(\mathrm{d}v)\mathrm{d}s$, then $X$ follows \begin{equation*}
X_t=x_0+\int_0^t \sigma(s,X_{\cdot \wedge s})\lambda(s,X_{\cdot \wedge s},A_s) \,\mathrm{d}s+\int_0^t \sigma(s,X_{\cdot \wedge s})\,\mathrm{d}W^A_s,\; t \in [0,T],\; \P^{A}\text{\rm--a.s.,}
\end{equation*} where \[W^A\coloneqq M^A(U)=W^0-\int_0^\cdot \lambda(s,X_{\cdot \wedge s},A_s) \mathrm{d}s\] is an $(\F,\P^A)$--Brownian motion. Our setting is thus an extension of the standard framework as considered in {\rm\cite{cvitanic2018dynamic}}.

\medskip
$(iii)$ In the light of {\rm \Cref{pf}}, we see that the choice of $V$ and $\mu$ is irrelevant. Any diffuse measure on a Polish space would do as we just need to fix a canonical diffuse intensity to define dynamics under $\P^0.$
\end{remark}

\subsection{The agent's problem and incentive compatibility}

We consider contracts consisting of a lump-sum payment at the terminal time $T$. These shall be represented by random variables  $\xi \in \L^0(\Fc_T).$ Let us now formulate the agent's optimisation problem. Let us be given a contract $\xi \in \L^0(\Fc_T)$ and a Borel-measurable running cost for the agent $f : [0,T]\times \Cc([0,T],\R^d) \times U \longrightarrow \R.$ Further, let $U_a: \R \longrightarrow \R$ be a Borel-measurable utility function, and consider a discount factor characterised by a Borel-measurable function $k: [0,T] \times \Cc([0,T],\R^d) \times U \longrightarrow \R.$ Our main assumptions on these functions are given below.

\begin{assumption}\label{assUkf} We have
\begin{itemize}
\item[$(i)$] $U_a$ is injective$;$
\item[$(ii)$] $k$ is uniformly bounded from below$;$
\item[$(iii)$] the function $\hat{f}: [0,T] \times \Cc([0,T],\R^d) \longrightarrow \R \cup \{ - \infty\}$ defined by \begin{align} \label{hatf}
\hat{f}(s,x)=\inf_{u \in U}\big\{ f(s,x,u) \big\},\; (s,x) \in [0,T] \times \Cc([0,T],\R^d),
\end{align} is Borel-measurable and there exists $p>1$ such that
\begin{align*} 
\E^{\P^\smallfont{0}} \bigg[ \bigg(\int_0^T \int_U  \hat{f}^2(s,X_{\cdot \wedge s})  \mathrm{d}s  \bigg)^\frac{p}{2}\bigg] < \infty.
\end{align*}
\end{itemize}
\end{assumption}

\begin{remark} Injectivity of the utility function is assumed since we shall deal with its inverse mapping in what follows. One-sided boundedness of the discount factor is commonly assumed in the literature and is of technical nature. While it may in principle be relaxed, we might in general face technical issues when defining the value process of the agent. At last, let us mention that the last item in {\rm\Cref{assUkf}} is assumed for well-posedness of the {\rm BSDE} in {\rm \Cref{ICchar}}, see {\rm \Cref{lips}}. Vaguely speaking, it asserts that while the running cost $f$ might be unbounded below or even attain the value $-\infty,$ it is sufficiently integrable for at least some controls. One such example would be the quadratic cost.
\end{remark}

Let us set for notational simplicity
\[
K_{t}^A(X_{\cdot \wedge t})\coloneqq \exp \bigg( - \int_0^t \int_U k(s,X_{\cdot \wedge s},u) m^A(\mathrm{d}u,\mathrm{d}s) \bigg),\; t \in [0,T],\; A\in\Ac.
\]

Let us denote by $V^a_0(\xi)$ the utility promised to the agent. That is to say
\[ 
V^a_0(\xi)\coloneqq \sup_{A \in \Ac} v^a_0(\xi,A), \;\text{with}\;  v^a_0(\xi,A)\coloneqq\E^{\P^{\smallfont{A}}} \bigg[K_{T}^A(X_{\cdot \wedge T}) U_a(\xi)- \int_0^T \int_U K_{s}^A(X_{\cdot \wedge s}) f(s,X_{\cdot \wedge s},u)m^A(\mathrm{d}u,\mathrm{d}s)  \bigg],\; A \in \Ac. 
\]

Finally, $\Ac^\star(\xi)$ denotes the set of optimal responses to $\xi$, that is
\[ 
\Ac^\star(\xi)\coloneqq \big\lbrace A \in \Ac: V^a_0(\xi)=v^a_0(\xi,A) \big\rbrace.
\]

Among all contracts $\xi \in \L^0(\Fc_T),$ we are interested in those to which the agent has an optimal response. The following definition is up to slight model-specific variations standard in the literature, see for instance \cite{cvitanic2018dynamic,djete2023stackelberg,hernandez2024principal}.

\begin{definition}
A contract $\xi \in \L^0(\Fc_T)$ is called incentive compatible, denoted by $\xi \in \IC$, if
\begin{itemize}
\item[$(i)$] there is $p>1$ such that $ \E^{\P^\smallfont{0}} [\lvert U_a(\xi) \rvert^{p}]<\infty;$
\item[$(ii)$] $\Ac^\star(\xi) \neq \emptyset.$
\end{itemize}
\end{definition}

\begin{remark}\label{int} Note that if $U \in \L_n^p (\Fc_T,\P^0),$ for some $p>1$ and $n \in \N^\star,$ then  $U \in \L^q_n(\Fc_T,\P^A),$ for any $1<q<p$ and $A \in \Ac.$ This can be justified as follows
\[
\E^{\P^\smallfont{A}} \big[ \lVert U \rVert^{q}\big]=\E^{\P^\smallfont{0}}\bigg[ \frac{\mathrm{d}\P^A}{\mathrm{d}\P^0} \lVert U \rVert^{q} \bigg] \leq \bigg(\E^{\P^\smallfont{0}}\bigg[ \frac{\mathrm{d}\P^A}{\mathrm{d}\P^0}^\frac{r}{r-1} \bigg]\bigg)^\frac{r-1}{r} \Big( \E^{\P^\smallfont{0}}\big[\lVert U \rVert^{q  r}\big] \Big)^\frac{1}{r},\; r \in (1,\infty).
\]
It suffices to take $r$ such that $qr<p$ and realise that the Radon--Nikod\'{y}m density $\mathrm{d}\P^A/\mathrm{d}\P^0$ has finite moments of any order under $\P^0$ due to boundedness of $\lambda.$ As a consequence, if $\xi \in \Ic\Cc,$ then $v^a_0(\xi,A)$ is well-defined as an element of $[-\infty, \infty)$ for any $A\in \Ac,$ thanks to {\rm \Cref{assUkf}}.
\end{remark}

Motivated by {\rm\citeauthor*{cvitanic2018dynamic} \cite{cvitanic2018dynamic}}, our aim is to give a characterisation of incentive compatible contracts by means of a dynamic programming representation. Having such a result not only reformulates the principal's problem in a more tractable way, but also provides us with the set of optimal responses for the agent to a given contract. Since this approach heavily depends on the theory of BSDEs and our framework does not fit within the usual setting, we need to introduce suitable spaces of functions and an appropriate notion of BSDEs. Let us first define the Hamiltonian corresponding to the agent's problem. We set
\begin{align*} 
h(t,x,y,z,u)&\coloneqq\sigma(t,x)\lambda(s,x,u)\cdot z - f(s,x,u) - k(t,x,u)y, \;(t,x,y,z,u) \in [0,T]\times\Cc([0,T],\R^d)\times\R\times\R^d\times U, \\
H(t,x,y,z)&\coloneqq\sup_{u \in U}\big\{  h(t,x,y,z,u)\big\},\; (t,x,y,z) \in [0,T]\times\Cc([0,T],\R^d)\times\R\times\R^d.
\end{align*}

Note that this definition agrees with the standard Hamiltonian as known from optimal control theory, see \emph{e.g.} {\rm \citeauthor*{touzi2013optimal} \cite{touzi2013optimal}}. We impose the following condition, which is also standard.

\begin{assumption} \label{ass:exist.minim}
There exists at least one Borel-measurable function $a : [0,T] \times \Cc([0,T],\R^d) \times \R \times \R^d \longrightarrow U$ with
\[
H(t,x,y,z)=h\big(t,x,y,z,a(t,x,y,z)\big),\;  (t,x,y,z) \in [0,T]\times\Cc([0,T],\R^d)\times\R\times\R^d.
\]
We denote by $\Upsilon$ the set of all such functions $a.$
\end{assumption}

\begin{remark}
Note that under this assumption, the function $\hat{f}$ defined by {\rm\Cref{hatf}} is Borel-measurable since $\hat{f}(t,x)=-H(t,x,0,0).$
\end{remark}

Let us, for $p \in [1,\infty)$, define the following spaces of processes.
\begin{itemize}
\item $\S^p(\F,\P^0)$ denotes the space of $\R$-valued, $\F$-adapted, c\`{a}dl\`{a}g  processes $Y$ such that 
\[
\| Y \|_{\S^\smallfont{p}(\F,\P^\smallfont{0})}^p\coloneqq \E^{\P^\smallfont{0}}\bigg[ \sup_{t \in [0,T]}  \lvert Y_s\rvert ^p \bigg] < \infty. 
\]
\item $\L^p_d(\F,\P^0,X(\cdot))$ is the space of all $\Pc\Mc(\F) \otimes \Bc(V)$-measurable processes $Z : \Omega \times [0,T] \times V \longrightarrow \R^d$ such that 
\[
\|  Z \|_{\L^\smallfont{p}_\smallfont{d}(\F,\P^\smallfont{0},X(\cdot))}^p\coloneqq    \E^{\P^\smallfont{0}} \bigg[  \int_0^T \int_V \big\| Z_s(v)^\top \sigma(x,X_{\cdot \wedge s}) \big\|^2 m^0(\mathrm{d}v,\mathrm{d}s) \bigg]^\frac{p}{2}< \infty. \]
\item $\M^p(\F,\P^0)$ is the space of all $\R$-valued c\'{a}dl\'{a}g, $(\F,\P^0)$-martingales $M$ such that $M_0=0$, $\P^0$--a.s., and 
\[
\| M \|^p_{\M^\smallfont{p}(\F,\P^\smallfont{0})}\coloneqq \E^{\P^0}\big[ [M]_T^\frac{p}{2}\big] < \infty. 
\]
\item $\M^p_{X(\cdot)^\smallfont{\perp}}(\F,\P^0)$ is the subspace of $\M^p(\F,\P^0)$ consisting of all $L \in \M^p(\F,\P^0)$ such that $[ X(\cdot), L ] \equiv 0,$ where the latter denotes the property
 \[
 \bigg[ \int_0^\cdot \int_V Z_s(v)\cdot X(\mathrm{d}v,\mathrm{d}s), \int_0^\cdot\,\mathrm{d}L_s \bigg] \equiv 0,\; Z \in \L^2_d(\F,\P^0,X(\cdot)). 
 \]
\end{itemize}
 
We further define the space $\S^p_{\rm loc}(\F,\P^0)$ as the set of $\R$-valued, $\F$-adapted c\`{a}dl\`{a}g processes $Y$ for which there exists a non-decreasing sequence of $\F$--stopping times $(\tau_n)_{n \in \N^\smallfont{\star}}$ such that the stopped process $Y_{\cdot \wedge \tau_\smallfont{n}}$ belongs to $\S^p(\F,\P^0)$ for every $n \in \N^\star$ and $\lim_{n \rightarrow \infty}\tau_n = T$, $\P^0$--a.s. Spaces $\L^p_{d,\mathrm{loc}}(\F^0,\P^0,X(\cdot)),$ $\M^p_{\rm loc}(\F,\P^0),$ and $\M^p_{X(\cdot)^\smallfont{\perp},\mathrm{loc}}(\F,\P^0)$ are defined analogously.

\medskip
The following theorem gives the main reduction result of this section. The correspondence to Theorem 4.2 in \cite{cvitanic2018dynamic} is to be noted. Let us emphasise that, unlike in \cite{cvitanic2018dynamic}, we need to consider an orthogonal martingale in our setting. This is due to having a general filtration $\F.$ As we shall see later, one can get rid of this part by restricting to the filtration generated by the martingale measure $X,$ which would then correspond to the standard form of a BSDE. However, working with a general filtration shall prove useful in the sequel.

\begin{theorem} \label{ICchar}
Let {\rm Assumptions \ref{asssigmamu}, \ref{assUkf}} and {\rm\ref{ass:exist.minim}} hold and let $\xi \in \L^0(\Fc_T).$ Then, $\xi \in \IC$ if and only if $\xi=U_a^{-1}(Y_T)$ for some $Y \in \S^p(\F,\P^0)$ with $Y_T \in U_a(\R),\; \P^0\text{\rm--a.s.,}$ of the form
\begin{align} \label{ICcontract} 
Y_t&=Y_0-\int_0^t \int_V H(s,X_{\cdot \wedge s},Y_s,Z_s(v))m^0(\mathrm{d}v,\mathrm{d}s) + \int_0^t \int_V Z_s(v) \cdot X(\mathrm{d}v,\mathrm{d}s) + \int_0^t\mathrm{d}L_s,\; t \in [0,T],\;\P^0\text{\rm--a.s.,} 
\end{align} 
for some $(Z,L) \in  \L^p_d(\F,\P^0,X(\cdot))\times \M^p_{X(\cdot)^\smallfont{\perp}}(\F,\P^0).$ Moreover, in the affirmative case, it holds $\E^{\P^0}[Y_0]=V_0^a(\xi)$ and $\Ac^\star(\xi)= \{ A^\star_a : a \in \Upsilon\},$ where for any $a \in \Upsilon$ we define
\[ A^\star_a : (t,\omega,v) \longmapsto a(t,X_{\cdot \wedge t}(\omega),Y_t(\omega),Z_t(v)(\omega)),\; (\omega, t,v) \in \Omega \times [0,T] \times V.  \]
\end{theorem}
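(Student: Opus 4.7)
I would adapt the classical martingale-method verification of Cvitanić--Possamaï--Zhang \cite{cvitanic2018dynamic} to the present martingale-measure relaxed setting, with Itô's formula applied to the discounted continuation utility $K^A Y$ as the pivotal computation in both directions.

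\textbf{Sufficiency.} Suppose $Y$ solves \eqref{ICcontract} with $(Z,L)\in\L^p_d(\F,\P^0,X(\cdot))\times\M^p_{X(\cdot)^\smallfont{\perp}}(\F,\P^0)$ and set $\xi\coloneqq U_a^{-1}(Y_T)$. Fix $A\in\Ac$. Under $\P^A$, the Girsanov transformation gives the decomposition $X(\mathrm{d}v,\mathrm{d}s)=\sigma(s,X_{\cdot\wedge s})\lambda(s,X_{\cdot\wedge s},A_s(v))\,m^0(\mathrm{d}v,\mathrm{d}s)+\sigma(s,X_{\cdot\wedge s})\widetilde{M}^A(\mathrm{d}v,\mathrm{d}s)$. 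Product-differentiating $K^A_t Y_t$ and invoking the elementary identity $\sigma\lambda\cdot z-yk-f=h$ yields
\[
\mathrm{d}(K^A_t Y_t)+K^A_t\!\int_V\! f(t,X_{\cdot\wedge t},A_t(v))\,m^0(\mathrm{d}v,\mathrm{d}t)=K^A_t\!\int_V\!\big[h(t,X_{\cdot\wedge t},Y_t,Z_t(v),A_t(v))-H(t,X_{\cdot\wedge t},Y_t,Z_t(v))\big]m^0(\mathrm{d}v,\mathrm{d}t)+\mathrm{d}N_t,
\]
where $N$ collects the stochastic integrals of $K^A\sigma^\top Z$ against $\widetilde{M}^A$ and of $K^A$ against $L$. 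The lower bound on $k$ (hence boundedness of $K^A$), the boundedness of $\sigma,\lambda$, and the defining norms of $\S^p$, $\L^p_d$, $\M^p_{X(\cdot)^\smallfont{\perp}}$ together with \Cref{int} ensure $N$ is a genuine $(\F,\P^A)$-martingale. Integrating on $[0,T]$, taking $\P^A$-expectation, pushing forward to $m^A$ via \Cref{pf2}, and using that $h\leq H$ and $Y_0$ is $\Fc_0$-measurable (so $\E^{\P^\smallfont{A}}[Y_0]=\E^{\P^\smallfont{0}}[Y_0]$), one obtains $v^a_0(\xi,A)\leq\E^{\P^\smallfont{0}}[Y_0]$. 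For $A=A^\star_a$ with $a\in\Upsilon$ the pointwise maximality of $a$ annihilates the drift $h-H$, so the inequality becomes an equality, yielding both $\xi\in\IC$, the identity $V^a_0(\xi)=\E^{\P^\smallfont{0}}[Y_0]$, and the inclusion $\{A^\star_a:a\in\Upsilon\}\subset\Ac^\star(\xi)$.

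\textbf{Necessity.} Conversely, if $\xi\in\IC$, invoke the BSDE existence/uniqueness result stated in the appendix (\Cref{lips}) for equations driven by the martingale measure $X$ in the general filtration $\F$, with terminal datum $U_a(\xi)$ and driver $H$. The Lipschitz property of $H$ in $(y,z)$ follows from boundedness of $\sigma,\lambda$ and the lower bound on $k$ (the Lipschitz constant in $y$ being controlled via $\hat f(t,x)=-H(t,x,0,0)$), while the required $\L^p$-integrability of the data is exactly \Cref{assUkf}(i)--(iii) coupled with the incentive-compatibility moment bound on $U_a(\xi)$. This produces a unique triple $(Y,Z,L)$ in the prescribed spaces. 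The orthogonal part $L$ is an artefact of the general filtration and would collapse in the canonical case $\F=\F^{X(\cdot)}$ by the martingale representation (\Cref{martRep}). The sufficiency direction applied to this $(Y,Z,L)$ gives the BSDE representation claim and the identity on $V^a_0(\xi)$.

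\textbf{Characterisation of optimal responses.} For the reverse inclusion $\Ac^\star(\xi)\subset\{A^\star_a:a\in\Upsilon\}$, let $A\in\Ac^\star(\xi)$. Chasing back the chain of inequalities in sufficiency, optimality forces the drift term $h(t,X_{\cdot\wedge t},Y_t,Z_t(v),A_t(v))-H(t,X_{\cdot\wedge t},Y_t,Z_t(v))$ to vanish $m^0\otimes\P^0$-a.e. A standard measurable-selection argument then produces an $a\in\Upsilon$ coinciding with $A$ on this full-measure set, so that $A=A^\star_a$ in $\Ac$.

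The main technical obstacle I anticipate is not the verification computation itself, but rather the bookkeeping around the martingale-measure stochastic calculus: ensuring the local-martingale parts arising from the Itô expansion are true martingales (requiring the $p>1$ integrability rather than $p=1$), and handling the push-forward between $m^0$ and $m^A$ cleanly through \Cref{pf2}. Second in difficulty is invoking the BSDE existence theorem in a form tailored to martingale-measure drivers with an orthogonal remainder $L$, which is why the well-posedness result in the appendix must be formulated precisely in these spaces.
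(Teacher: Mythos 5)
Your proposal is correct and follows essentially the same route as the paper: sufficiency via the Itô/Girsanov verification argument showing the discounted utility (the paper's process $R^A$) is a $\P^A$-supermartingale with equality exactly for $A^\star_a$, the orthogonal part $L$ remaining a $\P^A$-martingale by orthogonality plus the integrability of \Cref{int}, and necessity via the appendix well-posedness result (\Cref{Exbsde}, not \Cref{lips}, which is the assumption) together with injectivity of $U_a$. Your extra care on the reverse inclusion $\Ac^\star(\xi)\subset\{A^\star_a:a\in\Upsilon\}$ is consistent with the paper, which simply declares that step straightforward.
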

\begin{proof}
Let us first assume that $\xi=U_a^{-1}(Y_T)$ for some $Y \in \S^p(\F,\P^0),$ $Y_T \in U_a(\R),\; \P^0\text{\rm--a.s.,}$ of the form \eqref{ICcontract}. Clearly, $U_a(\xi) \in \L^p(\Fc_T,\P^0)$. We need to verify $\Ac^\star(\xi) \neq \emptyset.$ To this end, let us define a family of processes 
\begin{align*}
R_t^A&\coloneqq K_t^A(X_{\cdot \wedge t})Y_t-\int_0^t \int_V K_s^A(X_{\cdot \wedge s})f(s,X_{\cdot \wedge s},A_s(v)) m^0(\mathrm{d}v,\mathrm{d}s) \\
&=K_t^A(X_{\cdot \wedge t})Y_t-\int_0^t \int_V K_s^A(X_{\cdot \wedge s})f(s,X_{\cdot \wedge s},u) m^A(\mathrm{d}u,\mathrm{d}s),\; t \in [0,T],\; A \in \Ac.
\end{align*} 
Note that $R^A$ is a well-defined process with values in $[-\infty, \infty)$ for any $A \in \Ac.$ We have that 
\[
R_T^A=K_T^A(X_{\cdot \wedge T}) U_a(\xi)-\int_0^T \int_V K_s^A(X_{\cdot \wedge s}) f(s,X_{\cdot \wedge s},u)\,m^A(\mathrm{d}u,\mathrm{d}s),
\] 
and 
\begin{align*} 
R^A_t&=K_T^A(X_{\cdot \wedge T}) U_a(\xi)+\int_t^T \int_V K_s^A(X_{\cdot \wedge s}) \big(H(s,X_{\cdot \wedge s},Y_s,Z_s(v))-\sigma(s,X_{\cdot \wedge s})\lambda(s,X_{\cdot \wedge s},A_s(v))\cdot Z_s(v)\big)m^0(\mathrm{d}v,\mathrm{d}s)\\
&\quad+\int_t^T \int_V  K_s^A(X_{\cdot \wedge s})\big( f(s,X_{\cdot \wedge s},A_s(v)) +k(s,X_{\cdot \wedge s},A_s(v))Y_s\big)m^0(\mathrm{d}v,\mathrm{d}s)\\
&\quad  - \int_t^T \int_V K_s^A(X_{\cdot \wedge s}) Z_s(v)^\top \sigma(s,X_{\cdot \wedge s}) \widetilde{M}^A(\mathrm{d}v,\mathrm{d}s) - \int_t^T K_s^A(X_{\cdot \wedge s})\mathrm{d}L_s,\; t \in [0,T],\; \P^A\text{\rm--a.s.} 
\end{align*} 
Using \Cref{invsigma} we conclude that $[ X(\cdot),L ] \equiv 0$ implies $[ M^0(\cdot),L ] \equiv 0$ and, hence, $L$ remains an $(\F,\P^A)$-martingale,  since it is $\P^A$-integrable due to \Cref{int} and for any $t\in[0,T]$
\begin{align*}
\E^{\P^A}[L_T \vert \Fc_t]=\frac{1}{\E^{\P^\smallfont{0}}\big[\frac{\mathrm{d}\P^\smallfont{A}}{\mathrm{d}\P^\smallfont{0}}\big\vert \Fc_t\big]}\E^{\P^\smallfont{0}}\bigg[\frac{\mathrm{d}\P^A}{\mathrm{d}\P^0} L_T\bigg\vert \Fc_t\bigg] =\frac{1}{\E^{\P^\smallfont{0}}\big[\frac{\mathrm{d}\P^\smallfont{A}}{\mathrm{d}\P^\smallfont{0}}\big\vert \Fc_t\big]}\E^{\P^\smallfont{0}}\bigg[\frac{\mathrm{d}\P^A}{\mathrm{d}\P^0} \bigg\vert \Fc_t\bigg] \E^{\P^\smallfont{0}}[ L_T\vert \Fc_t]=L_t,\; \P^A\text{\rm--a.s.}
\end{align*}
It follows that $R^A$ is an $(\F,\P^A)$--super-martingale for any $A \in \Ac$, and it is a martingale if and only if $A=A^\star_a$ for some $a \in \Upsilon.$ Integrability can be verified as in \Cref{int}. It is then straightforward to verify that $\Ac^\star(\xi)=\{A^\star_a : a \in \Upsilon \}$ and $\E^{\P^0}[Y_0]=V_0^a(\xi).$ In particular, $\xi \in \Ic\Cc.$

\medskip 
For the converse implication, let us assume that $\xi \in \IC.$ Since the functions $\sigma$ and $\lambda$ are assumed to be bounded and $k$ is bounded from below, it follows that $H$ is Lipschitz-continuous in $(y,z)$, uniformly in the other variables. Because $U_a(\xi) \in \L^p(\Fc_T,\P^0)$ by assumption, we conclude using \Cref{Exbsde} that the BSDE
\begin{align*} 
Y_t&=U_a(\xi)+\int_t^T \int_V H(s,X_{\cdot \wedge s},Y_s,Z_s(v))m^0(\mathrm{d}v,\mathrm{d}s) - \int_t^T \int_V Z_s(v) \cdot X(\mathrm{d}v,\mathrm{d}s) - \int_t^T\mathrm{d}L_s,\; t \in [0,T],\; \P^0\text{\rm--a.s.,} 
\end{align*} 
has a unique solution $(Y,Z,L) \in \S^p(\F,\P^0) \times \L^p_d(\F,\P^0,X(\cdot))\times \M^p_{X(\cdot)^\smallfont{\perp}}(\F,\P^0).$ Since $U_a$ is injective, it is clear that $\xi=U_a^{-1}(Y_T).$ Moreover, it follows from the first part that $\Ac^\star(\xi)= \{ A^\star_a : a \in \Upsilon\}.$
\end{proof}

As mentioned earlier, one might ditch the orthogonal martingale part by restricting to a smaller filtration and obtain a result resembling the standard form of BSDEs. This is achieved thanks to martingale representation for martingale measures, see \Cref{martRep}.

\medskip
More specifically, let us define $\F^{X(\cdot)}=(\Fc_t^{X(\cdot)})_{t\in[0,T]}$ as the $\P^0$-augmentation of $\F^{o,X(\cdot)}=(\Fc_t^{o,X(\cdot)})_{t\in[0,T]}$ where
\[
 \Fc^{o,X(\cdot)}_t\coloneqq \sigma \big(X_s(A) : s \in [0,t],\; A \in \Bc(V) \big),\; t \in [0,T]. 
\]
Note that $\F^{X(\cdot)}$ satisfies the usual conditions by definition and $\Fc_0^{X(\cdot)}$ is $\P^0$-trivial due to \Cref{blumethal}.
\begin{theorem} \label{ICcharFB}
Assume $\F=\F^{X(\cdot)}$ and let {\rm Assumptions \ref{asssigmamu}, \ref{assUkf}} and {\rm\ref{ass:exist.minim}} hold. Then $\xi \in \L^0(\Fc_T)$ is an incentive compatible contract if and only if $\xi=U_a^{-1}(Y_T)$ for some $Y \in \S^p(\F,\P^0)$ with $Y_T \in U_a(\R),\; \P^0\text{\rm--a.s.,}$ of the form
\begin{align} \label{ICcontractFB} 
Y_t&=y_0-\int_0^t \int_V H(s,X_{\cdot \wedge s},Y_s,Z_s(v))m^0(\mathrm{d}v,\mathrm{d}s) + \int_0^t \int_V Z_s(v) \cdot X(\mathrm{d}v,\mathrm{d}s),\; t \in [0,T],\;\P^0\text{\rm--a.s.,}
\end{align} 
for some $Z \in \L^p_d(\F,\P^0,X(\cdot))$ and $y_0 \in \R.$ Moreover, in the affirmative case it holds $y_0=V_0^a(\xi),$ and $\Ac^\star(\xi)= \{ A^\star_a : a \in \Upsilon\},$ where for $a \in \Upsilon,$ we define 
\[ A^\star_a : (\omega, t,v) \longmapsto a(t,X_{\cdot \wedge t}(\omega),Y_t(\omega),Z_t(v)(\omega)),\; (\omega, t,v) \in \Omega \times [0,T] \times V.  \]
\end{theorem}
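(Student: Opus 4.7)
The plan is to reduce the statement to \Cref{ICchar}, which has already been established for a general filtration, and then to exploit martingale representation in the special case $\F = \F^{X(\cdot)}$ in order to eliminate the orthogonal martingale component $L$ and to conclude that $Y_0$ is $\P^0$-a.s.\ constant. Since $\F^{X(\cdot)}$ satisfies the usual conditions by construction, \Cref{ICchar} applies verbatim and yields that $\xi \in \IC$ if and only if $\xi = U_a^{-1}(Y_T)$ for some triple $(Y,Z,L) \in \S^p(\F,\P^0) \times \L^p_d(\F,\P^0,X(\cdot)) \times \M^p_{X(\cdot)^\smallfont{\perp}}(\F,\P^0)$ satisfying \eqref{ICcontract}, with $\E^{\P^\smallfont{0}}[Y_0] = V_0^a(\xi)$ and the announced description of $\Ac^\star(\xi)$. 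It therefore remains to show that, under $\F = \F^{X(\cdot)}$, one necessarily has (i) $L \equiv 0$, and (ii) $Y_0$ equal $\P^0$-a.s.\ to some deterministic constant $y_0 \in \R$.

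For (i), the crucial input is the martingale representation property for the martingale measure $X(\cdot)$ in the filtration it generates, provided by \Cref{martRep}. Applied to $L \in \M^p(\F^{X(\cdot)},\P^0)$, this yields a process $\widetilde{Z} \in \L^2_{d,\mathrm{loc}}(\F^{X(\cdot)},\P^0,X(\cdot))$ such that
\[
L_t = L_0 + \int_0^t \int_V \widetilde{Z}_s(v)\cdot X(\mathrm{d}v,\mathrm{d}s),\; t \in [0,T],\; \P^0\text{--a.s.}
\]
Using this representation as a competitor in the defining orthogonality $[X(\cdot),L]\equiv 0$ (after suitable localisation of $\widetilde{Z}$ if needed), we obtain $[L] \equiv 0$, and hence $L \equiv L_0 = 0$, the normalisation $L_0 = 0$ being built into $\M^p$. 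For (ii), the Blumenthal zero--one law for martingale measures (\Cref{blumethal}) makes $\Fc_0^{X(\cdot)}$ $\P^0$-trivial, so that $Y_0$ is $\P^0$-a.s.\ equal to $y_0 \coloneqq \E^{\P^\smallfont{0}}[Y_0] = V_0^a(\xi)$.

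The converse implication is immediate: given $(Y,Z) \in \S^p(\F,\P^0) \times \L^p_d(\F,\P^0,X(\cdot))$ and $y_0 \in \R$ satisfying \eqref{ICcontractFB}, the triple $(Y,Z,0)$ satisfies \eqref{ICcontract}, so \Cref{ICchar} directly yields $\xi = U_a^{-1}(Y_T) \in \IC$, together with $V_0^a(\xi) = \E^{\P^\smallfont{0}}[Y_0] = y_0$ and the prescribed form of $\Ac^\star(\xi)$.

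The only non-routine step in this programme is the martingale-representation argument underlying (i), but the relevant representation theorem for martingale measures on the filtration they generate is already packaged in the ancillary result \Cref{martRep}; no estimate beyond those already used in the proof of \Cref{ICchar} is required.
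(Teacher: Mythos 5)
Your proposal is correct, and it reaches the conclusion by a slightly different route than the paper. The paper's proof simply re-runs the argument of \Cref{ICchar} in the filtration $\F^{X(\cdot)}$, replacing the general BSDE existence result (\Cref{Exbsde}) by its representation-based counterpart \Cref{Ex2bsde}---so the orthogonal component never appears in the first place---and invoking \Cref{blumethal} for the deterministic initial value. You instead take \Cref{ICchar} as a black box and eliminate $L$ a posteriori: since $\F=\F^{X(\cdot)}$, the (localised) martingale representation of \Cref{martRep} writes $L$ itself as a stochastic integral against $X(\cdot)$, and testing the orthogonality defining $\M^p_{X(\cdot)^\perp}(\F,\P^0)$ against this representation gives $[L]\equiv 0$, hence $L\equiv 0$ because $L_0=0$; \Cref{blumethal} then pins down $Y_0$ as the constant $V_0^a(\xi)$, and the converse is the trivial specialisation $L=0$ of \Cref{ICchar}. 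Both arguments ultimately rest on the same two ingredients (\Cref{martRep} and \Cref{blumethal}); the paper's version packages the representation inside the BSDE well-posedness and so avoids any post-processing, while yours avoids repeating the verification (supermartingale) step of \Cref{ICchar} at the cost of the short self-orthogonality argument, which is sound (a local martingale with vanishing quadratic variation and zero initial value is indistinguishable from zero).
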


\begin{proof} 
Analogous arguments as in the proof of Theorem \ref{ICchar} using \Cref{Ex2bsde} and \Cref{blumethal}.
\end{proof}

\begin{remark} Since the agent controls the drift only, it would, generally speaking, be possible to introduce the Girsanov change of measure by means of
\[ \frac{\mathrm{d}\P^{m^\smallfont{A}}}{\mathrm{d}\P^0}\coloneqq  \exp \bigg( \int_0^T\int_V \lambda(s,X_{\cdot \wedge s},a) m_s^A(\mathrm{d}a) \cdot \mathrm{d}W^0_s - \frac{1}{2} \int_0^T \bigg\| \int_V \lambda(s,X_{\cdot \wedge s},A_s(v)) m^A_s(\mathrm{d}a)\bigg\|^2 \mathrm{d}s\bigg).\]
Indeed, one then obtains that the process \[ W^{m^\smallfont{A}} \coloneqq W^0-\int_0^\cdot  \int_A\lambda(s,X_{\cdot \wedge s},a) m_s^A(\mathrm{d}a) \mathrm{d}s, \] is an $(\F,\P^{m^\smallfont{A}})$--Brownian motion, thus ending up with the right relaxed dynamics for the process $X$. However, when $\xi$ is $\Fc_T^{X(\cdot)}$-measurable, the correct form of the {\rm BSDE} in this formulation would be the usual
\[
Y_t=\xi + \int_t^T \int_V H(s,X_{\cdot \wedge s},Y_s,Z_s) \mathrm{d}s - \int_t^T \int_V Z_s \cdot \mathrm{d}W^0_s - \int_t^T \mathrm{d}L_s,\; t \in [0,T],\;\P^0\text{\rm--a.s.,} \] with $(Y,Z,L)$ lying in appropriate spaces and $L$ being orthogonal to $W^0.$ In fact, it can readily be seen that the agent's control problem would agree with the standard weak control problem. Thus, even though the principal would be allowed to choose contracts from a larger class, the agent would not be allowed to control the dynamics of $X(\cdot)$ appropriately and even though there would be an optimal control, the class of his admissible responses would be somewhat restricted. In our formulation, we clearly see that whenever $Z$ is randomised, \emph{i.e.} it depends on $v$ in a non-trivial way, the optimal action of the agent is also necessarily randomised unless $a \in \Upsilon$ is somehow degenerate, which typically does not occur. Since the process $Z$ plays the role of a control for the principal in the reformulated problem, see {\rm \Cref{thm:ref_principal}}, we shall allow for it to be randomised to be able to use compactification techniques. However, we keep in mind that if $\xi$ is $\Fc_T^{X}$-measurable, where $\F^X$ is the filtration generated by the process $X$, or if $Z$ is not randomised, then the agent does not have to randomise his actions either and we recover the standard weak framework, see also {\rm \Cref{sec:non-relaxed}}.
\end{remark}

\section{Relaxed principal's problem: different points of view} \label{sec:principal}
The principal is concerned with finding a contract $\xi$ among all incentive compatible contracts maximising her utility. Doing so, she anticipates the agent's behaviour and expects him to behave in an optimal way. Let us be given reservation utility $r_0 \in \R \cup \{-\infty\}.$ We assume that the agent has the participation constraint 
\[
V_0^a(\xi) \geq r_0.
\]
In other words, we assume that the agent will refuse the contract should his utility fall below threshold $r_0$. 

\medskip We are given a Borel-measurable utility function $U_p : \Cc([0,T],\R^d) \times \R \longrightarrow \R$ for the principal. We further assume that she is allowed to only choose within the class of contracts satisfying certain constraints given by 
\[
\E^{\P^{A^\smallfont{\star}}} \big[g^i(X_{\cdot \wedge T},\xi)\big]\leq 0,\;k\in\N^\star,\; i \in I,
\] 
where $A^\star\in \Ac^\star(\xi)$ and $I$ is a---possibly uncountable---index set and for any $i\in I$, $g^i : \Cc([0,T],\R^d) \times \R \longrightarrow \R \cup \{ \infty \}$ are given Borel-measurable functions. 

\begin{remark} \label{rem:constraints} $(i)$ We note that our setting allows for more general constraints without further complication of the analysis. More specifically, one can consider constraints of the type
\[ \E^{\P^{A^\smallfont{\star}}} \bigg[ \int_0^T \int_{\R^d} f^i(s,X_{\cdot \wedge s},z) m^Z(\mathrm{d}z,\mathrm{d}s) + g^i(X_{\cdot \wedge T},\xi) \bigg]\leq 0,\; i \in I, \] 
where $A^\star \in \Ac^\star(\xi)$ and for any $i\in I$, $f^i: [0,T]\times \Cc([0,T],\R^d) \times \R^d \longrightarrow \R \cup \{ \infty \}$ and $g^i : \Cc([0,T],\R^d) \times \R \longrightarrow \R \cup \{ \infty \}$ are Borel-measurable maps.

\medskip
$(ii)$ The type of constraints we consider also includes so-called hard constraints. Let us fix a set $C \in \Bc(\Cc([0,T],\R^d)) \otimes \Bc(\R).$ If we define \[g(x,c)\coloneqq 1-\mathbf{1}_{\{(x,c)\in C\}},\; (x,c) \in \Cc([0,T],\R^d) \times \R,\] then $\E^{\P^{A^\smallfont{\star}}}[g(X_{\cdot \wedge T},\xi)]\leq 0$ holds if and only if $\P^{A^\smallfont{\star}}[(X_{\cdot \wedge T},\xi) \in C]=1.$ If $C$ is closed, then $g$ is lower-semicontinuous.
\end{remark}

\begin{example} \label{constraints} One example of such a constraint is imposing
\[0 \leq \xi \leq \ell(X_{\cdot \wedge T}). \] That is to say, the contract is not allowed to exceed the liquidation value of the project and is required to be non-negative. Note that this constraint can be rewritten as
\begin{equation*}
\E^{\P^{A^\smallfont{\star}}} [g^1(X_{\cdot \wedge T},\xi)] \leq 0,\; {\rm and}\; \E^{\P^{A^\smallfont{\star}}} [g^2(X_{\cdot \wedge T},\xi)] \leq 0,
\end{equation*} with
\begin{align*}  
g^1(x,c) \coloneqq1- \mathbf{1}_{\{c\geq 0\}}, \; {\rm and}\;
g^2(x,c) \coloneqq1- \mathbf{1}_{\{\ell(x) \geq c \}},\; (x,c) \in \Cc([0,T],\R^d)\times \R.
\end{align*}
Moreover, if $\ell$ is upper-semicontinuous, then $g^i,\; i \in \{1,2\},$ are jointly lower-semicontinuous.
\end{example}

The problem of the principal can be formulated as follows
\begin{align*} 
V_0^p \coloneqq \sup_{(\xi,A^\smallfont{\star}) \in \Phi}\E^{\P^{A^\smallfont{\star}}} \big[U_p(X_{\cdot \wedge T},\xi)\big],
\end{align*}
where
\[ \Phi=\big\{(\xi,A^\star) \in \Ic\Cc \times \Ac^\star(\xi): V_0^a(\xi) \geq r_0,\; \E^{\P^{A^\smallfont{\star}}} [g^i(X_{\cdot \wedge T},\xi)]\leq 0  ,\; i \in I \big\}. \]

Note that we assume that, if indifferent, the agent chooses a response which benefits the principal. This is a standard simplification, see \emph{e.g.} \cite{cvitanic2018dynamic,elie2019tale,djete2023stackelberg}, and can be interpreted as a recommendation from the principal to the agent. 

\medskip
Since the principal maximises an expectation under $\P^{A^\smallfont{\star}},$ it shall be useful to rewrite dynamics of the value process $Y$ under this measure. Let us thus fix a contract $\xi \in \IC$ and let $A^\star \in \Ac^\star(\xi).$ We then have by \Cref{ICchar} that $A^\star=A^\star_a$ for some $a \in \Upsilon$ and, by Girsanov theorem, it holds
\begin{align*} Y_t&=U_a(\xi)-\int_t^T \int_V \Big(f\big(s,X_{\cdot \wedge s},a(s,X_{\cdot \wedge s},Y_s,Z_s(v))\big)+k\big(s,X_{\cdot \wedge s},a(s,X_{\cdot \wedge s},Y_s,Z_s(v))\big)Y_s \Big)m^{0}(\mathrm{d}v,\mathrm{d}s) \\ 
&\quad- \int_t^T \int_V Z_s(v)^\top \sigma(s,X_{\cdot \wedge s}) \widetilde{M}^{A^\smallfont{\star}}(\mathrm{d}v,\mathrm{d}s) - \int_t^T\mathrm{d}L_s,\; t \in [0,T],\; \P^{A_a^\smallfont{\star}}\text{\rm--a.s.}, 
\end{align*} 
and, if we define a martingale measure on $\R^d$ and its intensity by $M^Z\coloneqq Z_\# \widetilde{M}^{A^\smallfont{\star}},\; \text{\rm and}\; m^Z\coloneqq Z_\# m^0,$ then 
\begin{align*} 
Y_t&=U_a(\xi)-\int_t^T \int_V \Big(f\big(s,X_{\cdot \wedge s},a(s,X_{\cdot \wedge s},Y_s,z)\big)+k\big(s,X_{\cdot \wedge s},a(s,X_{\cdot \wedge s},Y_s,z)\big)Y_s\Big)m^{Z}(\mathrm{d}z,\mathrm{d}s) \\
&\quad - \int_t^T \int_V z^\top\sigma(s,X_{\cdot \wedge s}) M^{Z}(\mathrm{d}z,\mathrm{d}s) - \int_t^T\mathrm{d}L_s\\
&=Y_0+\int_0^t \int_V \Big(f\big(s,X_{\cdot \wedge s},a(s,X_{\cdot \wedge s},Y_s,z)\big)+k\big(s,X_{\cdot \wedge s},a(s,X_{\cdot \wedge s},Y_s,z)\big)Y_s\Big)m^{Z}(\mathrm{d}z,\mathrm{d}s) \\
&\quad + \int_0^t \int_V z^\top\sigma(s,X_{\cdot \wedge s}) M^{Z}(\mathrm{d}z,\mathrm{d}s) + \int_0^t\mathrm{d}L_s,\; t \in [0,T],\; \P^{A_a^\smallfont{\star}}\text{\rm--a.s.}, 
\end{align*}

and the agent's optimal control can be written as $m^{A^\smallfont{\star}}=\hat{A}_{a \#}^\star m^Z,$ where $\hat{A}^\star_{a}$ is the $\Pc\Mc(\F) \otimes \Bc(\R^d)$-measurable process given by
\[ \hat{A}^\star_{a} : (\omega, t, z) \longmapsto a(t,X_{\cdot \wedge t}(\omega),Y_t(\omega),z),\; (\omega, t,z) \in \Omega \times [0,T] \times \R^d.  \]

The aim now is to reformulate the principal's optimisation problem as a dynamic control problem by means of \Cref{ICchar}, resp. \Cref{ICcharFB}, thus making use of the approach introduced in \cite{cvitanic2018dynamic}. To that end, let us introduce the following notation.

\begin{definition} $(i)$ $\Vc$ denotes the set of all $\Pc(\R^d)$-valued processes $m^Z$ for which there is $p>1$ and $Z \in \L^p_d(\F,\P^0,X(\cdot))$ such that $m^Z=Z_\#m^0.$ Equivalently, thanks to {\rm\Cref{pf}}, this is the set of all $\F$--progressively measurable, $\Pc(\R^d)$-valued processes $m^Z$ such that there exists $p>1$ for which 
\[
\E^{\P^\smallfont{0}} \bigg[  \bigg(\int_0^T \int_{\R^\smallfont{d}} \big\| z^\top\sigma(x,X_{\cdot \wedge s}) \big\|^2\ m^Z(\mathrm{d}z,\mathrm{d}s)\bigg)^\frac{p}{2}\bigg]< \infty. 
\]
$(ii)$ $\Wc$ denotes the set of all martingales $L \in \M^p_{X(\cdot)^\smallfont{\perp}}(\F,\P^0)$ for some $p>1.$ That is, $\Wc\coloneqq\bigcup_{p>1} \M^p_{X(\cdot)^\smallfont{\perp}}(\F,\P^0).$

\medskip
$(iii)$ $\mathfrak{Y}$ denotes the set of all $Y_0 \in \L^p(\Fc_0,\P^0)$ for some $p>1.$ That is, $ \mathfrak{Y}\coloneqq\bigcup_{p>1} \L^p(\Fc_0,\P^0).$
\end{definition}

\begin{remark} Given fixed $(Y_0, m^Z,L) \in \mathfrak{Y} \times \Vc \times \Wc,$ there exists $p>1$ and a unique process $Y \in \S^p(\F,\P^0)$ such that \begin{equation} \label{forwardY}
Y_t=Y_0-\int_0^t \int_V H(s,X_{\cdot \wedge s},Y_s,z)m^Z(\mathrm{d}z,\mathrm{d}s) + \int_0^t \int_V z^\top\sigma(x,X_{\cdot \wedge s}) \cdot M^Z(\mathrm{d}z,\mathrm{d}s) + \int_0^t\mathrm{d}L_s,\; t \in [0,T],\;\P^0\text{\rm--a.s.} 
\end{equation}
In other words, $Y$ is the unique strong solution to \Cref{forwardY}. This is due to the Lipschitz-continuity of the coefficients. To emphasise dependence on $(Y_0,m^Z,L),$ we shall write $Y^{Y_\smallfont{0},m^\smallfont{Z},L}$ for the corresponding process.

\medskip
Similarly, if $\F=\F^{X(\cdot)},$ then given $(y_0, m^Z) \in \R \times \Vc,$ there exists a unique strong solution in $\S^p(\F,\P^0)$ for some $p>1$ to the equation \begin{align*}
Y_t&=y_0-\int_0^t \int_V H(s,X_{\cdot \wedge s},Y_s,z)m^Z(\mathrm{d}z,\mathrm{d}s) + \int_0^t \int_V z^\top\sigma(x,X_{\cdot \wedge s}) \cdot M^Z(\mathrm{d}z,\mathrm{d}s),\; t \in [0,T],\;\P^0\text{\rm--a.s.,}
\end{align*} and we shall write $Y^{y_\smallfont{0},m^\smallfont{Z}}$ for the corresponding process.
\end{remark}
We are now ready to reformulate the problem of the principal. Thanks to \Cref{ICchar,ICcharFB} we have the following result. The proof is straightforward and is thus omitted.

\begin{theorem} \label{thm:ref_principal} Under {\rm Assumptions \ref{asssigmamu}, \ref{assUkf}} and {\rm\ref{ass:exist.minim}}, we have the following equality
\begin{multline} 
 V_0^p= \sup \Big\{ \E^{\P^{A^\smallfont{\star}_{\smallfont a}}}\big[ U_p(X_{\cdot \wedge T},U_a^{-1}(Y_T^{Y_\smallfont{0},m^\smallfont{Z},L}))\big] : (Y_0,a,m^Z,L) \in \mathfrak{Y} \times \Upsilon \times \Vc \times \Wc, \; \E^{\P^{A^\smallfont{\star}_{\smallfont a}}}[Y_0] \geq r_0,\; Y_T^{Y_\smallfont{0},m^\smallfont{Z},L} \in U_a(\R),\\
 \E^{\P^{A^\smallfont{\star}_{\smallfont a}}}\big[g^i\big(X_{\cdot \wedge T},U_a^{-1}(Y_T^{Y_\smallfont{0},m^\smallfont{Z},L})\big)\big]\leq 0,\; i \in I  \Big\}. \label{generalF} \end{multline}
Moreover, if $\F=\F^{X(\cdot)},$ then 
\begin{multline}  V_0^p= \sup \Big\{ \E^{\P^{A^\smallfont{\star}_{\smallfont{a}}}}\big[U_p(X_{\cdot \wedge T},U_a^{-1}(Y_T^{y_\smallfont{0},m^\smallfont{Z}}))\big]  : (y_0,a,m^Z) \in [r_0,\infty) \times \Upsilon \times \Vc, \\
Y_T^{y_\smallfont{0},m^\smallfont{Z}} \in U_a(\R),\; \E^{\P^{A^\smallfont{\star}_{\smallfont a}}}\big[g^i\big(X_{\cdot \wedge T},U_a^{-1}(Y_T^{y_\smallfont{0},m^\smallfont{Z}})\big)\big]\leq 0,\; i \in I  \Big\}. \label{FX} 
\end{multline}
Besides, every maximiser $(Y_0^\star,m^{Z,\star},L^\star)$, resp. $(y_0^\star,m^{Z,\star}),$ gives an optimal contract $\xi^\star = U_a^{-1}(Y_T^{Y_\smallfont{0}^\smallfont{\star},m^{\smallfont{Z}\smallfont{,}\smallfont{\star}},L^\smallfont{\star}})$, resp. $\xi^\star = U_a^{-1}(Y_T^{y_\smallfont{0}^\smallfont{\star},m^{\smallfont{Z}\smallfont{,}\smallfont{\star}}}).$
\end{theorem}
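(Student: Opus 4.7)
The proof is a direct translation using the bijective characterisation of $\IC$ in \Cref{ICchar} (respectively \Cref{ICcharFB}) together with the pushforward results \Cref{pf} and \Cref{pf2}.

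First I would prove $V_0^p \leq$ right-hand side of \eqref{generalF}. Pick any $(\xi, A^\star) \in \Phi$; by \Cref{ICchar} there exist $a \in \Upsilon$ with $A^\star = A^\star_a$, and a triple $(Y,Z,L) \in \S^p(\F, \P^0) \times \L^p_d(\F, \P^0, X(\cdot)) \times \M^p_{X(\cdot)^\smallfont{\perp}}(\F, \P^0)$ for some $p>1$ solving \eqref{ICcontract} with $\xi = U_a^{-1}(Y_T)$. Set $m^Z \coloneqq Z_\# m^0 \in \Vc$; then $Y_0 \in \mathfrak{Y}$ and $L \in \Wc$ by definition of these sets. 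Applying \Cref{pf2} to both the drift and the stochastic integrals on $V$, the BSDE \eqref{ICcontract} is pointwise identical to the forward equation \eqref{forwardY}, so by uniqueness $Y = Y^{Y_0,m^Z,L}$. The identity $V_0^a(\xi) = \E^{\P^0}[Y_0]$ from \Cref{ICchar}, combined with the $\Fc_0$-measurability of $Y_0$ and the fact that the Dol\'eans--Dade density defining $\P^{A^\star_a}$ starts at one, yields $\E^{\P^{A^\star_a}}[Y_0] = \E^{\P^0}[Y_0] = V_0^a(\xi) \geq r_0$; the other constraints and the objective are literally the same expressions on the two sides of the identification.

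Conversely, given any $(Y_0, a, m^Z, L)$ feasible in \eqref{generalF}, \Cref{pf} provides $Z \in \L^p_d(\F, \P^0, X(\cdot))$ with $m^Z = Z_\# m^0$; setting $Y \coloneqq Y^{Y_0, m^Z, L}$ and reversing the above computation shows that $(Y, Z, L)$ solves \eqref{ICcontract}, so that $\xi \coloneqq U_a^{-1}(Y_T)$ is incentive compatible with $A^\star_a \in \Ac^\star(\xi)$ by the sufficiency direction of \Cref{ICchar}. The feasibility conditions transport to $(\xi, A^\star_a) \in \Phi$ with the same objective value, yielding the reverse inequality and hence \eqref{generalF}. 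The identity \eqref{FX} is obtained by the same argument with \Cref{ICcharFB} in place of \Cref{ICchar}: when $\F = \F^{X(\cdot)}$, \Cref{blumethal} ensures that $\Fc_0^{X(\cdot)}$ is $\P^0$-trivial, so $Y_0$ reduces to a deterministic constant $y_0 \in \R$, and the orthogonal-martingale component $L$ disappears by martingale representation. The optimiser statement is then immediate from the bijective correspondence.

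The only steps requiring any attention are the pushforward identifications $Z \leftrightarrow m^Z$ and the equivalence of the backward and forward forms of the $Y$-equation, both already handled by \Cref{pf,pf2}; the translation of the participation constraint between $\P^0$ and $\P^{A^\star_a}$ is immediate from the $\Fc_0$-measurability of $Y_0$ and the martingale property of the Girsanov density, which together explain why, despite the expectation being taken under two different measures, no actual change of measure is needed at the level of $Y_0$.
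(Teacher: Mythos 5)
Your proposal is correct and is exactly the argument the paper has in mind: the paper omits the proof as ``straightforward'', and your write-up is precisely that bijective translation between $\Phi$ and the parameters $(Y_0,a,m^Z,L)$ (resp. $(y_0,a,m^Z)$) via \Cref{ICchar} (resp. \Cref{ICcharFB}) together with the pushforward identifications of \Cref{pf,pf2} that equate the BSDE \eqref{ICcontract} with the forward equation \eqref{forwardY}. Your remark that $\E^{\P^{A^\star_a}}[Y_0]=\E^{\P^0}[Y_0]$, because $Y_0$ is $\Fc_0$-measurable and the Girsanov density has unit $\Fc_0$-conditional expectation, is the right way to reconcile the participation constraint as written under $\P^{A^\star_a}$ with the identity $\E^{\P^0}[Y_0]=V_0^a(\xi)$ from \Cref{ICchar}.
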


\begin{remark}
$(i)$ Note that equality \eqref{FX} is analogous to results already know in the literature, see \emph{e.g.} {\rm\cite[Theorem 4.2]{cvitanic2018dynamic}}. Result \eqref{generalF} is then an obvious generalisation to the case when one considers a general filtered space and thus loses the martingale representation with respect to the martingale measure $X$. Indeed, it is obvious that the contracts allowed in our setting form a strictly larger class than those we would consider if we had not introduced relaxed controls and the process $X$ had been driven by a standard Wiener process, see also {\rm\Cref{sec:non-relaxed}} for more details.

\medskip $(ii)$ Despite the resemblance of these results to the previously known theory, we would like to point out that both problems formulated here are non-standard as perceived from the point of view of what has been treated in this direction so far. First, while both problems involve relaxed controls, they are inherently, for a fixed $a \in \Upsilon,$ strong control problems in the sense that we work on a fixed probability space and, given parameters $(Y_0,m^Z,L),$ resp. $(y_0,m^Z),$ the process $Y^{Y_\smallfont{0},m^\smallfont{Z},L},$ resp. $Y^{y_\smallfont{0},m^\smallfont{Z}},$ is a strong solution to the associated {\rm SDE}. This is substantially different from what was considered in the literature, namely works {\rm\cite{el1987compactification,haussmann1990existence}}. For this reason, we consider the weak formulation of these problems in {\rm\Cref{relPrincip,sec:feedbackContr}} and then connect those two approaches. Moreover, because we consider the orthogonal martingale part $L$ as a control in the former, we face a problem which, to the best of our knowledge, has not been treated yet.

\medskip $(iii)$ Finally, let us point out that, due to {\rm\Cref{blumethal}}, we can restrict in the problem \eqref{FX} to optimisation over deterministic $y_0 \in \R.$ For a general filtration, this is obviously not the case and thus we have to consider general $Y_0 \in \mathfrak{Y}.$
\end{remark}

In the sequel, as mentioned, we shall consider the weak formulation of these two control problems. Let us emphasise that while the latter problem, that is if we work in the canonical filtration, provides a neater and more tractable reformulation, if we consider the corresponding weak control problem, the assumption on adaptedness of $m^Z$ to the filtration $\F^{X(\cdot)}$ seems to be a major complication since it is not preserved under weak convergence of measures. This issue, however, is \emph{not} due to the relaxed nature of the problem and is present even if we consider the weak formulation in the standard framework as in \cite{cvitanic2018dynamic,elie2019tale,hernandez2024principal} and many other works.

\medskip On the contrary, in the relaxed setting we will be able to show existence of optimal contracts provided that the filtered probability space is `large' enough. We refer to {\rm\Cref{sect:compar.form}} for more details. In the case of the canonical filtration, we will derive in {\rm\Cref{sec:feedbackContr}} a result which is somewhat unsatisfactory since the optimal contract obtained there needs not have the required measurability properties. Similar issues with a lack of the correct measurability of the weak limit are present also in the recent work of {\rm \citeauthor*{djete2023stackelberg} \cite{djete2023stackelberg}}, who employs arguments akin to the ones we use in the following sections. Note that while in the $n$-player game therein, the contract $\xi$ does not depend on the controls chosen by the agents, while in the mean-field game obtained as the limit for $n \longrightarrow \infty,$ the contract in general depends on the control adopted by the representative agent. Similarly, existence of an optimal contract is achieved only in a larger class of contracts, which are not necessarily feedback.

\begin{remark} Maximisation over functions $a \in \Upsilon$ will not be treated in this article and, thus, $a \in \Upsilon$ is mostly either assumed to be fixed or it is assumed that there exists a unique element in $\Upsilon$ in what follows. Indeed, this is a problem rather disjoint from our main concerns. It is trivial if the set $\Upsilon$ is finite, but in general, however, existence of a maximiser is not readily achievable due to a lack of a sufficient structure on $\Upsilon$ itself, and a case-by-case analysis would be required.
\end{remark}

\subsection{Weak principal's problem}  \label{relPrincip}
In this section, we consider a weakened version of the problem \eqref{generalF}. That is, as pointed out in the previous section, we reformulate an optimal control problem which is intrinsically a strong one, as a weak one. In words, we reformulate the problem so that it in a sense resembles the setting considered in \cite{el1987compactification,haussmann1990existence} and then employ similar arguments to show that, provided that certain compactness criteria are met, there exists a solution. Of course, there is then a need to show that these two formulations are related, and we address this question in \Cref{sect:compar.form}. Let us once again point out that the generally c\`{a}dl\`{a}g orthogonal martingale is a rather distinct feature of our problem.

\medskip We fix a function $a \in \Upsilon$ throughout. In particular, we thus take \Cref{ass:exist.minim} to be valid. To simplify some technicalities, we impose the following condition.

\begin{assumption}\label{discF}  The discount factor of the agent does not depend on the control. That is to say, $k: [0,T] \times \Cc([0,T],\R^d)\longrightarrow \R.$
\end{assumption}

\begin{remark} It is clear that under {\rm\Cref{discF}} any maximiser of the Hamiltonian is independent of the $y$-entry. That is, $a (t,x,y,z)=a(t,x,z),\; a \in \Upsilon.$ We note that {\rm\Cref{discF}} is not without loss of generality. Even though allowing for the general case would \emph{not} pose an unsurpassable problem for the analysis, it would require a more general setting, see {\rm\Cref{rem_disc_fact}}, and is not considered for simplicity.
 \end{remark}

Roughly speaking, we consider the following optimal control problem (this shall be formalised later on)
\[ \sup_{\{Y_\smallfont{0} : \E^\smallfont{\P}[Y_\smallfont{0}] \geq r_\smallfont{0}\}} \sup_{m^\smallfont{Z},L} \E^\P \big[U_p(X_{\cdot \wedge T},\xi)\big], \] subject to
\[ \E^\P \big[g^i(X_{\cdot \wedge T}, U_a^{-1}(Y_T))\big] \leq 0, \; i \in I, \] where the state process $(X,Y)$ follows the dynamics, for $t\in[0,T]$
\begin{align*}
X_t&=x_0+\int_0^t \int_{\R^d} \sigma(s,X_{\cdot \wedge s}) \lambda(s,X_{\cdot \wedge s},a(s,X_{\cdot \wedge s},z))m^Z(\mathrm{d}z,\mathrm{d}s)+\int_0^t\int_{\R^d}  \sigma(s,X_{\cdot \wedge s})M^Z(\mathrm{d}z,\mathrm{d}s), \\
Y_t&=Y_0 + \int_0^t \int_{\R^\smallfont{d}} \Big( f(s,X_{\cdot \wedge s},a(s,X_{\cdot \wedge s},z))+k(s,X_{\cdot \wedge s})Y_s \Big) m^Z(\mathrm{d}z,\mathrm{d}s) +\int_0^t \int_{\R^d} z^\top \sigma(s,X_{\cdot \wedge s}) M^Z(\mathrm{d}z,\mathrm{d}s)+\int_0^t \mathrm{d}L_s.
\end{align*}

For technical reasons, we work shall with the discounted value process
\begin{align*} 
\mathrm{e}^{-\int_\smallfont{0}^\smallfont{t} k(u,X_{\smallfont{\cdot}\smallfont{\wedge}\smallfont{u}}) \mathrm{d}u}Y_t &= Y_0 + \int_0^t \int_{\R^\smallfont{d}} \mathrm{e}^{-\int_\smallfont{0}^\smallfont{s} k(u,X_{\smallfont{\cdot} \smallfont{\wedge} \smallfont{u}}) \mathrm{d}u} f\big(s,X_{\cdot \wedge s},a(s,X_{\cdot \wedge s},z)\big) m^Z(\mathrm{d}z,\mathrm{d}s) 
\\ & \quad +\int_0^t \int_{\R^\smallfont{d}} \mathrm{e}^{-\int_\smallfont{0}^\smallfont{s} k(u,X_{\smallfont{\cdot} \smallfont{\wedge}\smallfont{u}}) \mathrm{d}u} z^\top \sigma(s,X_{\cdot \wedge s}) M^Z(\mathrm{d}z,\mathrm{d}s)+\int_0^t \mathrm{e}^{-\int_\smallfont{0}^\smallfont{s} k(u,X_{\smallfont{\cdot} \smallfont{\wedge}\smallfont{u}}) \mathrm{d}u}\mathrm{d}L_s,\; t \in [0,T]. 
\end{align*}

This allows us to write $\mathrm{e}^{-\int_\smallfont{0}^\smallfont{\cdot} k(u,X_{\smallfont{\cdot} \smallfont{\wedge}\smallfont{u}}) \mathrm{d}u}Y=\Yc+U,$ where, still for $t\in[0,T]$
\begin{align}
\Yc_t &\coloneqq \int_0^t \int_{\R^\smallfont{d}} \mathrm{e}^{-\int_\smallfont{0}^\smallfont{s} k(u,X_{\smallfont{\cdot} \smallfont{\wedge} \smallfont{u}}) \mathrm{d}u}  f\big(s,X_{\cdot \wedge s},a(s,X_{\cdot \wedge s},z)\big) m^Z(\mathrm{d}z,\mathrm{d}s)  +\int_0^t \int_{\R^d} e^{-\int_0^s k(u,X_{\cdot \wedge u}) \mathrm{d}u} z^\top \sigma(s,X_{\cdot \wedge s}) M^Z(\mathrm{d}z,\mathrm{d}s), \label{eqn:YC} \\
U_t&\coloneqq Y_0+\int_0^t \mathrm{e}^{-\int_\smallfont{0}^\smallfont{s} k(u,X_{\smallfont{\cdot} \smallfont{\wedge} \smallfont{u}}) \mathrm{d}u} \mathrm{d}L_s. \nonumber
\end{align}

\begin{remark} \label{rem_disc_fact} $(i)$ We emphasise that, since $k$ is independent of the process $Y,$ the right-hand sides in the definitions of $\Yc$ and $U$ are independent of $Y$ as well. That is, the processes $\Yc$ and $U$ are only coupled through the process $X.$ This allows us to formulate the control problem in terms of the continuous process $\Yc$ and the---potentially discontinuous---process $U.$ In full generality, one would need to work with the discontinuous process $Y$ and formulate the martingale problem in {\rm\Cref{rule}} on the Skorokhod space.

\medskip $(ii)$ The initial value of the process $Y_0$, that is the utility of the agent,    is the initial value of the orthogonal martingale part $U$ in our reformulation. We thus impose that the process $\Yc$ satisfies $\Yc_0=0,$ $\P${\rm--a.s.}, and the orthogonal martingale $U$ meets the constraint $\E^{\P} [U] \geq r_0.$ 
 \end{remark}

Let us now introduce the weak reformulation of the problem. Let us for $(t,x,z) \in [0,T] \times \Cc([0,T],\R^d) \times \R^d$ denote
\begin{align*}
\bar{\sigma}(t,x,z)&\coloneqq\begin{pmatrix}
\sigma(t,x) \\
\exp\big(-\int_0^t k(u,x_{\cdot \wedge u}\big) \mathrm{d}u) z^\top \sigma(t,x)
\end{pmatrix},\; a(t,x,z)\coloneqq\bar{\sigma}(t,x,z)\bar{\sigma}^\top(t,x,z),\\
b(t,x,z)&\coloneqq \begin{pmatrix}
\sigma(t,x)\lambda(t,x,a(t,x,z)),\exp\big(-\int_0^t k(u,x_{\cdot \wedge u}) \mathrm{d}u\big) f(t,x,a(t,x,z))
\end{pmatrix}.
\end{align*}
Let $\Cc_c^2(\R^{d+1},\R)$ denote the space of compactly supported twice continuously differentiable functions from $\R^{d+1}$ to $\R$ and let us for $\phi \in \Cc_c^2(\R^{d+1},\R)$ define for any $(t,x,y,z) \in [0,T]\times \Cc ([0,T],\R^d) \times \R \times \R^d$
\begin{align*} 
\mathscr{L}\phi (s,x_{\cdot \wedge s},y_s,z)\coloneqq b(s,x_{\cdot \wedge s},z)\cdot \mathrm{D}\phi(x_s,y_s) + \frac{1}{2} a(s,x_{\cdot \wedge s},z) : \mathrm{D}^2\phi(x_s,y_s),\end{align*}
where $D\phi$ and $D^2\phi$ denote the gradient and the Hessian of $\phi,$ respectively.
Let us further denote for $(x,y,u) \in \Cc ([0,T],\R^d) \times \R \times \R$

\begin{align*}
F(x,y,u)\coloneqq \begin{cases} U_p \Big(x,U_a^{-1}\Big(\mathrm{e}^{ \int_\smallfont{0}^\smallfont{T}  k(t,x_{\smallfont{\cdot} \smallfont{\wedge} \smallfont{t}}) \mathrm{d}t}(y+u)\Big)\Big), \; \text{if}\; \mathrm{e}^{ \int_\smallfont{0}^\smallfont{T}  k(t,x_{\smallfont{\cdot} \smallfont{\wedge} \smallfont{t}}) \mathrm{d}t}(y+u) \in U_a(\R), \\
-\infty, \; \text{\rm otherwise}, \end{cases} 
\end{align*}
and for $i \in I$
\begin{align*}
h^i(x,y,u)\coloneqq \begin{cases} g^i\Big(x,U_a^{-1}\Big(\mathrm{e}^{\int_\smallfont{0}^\smallfont{T}  k(t,x_{\smallfont{\cdot} \smallfont{\wedge} \smallfont{t}}) \mathrm{d}t}(y+u)\Big)\Big), \; \text{if}\; \mathrm{e}^{ \int_\smallfont{0}^\smallfont{T}  k(t,x_{\smallfont{\cdot} \smallfont{\wedge} \smallfont{t}}) \mathrm{d}t}(y+u) \in U_a(\R), \\
\infty, \;\text {\rm otherwise}. \end{cases}
\end{align*}

\begin{remark} Indeed, by requiring $\E^\P \big[h^i(X_{\cdot \wedge T},\Yc_t,U)\big] \leq 0,$ $i \in I$, we also impose the condition $\mathrm{e}^{ \int_\smallfont{0}^\smallfont{T}  k(t,X_{\smallfont{\cdot} \smallfont{\wedge} \smallfont{t}}) \mathrm{d}t}(\Yc_T+U) \in U_a(\R),$ $\P${\rm--a.s.} If there are no such contraints, we set $I\coloneqq\{1\}$ and 
\[h^1(x,y,u)\coloneqq 1-\mathbf{1}_{\{ \exp( \int_\smallfont{0}^\smallfont{T}  k(t,x_{\smallfont{\cdot} \smallfont{\wedge} \smallfont{t}}) \mathrm{d}t)(y+u) \in U_a(\R) \}},
 \] to ensure that this condition is satisfied.
\end{remark}

We assume the following conditions.

\begin{assumption} \label{assPrincipal} We have that
\begin{enumerate}
\item[$(i)$] $\sigma$ is bounded, the functions $a$ and $b$ are continuous in $(x,z)$ for every $t$ and there is a constant $C>0$ such that 
\begin{align} \label{boundFA}
\lVert a(t,x,z) \rVert  &\leq C \big(1 + \lVert x \rVert + \lVert z^\top \sigma(t,x) \rVert^2 \big)\; {\rm and}\;\lVert b(t,x,z) \rVert \leq C \big(1 + \lVert x \rVert + \lVert z^\top \sigma(t,x) \rVert \big);
\end{align}
\item[$(ii)$] \label{assF} the function $F$ is upper-semicontinuous, concave in $(y,u)$ for every $x$, and there is $C>0$ such that 
\[
F^+(x,y,u) \leq C\big(1+\lVert x \rVert +\lvert y \rvert +\lvert u \rvert \big),\; (x,y,u) \in \Cc([0,T],\R^d) \times \R \times \R;
\]
\item[$(iii)$] \label{assh} for any $i\in I$, $h^i$ is lower-semicontinuous and there exists $C_i>0$ such that
\begin{equation*}
h^i(x,y,u) \geq -C_i \big(1+ \lVert x \rVert + \lvert y \rvert +\lvert u \rvert  \big).
\end{equation*}
Moreover, $ h^i(x,y,\,\cdot\,)$ is convex for any $(x,y) \in \Cc([0,T],\R^d) \times \R$.
\end{enumerate}
\end{assumption}

\begin{remark} \label{rem:convex} $(i)$ The convexity of functions $h^i(x,y,\cdot),$ $i  \in I,$ can be relaxed to the following condition: the set $M^i(x,y)\coloneqq\{ u : h^i(x,y,u)\leq 0 \}$ is convex and the function $h^i(x,y,\cdot)$ is convex on $M^i(x,y)$ for every $(i,x,y) \in I \times \Cc([0,T],\R^d) \times \R.$

\medskip $(ii)$ The convexity and concavity of the involved functioned is assumed so that the principal cannot benefit from even further enlarging the filtration, see {\rm \Cref{largerfiltr}}. In {\rm \Cref{sec:feedbackContr}} we do not consider larger filtrations and hence this assumption is dropped.

\medskip $(iii)$ The maps $F$ and $h^i,$ $i \in I,$ are, in fact, functions of $(x,y+u)$ so some expressions can be slightly simplified. 
\end{remark}

We now introduce the canonical space. Let us denote by $\Mc([0,T],\R^d)$ the set of measurable functions $m^Z: [0,T] \longrightarrow \Pc(\R^d)$ endowed with the coarsest topology making the maps
\[ 
m^Z \longmapsto \int_0^T \int_{\R^d} \varphi(t,z) m^Z_t(\mathrm{d}z)\mathrm{d}t,  \] 
continuous for every $\varphi: [0,T] \times \R^d \longrightarrow \R$ Borel-measurable, bounded and continuous in $z$ for every $t \in [0,T].$ This topology renders $\Mc([0,T],\R^d)$ Polish, see \cite[page 863]{haussmann1990existence} for details. Let us denote by $m^Z$ the canonical process and the canonical $\sigma$-algebra by $\F^{m^\smallfont{Z}}.$ \emph{i.e.}
\[ 
\Fc_t^{m^\smallfont{Z}}\coloneqq\sigma \bigg(\int_0^s \int_{\R^\smallfont{d}} \varphi(u,z) m^Z_u(\mathrm{d}z)\mathrm{d}u : s\in[0, t],\; \varphi\; \text{is Borel-measurable and bounded} \bigg),\; t \in [0,T]. 
\]

Next, we consider the space $\Cc([0,T],\R^{d+1})$ with canonical process $(X,\Yc)\in \R^d \times \R$ and canonical $\sigma$-algebra $\F^{X,\Yc}.$ Let us set $\Omega^c\coloneqq \Cc([0,T],\R^{d + 1}) \times \Mc([0,T],\R^d)$ and $\F^c\coloneqq\F^{X,\Yc} \otimes \F^{m^\smallfont{Z}}.$ Finally, we denote the canonical random variable on $\Omega^c \times \R$ by $(X,\Yc,m^Z, U)$. Note that we do not require any continuity of the filtration. The set of controls is introduced in the following definition.

\begin{definition} \label{rule}
Let $q>1$ and $q^\prime>1.$ We say that a Borel probability measure $\P$ on $(\Omega^c \times \R, \Fc_T^c \otimes \Bc(\R))$ belongs to the set of control rules for the principal with constants $(q,q^{\prime})$, denoted by $\Rc_{q,q^\smallfont{\prime}},$ if the following hold
\begin{enumerate}[label = $(\roman*)$]
\item \label{rule1} the constants $q>1$ and $q^\prime>1$ are such that 
\[
\E^{\P}\bigg[\bigg(\int_0^T\int_{\R^\smallfont{d}} \lVert z^\top \sigma(t,X_{\cdot \wedge t}) \rVert^2 m^Z_t(\mathrm{d}z)\mathrm{d}t\bigg)^\frac{q}{2}\bigg] < \infty \; {\rm and}\; \E^{\P}\big[\lvert U\rvert^{q^\smallfont{\prime}}\big] < \infty;\]
\item \label{rule2} the process 
\[ 
M^\phi_t\coloneqq\phi(X_t,\Yc_t)-\int_0^t \int_{\R^\smallfont{d}} \mathscr{L}\phi (s,X_{\cdot \wedge s},\Yc_s,z)  m^Z_s(\mathrm{d}z)\mathrm{d}s,\; t \in [0,T], 
\] 
is an $(\F^c,\P)$-martingale for every $\phi \in \Cc_c^2(\R^{d+1},\R);$
\item \label{rule3} $(X_0,\Yc_0,m^Z_0)=(x_0,0,\delta_0),\;\P$--{\rm a.s.}$;$
\item for any $i\in I$ \label{rule4} 
\begin{equation*}
\E^\P \big[h^i(X_{\cdot \wedge T},\Yc_t,U)\big] \leq 0;
\end{equation*}
\item \label{rule5} $\forall  \phi \in \Cc_c^2(\R^{d+1},\R)$, we have
\[
\big[ M^\phi_\cdot, \E^{\P}[ U \vert \Fc_\cdot^c ] \big]^{\F^\smallfont{c},\P} \equiv 0,\; \text{\rm and}\;  \E^\P [U] \geq r_0. 
\]
\end{enumerate}
Moreover, we set $\Rc=\bigcup_{(q,q^\smallfont{\prime})\in(1,+\infty)^\smallfont{2}}\Rc_{q,q^\smallfont{\prime}}.$
\end{definition}

We are hence interested in the following maximisation 
\begin{equation}\label{eq:reformulation}
\sup_{\P \in \Rc}\E^{\P}\big[F(X_{\cdot \wedge T},Y_T,U)\big].
\end{equation}
\begin{remark} 
Let us once again emphasise that at this point, it is not clear that problem \eqref{eq:reformulation} is in any sense equivalent to the principal's problem introduced in the previous section. We shall compare these two formulations in {\rm\Cref{sect:compar.form}} and provide a sufficient condition under which this problem gives an optimal contract.
\end{remark}

\begin{remark} In our setting, the orthogonal martingale $(U_t)_{t\in[0,T]}$ is only represented by the random variable $U$ and then recovered through the relation $U_t\coloneqq\E^{\P}[U \vert \Fc_t^c].$ As such, it would then be natural to write $\E^{\P}[U \vert \Fc_T^c]$ instead of $U$ in the constraints as well as in the objective function. However, in the light of {\rm \Cref{largerfiltr}} we see that it is not optimal to choose $\P \in \Rc$ such that $\P[U \neq \E^{\P}[U \vert \Fc_T^c]]>0.$
\end{remark}

We now state the main theorem of this section where we show that there exists a solution to the problem introduced above. In order to do so, we need to compactify the set of controls and then obtain existence of an optimiser on every such compact set. It is, indeed, natural to impose such compactness as the problem might, in general, be unbounded and thus existence of a solution is not guaranteed. We provide more discussions below as well as in \Cref{subsec:compactness}. In specific models this would then have to be verified on a case-by-case basis. The bulk of the proof is mostly technical and is postponed to \Cref{subsec:proof_of_THM}.

\begin{remark} It is standard in the literature, see \emph{e.g.} {\rm\citeauthor*{mirrlees1974notes} \cite{mirrlees1974notes}}, {\rm\citeauthor*{alvarez2023optimal} \cite{alvarez2023optimal}},
{\rm\citeauthor*{holmstrom1977incentives} \cite{holmstrom1977incentives,holmstrom1979moral}},
{\rm\citeauthor*{page1987existence} \cite{page1987existence,page1991optimal,page1992mechanism}},
{\rm\citeauthor*{backhoff2022robust} \cite{backhoff2022robust}},
{\rm\citeauthor*{balder1996existence} \cite{balder1996existence}},
{\rm\citeauthor*{ke2017existence} \cite{ke2017existence}},
{\rm\citeauthor*{jewitt2008moral} \cite{jewitt2008moral}} and
{\rm\citeauthor*{balder1996existence} \cite{balder1996existence}}, to impose boundedness or one-sided boundedness on the set of contracts to obtain compactness. In our setting, we only require boundedness of the moments of the control processes, which is generally substantially weaker.
\end{remark}

Let us for $(q,q^\prime) \in (1,\infty)^2$ define 
\begin{align*}
K_{q,q^\smallfont{\prime},R}\coloneqq\bigg\lbrace \P \in \Rc : \E^{\P}\bigg[ \int_0^T \int_{\R^\smallfont{d}} \lVert z \rVert^{2q} m^Z_s(\mathrm{d}z)\mathrm{d}s + \lvert U \rvert^{q^{\prime}} \bigg]\leq R \bigg\rbrace.
\end{align*} 
We have the following result.

\begin{theorem} \label{minim} Let {\rm Assumptions \ref{discF}} and {\rm \ref{assPrincipal}} hold. Then the function $\P \longmapsto \E^{\P}[F(X_{\cdot \wedge T},\Yc,U)]$  admits a maximiser on $K_{q,q^\smallfont{\prime},R}$ for any $R>0$ and $(q,q^\prime) \in (1,\infty)^2$ satisfying $\frac{1}{q}+\frac{1}{q^\prime} < 1,$ such that $K_{q,q^\smallfont{\prime},R}$ is non-empty.
\end{theorem}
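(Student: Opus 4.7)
The plan is to carry out a classical compactification argument in the spirit of \cite{el1987compactification,haussmann1990existence}, adapted to the joint canonical space $\Omega^c\times\R$ and to the orthogonal-martingale component encoded by the random variable $U$. I would start from a maximising sequence $(\P_n)_{n\in\N^\star}\subset K_{q,q',R}$ and first establish tightness of its laws on $\Cc([0,T],\R^{d+1})\times\Mc([0,T],\R^d)\times\R$, one marginal at a time. Tightness of the $m^Z$-marginal follows from the uniform bound $\E^{\P_n}[\int_0^T\!\int\|z\|^{2q} m^Z_s(\mathrm{d}z)\mathrm{d}s]\le R$, since the sublevel sets of this functional are relatively compact in $\Mc([0,T],\R^d)$; tightness of $U$ is immediate from the $q'$-moment bound; and tightness of the $X$- and $\Yc$-marginals is obtained through Kolmogorov's criterion, by bootstrapping the martingale problem \ref{rule2} with BDG and the growth bounds of Assumption \ref{assPrincipal}(i)---the $q$-moment estimate on $\int\!\int\|z^\top\sigma\|^2 m^Z$ is precisely what closes these estimates.

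Passing to a weakly convergent subsequence $\P_n\to\P^\star$ and invoking Skorokhod's representation so as to argue pathwise, I would next check $\P^\star\in K_{q,q',R}$. The moment bounds \ref{rule1} follow from Fatou, the initial condition \ref{rule3} is trivial, and the martingale problem \ref{rule2} passes in the standard way: continuity of $a$ and $b$ in $(x,z)$, a $z$-truncation to tame the quadratic growth of $\mathscr{L}\phi$, and uniform integrability from the $q$-moment bound give convergence of $\E^{\P_n}[(M^\phi_t-M^\phi_s)g_s]$ to the corresponding expression under $\P^\star$ for any bounded continuous $\Fc_s^c$-measurable $g_s$, with a monotone class argument to extend to all bounded $\Fc_s^c$-measurable test functions. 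The reservation constraint $\E^\P[U]\ge r_0$ is then obtained from uniform integrability of $U$, and the inequalities \ref{rule4} from the lower semicontinuity of $h^i$, its linear lower bound, and Fatou's lemma applied under uniform integrability (using $q,q'>1$).

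The delicate step, which I expect to be the main obstacle, is the orthogonality condition \ref{rule5}. The key reformulation is that $[M^\phi_\cdot,\E^\P[U|\Fc_\cdot^c]]^{\F^c,\P}\equiv 0$ is equivalent, via the tower property, to
\[\E^\P\big[(M^\phi_t-M^\phi_s)\,U\,g_s\big]=0,\; 0\le s\le t\le T,\; g_s\;\text{bounded}\;\Fc_s^c\text{-measurable},\]
an identity that no longer references the $\P$-dependent conditional projection of $U$ and can therefore be pushed to the weak limit. The uniform integrability required here is exactly where the hypothesis $1/q+1/q'<1$ enters: setting $r\coloneqq qq'/(q+q')>1$ and $(p,p')\coloneqq(q/r,q'/r)$ yields a conjugate pair with $rp=q$ and $rp'=q'$, so Hölder gives $\|(M^\phi_t-M^\phi_s)Ug_s\|_{L^r(\P_n)}$ bounded uniformly in $n$; combined with the same $z$-truncation as before and a final monotone class extension, this closes the orthogonality condition. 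Finally, upper semicontinuity of $F$ together with the linear upper bound on $F^+$ from Assumption \ref{assPrincipal}(ii) and the uniform integrability of $(X_{\cdot\wedge T},\Yc_T,U)$ yield, via a reverse Fatou argument, $\E^{\P^\star}[F]\ge\limsup_n\E^{\P_n}[F]=\sup_{K_{q,q',R}}\E^\P[F]$, so that $\P^\star$ is a maximiser.
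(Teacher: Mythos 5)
Your overall strategy is the same as the paper's: compactness of $K_{q,q',R}$ plus upper semicontinuity of $\P\longmapsto\E^\P[F]$, with the martingale problem and the constraints passed to the limit via a $z$-truncation, continuity of $a,b$ and uniform integrability, the constraints \ref{rule4} via lower semicontinuity and Fatou, and--crucially--the orthogonality condition \ref{rule5} handled through the reformulation $\E^\P[(M^\phi_t-M^\phi_s)\,U\,g_s]=0$ (this is exactly \Cref{OGmart}) combined with H\"older under $1/q+1/q'<1$; your exponents $r=qq'/(q+q')$, $(p,p')=(q/r,q'/r)$ are the same device as the paper's choice of $\varepsilon$ with $(1+\varepsilon)/q+(1+\varepsilon)/q'=1$ in \Cref{lem:compactness}, and your final reverse-Fatou step is \Cref{lem:upper-sc}.

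There is, however, one concrete gap: tightness of the $\Yc$-marginal cannot be obtained by Kolmogorov's criterion from the available moment bound, for the full range of $q$ allowed in the statement. The martingale part of $\Yc$ has quadratic variation $\int_s^t\int \mathrm{e}^{-2\int_0^u k}\lVert z^\top\sigma\rVert^2 m^Z_u(\mathrm{d}z)\mathrm{d}u$, which, unlike that of $X$ (bounded by a constant times $t-s$ since $\sigma$ is bounded), is not deterministically dominated in $t-s$. The only control you have is $\E^{\P}\big[\int_0^T\int\lVert z\rVert^{2q}m^Z_u(\mathrm{d}z)\mathrm{d}u\big]\le R$, which by Jensen (pointwise in $u$, since $m^Z_u$ is a probability measure) gives $\E^\P\big[\int_0^T\big(\int\lVert z\rVert^{2}m^Z_u(\mathrm{d}z)\big)^q\mathrm{d}u\big]\le C$ but nothing on higher moments of this quantity. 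BDG plus H\"older in time then yield at best $\E^\P[\lvert\Yc_t-\Yc_s\rvert^{2\kappa}]\le C\,\lvert t-s\rvert^{\kappa(1-1/q)}$ for $\kappa\le q$, so the best exponent is $q-1$, which exceeds $1$ only when $q>2$; for $q\in(1,2]$ (a case the theorem covers, e.g.\ $q=3/2$ with $q'$ large) Kolmogorov's criterion does not close. This is precisely why the paper uses instead the stopping-time/sub-martingale tightness criterion of \Cref{tightness} (from \cite[Theorem A.8]{haussmann1990existence}) in \Cref{tight}, which only needs the $(2+\varepsilon)$-moment through the stopping times $\sigma^{M,\varepsilon}$ and the bound on $\mathscr{L}\phi$. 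Alternatively, Aldous's criterion repairs your route: for any stopping time $\tau$ and $\delta>0$, splitting on $\{\int\lVert z\rVert^2 m^Z_u(\mathrm{d}z)\le M\}$ gives $\E^\P\big[\int_\tau^{\tau+\delta}\int\lVert z^\top\sigma\rVert^2 m^Z_u(\mathrm{d}z)\mathrm{d}u\big]\le C\,(M\delta+M^{1-q})$ uniformly over $K_{q,q',R}$, which tends to $0$ as $\delta\to0$ after optimising over $M$, and C-tightness follows since all paths are continuous. With the tightness of $\Yc$ fixed in one of these ways (your treatment of the $X$-marginal, the $m^Z$-marginal and of $U$ is fine), the rest of your argument is correct and coincides with the paper's proof.
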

\begin{proof} Because every upper-semicontinuous function attains its maximum on a compact set, the result follows immediately from \Cref{lem:compactness,lem:upper-sc}.
\end{proof}

Let us comment on potential drawbacks of this approach. In this generality, obtaining bounds on the moments of the process $m^Z$ of the form \[\E^{\P}\bigg[ \int_0^T \int_{\R^d} \lVert z \rVert^{2q} m^Z_s(\mathrm{d}z)\mathrm{d}s \bigg] \leq C, \] for some $C>0$ might not be readily achievable. For instance, using the theory of BSDEs, one typically obtains bounds on the norm
\[ \bigg(\E^{\P} \bigg[ \bigg(  \int_0^T \int_V \big\| z^\top \sigma(x,X_{\cdot \wedge s}) \big\|^2 m^Z_s(\mathrm{d}z)\mathrm{d}s \bigg)^\frac{p}{2} \bigg]\bigg)^\frac{1}{p}, \] in terms of $\E^{\P}[\lvert Y_T \rvert^p]$ for some $p>0,$ see \eqref{bdBSDE}. This, however, is in general insufficient as the norm is relatively weak for our purposes. For more refined results concerning bounds on the process $Z$, one has to employ more advanced techniques, such as \emph{e.g.} the Malliavin calculus, see for instance \citeauthor*{briand2013simple} 
\cite{briand2013simple} and \citeauthor*{harter2019stability} \cite{harter2019stability}. In general, verifying such a condition has to be done case-by-case. On the other hand, deriving bounds on the moments of $U$ is possible with the standard BSDE theory. We refer to \Cref{subsec:compactness} for more discussions.

\begin{remark} \label{rem:uniform_integr}
Analysing the proof of {\rm\Cref{minim}}, namely {\rm \Cref{tight,lem:compactness}}, we see that we can actually obtain a slightly stronger result. More specifically, let {\rm Assumptions \ref{discF}} and {\rm \ref{assPrincipal}} hold and let $(R,q,q^\prime) \in (0,\infty) \times (1,\infty)^2$ be such that $\frac{1}{q}+\frac{1}{q^\prime} < 1.$ Let further
\[K \subset \bigg\lbrace \P \in \Rc : \E^{\P}\bigg[ \bigg(\int_0^T \int_{\R^\smallfont{d}} \lVert z \rVert^{2} m^Z_s(\mathrm{d}z)\mathrm{d}s \bigg)^q + \lvert U \rvert^{q^{\prime}} \bigg]\leq R \bigg\rbrace, \] be a non-empty set such that
\[ \lim_{M \rightarrow \infty} \sup_{\P \in K} \E^{\P}\bigg[ \bigg( \int_0^T \int_{\R^\smallfont{d}} \lVert z \rVert^{2} \mathbf{1}_{\{\|z\| \geq M\}}  m^Z_s(\mathrm{d}z)\mathrm{d}s \bigg)^q \bigg]=0. \] 
Then, since the map $m^Z \longmapsto \int_0^T \int_{\R^\smallfont{d}} \lVert z \rVert^{2} \mathbf{1}_{\{\|z\| \geq M\}}  m^Z_s(\mathrm{d}z)\mathrm{d}s$ is lower-bounded and lower-semicontinuous, we have
\[ \lim_{M \rightarrow \infty} \sup_{\P \in \overline{K}} \E^{\P}\bigg[ \bigg( \int_0^T \int_{\R^\smallfont{d}} \lVert z \rVert^{2} \mathbf{1}_{\{\|z\| \geq M\}}  m^Z_s(\mathrm{d}z)\mathrm{d}s \bigg)^q \bigg]=\lim_{M \rightarrow \infty} \sup_{\P \in K} \E^{\P}\bigg[ \bigg( \int_0^T \int_{\R^\smallfont{d}} \lVert z \rVert^{2} \mathbf{1}_{\{\|z\| \geq M\}}  m^Z_s(\mathrm{d}z)\mathrm{d}s \bigg)^q \bigg]=0.\]
Moreover, adapting the proofs of {\rm\cite[Theorem A.8]{haussmann1990existence}} and {\rm \Cref{tight,lem:compactness}}, one can show that $K$ is relatively compact and
\[\overline{K} \subset \bigg\lbrace \P \in \Rc : \E^{\P}\bigg[ \bigg(\int_0^T \int_{\R^\smallfont{d}} \lVert z \rVert^{2} m^Z_s(\mathrm{d}z)\mathrm{d}s \bigg)^q + \lvert U \rvert^{q^{\prime}} \bigg]\leq R \bigg\rbrace. \]
Thus, the function $\P \longmapsto \E^{\P}[F(X_{\cdot \wedge T},\Yc,U)]$  admits a maximiser on $\overline{K},$ which is a subset of $\Rc.$
\end{remark}

\subsubsection{Comparison of the two formulations} \label{sect:compar.form}
As mentioned earlier, while the principal's problem introduced in the first part of \Cref{sec:principal} is a strong control problem, that is, we work on a fixed probability space, we later moderated this formulation to what is usually referred to as a weak control problem. As such, the analysis would not be complete without a comparison of these two approaches.

\medskip
Let Assumptions \ref{asssigmamu}, \ref{assUkf}, \ref{ass:exist.minim}, \ref{discF} and \ref{assPrincipal} hold and let $\P \in \Rc.$ Then, according to \Cref{ext1} there exists an extension, say $(\tilde{\Omega},\tilde{\Fc},\tilde{\F},\tilde{\P}),$ supporting a $k$-dimensional $(\tilde{\F},\tilde{\P})$-martingale measure $\widetilde{M}^0$ on $\Bc(V)$ with intensity $m^0$ such that
\begin{align*} 
X_t&=x_0+\int_0^t \int_V \sigma(s,X_{\cdot \wedge s}) \lambda\big(s,X_{\cdot \wedge s},a(s,X_{\cdot \wedge s},Z_s(v))\big) m^0(\mathrm{d}v,\mathrm{d}s)+\int_0^t\int_V \sigma(s,X_{\cdot \wedge s}) \widetilde{M}^0(\mathrm{d}v,\mathrm{d}s), \\
\Yc_t &= \int_0^t \int_{V} \mathrm{e}^{-\int_\smallfont{0}^\smallfont{s} k(u,X_{\smallfont{\cdot} \smallfont{\wedge} \smallfont{u}}) \mathrm{d}u} f\big(s,X_{\cdot \wedge s},a(s,X_{\cdot \wedge s},Z_s(v))\big)m^0(\mathrm{d}v,\mathrm{d}s) \\
&\quad+\int_0^t \int_{V} \mathrm{e}^{-\int_\smallfont{0}^\smallfont{s} k(u,X_{\smallfont{\cdot} \smallfont{\wedge} \smallfont{u}}) \mathrm{d}u} Z_s(v)^\top \sigma(s,X_{\cdot \wedge s}) \widetilde{M}^0(\mathrm{d}v,\mathrm{d}s),\; t \in [0,T],\; \tilde{\P}\text{\rm--a.s.},
\end{align*} where $Z$ is determined by $m^Z=Z_\# m^0$ and, moreover, $[ \widetilde{M}^0(\cdot),U ]^{\tilde{\F},\tilde{\P}} \equiv 0,$ where $U$ is the $\tilde{\F}$-martingale given by $U_t=\E^{\P}[U \vert \Fc^c_t],$ $t \in [0,T].$ Let us w.l.o.g. assume that this extension satisfies the usual conditions and that $U$ is right-continuous. If this is not the case, we consider the augmented filtration instead and then the c\`{a}dl\`{a}g version $U$, see \cite[Theorem 3.2.6]{weizsaecker1990stochastic}. Let us define $\tilde{\P}^0$ by 
\begin{align*}
\frac{\mathrm{d}\tilde{\P}^0}{\mathrm{d} \tilde{\P}}\coloneqq \exp &\bigg( -\int_0^T\int_V \lambda\big(s,X_{\cdot \wedge s},a(s,X_{\cdot \wedge s},Z_s(v))\big)\cdot M^0(\mathrm{d}v,\mathrm{d}s) 
\\&\quad - \frac{1}{2} \int_0^T\int_V \big\| \lambda\big(s,X_{\cdot \wedge s},a(s,X_{\cdot \wedge s},Z_s(v))\big)\big\|^2m^0(\mathrm{d}v,\mathrm{d}s)\bigg),
\end{align*} and the process $Y$ by \[ Y_t \coloneqq \exp \bigg(\int_0^t k(s,X_{\cdot \wedge s})\mathrm{d}s\bigg) ( \Yc_t + U_t),\; t \in [0,T]. \] Then under $\tilde{\P}^0$ we have for $M^0_t(\cdot)\coloneqq \widetilde{M}^0_t(\cdot)+\int_0^t \int_\cdot \lambda\big(s,X_{\cdot \wedge s},a(s,X_{\cdot \wedge s},Z_s(v))\big) m^0(\mathrm{d}v,\mathrm{d}s),$ $t \in [0,T],$
\begin{align*}
X_t&=x_0+\int_0^t\int_V \sigma(s,X_{\cdot \wedge s}) M^0(\mathrm{d}v,\mathrm{d}s), \\
Y_t &= U_0 \\
&\;\;+ \int_0^t \int_{V} \Big( f\big(s,X_{\cdot \wedge s},a(s,X_{\cdot \wedge s},Z_s(v))+k(s,X_{\cdot \wedge s}) Y_s-\sigma(s,X_{\cdot \wedge s}) \lambda\big(s,X_{\cdot \wedge s},a(s,X_{\cdot \wedge s},Z_s(v))\big)\cdot Z_s(v)\Big)m^0(\mathrm{d}v,\mathrm{d}s) \\
&\;\;+\int_0^t \int_{V} Z_s(v)^\top \sigma(s,X_{\cdot \wedge s}) M^0(\mathrm{d}v,\mathrm{d}s) + \int_0^t \mathrm{e}^{\int_\smallfont{0}^\smallfont{s} k(u,X_{\smallfont{\cdot} \smallfont{\wedge} \smallfont{u}}) \mathrm{d}u} \mathrm{d}U_s,\; t \in [0,T].
\end{align*}
Thus, if we denote $Y_0=U_0$ and $L \coloneqq\int_0^\cdot \mathrm{e}^{\int_\smallfont{0}^\smallfont{s} k(u,X_{\smallfont{\cdot} \smallfont{\wedge} \smallfont{u}}) \mathrm{d}u} \mathrm{d}U_s,$ we obtain
\begin{align*}
X_t&=x_0+\int_0^t\int_V \sigma(s,X_{\cdot \wedge s}) M^0(\mathrm{d}v,\mathrm{d}s), \\
Y_t &= Y_0 - \int_0^t \int_{\R^\smallfont{d}} H(s,X_{\cdot \wedge s},Y_s,Z_s(v))m^0(\mathrm{d}v,\mathrm{d}s)+\int_0^t \int_{V} Z_s(v)^\top \sigma(s,X_{\cdot \wedge s}) M^0(\mathrm{d}v,\mathrm{d}s) + \int_0^t  \mathrm{d}L_s,\; t \in [0,T].
\end{align*} Moreover, the property $[ \widetilde{M}^0(\cdot),U]^{\tilde{\F},\tilde{\P}} \equiv 0$ clearly implies $[ \widetilde{M}^0(\cdot),L]^{\tilde{\F},\tilde{\P}} \equiv 0$ and thus we also have $[ M^0(\cdot),L]^{\tilde{\F},\tilde{\P}^\smallfont{0}} \equiv 0.$ Further, $L$ is an $(\tilde{\F},\tilde{\P}^0)$-martingale by analogous arguments as in the proof of \Cref{ICchar}. We therefore recover the dynamics from \Cref{agent}.

\begin{corollary} \label{col:comparison} Let {\rm Assumptions \ref{asssigmamu}, \ref{assUkf}, \ref{discF}} and {\rm\ref{assPrincipal}} hold and assume that there is a unique $a \in \Upsilon$ and that there exists a solution $\P^\star \in \Rc$ to the problem \[\sup_{\P \in \Rc}\E^{\P}\big[F(X_{\cdot \wedge T},Y_T,U)\big], \] and denote by $(\tilde{\Omega},\tilde{\Fc},\tilde{\F},\tilde{\P}^0)$ the probability space described in the discussion above corresponding to $\P^\star$ and by $(\tilde{X},\tilde{Y})$ the processes introduced there. Assume that $(\Omega,\Fc,\F,\P^0),$ that is the probability space of the agent's problem, contains $(\tilde{\Omega},\tilde{\Fc},\tilde{\F},\tilde{\P}^0)$  in the sense that it is of the form
\[  
(\Omega,\Fc,\F,\P^0)= \big(\tilde{\Omega}\times \hat{\Omega},\tilde{\Fc} \otimes \hat{\Fc},\tilde{\F}\otimes \hat{\F},\tilde{\P}^0(\mathrm{d}\tilde{\omega})\otimes\hat{\P}(\tilde{\omega}, \mathrm{d}\hat{\omega})\big),
\] for some filtered space $(\hat{\Omega},\hat{\Fc},\hat{\F})$ and some measurable kernel $\hat{\P} : \tilde{\Omega} \longrightarrow \Pc(\hat{\Omega}).$ Assume that the process that the agent takes control of coincides with $\tilde{X},$ that is, $X=\tilde{X}.$ Then the corresponding principal's problem admits a solution, which is given by $\xi^\star=U_a^{-1}(\tilde{Y}_T).$
\end{corollary}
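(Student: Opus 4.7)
The plan is to push the weak-problem optimiser $\P^\star$ over into an incentive-compatible contract on the agent's strong space, check the participation and the $g^i$-constraints, and then sandwich the principal's value so that $\xi^\star = U_a^{-1}(\tilde Y_T)$ turns out to be optimal.

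First I would lift every object living on $(\tilde \Omega, \tilde \Fc, \tilde \F, \tilde \P^0)$---the processes $\tilde X$, $\tilde Y$, the process $Z$ provided by $m^Z = Z_\# m^0$ through \Cref{pf}, the martingale $L = \int_0^\cdot \mathrm{e}^{\int_\smallfont{0}^\smallfont{s} k(u,X_{\smallfont{\cdot}\smallfont{\wedge}\smallfont{u}}) \mathrm{d}u} \mathrm{d}U_s$, and the martingale measure $\widetilde{M}^0$---to the product space $\Omega = \tilde \Omega \times \hat \Omega$ by precomposing with the first-coordinate projection. The product form $\P^0(\mathrm{d}\tilde\omega, \mathrm{d}\hat\omega) = \tilde \P^0(\mathrm{d}\tilde\omega)\hat \P(\tilde\omega, \mathrm{d}\hat\omega)$ together with the compatibility of the filtrations ensures that $(\tilde\F,\tilde\P^0)$-martingale, integrability and orthogonality properties pass unchanged to $(\F, \P^0)$. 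Since $X = \tilde X$ and $\sigma$ has full rank, identity \eqref{invsigma} forces the agent's reference martingale measure $M^0$ to coincide with the lift of $\widetilde{M}^0$, so that the BSDE derived in the discussion preceding the statement,
\begin{equation*}
\tilde Y_t = Y_0 - \int_0^t \int_V H(s, X_{\cdot \wedge s}, \tilde Y_s, Z_s(v)) m^0(\mathrm{d}v,\mathrm{d}s) + \int_0^t \int_V Z_s(v)^\top \sigma(s, X_{\cdot \wedge s}) M^0(\mathrm{d}v,\mathrm{d}s) + L_t,
\end{equation*}
holds $\P^0$-a.s., with $(\tilde Y, Z, L) \in \S^p(\F, \P^0) \times \L^p_d(\F, \P^0, X(\cdot)) \times \M^p_{X(\cdot)^\smallfont{\perp}}(\F, \P^0)$ for some $p>1$ inherited from $\P^\star \in \Rc_{q,q^\smallfont{\prime}}$, possibly after shrinking $p$ to absorb the bounded Girsanov density.

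\Cref{ICchar} then yields $\xi^\star = U_a^{-1}(\tilde Y_T) \in \IC$ with $\Ac^\star(\xi^\star) = \{A^\star_a\}$ by uniqueness of $a \in \Upsilon$, and $V_0^a(\xi^\star) = \E^{\P^0}[Y_0] = \E^{\tilde \P^0}[U_0]$. Because $\Fc^c_0$ is $\P^\star$-trivial and the extension's new martingale measure starts at zero, $\tilde \Fc_0$ is $\tilde \P$-trivial, hence $U_0 = \E^{\tilde \P}[U]$ is $\tilde \P$-a.s.\ (and thus $\tilde \P^0$-a.s., by equivalence) a constant, so the participation constraint reduces to \Cref{rule}.\ref{rule5}. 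For the $g^i$-constraints, the Girsanov density $\mathrm{d}\P^{A^\smallfont{\star}_\smallfont{a}}/\mathrm{d}\P^0$ depends only on $\tilde\omega$, therefore $\P^{A^\smallfont{\star}_\smallfont{a}}$ restricted to $\tilde \Fc_T$ agrees with the extension $\tilde \P$ of $\P^\star$; combined with $U = U_T$ $\P^\star$-a.s.\ at the optimum (the remark following \eqref{eq:reformulation}), this yields
\begin{equation*}
\E^{\P^{A^\smallfont{\star}_\smallfont{a}}}\big[g^i(X_{\cdot \wedge T}, \xi^\star)\big] = \E^{\P^\smallfont{\star}}\big[h^i(X_{\cdot \wedge T}, \Yc_T, U)\big] \leq 0, \quad i \in I.
\end{equation*}

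Finally I would verify $\E^{\P^{A^\smallfont{\star}_\smallfont{a}}}[U_p(X_{\cdot \wedge T}, \xi^\star)] = V_0^p$. The inequality $\geq$ is immediate from admissibility of $\xi^\star$ just established. For the reverse, each admissible $(\xi, A^\star) \in \Phi$ corresponds via \Cref{thm:ref_principal} to a triple $(Y_0, m^Z, L) \in \mathfrak{Y} \times \Vc \times \Wc$; the decomposition $\mathrm{e}^{-\int k} Y = \Yc + U$ with $U_0 = Y_0$, applied to the $\P^0$-law of $(X, \Yc, m^Z, U_T)$, then furnishes an element of $\Rc$ obeying every clause of \Cref{rule} whose weak-problem value equals $\E^{\P^{A^\smallfont{\star}}}[U_p(X_{\cdot \wedge T}, \xi)]$. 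Hence $V_0^p \leq \E^{\P^\smallfont{\star}}[F(X_{\cdot \wedge T}, \Yc_T, U)]$, and the same computation as in the constraint step identifies this upper bound with $\E^{\P^{A^\smallfont{\star}_\smallfont{a}}}[U_p(X_{\cdot \wedge T}, \xi^\star)]$, closing the loop. The main obstacle I anticipate is precisely this last construction: verifying that the pushforward of the strong-problem data genuinely sits in $\Rc$---especially the orthogonality condition \Cref{rule}.\ref{rule5}, expressed through the projection $\E^\P[U \mid \Fc_t^c]$---and carefully matching the two value functionals; the preceding steps, by contrast, amount to one invocation of \Cref{ICchar} on the enlarged product space once the BSDE has been transferred.
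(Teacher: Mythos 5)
Your overall strategy coincides with the paper's: transfer the weak optimiser to the agent's space, invoke \Cref{ICchar} (together with the moment bound of \Cref{bdMOMstatement}) to get $\xi^\star=U_a^{-1}(\tilde Y_T)\in\IC$ with the unique best response $A^\star_a$, and then obtain optimality by showing $\sup_{\P\in\Rc}\E^\P[F(X_{\cdot\wedge T},\Yc_T,U)]\geq V_0^p$. The paper gets this last inequality in one stroke from \Cref{largerfiltr}: any admissible $(\xi,A^\star)\in\Phi$, rewritten via \Cref{thm:ref_principal} and the decomposition $\mathrm{e}^{-\int_0^\cdot k}Y=\Yc+U$, yields data on $(\Omega,\Fc,\F)$ satisfying the hypotheses of that lemma, which then produces an element of $\Rc$ with at least the same value.

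Your proposal stops short exactly at this point. You sketch the construction but explicitly defer the verification that the pushforward lies in $\Rc$ --- in particular the martingale property with respect to $\F^c$, the integrability clause \ref{rule1}, and above all the orthogonality clause \Cref{rule}.\ref{rule5}, which requires the characterisation of \Cref{OGmart} (with a localisation when $1/q+1/q'>1$) and, in the paper's treatment, the concavity of $F$ and convexity of the $h^i$ to replace $U$ by its projection on $\Fc_T^c$. This is precisely the content of \Cref{largerfiltr}; without either citing it or reproducing its argument, the reverse inequality $V_0^p\leq\sup_{\P\in\Rc}\E^\P[F]$ --- and hence optimality of $\xi^\star$ --- is not established. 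A second, concrete error: you propose to push forward the \emph{$\P^0$-law} of $(X,\Yc,m^Z,U_T)$. Under $\P^0$ the process $X$ is driftless and $\Yc$ has no finite-variation part, so the martingale problem in \Cref{rule}.\ref{rule2}, whose generator $\mathscr{L}$ contains the drift $b$ built from $\sigma\lambda$ and $f$, fails. The correct law is the one under the optimal-response measure $\P^{A^\star_a}$ (equivalently the measure $\tilde\P$ obtained from $\tilde\P^0$ by the Girsanov change written out before the corollary); this is also how the paper phrases it in \Cref{subsec:compactness}, where the candidate element of $\Rc$ is the law of $(X,\Yc,m^Z,L_T)$ under $\P^{A^\star_a}$. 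The earlier parts of your argument (lifting to the product space, identifying $M^0$ with the lift of $\widetilde M^0$ via \eqref{invsigma}, matching the constraints through $h^i$ and the participation constraint through $\E^\P[U]\geq r_0$) are consistent with the paper's intended reading and need only minor tightening.
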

\begin{proof}
It is straightforward that $\xi^\star \in \Ic\Cc$ by \Cref{ICchar} and \Cref{bdMOMstatement}. We have to verify optimality. Using \Cref{largerfiltr} we conclude that $\sup_{\P \in \Rc}\E^{\P}\big[F(X_{\cdot \wedge T},Y_T,U)\big] \geq V_0^p,$ and thus $\xi^\star$ is necessarily optimal.
\end{proof}

\begin{remark} $(i)$ Roughly speaking, the statement asserts that, provided that the principal has access to enough randomisation, there exists an optimal contract. Of course, this result is of rather theoretical nature and in practical situations would have to be checked on a case-by-case basis. 

\medskip $(ii)$ Note that the assumption in {\rm \Cref{col:comparison}} does not only concern the filtered space, but also the probability measure considered on it. This is important since in the original problem we are allowed to employ only equivalent changes of measure while in the weak formulation also mutually singular measures are admissible.

\medskip $(iii)$ Let us also point out that an enlargement of the probability space can indeed in general benefit the principal. A similar observation was already noted in the work of {\rm \citeauthor*{kadan2017existence} \cite[page 785]{kadan2017existence}}. However, in the light of {\rm\Cref{largerfiltr}} we see that there is a limit to it, in the sense that there exists a measurable space which is already optimal for the principal and enlarging it would not increase her utility.
\end{remark}

\subsection{Feedback contracts} \label{sec:feedbackContr}
At last, we treat the case where it is required that the contract is $\Fc_T^{X(\cdot)}$-measurable. This situation might be of interest when one does not wish to allow for space enlargements, and imposes that the contract is measurable with respect to the canonical $\sigma$-algebra $\Fc_T^{X(\cdot)}.$ Indeed, this is natural since the obvious generalisation of the contracts considered \emph{e.g.} in \cite{cvitanic2018dynamic,hernandez2024principal} is to impose that $\xi$ is a function of the state process, which in our case is the martingale measure $X(\cdot).$ The main drawback of this is that in such a case we cannot guarantee closedness of the set of admissible contracts, since this property is not preserved if one takes weak limits. On the other hand, there is no need for the orthogonal martingale part due to the martingale representation theorem.

\medskip Similarly as in \Cref{relPrincip}, we thus consider the weak formulation of the problem control problem \eqref{FX}. Note that we no longer require \Cref{discF}, but we need to slightly modify the setting. Let us again fix $a \in \Upsilon,$ which we thus assume to exists, and let us set for $\phi \in \Cc_c^2(\R^{d+1},\R)$ and $(t,x,y,z) \in [0,T]\times \Cc ([0,T],\R^d) \times \R \times \R^d$
\begin{align*}
\hat{\mathscr{L}}\phi (s,x_{\cdot \wedge s},y_s,z)=\hat{b}(t,x_{\cdot \wedge s},y_s,z)\cdot \mathrm{D}\phi(x_s,y_s) + \frac{1}{2} \hat{a}(s,x_{\cdot \wedge s},z) : \mathrm{D}^2\phi(x_s,y_s),
\end{align*}
where
\begin{align*}
\hat{\bar{\sigma}}(t,x,z)&\coloneqq\begin{pmatrix}
\sigma(t,x) \\
 z^\top \sigma(t,x)
\end{pmatrix},\;
\hat{a}(t,x,z)\coloneqq\hat{\bar{\sigma}}\hat{\bar{\sigma}}^\star(t,x,z),\;
\hat{b}(t,x,y,z)\coloneqq \begin{pmatrix}
\sigma(t,x)\lambda(t,x,a(t,x,y,z)), f(t,x,a(t,x,y,z))
\end{pmatrix}.
\end{align*}

Let us further denote 
\[
\hat{F}(x,y)\coloneqq \begin{cases} U_p(x,U_a^{-1}(y)), \; \text{if}\;  y \in U_a(\R),\\
-\infty, \; \text{\rm otherwise,} \end{cases}\; (x,y) \in \Cc ([0,T],\R^d) \times \R,
\]
and
\[
\hat{h}^{i}(x,y)\coloneqq \begin{cases} g^i(x,U_a^{-1}(y)), \; \text{if}\; y \in U_a(\R),\\
\infty, \; \text{\rm otherwise,} \end{cases}\; (x,y) \in \Cc ([0,T],\R^d) \times \R,\; i \in I. 
\]

We assume the following conditions, which are analogous to the ones introduced in \Cref{assPrincipal}.

\begin{assumption} \label{assPrincip2} We have that
\begin{enumerate}
\item[$(i)$] $\sigma$ is bounded and the functions $a$ and $b$ are continuous in $(x,z)$ for every $t$, and there is a constant $C>0$ such that 
\begin{align*} 
\lVert \hat{a}(t,x,z) \rVert &\leq C \big(1 + \lVert x \rVert + \lvert y \rvert + \lVert z^\top \sigma(t,x) \rVert^2 \big)\;{\rm and}\;\lVert \hat{b}(t,x,y,z) \rVert \leq C \big(1 + \lVert x \rVert + \lvert y \rvert + \lVert z^\top \sigma(t,x) \rVert \big);
\end{align*}
\item[$(ii)$] the function $\hat{F}$ is upper-semicontinuous and there is $C>0$ such that 
\[
\hat{F}^{+}(x,y) \leq C\big(1+\lVert x \rVert +\lvert y \rvert \big),\; (x,y,u) \in \Cc([0,T],\R^d) \times \R;
\]
\item[$(iii)$] the functions $\hat{h}^i$, $i \in I,$ are lower-semicontinuous and there exists $C_i>0$ such that
\begin{equation*}
\hat{h}^i(x,y) \geq -C_i \big(1+ \lVert x \rVert + \lvert y \rvert \big).
\end{equation*}
\end{enumerate}
\end{assumption}

In this section, we work on the canonical space $\Omega^c=\Cc([0,T],\R^{d + 1}) \times \Mc([0,T],\R^d).$ The canonical process shall now be denoted by $(X,Y,m^Z)$. In the current setting, the measure $X(\cdot)$ does not live on the canonical space that we work on. Hence, if we want to speak about $\Fc_T^{X(\cdot)}$-measurable contracts, we need to augment the probability space to incorporate it.

\begin{definition} \label{rule'}
Let $y_0 \in \R$ and $q > 1.$ We say that a probability measure $\P$ on $(\Omega^c, \Fc^c_T)$ belongs to the set of control rules for the principal, denoted by $\hat{\Rc}_q(y_0)$ if
\begin{enumerate}[label = $(\roman*)$]
\item we have 
\[
\E^{\P}\bigg[\bigg(\int_0^T\int_{\R^d} \lVert z^\top \sigma(t,X_{\cdot \wedge t}) \rVert^2 m^Z_t(\mathrm{d}z)\mathrm{d}t \bigg)^\frac{q}{2}\bigg] < \infty;
\]
\item the process 
\[
M^\phi_t\coloneqq \phi(X_t,Y_t)-\int_0^t \int_{\R^\smallfont{d}} \hat{\mathscr{L}}\phi (s,X_{\cdot \wedge s},Y_s,z) m^Z_s(\mathrm{d}z)\mathrm{d}s,\; t \in [0,T], 
\]
is an $(\F^{c},\P)$-martingale for every $\phi \in \Cc_c^2(\R^{d+1},\R);$
\item  $(X_0,Y_0,m^Z_0)=(x_0,y_0,\delta_0),\;\P${\rm--a.s.;}
\item  we have for any $i\in I$
\begin{equation*}
\E^\P \big[\hat{h}^i(X_{\cdot \wedge T},Y_T)\big] \leq 0;
\end{equation*}
\item\label{rule4'} there exists an extension of $(\Omega^c,\Fc^c_T,\F^c,\P),$ denoted $(\Omega,\Fc_T,\F,\Q),$ supporting a $k$-dimensional $(\F,\Q)$-martingale measure $M$ on $V$ with intensity $m^0$ satisfying
\begin{align}
\begin{split} \label{eqn:ext_dynamics}
X_t&=x_0+\int_0^t \int_V \sigma(s,X_{\cdot \wedge s}) \lambda\big(s,X_{\cdot \wedge s},a(s,X_{\cdot \wedge s},Y_s,Z_s(v))\big)m^0(\mathrm{d}v,\mathrm{d}s)+\int_0^t\int_V \sigma(s,X_{\cdot \wedge s}) M(\mathrm{d}v,\mathrm{d}s), \\
Y_t&=y_0 + \int_0^t \int_V \Big(f\big(s,X_{\cdot \wedge s},a(s,X_{\cdot \wedge s},Y_s,Z_s(v))\big)+k\big(s,X_{\cdot \wedge s},a(s,X_{\cdot \wedge s},Y_s,Z_s(v))\big)Y_s \Big)m^0(\mathrm{d}v,\mathrm{d}s) \\
&\quad+\int_0^t \int_V Z_s(v)^\top \sigma(s,X_{\cdot \wedge s}) M(\mathrm{d}v,\mathrm{d}s),\; t \in [0,T],\; \Q\text{\rm--a.s.}
\end{split}
\end{align} 
such that $m^Z$ is $\F^{X(\cdot)}$-adapted, where $Z$ is given by $m^Z=Z_\#m^0$ and $X(\cdot)$ is defined by
\[
X_t(\cdot)\coloneqq x_0+\int_0^t \int_\cdot \sigma(s,X_{\cdot \wedge s}) \lambda\big(s,X_s,a(s,X_{\cdot \wedge s},Y_s,Z_s(v))\big)m^0(\mathrm{d}v,\mathrm{d}s)+\int_0^t\int_\cdot  \sigma(s,X_{\cdot \wedge s}) M(\mathrm{d}v,\mathrm{d}s). 
\]
\end{enumerate}
We further set $\hat{\Rc}(y_0)\coloneqq\bigcup_{q>1}\hat{\Rc}_q(y_0).$
\end{definition}
\begin{remark} 
$(i)$Let us note that an extension supporting $M$ always exists by using an analogue of {\rm\Cref{ext1}}. Unfortunately, it is in general not true that $m^Z$ is $\F^{X(\cdot)}$-adapted.

\medskip $(ii)$ Indeed, if $m^Z$ is $\F^X$-adapted, where $\F^X$ is the filtration generated by the process $X,$ then $m^Z$ is $\F^{X(\cdot)}$-adapted on any extension such that {\rm\Cref{eqn:ext_dynamics}} holds. Moreover, such an extension always exists due to {\rm\Cref{ext1}}.
\end{remark}

\begin{remark} Note that can drop the convexity and concavity assumptions considered in the previous section. We refer to {\rm \Cref{rem:convex}} for discussions on why this is the case.
\end{remark}

Now in this setting, every control rule gives a feedback contract.

\begin{lemma} \label{FXadaptedness} Let {\rm Assumptions \ref{asssigmamu} and \ref{assPrincip2}} hold. For every $\P \in \hat{\Rc}(y_0),$  $Y$ is $\F^{X(\cdot)}$-adapted. In particular, $Y_T$ is $\Fc_T^{X(\cdot)}$-measurable.
\end{lemma}
\begin{proof} Let $\Q$ be the probability from \Cref{rule'}.\ref{rule4'} defined on an extended space. Using Girsanov's theorem, we can find an equivalent measure $\P^0 \sim \Q$ such that 
\begin{align*}
X_t(\cdot)&=x_0+\int_0^t\int_\cdot \sigma(s,X_{\cdot \wedge s}) M^0(\mathrm{d}v,\mathrm{d}s), \\
Y_t&=y_0 + \int_0^t \int_V H(s,X_{\cdot \wedge s},Y_s,Z_s(v)) m^0(\mathrm{d}v,\mathrm{d}s) +\int_0^t \int_V Z_s(v)^\top \sigma(s,X_{\cdot \wedge s})  M^0(\mathrm{d}v,\mathrm{d}s),\; t \in [0,T], \P^0\text{\rm--a.s.},
\end{align*}
for some $M^0$ with intensity $m^0.$ Since $\sigma$ admits an inverse from the left (see \eqref{invsigma}) and \Cref{x} admits unique strong solution, it can readily be seen that $\F^{X(\cdot)}=\F^{M^\smallfont{0}(\cdot)},$ see also \Cref{rem:corresp}. Further, $Z$ is $\F^{X(\cdot)}$-adapted since $m^Z$ is. Using that $H$ is Lipschitz-continuous in the $y$-variable, $Y$ is, in fact, the unique strong solution to the equation 
\[
Y_t=y_0 + \int_0^t \int_V H(s,X_{\cdot \wedge s},Y_s,Z_s(v)) m^0(\mathrm{d}v,\mathrm{d}s) +\int_0^t \int_V Z_s(v)^\top \sigma(s,X_{\cdot \wedge s})  M^0(\mathrm{d}v,\mathrm{d}s),\; t \in [0,T], \P^0\text{\rm--a.s.},
\] and, as such, it is $\F^{X(\cdot)}$-adapted, since the coefficients and the driving noise are.
\end{proof}

Similarly as in the previous section, we get the following compactness and continuity  results. Note that we obtain compactness under slightly weaker assumptions than in the case of the previous section. This is thanks to the simplified dynamics of $Y.$

\begin{lemma} \label{tight'} Let {\rm\Cref{assPrincip2}} hold. Then the set 
\begin{align*}
\hat{K}_{\varepsilon,R}\coloneqq\Bigg\lbrace \P \in \bigcup_{\{y_\smallfont{0} \in \R : y_\smallfont{0} \geq r_\smallfont{0}\}}\hat{\Rc}(y_0) : y_0 + \E^{\P}\bigg[ \int_0^T \int_{\R^\smallfont{d}} \lVert z \rVert^{2+\varepsilon} m^Z_s(\mathrm{d}z)\mathrm{d}s\bigg]\leq R \Bigg\rbrace,
\end{align*}
is relatively compact for the weak topology for every $\varepsilon>0$ and $R>0$. Moreover, every cluster point $\P^\prime \in \overline{\hat{K}_{\varepsilon,R}}$ satisfies 
\[
y_0+\E^{\P^\smallfont{\prime}}\bigg[\int_0^T \int_{\R^\smallfont{d}} \lVert z \rVert^{2+\varepsilon} m^Z_s(\mathrm{d}z)\mathrm{d}s\bigg]\leq R, \] 
as well as all conditions in {\rm\Cref{rule'}} for some $y_0 \in \R$ except possibly {\rm \ref{rule4'}}.
\end{lemma}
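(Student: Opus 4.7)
The proof plan is to mimic closely the argument of Lemma \ref{tight}, adapted to the present setting where the $kY$ term now sits inside the drift $\hat b$ rather than being split off through an orthogonal martingale, and where $y_0$ is an additional free parameter ranging in $[r_0,R]$ instead of the auxiliary random variable $U$. First, I would establish relative compactness by verifying the tightness criterion of Lemma \ref{tightness} for the laws of $(X,Y,m^Z)$ under $\P\in\hat{K}_{\varepsilon,R}$. A standard BDG-and-Gr\"onwall estimate—Gr\"onwall being now needed to absorb the linear-in-$y$ part of $\hat b$—yields, analogously to Lemma \ref{bdMOMstatement}, the uniform moment bound
\[
\sup_{\P \in \hat{K}_{\varepsilon,R}} \E^{\P}\Big[\sup_{t\in[0,T]} \lVert (X_t,Y_t)\rVert^p\Big] < \infty,
\]
for some $p>1$, which combined with the $\lVert z\rVert^{2+\varepsilon}$-bound gives condition \ref{T1} of Lemma \ref{tightness}. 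Condition \ref{T2} follows from the truncated pointwise estimate \eqref{eqn:bound_L}, whose additional linear-in-$y$ term is absorbed into the $M$-dependent constant after stopping by $\sigma^{M,\varepsilon}$. Tightness of the $\Mc([0,T],\R^d)$-marginal is identical to that in Lemma \ref{tight}, and tightness of the initial data $y_0 \in [r_0,R]$ is immediate since the set is bounded.

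For a weak cluster point $\P'$ of a sequence $(\P_n)\subset \hat{K}_{\varepsilon,R}$, the moment bound passes to the limit by applying the Portmanteau theorem to the lower semicontinuous nonnegative map $m^Z \longmapsto \int_0^T\int_{\R^d} \lVert z\rVert^{2+\varepsilon}\,m^Z_s(\mathrm{d}z)\mathrm{d}s$ and using continuity of the projection $\omega \longmapsto Y_0(\omega)$. Condition (i) of Definition \ref{rule'} then follows from the boundedness of $\sigma$; condition (iii) is a closed constraint and is preserved; and condition (iv) follows from Lemma \ref{fatou}, since the linear lower bound on $\hat{h}^i$ from Assumption \ref{assPrincip2}(iii) together with the uniform moment bounds on $\lVert X\rVert$ and $\lvert Y\rvert$ provides asymptotic uniform integrability of $\hat{h}^{i,-}$.

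The main obstacle, as in Lemma \ref{tight}, is verifying the martingale property (ii) at the limit. The strategy is to split, for $\phi \in \Cc_c^2(\R^{d+1},\R)$ and any continuous bounded $\Fc_s^c$-measurable $h_s$, the increment $\E[h_s(M^\phi_t - M^\phi_s)]$ through a $z$-truncation at level $M$. The truncated piece is a continuous functional of $(X,Y,m^Z)$ on the set of measures giving no mass to $\{\lVert z\rVert = M\}$ (which has full $\P'$-mass for all but countably many $M$) and therefore converges by Lemma \ref{lem:continuity_of_phi}, while the tail piece is uniformly small in $n$ via the analogue of \eqref{eqn:unif_conv.}, using Assumption \ref{assPrincip2}(i) to majorise $\lvert\hat{\mathscr{L}}\phi\rvert\mathbf{1}_{\{\lVert z\rVert \geq M\}}$ by a constant times $\lVert z\rVert^2$ on the compact support of $\phi$, on which $\lvert y\rvert$ is automatically bounded and thus the new $ky$ drift term causes no additional trouble. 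Condition \ref{rule4'}, on the other hand, is precisely the property that is \emph{not} claimed to pass to the limit: the requirement that $m^Z$ be $\F^{X(\cdot)}$-adapted on a suitable extension is a measurability constraint that weak convergence does not respect, which is the loss of closedness flagged in the introduction of this subsection and explains why the lemma only asserts a relative compactness statement.
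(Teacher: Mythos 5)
Your proposal is correct and follows exactly the route the paper intends: the paper's own proof of this lemma simply defers to ``analogous arguments as in the proof of Lemma \ref{tight}'', and your write-up is precisely that adaptation (tightness via Lemma \ref{tightness} with the Gr\"onwall/BDG moment bound, the $z$-truncation plus Lemma \ref{lem:continuity_of_phi} for the martingale property, Lemma \ref{fatou} for the constraints). The points you single out as the only genuine changes---the linear-in-$y$ part of $\hat b$ being absorbed by Gr\"onwall and by the compact support of $\phi$, the bounded initial datum $y_0$ replacing the variable $U$, and the measurability condition \ref{rule4'} being exactly what fails to be closed under weak limits---are the right ones, so nothing further is needed.
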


\begin{proof}
Analogous arguments as in the proof of \Cref{tight}.
\end{proof}

Using these results, one can obtain a maximiser. Should this maximiser satisfy the required measurability conditions, we get existence of an optimal feedback contract. The arguments are analogous to the previous section.

\begin{corollary} \label{minFX}
Let {\rm\Cref{assPrincip2}} be valid. Then the function
\[ 
\P \longmapsto \E^{\P} \big[\hat{F}(X_{\cdot\wedge T},Y_T)\big],
\] 
admits a maximiser on $\overline{\hat{K}_{\varepsilon,R}}$ for any $q>1$ and $R>0$ such that $\overline{\hat{K}_{\varepsilon,R}}\neq\emptyset.$ Moreover, if the maximiser satisfies {\rm\Cref{rule'}}.\ref{rule4'}, then it belongs to $\hat{K}_{\varepsilon,R}.$ 
\end{corollary}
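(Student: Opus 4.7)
The plan is to combine the compactness of $\overline{\hat{K}_{\varepsilon,R}}$ furnished by \Cref{tight'} with upper-semicontinuity of the objective, exactly as in the proof of \Cref{minim}. By \Cref{tight'}, $\overline{\hat{K}_{\varepsilon,R}}$ is compact for the weak topology and every $\P$ in this closure inherits the moment bound $y_0+\E^{\P}\big[\int_0^T\int_{\R^d}\|z\|^{2+\varepsilon}m_s^Z(\mathrm{d}z)\mathrm{d}s\big]\leq R$ together with all conditions of \Cref{rule'} except possibly \ref{rule4'}. In particular, the time-$0$ marginal $y_0\geq r_0$ is deterministic and bounded.

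Next I would establish the analog of \Cref{lem:upper-sc}: the map $\P\longmapsto\E^{\P}[\hat{F}(X_{\cdot\wedge T},Y_T)]$ is upper-semicontinuous on $\overline{\hat{K}_{\varepsilon,R}}$. Since $\hat{F}$ is upper-semicontinuous by \Cref{assPrincip2}$(ii)$, \Cref{fatou} reduces this to uniform asymptotic integrability of $\hat{F}^+(X_{\cdot\wedge T},Y_T)$, which by the linear growth bound on $\hat{F}^+$ in \Cref{assPrincip2}$(ii)$ follows once one has a uniform estimate $\sup_{\P\in\overline{\hat{K}_{\varepsilon,R}}}\E^{\P}\big[\|X_{\cdot\wedge T}\|^{1+\gamma}+|Y_T|^{1+\gamma}\big]<\infty$ for some $\gamma>0$. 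Such a moment bound is obtained purely from the martingale problem, in the spirit of \Cref{bdMOMstatement}: applying the martingale property of $M^\phi$ to (suitably truncated) test functions $x\longmapsto\|x\|^{1+\gamma}$ and $y\longmapsto|y|^{1+\gamma}$, combined with \Cref{assPrincip2}$(i)$, the Burkholder--Davis--Gundy inequality and Gr\"onwall's lemma, yields a bound in terms of the moments of $\int_0^T\int_{\R^d}\|z^\top\sigma(t,X_{\cdot\wedge t})\|^2m_s^Z(\mathrm{d}z)\mathrm{d}s$. For $\gamma<\varepsilon$ small enough, the boundedness of $\sigma$ and the defining bound of $\hat{K}_{\varepsilon,R}$ control the right-hand side, also uniformly on the closure by Fatou's lemma applied to the lower-semicontinuous map inside.

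Once upper-semicontinuity on the compact set $\overline{\hat{K}_{\varepsilon,R}}$ is in hand, a maximiser $\P^\star$ exists. For the second assertion, if $\P^\star$ additionally satisfies \Cref{rule'}.\ref{rule4'}, then together with the other items of \Cref{rule'} already valid on the closure one has $\P^\star\in\hat{\Rc}(y_0)$ for the deterministic $y_0\geq r_0$ appearing in its time-$0$ marginal; combined with the moment bound, this places $\P^\star\in\hat{K}_{\varepsilon,R}$.

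The main obstacle is the moment estimate underlying upper-semicontinuity. Because cluster points need not satisfy \ref{rule4'}, no SDE representation for $(X,Y)$ is available \emph{a priori}, so the bound must be derived entirely from the martingale problem formulation. A small additional subtlety compared to the proof of \Cref{bdMOMstatement} is that, without \Cref{discF}, the $Y$-dynamics inherent in the operator $\hat{\mathscr{L}}$ depends explicitly on $Y_s$ through the drift $f+kY_s$, but the linear growth of $\hat{b}$ in $y$ granted by \Cref{assPrincip2}$(i)$ is precisely what is needed for Gr\"onwall's argument to close; once this is carried out the rest of the reasoning is routine.
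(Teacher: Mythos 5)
Your proposal is correct and follows essentially the same route as the paper: the paper's proof simply says the argument repeats that of \Cref{minim} using \Cref{tight'}, i.e. compactness of $\overline{\hat{K}_{\varepsilon,R}}$ plus an analogue of \Cref{lem:upper-sc} obtained from \Cref{fatou} and a moment bound in the spirit of \Cref{bdMOMstatement} derived from the martingale problem (with Gr\"onwall absorbing the linear $y$-dependence of $\hat b$), and the final inclusion when \Cref{rule'}.\ref{rule4'} holds is immediate from \Cref{tight'}. Your added remarks on why the bound must come from the martingale problem alone are consistent with, and slightly more explicit than, the paper's treatment.
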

\begin{proof}
The proof follows the same steps as the proof of \Cref{minim} using \Cref{tight'}.
\end{proof}

\begin{remark} $(i)$ Let us remark that the connection of the two formulations is obvious in this case. Indeed, having optimal $\xi \in \Fc_T^{X(\cdot)}$ immediately yields an optimal contract in the original, strong,  setting. Roughly speaking, this is because we impose that the weak control problem is, in fact, a strong control problem by adding an extra constraint on measurability.

\medskip $(ii)$ An analogue of {\rm \Cref{rem:uniform_integr}} holds in this case as well.
\end{remark}

\begin{remark} Let $\P \in\overline{\hat{K}_{\varepsilon,R}}$ for some $\varepsilon>0$ and $R>0.$ Using {\rm \cite[Lemma 3.7]{haussmann1990existence}}, see also {\rm \cite[Proposition 2.6]{el1987compactification}}, we can without loss of generality assume that $m^Z$ is $\F^{X,Y}$-predictable. To be precise, without changing the value of the objective function, we can replace $\P$ with $\tilde{\P}$ under which $m^Z$ $\tilde{\P}$--almost surely coincides with an $\F^{X,Y}$-predictable process $\tilde{m}^Z$. Thus, there exists a $\Pc\Mc(\F^{X,Y})\otimes \Bc(V)$-measurable $($in fact even predictable$)$ map $Z : [0,T] \times \Cc([0,T],\R^{d+1}) \times V \longrightarrow \R$ such that
\[ m^Z=Z(X,Y,\cdot)_{\#} m^0. \]
Assume that there exists a measurable function $C: [0,T] \times \Cc([0,T],\R^d) \times V \longrightarrow [0,\infty)$  such that \[ \lvert Z_s(x,y,v) - Z_s(x,y^\prime,v)  \rvert\leq C_s(x_{\cdot \wedge s},v)  \| y_{\cdot \wedge s} -y^\prime_{\cdot \wedge s} \|,\;(s,x,y,y^\prime,v)\in [0,T]\times \Cc([0,T],\R^d)\times \Cc([0,T],\R) \times \Cc([0,T],\R)\times V.  \]
Provided that the function $C$ is integrable enough $($\emph{e.g.} uniform boundedness is sufficient$)$, one can analogously as in {\rm \Cref{FXadaptedness}} verify using standard methods that $Y$ is a strong solution to the  given {\rm SDE} on any extension supporting a suitable martingale measure $M^0$. In particular $Y$ as well as $Z$ are $\F^{X(\cdot)}$-adapted. Unfortunately, verifying such regularity of a minimiser is not easily achievable in general, and this would need a case-by-case analysis.
\end{remark}

\section{Additional remarks, extensions and examples} \label{sec:additional}

\subsection{Random time horizon and retirement}
Among possible extensions of our framework let us mention the introduction of a retirement time. That is to say, similarly as in \citeauthor*{sannikov2008continuous} \cite{sannikov2008continuous}, \citeauthor*{possamai2020there} \cite{possamai2020there}, and \citeauthor*{lin2022random} \cite{lin2022random}, a contract offered by the principal consists of a pair $(\xi,\tau),$ where $\tau \in [0,\infty)$ is a, no longer necessarily bounded, $\F$--stopping time representing the retirement of the agent and $\xi,$ as before, represents the lump-sum payment at such a time. In this framework, $\xi$ is thus required to be $\Fc_\tau$-measurable. An obvious generalisation of our results is then that a contract $(\xi, \tau)$ is incentive compatible if and only if $(\xi,\tau)=(U_a^{-1}(Y_\tau),\tau),$ where $Y$ is given by \Cref{ICcontract}, respectively \Cref{ICcontractFB}, with now allowing $t \in [0,\infty),$ thus making use of a BSDE representation of the continuation utility of the agent with a random time horizon. The principal's problem would in such a case further involve maximisation over all $\F$--stopping times $\tau.$ In the weak formulation, this would then translate into an extension of the canonical space by adding the extended half-axis $[0,+\infty]$ with the canonical random variable $\tau,$ see \citeauthor*{{haussmann1990existence}} \cite[Section 3.11]{haussmann1990existence}. This formulation is indeed feasible and a similar weak control--stopping problem was, in fact, treated in \cite{haussmann1990existence}. However, note that further extension of the canonical space in the weak formulation might deepen the measurability issues pointed out earlier.

\subsection{Controlled discount factor} \label{sec:disc_f}
Let us comment on a potential relaxation of \Cref{discF}. As was mentioned in \Cref{rem_disc_fact}, \Cref{discF} allowed us to reformulate the principal's problem without the need to introduce the martingale problem in \Cref{rule} on the Skorokhod space of c\`{a}dl\`{a}g functions, which shall be denoted by $\Dc([0,T],\R)$. In the general case, we would need to consider the probability space $\Omega^{\Dc}\coloneqq\Cc([0,T],\R^{d})\times \Dc([0,T],\R) \times \Mc([0,T],\R^d) \times \Dc([0,T],\R)$  endowed with the canonical $\sigma$-algebra, canonical filtration $\F^\Dc$ and consider the canonical process $(X,Y,m^Z,C).$ We would then impose that the process
\begin{align*} M^{\phi}&\coloneqq \phi(X,Y)-\int_0^\cdot \int_{\R^\smallfont{d}} \hat{\mathscr{L}}\phi (s,X_{\cdot \wedge s},Y_s,z) m^Z_s(\mathrm{d}z)\mathrm{d}s\\
 &\quad- \int_0^\cdot \frac{1}{2}D^2_{y,y}\phi(X_s,Y_s) \mathrm{d}C_s-\sum_{0<s \leq \cdot }\Big[ \phi(X_{s},Y_s)-\phi(X_{s},Y_{s-})-\Delta Y_s  D_{y} \phi(X_s,Y_{s-}) \big],
 \end{align*} 
 is an $(\F^\Dc,\P)$-martingale and the process $C$ is non-decreasing, $\P$--almost surely, where $D_y$ and $D^2_{y,y}\phi$ denotes the first and the second derivative of $\phi$ with respect to the variable $y,$ respectively, and $\Delta Y$ denotes the process of jumps of $Y$. While this approach in principle appears to be feasible, we do not rule out that it might potentially be technically complex, and thus leave it for future research.

\subsection{Exponential utility does not help} \label{sec:exponential}
Let us consider problem \eqref{generalF} and assume that the agent has an exponential utility function. That is, the agent aims to maximise \[ \E^\P\bigg[ -\exp\bigg(-\gamma \bigg(\xi - \int_0^T \int_U \tilde{k}(s,X_{\cdot\wedge s},a)m^A(\mathrm{d}a,\mathrm{d}s)\bigg)\bigg)\bigg],\; A \in \Ac,\] for some $\gamma>0.$ This indeed fits in our framework by setting \[f(s,x,a) \coloneqq 0,\; U_a(c)\coloneqq -\mathrm{e}^{-\gamma c}\;{\rm and}\; k(s,x,u)\coloneqq -\gamma \tilde{k}(s,x,u). \]
A common method (see \emph{e.g.} \citeauthor*{hernandez2023pollution} \cite{hernandez2023pollution}, \citeauthor*{abi2024gaussian} \cite{abi2024gaussian}, \citeauthor*{martin2023risk} \cite{martin2023risk}, \citeauthor*{mastrolia2022agency} \cite{mastrolia2022agency} and numerous others) is to consider the certainty equivalent given by 
\[ \widehat{Y}\coloneqq\frac{-1}{\gamma}\log(-Y) + \int_0^\cdot \int_V \tilde{k}(s,X_{\cdot\wedge s},a(s,X_{\cdot \wedge s},Y_s,Z_s(v)))m^0(\mathrm{d}v,\mathrm{d}s). \]

By It\^{o}'s formula we obtain under $\P^{A^\star_a}$ for some $a \in \Upsilon$ the dynamics
\begin{align}\nonumber \widehat{Y}_t &= \xi- \int_t^T \int_{V} \frac{1}{2\gamma  Y_{s-}^2}Z_s^\top(v) \sigma\sigma^\top(s,X_{\cdot \wedge s})Z_s(v) m^0(\mathrm{d}v,\mathrm{d}s)+ \int_t^T \int_{V} \frac{Z_s^\top(v)}{\gamma  Y_{s-}} \sigma(s,X_{\cdot \wedge s})  \widetilde{M}^{A^\star_a}(\mathrm{d}v,\mathrm{d}s) \\
\nonumber &\quad+\int_t^T \frac{1}{\gamma  Y_{s-}} \mathrm{d}L_s + \int_t^T \frac{1}{2\gamma  Y_{s-}^2}  \mathrm{d}[L]_s + \sum_{s \in (t,T]} \Big[ \frac{1}{\gamma} (\log(-Y_s)-\log(-Y_{s-})) - \Delta Y_s \frac{1}{Y_{s-}} \Big]\\
\nonumber&= \xi - \int_t^T \int_{V}  \frac{1}{2\gamma}\widehat{Z}_{s}^\top(v) \sigma\sigma^\top(s,X_{\cdot \wedge s})\widehat{Z}(v) m^0(\mathrm{d}v,\mathrm{d}s)+ \int_t^T \int_{V} \frac{1}{\gamma} \widehat{Z}^\top(v) \sigma(s,X_{\cdot \wedge s})  \widetilde{M}^{A^\star_a}(\mathrm{d}v,\mathrm{d}s) \\
&\quad+\int_t^T \frac{1}{\gamma Y_s} \mathrm{d}L_s + \int_t^T \frac{1}{2\gamma Y_s^2}  \mathrm{d}[L]_s + \sum_{s \in (t,T]}\frac{1}{\gamma } \bigg( \log(-Y_s)-\log(-Y_{s-}) - \Delta Y_s \frac{1}{Y_{s-}} \bigg),\; t \in [0,T], \label{eqn:jumps}
  \end{align} where $\widehat{Z}_t(v)\coloneqq Z_t(v)/Y_{t-}$. This typically simplifies the right-hand side since, due to an absence of the last line in \eqref{eqn:jumps}, it no longer explicitly depends on the process $\widehat{Y},$ which we now consider as a state process. Moreover, the dynamics of $\widehat{Y}$ are generally simpler and more tractable. However, in our current setting, the presence of the orthogonal martingale $L$ makes such a transformation less interesting, and, in fact, it complicates the problem further due to the presence of the quadratic variation $[L]$ and jumps. Consequently, we encounter the same issue as explained in \Cref{sec:disc_f}, as we end up with a general controlled jump--diffusion.

\subsection{Non-relaxed controls} \label{sec:non-relaxed}
Let $(y_0,m^Z) \in [r_0,\infty)  \times \Vc$ be admissible for problem \eqref{FX} and assume that there exists a progressive process $Z : \Omega \times [0,T] \longrightarrow \R^d$ such that \[m^Z_t(\mathrm{d}z)=\delta_{Z_t}(\mathrm{d}z),\; t \in [0,T],\;\P^0\text{\rm--a.s.}\] Then the optimal responses to the contract $\xi\coloneqq U_a^{-1}\big(Y_T^{Y_\smallfont{0},m^\smallfont{Z},L}\big)$ are given by
\[ 
m^A_t(\mathrm{d}a)=\delta_{a(t,X_{\cdot \wedge t},Y_s,Z_s)}(\mathrm{d}a)\; t \in [0,T],\;\P^0\text{\rm--a.s.,} 
\] for some $a \in \Upsilon.$ That is, roughly speaking, if the principal does not randomise her action, neither does the agent and the problem reduces to the standard setting considered \emph{e.g.} in \cite{cvitanic2018dynamic}. Analogous results can be obtained for the problem \eqref{generalF}. Indeed, this might be of interest in practice since relaxed controls, while being a natural extension, might have unclear interpretation. 

\medskip In the case of the `weakened' principal's problem introduced in \Cref{relPrincip,sec:feedbackContr}, we have the following result in this regard, similar arguments were also used in the work of {\rm \citeauthor*{djete2023stackelberg} \cite{djete2023stackelberg}}. Interested readers may find more references therein as well as in \cite{el1987compactification,haussmann1990existence}.

\begin{example} Let us assume that the set \[ K(t,x,y)\coloneqq \big\{ \big( a(t,x,z),b(t,x,y,z) \big) : z \in \R^d \big\}, \] is convex for $\lambda$--almost every $t \in [0,T]$, for every $(x,y) \in \Cc([0,T],\R^{d+1})$, and that $-\hat{F}$ and $\hat{h}^i$ are convex functions.  Then for every $\P \in \hat{\Rc}(y_0)$ for some $y_0 \geq r_0$ there exists $\bar{\P} \in \hat{\Rc}(y_0)$ such that $\E^{\P} \big[F(X_{\cdot\wedge T},Y_T)\big]=\E^{\bar{\P}} \big[F(X_{\cdot\wedge T},Y_T)\big]$ and \[m^Z_t(\mathrm{d}z)=\delta_{Z_t}(\mathrm{d}z),\; t \in [0,T],\;\bar{\P}\text{\rm --a.s.},\] for some progressive process $Z : \Omega^c \times [0,T] \longrightarrow \R^d$. This follows directly from {\rm\cite[Theorem 3.6]{haussmann1990existence}}. Analogous results can be obtained for $\P \in \Rc.$
\end{example}

\subsection{Bounds on moments are in a certain sense necessary} \label{subsec:compactness}
Let us point our that, due to the nature of the problem, the coefficients $a$ and $\hat{a}$ introduced in \Cref{relPrincip,sec:feedbackContr} have inherently quadratic growth in the $z$ entry. Indeed, since the volatility coefficient of the process $\Yc,$ resp. $Y$, is linear in $z$, the quadratic variation is then necessarily quadratic. As a consequence, we consider it unlikely that the compactness conditions in \Cref{minim}, resp. \Cref{rem:uniform_integr}, and \Cref{minFX}, can be relaxed at this generality, and are in line the results of \citeauthor*{haussmann1990existence} \cite{haussmann1990existence}. As a matter of fact, we have the following observation.

\begin{remark}
Let $\xi \in \Ic\Cc$ be such that $\E^{\P^\smallfont{0}}\big[\lvert U_a(\xi) \rvert^p\big]<\infty,$ where $p>1$ is given in {\rm\Cref{assUkf}}. Using \eqref{bdBSDE} we obtain the following bound
\begin{equation*} 
\E^{\P^\smallfont{0}} \bigg[ \sup_{t \in [0,T]}\lvert Y_t \rvert^p + \bigg( \int_0^T \int_{\R^\smallfont{d}} \big\| z^\top \sigma(x,X_{\cdot \wedge s}) \big\|^2 m^Z_s(\mathrm{d}z)\mathrm{d}s \bigg)^\frac{p}{2} + \sup_{t \in [0,T]} \lvert L_t \rvert^{p} \bigg] \leq K \E^{\P^\smallfont{0}}\bigg[ \lvert U_a(\xi) \rvert^p +  \bigg( \int_0^T \lvert g_s(0,0) \rvert \mathrm{d}s\bigg)^p\bigg].
\end{equation*} Moreover, similarly as in {\rm\Cref{int}} and exploiting once again boundedness of $\lambda,$ one can show that for any $q \in (1,p)$ there exists a constant $c_q>0$ such that for any $A \in \Ac$ we have 
\begin{equation*} 
\E^{\P^\smallfont{A}} \bigg[ \sup_{t \in [0,T]}\lvert Y_t \rvert^q + \bigg(\int_0^T \int_{\R^\smallfont{d}} \big\| z^\top \sigma(x,X_{\cdot \wedge s}) \big\|^2 m^Z_s(\mathrm{d}z)\mathrm{d}s \bigg)^\frac{q}{2} + \sup_{t \in [0,T]} \lvert L_t \rvert^{q} \bigg] \leq c_q.
\end{equation*} It is then clear that for any $a \in \Upsilon$ we have
\[ \lim_{M \rightarrow \infty} \E^{\P^\smallfont{A_a^\star}} \bigg[ \bigg(\int_0^T \int_{\R^\smallfont{d}} \big\| z^\top \sigma(x,X_{\cdot \wedge s}) \big\|^2 \mathbf{1}_{\{\|z\| \geq M \}} m^Z_s(\mathrm{d}z)\mathrm{d}s \bigg)^\frac{q}{2} \bigg]=0, \] by the dominated convergence theorem. In other words, if there exists an optimal contract $\xi^\star \in \Ic\Cc$ in the principal's problem such that $U_a(\xi^\star)$ is sufficiently integrable, $p$ in {\rm\Cref{assUkf}} is large enough and $\sigma$ is uniformly elliptic, then the condition in {\rm \Cref{rem:uniform_integr}} is satisfied with $K \coloneqq \{ \Q^{A_a^\star} \},$ where $\Q^{A_a^\star} \in \Pc(\Omega^c \times \R)$ is the law of $(X,\Yc,m^Z,L_T)$ under $\P^{A_a^\star}.$ Therefore, this condition is in a sense also necessary. However, as pointed out earlier, showing that we can \emph{a priori} restrict to optimisation over a set satisfying such an assumption might not be easily attainable.
\end{remark}

\subsection{Inter-temporal payments}
A possible extension of our setting is to consider inter-temporal payments to the agents. Because these would play a role of a control in the principal's problem, we need to introduce its relaxed version if we wish to extend our methods to this framework.

\medskip
More specifically, this would involve the introduction of contracts as pairs $(\xi, \pi)$, where $\xi \in \Ic\Cc$ and $\pi : [0,T] \times \Omega \times \Bc(V) \longrightarrow Y$ is $\Pc\Mc \otimes \Bc(V)$-measurable, Polish-space--valued and represents the `relaxed' flow of inter-temporal payments. In this case, the agent's problem could be formulated as \[ 
\sup_{A \in \Ac}\E^{\P^{\smallfont{A}}} \bigg[K_{T}^A(X_{\cdot \wedge T}) U_a(\xi)- \int_0^T \int_U K_{s}^A(X_{\cdot \wedge s}) f(s,X_{\cdot \wedge s},A_s(v),\pi_s(v))
m^0(\mathrm{d}v,\mathrm{d}s)  \bigg],\; A \in \Ac,
\] where $f : [0,T]\times \Cc([0,T],\R^d) \times U \times Y \longrightarrow \R$ is the running cost of the agent. The principal's problem would then involve maximisation over measures on $Y \times \R^{d},$ instead of just $\R^d,$ of the form \[ m^Z_s(\mathrm{d}y,\mathrm{d}z)=(\pi_s(v),Z_s(v))_{\#} m^0_s(\mathrm{d}v),\; s \in [0,T]. \]
While our setting can be extended to this framework quite easily, the practical interpretation of the measure-valued inter-temporal payments is unclear. If we, however, have appropriate convexity properties of the involved functions, see \Cref{sec:non-relaxed}, we can recover the usual, non-relaxed setting.

\subsection{Volatility control}
Potential extensions of our framework include a scenario in which the agent controls the volatility coefficient as well. That is, the coefficient $\sigma$ is now a measurable function
\[ \sigma : [0,T] \times \Cc([0,T],\R^d) \times U \longrightarrow \R^{d \times k} \] and if the agent chooses a control $A \in \Ac$, then the state process $X$ follows under $\P^{A}$ the dynamics
\begin{equation*}
X_t=x_0+\int_0^t \int_V\sigma(s,X_{\cdot \wedge s},A_s(v))\lambda(s,X_{\cdot \wedge s},A_s(v)) m^0(\mathrm{d}v,\mathrm{d}s)+\int_0^t\int_V \sigma(s,X_{\cdot \wedge s},A_s(v))\,\widetilde{M}^A(\mathrm{d}v,\mathrm{d}s),\; t \in [0,T].
\end{equation*}
Equivalently,
\begin{equation*}
X_t=x_0+\int_0^t \int_U \sigma(s,X_{\cdot \wedge s},u)\lambda(s,X_{\cdot \wedge s},u) \,m^A(\mathrm{d}u,\mathrm{d}s)+\int_0^t\int_U \sigma(s,X_{\cdot \wedge s},u)\,M^A(\mathrm{d}u,\mathrm{d}s),\; t \in [0,T],\; \P^{A}\text{\rm--a.s.}
\end{equation*}
We expect that our setting extends to this framework. In such a case, we conjecture that the set of all incentive compatible contracts will be, similarly as in \cite[Theorem 3.6]{cvitanic2018dynamic}, characterised by means of a suitable form of second-order BSDE driven by the martingale measure $X(\cdot).$ We leave the study of this case for future research.

\section{Applications and implementability}\label{sec:applications}

This section explains $(i)$ why realistic constraints on contracts make the classical PDE approach essentially intractable, and $(ii)$ how the relaxed formulation yields existence \emph{and} implementability through finitely many observable statistics of the driving martingale measure. We provide three application templates and a constructive finite-observation approximation result.

\subsection{Why realistic constraints defeat the PDE route}\label{subsec:whyPDE}
In the dynamic programming approach à la \cite{cvitanic2018dynamic}, the principal’s value function solves a fully non-linear, degenerate parabolic PDE with state variables $x$ and the agent’s continuation utility $y$. Under realistic restrictions on $\xi$ and on intermediate payments, boundary/obstacle conditions are defined by potentially \emph{discontinuous} functionals of $(X,\xi)$ (\emph{e.g.} liquidation caps, budget/risk constraints), so the HJB gets coupled to additional obstacle equations. There is no general regularity theory in this setting; in particular, neither smoothness nor comparison giving closed-form verification is available in general dimensions. As a result, the PDE route cannot deliver existence of optimal contracts beyond very special, tractable cases.

\medskip
By contrast, the relaxed formulation recasts the principal’s problem as a weak control problem in which the control is the \emph{measure–valued} $m^Z$ encoding the distribution of $Z$ across~$V$. Under \Cref{assPrincipal}, the class of admissible laws is tight/compact and the objective/constraints are stable under weak convergence. Hence existence follows by standard compactness/semicontinuity arguments \emph{without} any PDE regularity.

\subsection{Three application templates}\label{subsec:templates}

Throughout, we fix a maximiser $a\in\Upsilon$, write the principal’s objective as in \Cref{sec:principal}, and we allow a possibly uncountable family of constraints
\[
\mathbb{E}^\P\big[g_i(X_{\cdot\wedge T},\xi)\big]\le 0,\; i\in I,
\]
as well as integral constraints depending on $m^Z$, see \Cref{rem:constraints}

\subsubsection{Limited liability, liquidation, budget/risk caps.}
Impose simultaneously
\[
0\le \xi \le \ell(X_T),\; \mathbb{E}^\P[\xi]\le B,\; \mathrm{CVaR}^\P_\alpha(\xi)\le R_\alpha.
\]
The first pair are the \emph{hard} constraints, already present in \Cref{constraints} and remain admissible as lower semicontinuous functionals. The constraint involving $\mathrm{CVaR}^\P_\alpha$ (the conditional value-at-risk at level $\alpha$) is convex and lower semicontinuous in law, thus stable under weak limits. If the first constraint is potentially reachable via PDEs, as exemplified recently in \citeauthor*{hernandez2024closed} \cite{hernandez2024closed}, it leads to a PDE on domain, whose boundary is given by the solution to another PDE, and for which boundary conditions are themselves given by yet other PDEs. As exemplified in a future work, see \citeauthor*{krsek2025limited} \cite{krsek2025limited}, regularity results for even the simplest version of these problems remain elusive. The other two constraints are worse in the sense that as far as we know, they cannot be simply tackled using PDEs. Notwithstanding, in the relaxed problem, they integrate seamlessly and existence follows from compactness of admissible laws.

\subsubsection{Prudential and pathwise drawdown constraints.}
Let $D_u(X,\xi)$ denote a pathwise drawdown functional at time $u\in[0,T]$ applied either to the agent’s continuation value or the principal’s interim P\&L. Require
\[
\mathbb{E}^\P\big[D_u(X,\xi)\big]\le 0,\; \forall u\in[0,T].
\]
This creates an \emph{uncountable} family $(g_u)_{u\in[0,T]}$, which our framework handles by construction. The PDE route would need to encode a continuum of moving obstacles, which is analytically prohibitive, if even possible.

\subsubsection{Demand–response and procurement with monotonic tariffs and variation caps.}
In settings like electricity demand response or PPP procurement, common policy constraints include:
\begin{itemize}
  \item non–negativity and monotonicity of tariffs in state variables (\emph{e.g.} consumption shortfall or delivery quality);
  \item fairness across time‑of‑use (majorisation-type inequalities);
  \item caps on the intra‑day variation of payment schedules.
\end{itemize}
Similar problems, without constraints have been tackled in \cite{aid2022optimal} or \cite{elie2021mean}. These constraints generically become convex, and l.s.c. in $(X,\xi)$ and, by \Cref{rem:constraints}, may include integral terms in $m^Z$ capturing average intensity of the principal’s signal extraction. Existence again follows from the same compactness arguments, and is completely out of reach for PDE methods.

\subsection{Finite-observation implementability}\label{subsec:finite-obs}

A central practical question is: must the principal observe $X_s(A)$ for \emph{all} $s\in[0, T]$ and all $A\in\mathcal{B}(V)$ to evaluate a contract? The answer is no. The completed canonical $\sigma$-algebra generated by $\{X_s(A)\}$ is \emph{countably} generated: there exists a countable family $\{G_j\}_{j\in\N^{\smallfont \star}}$ of open sets in $\mathcal{B}(V)$ and dyadic times such that
\[
\Fc_T^{X(\cdot)}
=\sigma\big(X_{i2^{\smallfont{-}\smallfont{n}}}(G_j): (i,n,j)\in\mathbb{N}^3\big) \vee \sigma(\mathcal{N}),
\]
see \Cref{FXfiltr}. Consequently, every contract $\xi$ will admit approximants depending on finitely many co‑ordinates of the form $X_{i2^{\smallfont{-}\smallfont{n}}}(G_j)$.

\begin{proposition}[Finite-statistic implementation]\label{prop:finite-obs}
Fix an admissible $\Fc_T^{X(\cdot)}$-measurable contract $\xi$ and assume that there's a finite number of constraints.  Assume that the principal’s objective $U_p$ as well as the functions $g^i$, $i \in \{1,\ldots,N\},$ are continuous in the second entry and of linear growth. Then for every $\varepsilon>0$ there exist positive integers $(n,k)$ and a contract
\[
\xi^{(n,k)} \; \text{\rm measurable with respect to}\; \sigma\big(X_{i2^{\smallfont{-}\smallfont{n}}}(G_j): i\in\{1,\dots, 2^n\},\; j\in\{1,\dots,k\}\big),
\]
such that
\[
\Big| \mathbb{E}^\P\big[U_p(X_{\cdot\wedge T},\xi^{(n,k)})\big] - \mathbb{E}^\P\big[U_p(X_{\cdot\wedge T},\xi)\big] \Big| \le \varepsilon
\enspace {\rm and}\enspace
\sup_{i\in \{1,\ldots, N\} } \mathbb{E}^\P\big[g_i(X_{\cdot\wedge T},\xi^{(n,k)})\big] \le \varepsilon.
\]
\end{proposition}
\begin{proof}
For notational simplicity, let $\Gc_{n}\coloneqq\sigma\big(X_{i2^{\smallfont{-}\smallfont{n}}}(G_j): i\in\{1,\dots, 2^n\},\; j\in\{1,\dots,n\}\big)$. Up to redefining $\xi$ on a $\P$--null set, we may assume that $\xi$ is $(\bigvee_{n \in \N} \Gc_{n})$-measurable, see \Cref{FXfiltr}. Let us set $\xi^{n}\coloneqq \E^{\P}[\xi | \Gc_{n}]$. Since $\E^{\P} [ |\xi|]< \infty$, we have that the process $(\xi^{n})_{n \in \N^\star}$ is a (discrete-time) $\P$--uniformly integrable martingale with respect to the filtration $(\Gc_n)_{n \in \N^\smallfont{\star}}$. By the martingale convergence theorem, we have $\lim_{n \rightarrow \infty}\xi^n=\xi,$ $\P$--almost surely and in $\L^1(\R,\Fc^{X(\cdot)},\P)$.

\medskip
As $U_p$ and $g^i$, $i \in \{1,\ldots,N\}$ have at most linear growth, are continuous in the second entry, and  the sequence $(\xi^n)_{n \in \N^\smallfont{\star}}$ is $\P$--uniformly integrable, we have that
 \[ \lim_{n \rightarrow \infty} \mathbb{E}^\P\big[U_p(X_{\cdot\wedge T},\xi^{n})\big] =  \mathbb{E}^\P\big[U_p(X_{\cdot\wedge T},\xi)\big],\; {\rm and}\; \lim_{n \rightarrow \infty} \mathbb{E}^\P\big[g_i(X_{\cdot\wedge T},\xi^{n})\big] =  \mathbb{E}^\P\big[g_i(X_{\cdot\wedge T},\xi^{n})\big], \; i \in \{1,\ldots,N\}. \] It thus suffices to find an $n \in \N^\star$ large enough to conclude. 
 \end{proof}

Practically speaking, one may fix positive integers $M$ and $N$, choose a time grid $(t_i)_{i\in\{1,\dots,N\}}$ and a finite measurable partition $(A_j)_{j\in\{1,\dots,M\}}$ of $V$ generated by the $G_j$; design a contract as a function of the \emph{observable} statistics $(X_{t_i}(A_j))_{(i,j)\in\{1,\dots,N\}\times\{1,\dots, M\}}$. Then,  \Cref{prop:finite-obs} guarantees that this finite‑statistic contract is $\varepsilon$-close in both objective and constraints to any target admissible contract. Economically, this is the continuous‑time analogue of the one-period randomisation and compactness argument of \cite{kadan2017existence}, now made concrete via a finite menu of statistics rather than the whole $\sigma$-algebra. 

\medskip
\medskip We point out that the probability measure $\P$ in \Cref{prop:finite-obs} might not be optimal for the agent given the contracts $\xi^{n,k}$, $(n,k) \in \N^2.$ Ensuring that the agent’s response remains optimal when approximating is generally a difficult problem due to the two-stage optimization procedure. However, when one wishes to discretize only the `space' component of the martingale measure, one may use the chattering lemma to approximate the relaxed control in the principal’s problem by a sequence of standard (non-relaxed) controls. This provides an approximation that preserves optimality in the agent’s problem and offers a way to interpret the relaxed contract as a cluster point of non-relaxed contracts in a suitable sense. We refer to \citeauthor*{el1987compactification} \cite[Section 4]{el1987compactification} and the references therein for further details, as well as 
\citeauthor*{mezerdi2002necessary} \cite{mezerdi2002necessary}, \citeauthor*{bahlali2006approximation} \cite{bahlali2006approximation} and \citeauthor*{bahlali2018relaxed} \cite{bahlali2018relaxed}. The main reasons we do not show this in our setting are the relatively strict conditions required for the chattering lemma to hold and the additional difficulties arising from the presence of constraints, since there is generally no guarantee that a constrained contract can be approximated in this way.

\subsection{When randomisation is \texorpdfstring{not}{not} needed}\label{subsec:no-rand}
Randomisation is a tool for compactness; if the primitives are convex/concave in the appropriate variables, the relaxed and pure problems coincide. \Cref{sec:non-relaxed} details conditions under which the principal can recover a pure $Z$ from $m^Z$ (standard measurable selection and Carathéodory convexity). Conversely, as the one–period literature (see \cite{kadan2017existence}) emphasises, randomisation is sometimes \emph{necessary} for existence; our dynamic framework inherits this feature via $m^Z$ while preserving implementability through \Cref{prop:finite-obs}.

{\small
\bibliography{bibliographyDylan}}
\appendix 
\section{BSDEs}
\subsection{Preliminary results}
In this part of the appendix, we work in the setting of \Cref{agent}. We fix $p>1$ throughout this section.
\begin{lemma}\label{k-w} The space $\L^p_d(\F,\P^0,X(\cdot))$ is a Hilbert space and every martingale $M \in \M^p(\F,\P^0)$ admits unique decomposition
\begin{align*}
M_t =\int_0^t \int_V Z_s(v)\cdot X(\mathrm{d}v,\mathrm{d}s)+\int_0^t\mathrm{d}L_s,\; t \in [0,T],\;\P^0\text{\rm--a.s.,}
\end{align*} 
for some $Z \in \L^p_d(\F,\P^0,X(\cdot))$ and $L \in \M^p_{X(\cdot)^\smallfont{\perp}}(\F,\P^0).$
\end{lemma}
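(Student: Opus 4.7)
The plan is to proceed in three stages, following the classical Kunita--Watanabe scheme adapted to martingale measures.

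First, I would establish that $\L^p_d(\F,\P^0,X(\cdot))$ is a complete normed space. Given a Cauchy sequence $(Z^n)_{n \in \N^\star}$, the defining norm controls $\big(\int_0^T \int_V \|(Z^n - Z^m)_s^\top \sigma(s,X_{\cdot \wedge s})\|^2 \, m^0(\mathrm{d}v,\mathrm{d}s)\big)^{1/2}$ in $L^p(\P^0)$; extracting a subsequence converging $\P^0 \otimes m^0$--almost everywhere and invoking Fatou's lemma produces a limit $Z \in \L^p_d(\F,\P^0,X(\cdot))$. When $p = 2$, the natural bilinear form $\langle Z, Z'\rangle \coloneqq \E^{\P^0}\big[\int_0^T\int_V Z_s(v)^\top \sigma\sigma^\top(s,X_{\cdot\wedge s}) Z'_s(v)\, m^0(\mathrm{d}v,\mathrm{d}s)\big]$ induces the norm and gives the Hilbert structure.

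Second, I would show that the stochastic integral map $\Phi: Z \longmapsto Z \bullet X$ is a continuous injection from $\L^p_d(\F,\P^0,X(\cdot))$ into $\M^p(\F,\P^0)$ with closed range. By the construction recalled above \Cref{PF1}, $[Z \bullet X]_T = \int_0^T\int_V \|Z_s(v)^\top \sigma(s,X_{\cdot\wedge s})\|^2 \, m^0(\mathrm{d}v,\mathrm{d}s)$, so the Burkholder--Davis--Gundy inequality yields two-sided bounds $c_p \|Z\|_{\L^p_d}^p \leq \|Z \bullet X\|_{\M^p}^p \leq C_p \|Z\|_{\L^p_d}^p$; this gives continuity, injectivity, and closedness of the image $\mathcal{I}^p \coloneqq \Phi(\L^p_d)$.

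Third, the decomposition. For $p = 2$, the Hilbert space $\M^2(\F,\P^0)$ equipped with $(M,N) \longmapsto \E^{\P^0}[M_T N_T]$ admits the orthogonal projection onto the closed subspace $\mathcal{I}^2$, providing $Z \in \L^2_d$ with $L \coloneqq M - Z \bullet X \perp \mathcal{I}^2$ in $\M^2$; since $\E^{\P^0}[L_T (Z' \bullet X)_T] = \E^{\P^0}[[L, Z' \bullet X]_T] = 0$ for every $Z' \in \L^2_d$, a polarization/stopping argument upgrades this to $[L, Z' \bullet X] \equiv 0$, so $L \in \M^2_{X(\cdot)^\perp}$. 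For general $p > 1$, I would use a localization procedure: choose stopping times $\tau_n \nearrow T$ such that $M^{\tau_n}$ is bounded, hence in $\M^2$; apply the $p = 2$ decomposition to obtain consistent $Z^n$ and $L^n$; using uniqueness (proved below) the restrictions agree and paste into a global pair $(Z, L)$. The two-sided BDG bound combined with $\|M\|_{\M^p} < \infty$ then forces $Z \in \L^p_d$ and $L \in \M^p_{X(\cdot)^\perp}$.

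Uniqueness is immediate: if $Z_1 \bullet X + L_1 = Z_2 \bullet X + L_2$, then $(Z_1 - Z_2) \bullet X = L_2 - L_1$, whose quadratic variation with any $Z' \bullet X$ both vanishes (by the orthogonality of the right-hand side) and equals $\int_0^\cdot\int_V (Z_1-Z_2)_s(v)^\top \sigma\sigma^\top(s,X_{\cdot\wedge s}) Z'_s(v)\, m^0(\mathrm{d}v,\mathrm{d}s)$ (by the integral bracket identity recalled in \Cref{prelim}); taking $Z' = Z_1 - Z_2$ and using the full rank condition from \Cref{asssigmamu} forces $Z_1 = Z_2$ in $\L^p_d$, and hence $L_1 = L_2$.

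The main technical obstacle will be propagating the Hilbert-space projection from $p = 2$ to general $p > 1$ while keeping control of the orthogonality condition under localization, since $\M^p_{X(\cdot)^\perp}$ is defined through bracket orthogonality rather than through an $L^p$ inner product. This is handled by the BDG-based isometry together with uniqueness. Alternatively, one may invoke directly the Kunita--Watanabe type decomposition for martingale measures proved in \cite[Section III]{el1990martingale}, which essentially yields the statement.
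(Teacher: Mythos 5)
Your proposal is correct and amounts to carrying out in detail exactly what the paper does by citation: its proof simply invokes the classical Kunita--Watanabe decomposition (Protter, and \cite{el1990martingale}) as a straightforward generalisation, which is the projection-plus-localisation argument you spell out. The only cosmetic point is that injectivity of $Z\longmapsto Z\bullet X$ and the conclusion $Z_1=Z_2$ in the uniqueness step hold modulo the kernel of $Z\longmapsto Z^\top\sigma$, i.e.\ in the (semi-)norm defining $\L^p_d(\F,\P^0,X(\cdot))$, which is how equality in that space is understood anyway.
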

\begin{proof} The lemma is a straightforward generalisation of the Kunita--Watanabe decomposition, see for instance \citeauthor*{protter2005stochastic} \cite[page 183]{protter2005stochastic}.
\end{proof}

\begin{lemma} \label{blumethal} The $\sigma$-algebra $\Fc_0^{X(\cdot)}$ is $\P^0$-trivial, that is to say
\[
\P^0(A) \in \{0,1 \},\; A \in\Fc_0^{X(\cdot)}.
\]
\end{lemma}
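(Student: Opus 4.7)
The plan is to reduce the lemma to a Blumenthal-type zero-one law for a countable family of jointly Gaussian continuous martingales. By \Cref{rem:corresp}, the filtrations generated by $X(\cdot)$ and by $M^0(\cdot)$ have the same $\P^0$-augmentation, so it suffices to prove that the usual augmentation of $\F^{o,M^{0}(\cdot)}$ is $\P^0$-trivial at time $0$. Since $M^0_0(A)=0$ for every $A\in\Bc(V)$, the raw $\sigma$-algebra $\Fc_0^{o,M^{0}(\cdot)}$ is already trivial, and the entire content of the lemma is that the right-continuous augmentation does not add new non-trivial events, which is exactly a Blumenthal-type statement.

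Applying \Cref{FXfiltr} to $M^0$, I fix a countable collection $(G_j)_{j\in\N^\star}$ of open subsets of $V$ such that, modulo $\P^0$-null sets, $\Fc_T^{X(\cdot)}$ is generated by the countable family $\{M^{0,\ell}_{iT/2^n}(G_j)\}$ ranging over $n, i, j, \ell$. For any finite selection of indices $(\ell_a,j_a)_{a=1}^n$, the $\R^n$-valued continuous $(\F,\P^0)$-martingale $N_t\coloneqq (M^{0,\ell_a}_t(G_{j_a}))_{a=1}^n$ has the \emph{deterministic} cross-bracket $[N^a,N^{a'}]_t=\mathbf{1}_{\{\ell_a=\ell_{a'}\}}\mu(G_{j_a}\cap G_{j_{a'}})t$. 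By the multivariate L\'evy characterisation, $N$ is a (possibly degenerate) Brownian motion under $\P^0$, and in particular, for any $0\leq s\leq t_1,\ldots,t_K\leq T$, the concatenated vector of future increments $(N_{t_k}-N_s)_{k=1}^K$ is jointly Gaussian and independent of $\Fc_s$.

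To conclude, I take $A\in\Fc_0^{X(\cdot)}$; by right-continuity, $A\in\Fc_s$ for every $s>0$. For arbitrary $0<t_1,\ldots,t_K\leq T$, indices $(\ell_a,j_a)_{a=1}^n$, and bounded continuous $f:\R^{nK}\to\R$, the previous paragraph yields, for $0<s<\min_k t_k$,
\[
\E^{\P^0}\big[\mathbf{1}_A f\big((M^{0,\ell_a}_{t_k}(G_{j_a})-M^{0,\ell_a}_s(G_{j_a}))_{a,k}\big)\big]=\P^0(A)\,\E^{\P^0}\big[f\big((M^{0,\ell_a}_{t_k}(G_{j_a})-M^{0,\ell_a}_s(G_{j_a}))_{a,k}\big)\big].
\]
Letting $s\to 0^+$ and using path continuity together with $M^0_0=0$, bounded convergence gives independence of $\mathbf{1}_A$ from the joint law of $(M^{0,\ell_a}_{t_k}(G_{j_a}))_{a,k}$. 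A standard monotone class argument, combined with the countable generation recalled in the second paragraph, extends this to independence of $A$ from $\Fc_T^{X(\cdot)}$; but $A\in\Fc_0^{X(\cdot)}\subseteq\Fc_T^{X(\cdot)}$, so $A$ is independent of itself and $\P^0(A)\in\{0,1\}$. The main technical step is the multivariate L\'evy argument keeping track of the double indexing (coordinate $\ell$ and test set $G_j$) in $M^0$; the rest is a routine Blumenthal-type adaptation, with the countable dense family from \Cref{FXfiltr} playing the role that dyadic times play in the classical proof for Brownian motion.
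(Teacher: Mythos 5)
Your proof is correct and follows essentially the same route as the paper's: Lévy-type characterisation of the processes $M^0(G_j)$ (deterministic brackets $\Rightarrow$ Gaussian increments independent of the past), a limit $s\to 0^+$ using path continuity and boundedness, the countable generation from \Cref{FXfiltr} plus a $\pi$-system argument, and the conclusion that $\Fc_0^{X(\cdot)}$ is independent of itself. The only cosmetic difference is that you invoke the multivariate Lévy characterisation with a possibly degenerate constant covariance, whereas the paper disjointifies the test sets and normalises (treating $\mu$-null sets separately) to produce a genuine nondegenerate Brownian motion before applying the Markov property.
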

\begin{proof} First, let us note that since $\sigma$ is assumed to be of full rank and \Cref{x} admits unique strong solution, we can w.l.o.g. assume $x_0=0$ and that $d=k$ and $\sigma \equiv \mathrm{I}_{d \times d}$ for simplicity (see also \Cref{rem:corresp}). Let us fix an arbitrary positive integer $n$, and fix $(A_1,\ldots A_n) \in \Bc(V)^n.$ 

\medskip
\emph{Step 1:}
in the first step, we show that the process that for any $s\in(0,T)$
\[
M_t^s(A_1,\ldots,A_n)\coloneqq \big(X_{t}(A_1)-X_{s}(A_1),\ldots,X_{t}(A_n)-X_{s}(A_n) \big), \; t \in (0,T-s],
\]
is $\P^0$-independent of $\Fc_s^{X(\cdot)}$. If $\mu(A_i)=0$ for some $i \in \{1,\ldots,n \},$ then 
\[
\E^{\P^\smallfont{0}} \big[X_t(A_i)^2\big] = \mu(A_i)t=0,\; t \in [0,T]. 
\] 
Hence, $X(A_i)\equiv 0.$ We therefore w.l.o.g. assume $\mu(A_i)>0,$ for every $i \in \{1,\ldots,n \}.$ For simplicity of notation, assume that $n=2$.  The general case can be proved in the very same fashion. It is clear that there is a bijective relation between the process $M^s(A_1,A_2)$ and 
\begin{align}\label{brm} 
\bigg(\frac{X_t(A_1 \setminus A_2)-X_s(A_1 \setminus A_2)}{\sqrt{\mu(A_1 \setminus A_2)}}, \frac{X_t(A_2 \setminus A_1)-X_s(A_2 \setminus A_1)}{\sqrt{\mu(A_2 \setminus A_1)}} , \frac{X_t(A_1 \cap A_2)-X_s(A_1 \cap A_2)}{\sqrt{\mu(A_1 \cap A_2)}} \bigg)_{t \in (0,T-s]}. 
\end{align} 
It is clear that \eqref{brm} is a $3d$-dimensional $(\F^{X(\cdot)},\P^0)$--Brownian motion by L\'{e}vy's characterisation. Independence with respect to $\Fc_s^{X(\cdot)}$ is then immediate by the Markov property. 

\medskip
 \emph{Step 2:} let us have arbitrary $0<t_1\leq \ldots \leq t_n$, as well as $\psi \in \Cc_b(\R^{d \cdot n}, \R)$ and $B \in \Fc_0^{X(\cdot)},$ where $\Cc_b(\R^{d \cdot n}, \R)$ denotes the set of all continuous bounded functions from $\R^{d \cdot n}$ to $\R.$ It follows from continuity and boundedness that
\begin{align*}
\E^{\P^\smallfont{0}} \Big[\mathbf{1}_B \psi\big(X_{t_1}(A_1),\ldots,X_{t_n}(A_n)\big)&=\lim_{s \searrow 0}\E^{\P^\smallfont{0}} \mathbf{1}_B \psi\big(X_{t_1}(A_1)-X_{s}(A_1),\ldots,X_{t_n}(A_n)-X_{s}(A_n)\big)\Big] \\
&=\P^0[B]\lim_{s \searrow 0} \E^{\P^\smallfont{0}}\big[\psi\big(X_{t_1}(A_1)-X_{s}(A_1),\ldots,X_{t_n}(A_n)-X_{s}(A_n)\big)\big]\\
&=\P^0[B]\E^{\P^\smallfont{0}}\big[ \psi\big(X_{t_1}(A_1),\ldots,X_{t_n}(A_n)\big)\big].
\end{align*} 
Hence, $\Fc_0^{X(\cdot)}$ is $\P^0$-independent of $\sigma ((X_s(A_i) : i \in \{1,\ldots,n\},\; t>0).$ It is clear to see from continuity of paths that 
\[
\sigma ((X_s(A_i) : i \in \{1,\ldots,n\},\; t>0)=\sigma ((X_s(A_i) : i \in \{1,\ldots,n\},\; t\geq0).
\]

\emph{Step 3:} by \Cref{FXfiltr}, the filtration $\F^{X(\cdot)}$ is the augmentation of $\F^o=(\Fc^o_t)_{t\in[0,T]}$ with
\[ 
\Fc^{o}_t\coloneqq \bigvee_{k \in \N^\smallfont{\star}}\Fc_t^k,\; t \in [0,T], \; \text{where}\; \Fc_t^k=\sigma \big( X_s(G_j) : s \in [0,t],\; j\in\{0,\ldots,k\} \big) \vee \sigma(\Nc),\; t \in [0,T],\; k\in\N^\star, 
\]
for a countable system of open sets $\{ G_j : j \in \N^\star \}$ and where $\Nc$ denotes the system of all $\P^0$-null sets. We note that $\cup_{k \in \N^\smallfont{\star}}\Fc_t^k$ is a $\pi$-system generating $\Fc^{o}_t,$ which is $\P^0$-independent of $\Fc_0^{X(\cdot)}.$ Using Dynkin's lemma, we conclude that $\Fc^{o}_t$ is $\P^0$-independent of $\Fc_0^{X(\cdot)}$ for any $t>0.$

\medskip
\emph{Step 4:} by definition
\[
\Fc_0^{X(\cdot)}=\bigcap_{k \in \N^\smallfont{\star}} \Fc^{o}_{1/k} \vee \sigma(\Nc).
\]
It is then simple to see that, necessarily, $\Fc_0^{X(\cdot)}$ is $\P^0$-independent of itself, which concludes the proof.
\end{proof}

\begin{lemma} \label{martRep} 
Assume that $\F=\F^{X(\cdot)}$. Let $M \in \M^2(\F,\P^0).$ Then, there exists a unique $Z \in \L^2_d(\F,\P^0,X(\cdot))$ such that 
\begin{align} \label{mrep}
M_t =\int_0^t \int_V Z_s(v)\cdot X(\mathrm{d}v,\mathrm{d}s),\; t \in [0,T],\; \P^0\text{\rm--a.s.}
\end{align}
\end{lemma}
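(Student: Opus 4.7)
The plan is to invoke the Kunita--Watanabe decomposition and then show that the orthogonal remainder vanishes under the assumption that the filtration is generated by the martingale measure itself.

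First, by \Cref{k-w} applied to $M \in \M^2(\F,\P^0)$, there exist $Z \in \L^2_d(\F,\P^0,X(\cdot))$ and $L \in \M^2_{X(\cdot)^\smallfont{\perp}}(\F,\P^0)$ such that
\[
M_t = \int_0^t \int_V Z_s(v)\cdot X(\mathrm{d}v,\mathrm{d}s) + L_t,\; t \in [0,T],\; \P^0\text{--a.s.,}
\]
with $L_0=0$ by the definition of $\M^2$ and $M_0=0$ by triviality of $\Fc_0^{X(\cdot)}$ (\Cref{blumethal}). The whole task is to show $L \equiv 0$; uniqueness of $Z$ then follows at once from the It\^o isometry, which is built into the norm on $\L^2_d(\F,\P^0,X(\cdot))$.

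To kill $L$, I would pass from $X(\cdot)$ to $M^0(\cdot)$. By \Cref{invsigma} and \Cref{rem:corresp}, the left invertibility of $\sigma$ yields that stochastic integrals against $X(\cdot)$ and against $M^0(\cdot)$ are linearly equivalent (every integrand $\zeta \in \R^k$ against $M^0$ corresponds to $Z=(\sigma^{-1}_{\rm left})^\top \zeta$ against $X(\cdot)$, with the same integral), and that $\F^{X(\cdot)}=\F^{M^\smallfont{0}(\cdot)}$. Hence $L$ is orthogonal to every stochastic integral $\int_0^\cdot \int_V \zeta_s(v)\cdot M^0(\mathrm{d}v,\mathrm{d}s)$ with $\zeta$ in the relevant $\L^2$ space.

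Next, for any bounded deterministic $\phi:[0,T]\times V \longrightarrow \R^k$, the exponential
\[
\Ec^\phi_t \coloneqq \exp \bigg( \int_0^t \int_V \phi_s(v)\cdot M^0(\mathrm{d}v,\mathrm{d}s) - \tfrac{1}{2}\int_0^t \int_V \lVert \phi_s(v)\rVert^2 m^0(\mathrm{d}v,\mathrm{d}s)\bigg)
\]
is an $(\F,\P^0)$-martingale that, by It\^o's formula, equals $1+\int_0^\cdot \int_V \Ec^\phi_s\phi_s(v)\cdot M^0(\mathrm{d}v,\mathrm{d}s)$. Since $\mu$ is deterministic, for finitely many step-function $\phi$'s the random variables $\int_0^T\int_V \phi_s(v)\cdot M^0(\mathrm{d}v,\mathrm{d}s)$ are jointly Gaussian. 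Combining a standard Fourier/monotone-class argument with \Cref{FXfiltr}---which guarantees that $\Fc_T^{X(\cdot)}$ is generated, up to null sets, by countably many coordinates $M^0_{it/2^\smallfont{n}}(G_j)$---one obtains that the linear span of $\{\Ec^\phi_T : \phi \text{ bounded deterministic step function}\}$ is dense in $\L^2(\Fc_T^{X(\cdot)},\P^0)$. The orthogonality $\E^{\P^\smallfont{0}}[L_T\Ec^\phi_T]=\E^{\P^\smallfont{0}}[L_0]=0$ for every such $\phi$ then forces $L_T=0$, and the martingale property propagates this to $L \equiv 0$.

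The main obstacle is the density claim in the last paragraph. The heart of it is really the classical Fourier argument for Gaussian vectors, but it must be combined carefully with the fact that $\Fc_T^{X(\cdot)}$ is generated by the countable family from \Cref{FXfiltr} (one first truncates to finitely many coordinates, applies Gaussian density, and then passes to the limit via a monotone-class argument). Everything else---Kunita--Watanabe, passing between $X(\cdot)$ and $M^0(\cdot)$, It\^o's formula for the exponential, and uniqueness---is routine.
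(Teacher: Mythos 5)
Your proof is correct, but it takes a genuinely different route from the paper's. The paper never invokes the Kunita--Watanabe decomposition: it proves the representation constructively, reducing (via \Cref{invsigma} and \Cref{rem:corresp}) to $\sigma\equiv \mathrm{I}_{d\times d}$, representing functionals of finitely many coordinates $X(A_1),\dots,X(A_\ell)$ by disjointifying the sets, normalising by $\sqrt{\mu(\cdot)}$, applying the multidimensional L\'evy characterisation to get a finite-dimensional Brownian motion and then the classical Brownian martingale representation theorem, and finally passing to general $\zeta\in\L^2(\Fc_T,\P^0)$ by conditioning on the countably generated $\sigma$-algebras of \Cref{FXfiltr}, taking $\L^2$-limits and truncating. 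You instead start from \Cref{k-w} and reduce everything to showing $L\equiv 0$, which you obtain from the totality in $\L^2(\Fc_T^{X(\cdot)},\P^0)$ of stochastic exponentials $\Ec^\phi_T$ of deterministic step integrands against $M^0$ (note that orthogonality to these is immediate from the definition of $\M^2_{X(\cdot)^\smallfont{\perp}}$, since $\Ec^\phi=1+\int\int \Ec^\phi_s\phi_s(v)\cdot M^0(\mathrm{d}v,\mathrm{d}s)$ can be rewritten as an integral against $X(\cdot)$ with integrand $Z_s(v)=\Ec^\phi_s(\sigma^{-1}_{\rm left})^\top\phi_s(v)\in\L^2_d(\F,\P^0,X(\cdot))$, so no separate transfer of orthogonality is even needed). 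The two ingredients are the same in both proofs---\Cref{FXfiltr} plus the disjointification/L\'evy argument that turns finitely many coordinates $M^0_{t_\smallfont{i}}(G_j)$ into a jointly Gaussian (indeed Brownian) vector; in particular your remark ``since $\mu$ is deterministic the integrals are jointly Gaussian'' is exactly this step and should be justified that way, just as the paper does in its Step 2. What your route buys is the classical, cleaner architecture (KW plus a totality lemma), avoiding the explicit construction of integrands and the Cauchy-sequence bookkeeping of the paper's Steps 4--5; what the paper's route buys is that each finite-dimensional step reduces to the off-the-shelf Brownian MRT, so no density-of-exponentials lemma has to be proved in the martingale-measure setting---which is precisely the one nontrivial claim you flag and would still have to write out (finite-dimensional Gaussian/Fourier argument, then monotone class and martingale convergence through the filtration of \Cref{FXfiltr}). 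Minor remark: $M_0=0$ and $L_0=0$ are already part of the definition of $\M^2(\F,\P^0)$, so the appeal to \Cref{blumethal} there is not needed.
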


\begin{proof} Let us first prove uniqueness. Let us have two $(Z,\tilde{Z}) \in \L^2_d(\F,\P^0,X(\cdot))\times \L^2_d(\F,\P^0,X(\cdot))$ satisfying the property. Then
\begin{align*}
0=\E^{\P^\smallfont{0}} \bigg[ \bigg(\int_0^T \int_V \big(Z_s(v)-\tilde{Z}_s(v)\big)\cdot X(\mathrm{d}v,\mathrm{d}s) \bigg)^2\bigg]=\E^{\P^\smallfont{0}} \int_0^T \int_V \big\| \big(Z_s(v)-\tilde{Z}_s(v)\big)^\top\sigma(s,X_{\cdot \wedge s})\big\|^2 m^0(\mathrm{d}v,\mathrm{d}s),
\end{align*} 
which yields $Z=\tilde{Z}.$

\medskip
Now we prove existence. It is sufficient to show that for every random variable $\zeta \in \L^2(\Fc_T,\P^0),$ such that $\E^{\P^\smallfont{0}}[\zeta]=0,$ there exists $Z \in \L^2_d(\F,\P^0,X(\cdot))$ such that 
\begin{align*}
\zeta=\int_0^T \int_V Z_s(v)\cdot X(\mathrm{d}v,\mathrm{d}s),\quad t \in [0,T],\; \P^0\text{\rm--a.s.}
\end{align*} 
Because $\sigma$ is assumed to be of full rank and \Cref{x} admits unique strong solution, we can w.l.o.g. assume that $d=k$ and $\sigma \equiv \mathrm{I}_{d \times d}$ (see \eqref{invsigma} and \Cref{rem:corresp}).

\medskip
\emph{Step 1:} let us first assume $\zeta=g(X_{\cdot \wedge T}(A))$ for some measurable $g: \Cc([0,T],\R^d) \longrightarrow \R$ and $A \in \Bc(V).$ If $m^0_t(A)=\mu(A)=0,\, t \in [0,T],$ then 
\[ 
\E^{\P^\smallfont{0}} \big[X_t(A)^2\big] = \E^{\P^\smallfont{0}} \big[\mu(A)t\big]=0,\; t \in [0,T].
\]
Hence, $X(A)\equiv 0$ and we can choose $Z \equiv 0.$ If $\mu(A) > 0,$ then by L\'{e}vy's characterisation, $W_t\coloneqq X_t(A)/\sqrt{\mu(A)}, t \in [0,T],$ is a $d$-dimensional Brownian motion and hence (recall that $\mu(A)$ is deterministic), we can use the standard martingale representation in the $\P^0$-completed canonical filtration of $W,$ denoted by $\F^W,$ to get existence of $\tilde{Z} \in \L^2_d(\F^W, \P^0,W)$ such that 
\[ 
g\big(\mu(A)W_{\cdot \wedge T}\big)=\int_0^T \tilde{Z}_s\cdot\mathrm{d}W_s,\; \P^0\text{\rm--a.s.}
\]
It then suffices to take $Z_s(v)=\frac{1}{\sqrt{\mu(A)}}\tilde{Z}_s\mathbf{1}_{\{ v \in  A \}},$ $t \in [0,T]$, $v \in V$, to get
\[ 
g\big(X_{\cdot \wedge T}(A)\big)=g\big(\mu(A)W_{\cdot \wedge T}\big)=\int_0^T \tilde{Z}_s\cdot\mathrm{d}W_s= \int_0^T \int_V Z_s(v)\cdot X(\mathrm{d}v,\mathrm{d}s),\;  \P^0\text{\rm--a.s.}
\]

\emph{Step 2:} let us assume next that $\zeta=g(X_{\cdot \wedge T}(A),X_{\cdot \wedge T}(B))$ for some measurable $g: \Cc([0,T],\R^d)^2 \longrightarrow \R$ and $(A,B) \in \Bc(V)^2.$ Similarly as before, we can w.l.o.g. assume $\mu(A)>0$, and $\mu(B)>0.$ We note that we can rewrite 
\[
g\big(X_{\cdot \wedge T}(A),X_{\cdot \wedge T}(B)\big)=\tilde{g}\big(X_{\cdot \wedge T}(A \setminus B),X_{\cdot \wedge T}(B \setminus A),X_{\cdot \wedge T}(A \cap B)\big),
\]
for $\tilde{g}(a,b,c)\coloneqq g(a+c,b+c).$ Again, by L\'{e}vy's characterisation, the process 
\[
(W^1_t,W^2_t,W^3_t)\coloneqq \bigg(\frac{1}{\sqrt{\mu(A \setminus B)}}X_t(A \setminus B), \frac{1}{\sqrt{\mu(B \setminus A)}} X_t(B \setminus A), \frac{1}{\sqrt{\mu(A \cap B)}}X_t(A \cap B) \bigg),\; t \in [0,T],
\] 
is a $3d$-dimensional Brownian motion because $(A \setminus B)$, $(B \setminus A)$, and $(A \cap B)$ are pairwise disjoint. Hence, similarly as before 
\[
\tilde{g}\big(X_{\cdot \wedge T}(A \setminus B),X_{\cdot \wedge T}(B \setminus A),X_{\cdot \wedge T}(A \cap B)\big)= \sum_{i=1}^3 \int_0^T \tilde{Z}^i_s\mathrm{d}W^i_s,
\]
for some $\tilde{Z}^i \in \L^2_d(\F^{W^i},\P^0,W^i)$, $i \in \{1,2,3\}.$ Therefore we can take
\[ 
Z_s(v)\coloneqq \frac{1}{\sqrt{\mu(A \setminus B)}}\tilde{Z}_s^1\mathbf{1}_{\{ v \in  A \setminus B\}}+\frac{1}{\sqrt{\mu(B \setminus A)}}\tilde{Z}_s^2\mathbf{1}_{\{ v \in  B \setminus A\}}+\frac{1}{\sqrt{\mu(A \cap B)}}\tilde{Z}_s^3\mathbf{1}_{\{ v \in  A \cap B\}}.
\]

\emph{Step 3:} if $\zeta=g\big((X_{\cdot \wedge T}(A_i))_{i\in\{1,\ldots,\ell\}}\big)$ for some Borel-measurable $g: \Cc([0,T],\R^d)^\ell \longrightarrow \R$, $\ell \in \N^\star$, and $A_i\in \Bc(V),$ for $i \in \{1, \ldots, \ell\},$ we can proceed exactly as in the previous step.

\medskip
\emph{Step 4:} by \Cref{FXfiltr}, there exists a system of open sets $\{ G_j: j \in \N^\star \}$ such that $\Fc_T=\bigvee_{k \in \N^\smallfont{\star}} \Fc^{k}_T$, with 
\[
\Fc^{k}_T=\sigma \big( X_{\cdot \wedge T}(G_j) : j\in\{1,\ldots,k\}\big) \vee \sigma(\Nc) . 
\] 

Let now $\zeta \in \Fc_T$ be bounded and define $\zeta^{k}\coloneqq \E^{\P^\smallfont{0}}[ \zeta \vert \Fc_T^{k}],$ for $k \in \N^\star.$ By the Doob--Dynkin lemma, for any $k\in\N^\star$
\[
\zeta^{k}=g^{k}\big((X_{\cdot \wedge T}(G_j))_{j\in\{1,\ldots,k\}}\big),\; \P^0\text{\rm--a.s.,}
\] 
for some Borel-measurable $g^{k}.$ Hence, we can use \emph{Step 3} to conclude that  there exists $Z^{k} \in \L^2_d(\F,\P^0,X(\cdot))$ such that
\[ 
\zeta^{k}=\int_0^T \int_V Z^{k}_s(v)\cdot X(\mathrm{d}v,\mathrm{d}s),\; \P^0\text{\rm--a.s.}
\] 
Since by martingale convergence theorem and boundedness we have $\zeta^{k} \longrightarrow \zeta$ in $\L^2(\Fc_T,\P^0)$ for $k \longrightarrow \infty,$ we can easily verify that $(Z^{k})_{k\in\N^\smallfont{\star}}$ is a Cauchy sequence in $\L^2_d(\F,\P^0,X(\cdot))$ and, hence, admits a limit $Z$ there. Consequently
\[ 
\zeta=\int_0^T \int_V Z_s(v)\cdot X(\mathrm{d}v,\mathrm{d}s),\; \P^0\text{\rm--a.s.}
\]

\emph{Step 5:} if $\zeta \in \L^2(\Fc_T,\P^0)$ is in general unbounded, we define $\zeta^n\coloneqq (-n \vee \zeta) \wedge n$, for $n \in \N^\star.$ We use step 4 to get the desired representation for $\zeta^n$ and, since again $\zeta^n \longrightarrow \zeta$ in $\L^2(\Fc_T,\P^0),$ we again conclude by passing to the limit.

\end{proof}

\begin{remark} By standard localisation techniques, one can show that for any $(\F,\P^0)$--local martingale $M$ with $M_0=0$, there is $Z \in \L^2_{d,\mathrm{loc}}(\F,\P^0,X(\cdot)),$ such that {\rm \Cref{mrep}} holds. Moreover, if $M \in \M^p(\F,\P^0),$ then $Z \in \L^p_d(\F,\P^0,X(\cdot)).$
\end{remark}

\subsection{Well-posedness for BSDEs}
\begin{assumption}\label{lips} \begin{enumerate}
\item[$(i)$] The process $g_\cdot(y,z)$ is $\F$--progressively measurable for any $(y,z)\in \R \times \R^d$, and 
\[
\E^{\P^\smallfont{0}}\bigg[ \bigg(\int_0^T \lvert g_s(0,0) \rvert^2\mathrm{d}s \bigg)^\frac{p}{2}\bigg] < \infty. 
\]
\item[$(ii)$] There exists $C>0$ such that for all $(\omega,t) \in \Omega \times [0,T]$, and for all $(y,z,y^\prime,z^\prime)\in \R \times \R^d\times\R\times\R^d$
\[
\big| g_t (\omega,y,z)-g_t (\omega,y^\prime,z^\prime)\big| \leq C\big(\lvert y-y^\prime\rvert + \big\| (z-z^\prime)^\top\sigma(t,X_{\cdot \wedge t}(\omega)) \big\| \big).
\] 
\end{enumerate} 
\end{assumption}

\begin{theorem}\label{Exbsde} Let $\zeta \in \L^p(\Fc_T,\P^0).$ If {\rm\Cref{lips}} holds, then there is a unique solution $(Y,Z,L) \in \S^p(\F,\P^0) \times \L^p_d(\F,\P^0,X(\cdot))\times \M^p_{X(\cdot)^\smallfont{\perp}}(\F,\P^0)$ to the {\rm BSDE}
\begin{equation}\label{bsde} 
Y_t=\zeta-\int_t^T \int_V g_{s}(Y_s,Z_s(v))m^0(\mathrm{d}v,\mathrm{d}s) - \int_t^T \int_V Z_s(v) \cdot X(\mathrm{d}v,\mathrm{d}s) - \int_t^T\mathrm{d}L_s,\; t \in [0,T],\; \P^0\text{\rm--a.s.} 
\end{equation}
Moreover, there exists a constant $K>0$ such that
\begin{equation} \label{bdBSDE}
\| Y \|_{\S^\smallfont{p}(\P^\smallfont{0})}^p + \| Z \|^p_{\L^\smallfont{p}_\smallfont{d}(\F,\P^\smallfont{0},X(\cdot))} + \| L \|_{\M^\smallfont{p}(\P^\smallfont{0})}^p \leq K \E^{\P^\smallfont{0}}\bigg[ \lvert \zeta \rvert^p +  \bigg( \int_0^T \lvert g_s(0,0) \rvert \mathrm{d}s\bigg)^p\bigg].
\end{equation}
\end{theorem}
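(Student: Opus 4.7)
The plan is to follow the classical Pardoux--Peng BSDE well-posedness scheme, adapted to the martingale-measure setting by means of the Kunita--Watanabe decomposition of \Cref{k-w}. I would organise the proof in three steps: (i) solve the equation when the generator is a prescribed process $h$ independent of $(Y,Z)$; (ii) establish a priori estimates of the form \eqref{bdBSDE}; (iii) run a Picard iteration / Banach fixed point in the space $\mathbb{B}^p \coloneqq \S^p(\F,\P^0) \times \L^p_d(\F,\P^0,X(\cdot))\times \M^p_{X(\cdot)^\smallfont{\perp}}(\F,\P^0)$.

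For step (i), given any $\F$-progressive $h$ with $\E^{\P^\smallfont{0}}\bigl[\bigl(\int_0^T |h_s|\mathrm{d}s\bigr)^p\bigr]<\infty$, I would define $M_t \coloneqq \E^{\P^\smallfont{0}}\bigl[\zeta - \int_0^T h_s\mathrm{d}s\bigm| \Fc_t\bigr]$, which lies in $\M^p(\F,\P^0)$ by Doob's inequality, and invoke \Cref{k-w} to obtain a unique $(Z,L) \in \L^p_d(\F,\P^0,X(\cdot))\times \M^p_{X(\cdot)^\smallfont{\perp}}(\F,\P^0)$ with $M_\cdot = M_0 + \int_0^\cdot \int_V Z_s(v) \cdot X(\mathrm{d}v,\mathrm{d}s) + L_\cdot$. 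Then $Y_t \coloneqq M_t + \int_0^t h_s\mathrm{d}s$ satisfies the BSDE by direct inspection, and $Y \in \S^p$ again by Doob. For step (ii), on any solution I would apply It\^o's formula: in the case $p=2$ it yields, using orthogonality $[X(\cdot),L]\equiv 0$,
\begin{align*}
|Y_t|^2 + \int_t^T \!\int_V \|Z_s(v)^\top\sigma(s,X_{\cdot\wedge s})\|^2 m^0(\mathrm{d}v,\mathrm{d}s) + [L]_T - [L]_t = |\zeta|^2 + 2\int_t^T \!\int_V Y_s\, g_s(Y_s,Z_s(v))\,m^0(\mathrm{d}v,\mathrm{d}s) - 2\int_t^T Y_{s-}\mathrm{d}\mathcal{N}_s,
\end{align*}
with $\mathcal{N}$ the martingale part. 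Lipschitz continuity of $g$, Young's inequality to absorb the $\|Z^\top\sigma\|^2$ term, Gronwall, and the BDG inequality then deliver \eqref{bdBSDE}. For general $p>1$ I would repeat the computation with $(|Y|^2+\varepsilon)^{p/2}$ and pass to the limit $\varepsilon\downarrow 0$, which is the route of \citeauthor*{briand2013simple} and handles the singularity when $p<2$.

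For step (iii), I would define $\Phi : \mathbb{B}^p \to \mathbb{B}^p$ by sending $(Y',Z',L')$ to the step-(i) solution for the data $h_s \coloneqq g_s(Y'_s, Z'_s(v))$; the requisite $p$-integrability of $h$ follows from the Lipschitz assumption and the norms of $(Y',Z')$. Equipping $\mathbb{B}^p$ with an equivalent $\beta$-weighted norm (using the weight $\mathrm{e}^{\beta s}$) and applying the a priori estimate of step (ii) to the BSDE satisfied by the difference of two images of $\Phi$—whose terminal datum is $0$ and whose generator is Lipschitz with the same constant—I would show that, for $\beta$ large enough, $\Phi$ becomes a strict contraction. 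Banach's theorem then yields a unique fixed point $(Y,Z,L)$, and applying step (ii) to this fixed point, together with the bound $|g_s(Y_s,Z_s(v))| \leq |g_s(0,0)| + C(|Y_s| + \|Z_s(v)^\top\sigma(s,X_{\cdot\wedge s})\|)$ reabsorbed via Young's inequality, produces the final estimate \eqref{bdBSDE}.

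The main obstacle is the estimate of step (ii) for $p\neq 2$: since $y\mapsto |y|^p$ fails to be $\Cc^2$ near the origin when $p<2$, the direct It\^o calculation is not licit and one has to carry out the $\varepsilon$-regularisation carefully in the presence of the discontinuous orthogonal martingale $L$, checking that the localised BDG-plus-Young scheme still yields the right constants uniformly in $\varepsilon$. The martingale-measure formulation itself introduces no genuinely new difficulty because $L\in\M^p_{X(\cdot)^\smallfont{\perp}}$ is orthogonal to the $X(\cdot)$-integral by construction, so It\^o's formula decouples the two martingale contributions exactly as in the Brownian case.
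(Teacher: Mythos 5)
Your proposal is correct and follows essentially the same route as the paper: the Kunita--Watanabe decomposition of \Cref{k-w} supplies the candidate $(Z,L)$ for a frozen generator, and a Banach fixed point in an exponentially $\beta$-weighted norm yields existence and uniqueness, with orthogonality of $L$ to $X(\cdot)$ cancelling the cross-bracket in the It\^o computation exactly as you note. The only (minor) difference is that the paper runs the contraction in $\H^2\times\L^2_d(\F,\P^0,X(\cdot))$ for $p=2$ and then delegates the $\S^p$ estimate and the cases $p\neq 2$ to standard arguments (citing \citeauthor*{bouchard2018unified} \cite{bouchard2018unified}), whereas you contract directly in the $p$-spaces and carry out the $(|y|^2+\varepsilon)^{p/2}$ regularisation yourself.
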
 
\begin{proof} 
Let us first assume that $p=2$ and let us define $\H^2(\F,\P^0)$ as the space of all $\R$-valued, $\F$-optional processes $Y$ such that
\[ \E^{\P^\smallfont{0}}\bigg[ \int_0^T Y_s^2\mathrm{d}s\bigg] < \infty. \] Let us for $\beta>0$ and for $(Y,Z) \in \H^2(\F,\P^0) \times \L^2_d(\F,\P^0,X(\cdot))$ define
\[ 
\lVert (Y,Z) \rVert_\beta^2 \coloneqq \E^{\P^\smallfont{0}}\bigg[ \int_0^T \int_V \mathrm{e}^{\beta s} \big( Y_s^2+\lVert Z_s(v)^\top\sigma(s,X_{\cdot \wedge s}) \rVert^2 \big)m^0(\mathrm{d}v,\mathrm{d}s)\bigg]. \] 
It is straightforward to verify that $\left(\H^2(\F,\P^0) \times \L^2_d(\F,\P^0,X(\cdot)),  \lVert (\,\cdot\,,\,\cdot\,) \rVert_\beta\right)$ is a Banach space for any $\beta>0.$ Let us further define $\Theta: \H^2(\F,\P^0) \times \L^2_d(\F,\P^0,X(\cdot)) \rightarrow \H^2(\F,\P^0) \times \L^2_d(\F,\P^0,X(\cdot))$ by the following construction. For $(y,z) \in \H^2(\F,\P^0) \times \L^p_d(\F,\P^0,X(\cdot)),$ we apply Lemma \ref{k-w} to get the decomposition 
\[ \zeta - \int_0^T\int_V g_s(y_s,z_s(v))m^0(\mathrm{d}v,\mathrm{d}s)=\E^{\P^\smallfont{0}}\bigg[\zeta - \int_0^T\int_V g_s(y_s,z_s(v))m^0(\mathrm{d}v,\mathrm{d}s) \bigg]+\int_0^T\int_V Z_s(v) \cdot X(\mathrm{d}v,\mathrm{d}s)+\int_0^T\mathrm{d}L_s,
\]
and define 
\[
Y_t\coloneqq \E^{\P^\smallfont{0}} \bigg[ \zeta - \int_0^T\int_V g_s(y_s,z_s(v))m^0(\mathrm{d}v,\mathrm{d}s) \bigg\vert \Fc_t\bigg]+\int_0^t\int_V g_s(y_s,z_s(v))m^0(\mathrm{d}v,\mathrm{d}s).
 \]
We set $\Theta(y,z)\coloneqq (Y,Z).$ It is straightforward to verify that $(Y,Z,L) \in \H^2(\F,\P^0) \times \L^2_d(\F,\P^0,X(\cdot)) \times \M^2_{X(\cdot)^\smallfont{\perp}}(\F,\P^0).$ From the construction, we see that 
\[
Y_t=\zeta-\int_t^T\int_V g_s(y_s,z_s(v))m^0(\mathrm{d}v,\mathrm{d}s)-\int_t^T\int_V Z_s(v)\cdot X(\mathrm{d}v,\mathrm{d}s)-\int_t^T\mathrm{d}L_s.
\]
 Thus, it suffices to verify that $\Theta$ has a unique fixed-point. To that end, let us show that $\Theta$ is a contraction. Let us have $(y,z,y^\prime,z^\prime) \in (\H^2(\F,\P^0) \times \L^2_d(\F,\P^0,X(\cdot)))^2$ and let us denote by $(Y,Z)$ and $(Y^\prime,Z^\prime)$ the respective images under $\Theta$, and $L$, $L^\prime$ the corresponding orthogonal martingales. Let us set $\delta Y\coloneqq Y-Y^{\prime},$ $\delta y\coloneqq y-y^{\prime},$ and $\delta g\coloneqq g(y,z(v))-g(y^{\prime},z^{\prime}(v))$. Processes $\delta L, \delta Z$ and $\delta z$ are defined analogously.  We have
\begin{align*}
\delta Y_t^2=\delta Y_0^2 +\int_0^t 2\delta Y_{s-}\mathrm{d}\delta Y_s + \int_0^t \mathrm{d}[\delta Y]_s &=\delta Y_0^2+\int_0^t\int_V 2\delta Y_{s-} \delta g_s m^0(\mathrm{d}v,\mathrm{d}s) \\
&\quad +\int_0^t\int_V 2 \delta Y_{s-} \delta Z_s(v)\cdot X(\mathrm{d}v,\mathrm{d}s) +\int_0^t 2\delta Y_{s-} \mathrm{d}\delta L_s \\
&\quad +\int_0^t\int_V \lVert \delta Z_s(v)^\top \sigma(s,X_{\cdot \wedge s})\rVert^2 m^0(\mathrm{d}v,\mathrm{d}s)+\int_0^t\mathrm{d}[\delta L]_s. \\
\end{align*}
Consequently
\begin{align*}
\mathrm{e}^{\beta T}\delta Y_T^2 -  \delta Y_0^2 
&=\int_0^T \beta \mathrm{e}^{\beta s}\delta Y_s^2\mathrm{d}s  + \int_0^T\int_V 2\mathrm{e}^{\beta s}\delta Y_{s-}\delta g_s m^0(\mathrm{d}v,\mathrm{d}s) \\
&\quad +\int_0^t\int_V 2\mathrm{e}^{\beta s}\delta Y_{s-} \delta Z_s(v) \cdot X(\mathrm{d}v,\mathrm{d}s) +\int_0^t 2\mathrm{e}^{\beta s}\delta Y_{s-} \mathrm{d}\delta L_s \\
&\quad +\int_0^t\int_V \mathrm{e}^{\beta s}\lVert \delta Z_s(v)^\top \sigma(s,X_{\cdot \wedge s})\rVert^2 m^0(\mathrm{d}v,\mathrm{d}s)+\int_0^t \mathrm{e}^{\beta s}\mathrm{d}[\delta L]_s. 
\end{align*} 
It is straightforward to verify that both stochastic integrals are true martingales. Consequently, since $Y_T=Y^{\prime}_T=\zeta,$ we have
\begin{align*}
&\E^{\P^\smallfont{0}}\bigg[ \int_0^T \int_V  \mathrm{e}^{\beta s} \big(\beta \delta Y_s^2+\lVert \delta Z_s(v)^\top \sigma(s,X_{\cdot \wedge s}) \rVert^2 \big) m^0(\mathrm{d}v,\mathrm{d}s) \bigg] \\
&= \E^{\P^\smallfont{0}}\bigg[ -\delta Y_0^2 - \int_0^T \mathrm{e}^{\beta s}\mathrm{d}[\delta L]_s  -\int_0^T\int_V 2\mathrm{e}^{\beta s}\delta Y_{s-}\delta g_s m^0(\mathrm{d}v,\mathrm{d}s)\bigg] \\
& \leq \E^{\P^\smallfont{0}}\bigg[\int_0^T\int_V 2\mathrm{e}^{\beta s}\lvert \delta Y_{s-} \rvert \lvert \delta g_s \rvert m^0(\mathrm{d}v,\mathrm{d}s)\bigg] \\
& \leq \E^{\P^\smallfont{0}}\bigg[\int_0^T\int_V 2C \mathrm{e}^{\beta s}\lvert \delta Y_{s-} \rvert \big(\lvert \delta y_s \rvert + \lVert \delta z_s(v)^\top \sigma(s,X_{\cdot \wedge s}) \rVert \big) m^0(\mathrm{d}v,\mathrm{d}s)\bigg] \\
&\leq C \alpha \E^{\P^\smallfont{0}}\bigg[\int_0^T \mathrm{e}^{\beta s} \delta Y_{s-}^2 \mathrm{d}s + 2\frac{C}{\alpha}\int_0^T\int_V \mathrm{e}^{\beta s} \delta y_s^2 + \lVert \delta z_s(v)^\top \sigma(s,X_{\cdot \wedge s}) \rVert^2  m^0(\mathrm{d}v,\mathrm{d}s)\bigg],
\end{align*} for any $\alpha >0.$ Note that we can write $\delta Y$ instead of $\delta Y_-$ in the last expression. We conclude that
\begin{align*}
&\E^{\P^\smallfont{0}}\bigg[ \int_0^T \int_V  \mathrm{e}^{\beta s} \big((\beta -\alpha C ) \delta Y_s^2+\lVert \delta Z_s(v)^\top \sigma(s,X_{\cdot \wedge s}) \rVert^2 \big) m^0(\mathrm{d}v,\mathrm{d}s) \bigg] \\
&\leq 2\frac{C}{\alpha}\E^{\P^\smallfont{0}}\bigg[\int_0^T\int_V \mathrm{e}^{\beta s} \big(\delta y_s^2 + \lVert \delta z_s(v)^\top \sigma(s,X_{\cdot \wedge s}) \rVert^2 \big) m^0(\mathrm{d}v,\mathrm{d}s)\bigg].
\end{align*}
It suffices to take $\alpha>0$ such that $2\frac{C}{\alpha}<1$ and then $\beta$ such that $\beta -\alpha C=1$ to get a contraction. We conclude that there exists a unique solution $(Y,Z,L) \in \H^2(\F,\P^0) \times \L^2_d(\F,\P^0,X(\cdot)) \times \M^2_{X(\cdot)^\smallfont{\perp}}(\F,\P^0)$. The fact that $Y \in \S^2(\F,\P^0)$ as well as bound \eqref{bdBSDE} can be shown by standard techniques, see \emph{e.g.} \citeauthor*{bouchard2018unified} \cite{bouchard2018unified}.

\medskip
Let now $p \geq 2.$ Then, by the first part, there exists a unique solution $(Y,Z,L) \in \S^2(\F,\P^0) \times \L^2_d(\F,\P^0,X(\cdot)) \times \M^2_{X(\cdot)^\smallfont{\perp}}(\F,\P^0).$ Moreover, we have $(Y,Z,L) \in \S^p(\F,\P^0) \times \L^p_d(\F,\P^0,X(\cdot)) \times \M^p_{X(\cdot)^\smallfont{\perp}}(\F,\P^0)$ again by usual arguments. Uniqueness in $\S^p(\F,\P^0) \times \L^p_d(\F,\P^0,X(\cdot)) \times \M^p_{X(\cdot)^\smallfont{\perp}}(\F,\P^0)$ holds trivially.

\medskip Finally, if $p \in (1,2),$ one can once again use standard techniques, hence we omit the proof here. We refer to \cite[Section 4]{bouchard2018unified} for details.
\end{proof}

\begin{corollary} \label{Ex2bsde} Assume that $\F=\F^{X(\cdot)}.$ Let $\zeta \in \L^p(\Fc_T,\P^0)$ and let {\rm\Cref{lips}} hold. Then there is a unique solution $(Y,Z) \in \S^p(\F,\P^0) \times \L^p_d(\F,\P^0,X(\cdot))$ to the {\rm BSDE} 
\begin{equation}
Y_t=\zeta-\int_t^T \int_V g_{s}(Y,Z(v)) m^0(\mathrm{d}v,\mathrm{d}s) - \int_t^T \int_V Z_s(v) \cdot X(\mathrm{d}v,\mathrm{d}s),\; t \in [0,T],\; \P^0\text{\rm--a.s.}
\end{equation}
Moreover, there exists a constant $K>0$ such that
\begin{equation*}
\| Y \|_{\S^\smallfont{p}(\P^0)}^p + \| Z \|^p_{\L^\smallfont{p}_\smallfont{d}(\F,\P^0,X(\cdot))} \leq K \E^{\P^\smallfont{0}}\bigg( \lvert \zeta \rvert^p + \bigg[\bigg( \int_0^T \lvert g_s(0,0) \rvert \mathrm{d}s\bigg)^p\bigg] \bigg).
\end{equation*}
\end{corollary}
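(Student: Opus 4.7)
The plan is to deduce Corollary \ref{Ex2bsde} directly from Theorem \ref{Exbsde} by showing that the orthogonal martingale component necessarily vanishes in the canonical filtration $\F = \F^{X(\cdot)}$. First, I would observe that $\F^{X(\cdot)}$ satisfies the usual conditions by construction, and Assumption \ref{lips} combined with $\zeta \in \L^p(\Fc_T,\P^0)$ lets us apply Theorem \ref{Exbsde}. This yields a unique triple $(Y, Z, L) \in \S^p(\F,\P^0) \times \L^p_d(\F,\P^0,X(\cdot)) \times \M^p_{X(\cdot)^\smallfont{\perp}}(\F,\P^0)$ solving \eqref{bsde}, together with the estimate \eqref{bdBSDE}.

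Next, I would argue that $L \equiv 0$. Applying Lemma \ref{martRep}, together with the localised extension indicated in the remark following it (which handles elements of $\M^p(\F,\P^0)$ directly), to the martingale $L$ produces some $\tilde Z \in \L^p_d(\F,\P^0,X(\cdot))$ such that
\[ L_t = \int_0^t \int_V \tilde Z_s(v)\cdot X(\mathrm{d}v,\mathrm{d}s),\; t \in [0,T],\; \P^0\text{-a.s.} \]
Because $L \in \M^p_{X(\cdot)^\smallfont{\perp}}(\F,\P^0)$, the defining orthogonality applied with $Z = \tilde Z$ gives $[L, L] \equiv 0$. Since $L_0 = 0$, we conclude $L \equiv 0$, and the pair $(Y, Z)$ indeed solves the BSDE in the statement, with the required estimate inherited from \eqref{bdBSDE}.

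For uniqueness, any other solution $(Y', Z') \in \S^p(\F,\P^0) \times \L^p_d(\F,\P^0,X(\cdot))$ produces an augmented triple $(Y', Z', 0) \in \S^p(\F,\P^0) \times \L^p_d(\F,\P^0,X(\cdot)) \times \M^p_{X(\cdot)^\smallfont{\perp}}(\F,\P^0)$ solving \eqref{bsde}, and the uniqueness part of Theorem \ref{Exbsde} forces $(Y',Z') = (Y,Z)$. The proof is essentially bookkeeping; the only point requiring attention is the invocation of the $\L^p$-version of the martingale representation for $p \neq 2$, but this is exactly what the remark after Lemma \ref{martRep} supplies via standard localisation, so no real obstacle arises.
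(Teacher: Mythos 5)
Your proof is correct and takes essentially the same route as the paper, whose proof simply invokes \Cref{martRep} together with \Cref{Exbsde}: under $\F=\F^{X(\cdot)}$ the orthogonal component $L$ produced by \Cref{Exbsde} must vanish, and existence, uniqueness and the estimate follow. The only micro-detail worth noting is that for $p<2$ the representing integrand $\tilde Z$ of $L$ need not lie in $\L^2_d(\F,\P^0,X(\cdot))$, so the orthogonality in the definition of $\M^p_{X(\cdot)^\perp}(\F,\P^0)$ should be applied after a localisation reducing to the square-integrable case, in the spirit of the remark following \Cref{martRep}.
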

\begin{proof}
This follows from \Cref{martRep} and \Cref{Exbsde}.
\end{proof}

\section{Proofs for Section \ref{prelim}} \label{proofApp}

\begin{proof}[Proof of \Cref{pf2}] Let us fix $\omega \in \Omega$ and let us first assume that $g(\omega)$ is simple. That is, it is of the form
\[ 
g_s(f)(\omega)=\sum_{i=1}^N g_i \mathbf{1}_{\{ (s,f) \in B_\smallfont{i} \}},\; (s,f) \in   [0,T] \times F, \] for some $g_i \in \R$ and $B_i \in \Bc([0,T]) \otimes \Bc(F)$ for $i \in \{1,\ldots,N\},$ $N \in \N^\star.$ For such $\omega \in \Omega,$ we have
\begin{align*}
\int_0^T \int_E  g_s(Z_s(e))  m(\mathrm{d}e,\mathrm{d}s) = \int_0^T \int_E \sum_{i=1}^N g_i \mathbf{1}_{\{ (s,Z_\smallfont{s}(e)) \in B_\smallfont{i} \}}  m(\mathrm{d}e,\mathrm{d}s) &= \sum_{i=1}^N g_i \int_0^T \int_E  \mathbf{1}_{\{ ( s,Z_\smallfont{s}(e)) \in B_\smallfont{i} \}}  m(\mathrm{d}e,\mathrm{d}s) \\
&=\sum_{i=1}^N g_i  n( B_i)=\int_0^T \int_F g_s(f) n(\mathrm{d}f,\mathrm{d}s).
\end{align*}

Let now $g(\omega)$ be a general positive $\Bc([0,T]) \otimes \Bc(F)$-measurable function. We can find a sequence of simple functions $g^j :[0,T] \times F \longrightarrow \R$, $j\in\N^\star$, such that $g_s^j(f)  \nearrow  g_s(f)(\omega), \; (s, f) \in [0,T] \times F.$ We have
\begin{align*}
\int_0^T \int_E  g_s^j(Z_s(e)) m(\mathrm{d}e,\mathrm{d}s) =\int_0^T \int_F g_s^j(f) n(\mathrm{d}f,\mathrm{d}s),\; j\in \N^\star.
\end{align*} 
Hence, by passing to the limit and using the monotone convergence theorem
\begin{align} \label{eqposit}
\int_0^T \int_E  g_s(Z_s(e))  m(\mathrm{d}e,\mathrm{d}s) =\int_0^T \int_F g_s(f)  n(\mathrm{d}f,\mathrm{d}s).
\end{align} 
The first statement follows immediately. As for the second one, we can write for general $g$ using \eqref{eqposit} that
\begin{align*}
\int_0^t \int_{Z_\smallfont{s}^{\smallfont{-}\smallfont{1}}(A)}  g_s(Z_s(e)) m(\mathrm{d}e,\mathrm{d}s)&=\int_0^T \int_E  g_s(Z_s(e))\mathbf{1}_{\{ (s,Z_\smallfont{s}(e)) \in (0,t] \times A \}}  m(\mathrm{d}e,\mathrm{d}s)\\
&=\int_0^T \int_E  g_s^+(Z_s(e))\mathbf{1}_{\{ (s,Z_\smallfont{s}(e)) \in (0,t] \times A \}}  m(\mathrm{d}e,\mathrm{d}s) \\
&\quad- \int_0^T \int_E  g_s^-(Z_s(e))\mathbf{1}_{\{ (s,Z_\smallfont{s}(e)) \in (0,t] \times A \}}  m(\mathrm{d}e,\mathrm{d}s)\\
&=\int_0^T \int_A  g_s^+(f)\mathbf{1}_{\{ (s,f) \in (0,t] \times A \}} n(\mathrm{d}f,\mathrm{d}s)-\int_0^T \int_A  g_s^-(f)\mathbf{1}_{\{ (s,f) \in (0,t] \times A \}} n(\mathrm{d}f,\mathrm{d}s)\\
&=\int_0^t \int_A  g_s(f) n(\mathrm{d}f,\mathrm{d}s),
\end{align*} 
which concludes the second part. Further, by the above, we have that 
\[
 \int_0^T \int_E \lvert g_s(Z_s(e)) \rvert^2 m(\mathrm{d}e,\mathrm{d}s)<\infty,\; \text{\rm if and only if}\;  \int_0^T \int_F \lvert g_s(f) \rvert^2 n(\mathrm{d}f,\mathrm{d}s)<\infty,
 \] which immediately yields that $(g \circ Z) \in \L^2_{\rm loc}(\F,\P,M(\cdot))$ is equivalent to $g \in \L^2_{\rm loc}(\F,\P,N(\cdot)).$ 
 
\medskip For the last claim, let us first assume that $g$ is an elementary adapted process. That is, it is of the form
\[
g_s(f)(\omega)=\sum_{i=1}^N g_i(\omega) \mathbf{1}_{\{ s \in [s_\smallfont{i},t_\smallfont{i}) \}} \mathbf{1}_{\{ f \in B_\smallfont{i} \}},\; (\omega, s,f) \in \Omega \times [0,T] \times F, \] for some $N \in \N^\star,$ $g_i \in \L^2(\Fc_{s_i}, \P),$ $0\leq s_1 \leq t_1 \leq \ldots \leq s_n \leq t_n \leq T$ and $B_i \in \Bc(F)$ for $i \in \{1,\ldots,N\}.$ We have
\begin{align*}
\int_0^t \int_{Z_\smallfont{s}^{\smallfont{-}\smallfont{1}}(A)}  g_s(Z_s(e)) M(\mathrm{d}e,\mathrm{d}s) &= \int_0^T \int_E \sum_{i=1}^N g_i \mathbf{1}_{\{ s \in [s_\smallfont{i} \wedge t,t_\smallfont{i} \wedge t) \}} \mathbf{1}_{\{ Z_\smallfont{s}(e) \in B_\smallfont{i} \cap A \}} M(\mathrm{d}e,\mathrm{d}s) \\
&= \sum_{i=1}^N g_i \int_{s_\smallfont{i} \wedge t}^{t_\smallfont{i} \wedge t} \int_E  \mathbf{1}_{\{ Z_\smallfont{s}(e) \in B_\smallfont{i} \cap A \}} M(\mathrm{d}e,\mathrm{d}s) \\
&=\sum_{i=1}^N g_i \int_{s_\smallfont{i}\wedge t}^{t_\smallfont{i} \wedge t} \int_E  \mathrm{d}N_s(B_i \cap A)=\int_0^t \int_A g_s(f)  N(\mathrm{d}f,\mathrm{d}s).
\end{align*} 

For $g \in \L^2(\F,\P,N(\cdot))$, there exists a sequence of elementary processes $(g^j)_{j\in\N^\smallfont{\star}}$ such that $g^j \longrightarrow g$ in  $\L^2(\F,\P,N(\cdot)).$ By the previous parts, $g^j \circ Z \longrightarrow g \circ Z$ in $\L^2(\F,\P,M(\cdot)).$ The equality
\[
\int_0^t \int_{Z_\smallfont{s}^{\smallfont{-}\smallfont{1}}(A)}  g_s^j(Z_s(e)) M(\mathrm{d}e,\mathrm{d}s)=\int_0^t \int_A  g_s^j(f) N(\mathrm{d}f,\mathrm{d}s),\; j \in \N^\star, \] then once again yields 
\[
\int_0^t \int_{Z_\smallfont{s}^{\smallfont{-}\smallfont{1}}(A)}  g_s(Z_s(e)) M(\mathrm{d}e,\mathrm{d}s)=\int_0^t \int_A  g_s(f) N(\mathrm{d}f,\mathrm{d}s), \] by passing to the limit in $\L^2(\Fc_t,\P^0)$ on both sides.

\medskip Finally, if $g \in \L^2_{\rm loc}(\F,\P,N(\cdot)),$ we use localisation arguments together with the previous results to conclude.
\end{proof}

\begin{proof}[Proof of \Cref{FXfiltr}]  Let us first note that for any system $\{ G_j:j \in \N^\star \} \subset \Bc(E)$ and for any $k \in \N^\star,\; t \in [0,T]$
\[ 
\sigma \big( M_s(G_j) : s \in [0,t], \; j\in\{1,\ldots,k\} \big)\vee \sigma(\Nc)=\bigvee_{n \in \N^\smallfont{\star}} \sigma \big( M_{it/2^\smallfont{n}}(G_j) : i\in\{0,\ldots,2^n\},\; j\in\{1,\ldots,k\} \big)\vee \sigma(\Nc),\]
by continuity of paths. Hence, it is sufficient to find a system of open sets such that
\[
\sigma \big( M_s(B) : s \in [0,t],\;  B\in\Bc(E) \big) \vee \sigma(\Nc)=\bigvee_{k \in \N^\smallfont{\star}} \sigma \big( M_s(G_j) : s \in [0,t],\; j\in\{1,\ldots,k\} \big) \vee \sigma(\Nc),\; t \in [0,T]. 
\]
Since $E$ is assumed to be Polish, there exists a countable topological base $\{ H_j:j \in \N^\star \}.$ We claim that it is sufficient to define $\{ G_j:j \in \N^\star \}$ as the set of all finite unions of $\{ H_j:j \in \N^\star \}.$ Indeed, if $G \in \Bc(E)$ is open, there exists a sequence $(H_k)_{k \in \N^\smallfont{\star}} \subset \{ H_j:j \in \N^\star \}$ such that $G= \cup_{k=1}^\infty H_k.$ It follows that for every $t \in [0,T]$
\begin{align*}
\E^{\P}\bigg[ \bigg(M_t(G)-M_t\bigg(\bigcup_{k=1}^n H_k\bigg) \bigg)^2\bigg]&=\E^{\P}\bigg[\bigg[ M(G)-M\bigg(\bigcup_{k=1}^n H_k\bigg)\bigg ]_t\bigg]\\
&=\E^{\P} \bigg[ [ M(G) ]_t - 2\bigg[ M(G),M\bigg(\bigcup_{k=1}^n H_k\bigg)\bigg ]_t + \bigg[ M\bigg(\bigcup_{k=1}^n H_k\bigg) \bigg]_t  \bigg] \\
&= \E^{\P} \bigg[ [ M(G) ]_t - \bigg[ M\bigg(\bigcup_{k=1}^n H_k\bigg)\bigg ]_t  \bigg] \\
&=\E^{\P}  \bigg[ m([0,t] \times G) - m\bigg([0,t] \times \bigcup_{k=1}^n H_k\bigg) \bigg] \searrow 0, \; \text{as}\; n\longrightarrow\infty.
\end{align*}
Hence, up to choosing a subsequence 
\[
M_t(G)=\lim_{n \rightarrow \infty} M_t\bigg(\bigcup_{k=1}^n H_k\bigg),\; \P\text{\rm--a.s.} 
\]

Let us define a measure $\nu$ on $[0,T] \times E$ by $\nu([0,t] \times A)=\E^{\P}m([0,t]\times A),$ $t \in [0,T], A \in \Bc(E).$ It is straightforward to verify that $\nu$ is indeed a well-defined finite measure. Let now $A \in \Bc(E)$. Since $E$ is Polish, $\nu$ is a regular measure. Hence, $\nu(A)=\inf\{ \nu(G) : A \subset G,\; G \in\Bc(E) \; \text{open} \}.$ In particular, there is a sequence of open sets $(\widetilde{G}_n)_{n\in\N^\smallfont{\star}}$ such that 
\[
\nu\big(\widetilde{G}_n\big) \searrow \nu(A),\; \text{as}\; n\longrightarrow\infty.  
\]
Similarly as before, one can show that up to a subsequence $M_t(A)=\lim_{n \rightarrow \infty} M_t\big( \widetilde{G}_n\big),\;  \P^0\text{\rm--a.s.}$ This concludes the proof.
\end{proof}

\section{Convergence of measures}
\begin{lemma} \label{fatou} Let $X$ be a Polish space and let $(\mu_n,\mu) \in \Pc(X)\times\Pc(X)$, $n \in \N^\star,$ be such that $\mu_n \longrightarrow \mu$ weakly. Let $f: X \longrightarrow \R$ be upper-semicontinuous and such that $f^+$ is asymptotically uniformly integrable w.r.t. $(\mu_n)_{n \in \N^\smallfont{\star}}.$ That is
\[ \lim_{K \rightarrow \infty} \limsup_{n \rightarrow \infty} \int f^+(x) \mathbf{1}_{\{f^+(x)>K\}}\mu_n(\mathrm{d}x)=0.  \] 
Then 
\[ \limsup_{n \rightarrow \infty} \int f(x)  \mu_n(\mathrm{d}x) \leq \int f(x) \mu(\mathrm{d}x). \]
Consequently, if $f: X \rightarrow \R$ is continuous and asymptotically uniformly integrable w.r.t. $(\mu_n)_{n \in \N^\smallfont{\star}},$ then 
\[ \lim_{n \rightarrow \infty} \int f(x)  \mu_n(\mathrm{d}x) = \int f(x) \mu(\mathrm{d}x). \]
\end{lemma}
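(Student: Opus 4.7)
The plan is to reduce the statement to the one-sided Portmanteau theorem for bounded upper-semicontinuous functions by a truncation argument. For each $K > 0$, set $f_K \coloneqq f \wedge K$, which remains upper-semicontinuous and is now bounded above by $K$. The function $K - f_K$ is therefore nonnegative and lower-semicontinuous, so the classical Portmanteau inequality applied to it gives
\[
\limsup_{n \to \infty} \int f_K\, \mathrm{d}\mu_n \leq \int f_K\, \mathrm{d}\mu.
\]
The asymptotic uniform integrability hypothesis ensures that $\sup_n \int f^+\, \mathrm{d}\mu_n < \infty$ (pick $K_0$ so that $\sup_n\int f^+\mathbf{1}_{\{f^+>K_0\}}\mathrm{d}\mu_n \leq 1$ and bound the rest by $K_0$), so every integral appearing below is well-defined in $[-\infty, +\infty)$.

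Next, I will decompose $f = f_K + (f - K)^+$ and use the pointwise bound $(f - K)^+ \leq f^+ \mathbf{1}_{\{f^+ > K\}}$. Combining this with the Portmanteau inequality above, I obtain
\[
\limsup_{n \to \infty} \int f\, \mathrm{d}\mu_n \leq \limsup_{n \to \infty}\int f_K\, \mathrm{d}\mu_n + \limsup_{n \to \infty}\int (f-K)^+\, \mathrm{d}\mu_n \leq \int f_K\, \mathrm{d}\mu + \eta_K \leq \int f\, \mathrm{d}\mu + \eta_K,
\]
where $\eta_K \coloneqq \limsup_{n\to\infty} \int f^+\,\mathbf{1}_{\{f^+>K\}}\,\mathrm{d}\mu_n \to 0$ as $K \to \infty$ by assumption, and the last inequality uses $f_K \leq f$. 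Letting $K \to \infty$ concludes the first assertion.

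For the second assertion, a continuous $f$ with the stated asymptotic uniform integrability (interpreted for $|f|$) has both $f^+$ and $f^-$ asymptotically uniformly integrable. Applying the first part to $f$ (upper-semicontinuous) yields $\limsup_n \int f\,\mathrm{d}\mu_n \leq \int f\,\mathrm{d}\mu$, while applying it to $-f$ (also upper-semicontinuous since continuous) yields $\liminf_n \int f\,\mathrm{d}\mu_n \geq \int f\,\mathrm{d}\mu$, and the two inequalities together give the equality.

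\textbf{Main obstacle.} There is no real obstacle beyond careful bookkeeping: the only subtlety is making sure all integrals remain well-defined as one truncates and passes to limits, which is handled once one extracts $\sup_n \int f^+\,\mathrm{d}\mu_n < \infty$ from the asymptotic uniform integrability. The rest of the argument is a routine combination of Portmanteau and a tail estimate.
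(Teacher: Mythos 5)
Your proof is correct, but it proceeds quite differently from the paper: the paper does not prove this lemma at all, it simply cites \cite[Theorem 2.4 and Corollary 2.8]{feinberg2020weakfatou}, whereas you give a self-contained argument by truncating at level $K$, applying the Portmanteau inequality to the nonnegative lower-semicontinuous function $K-f\wedge K$, and controlling the remainder $(f-K)^+\le f^+\mathbf{1}_{\{f^+>K\}}$ by the asymptotic uniform integrability. The cited result is more general (it is stated for a sequence of functions $f_n$, with the limiting integrand obtained as a generalised limes inferior/superior), while your route is elementary and entirely sufficient for the fixed-$f$ version used in the paper; the second assertion is then obtained exactly as one would expect, by applying the first part to $f$ and to $-f$.

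Two small bookkeeping points. First, asymptotic uniform integrability only yields $\int f^+\,\mathrm{d}\mu_n\le K_0+1$ for all sufficiently large $n$ (finitely many initial terms may be infinite), not the uniform bound $\sup_n\int f^+\,\mathrm{d}\mu_n<\infty$ you state; this is harmless since only the $\limsup$ matters. Second, finiteness of $\int f^+\,\mathrm{d}\mu$ does \emph{not} follow from the bounds on the $\mu_n$-integrals (for upper-semicontinuous $f^+$, mass can appear in the limit, e.g. $f^+=\mathbf{1}_{\{0\}}$, $\mu_n=\delta_{1/n}$), so in the step $\int (f\wedge K)\,\mathrm{d}\mu\le\int f\,\mathrm{d}\mu$ you should simply observe that when $\int f^+\,\mathrm{d}\mu=+\infty$ the asserted inequality is trivial (the statement implicitly assumes $\int f\,\mathrm{d}\mu$ is well defined), and otherwise your chain of inequalities applies verbatim; in the continuous case the issue disappears, since $|f|$ is lower-semicontinuous and hence $\int|f|\,\mathrm{d}\mu\le\liminf_{n\rightarrow\infty}\int|f|\,\mathrm{d}\mu_n<\infty$.
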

\begin{proof} See \citeauthor*{feinberg2020weakfatou} \cite[Theorem 2.4 and Corollary 2.8]{feinberg2020weakfatou}
\end{proof}
\begin{remark} \label{rem:UI_convergence} If \[\sup_{n \in \N^\smallfont{\star}} \int \lvert f(x) \rvert^{1+\varepsilon} \mu_n(\mathrm{d}x) < \infty\] for some $\varepsilon>0,$ then $f$ is asymptotically uniformly integrable w.r.t. $(\mu_n)_{n \in \N^\smallfont{\star}}.$ Indeed, we have
\begin{align*} \lim_{K \rightarrow \infty} \limsup_{n \rightarrow \infty} \int \lvert f(x) \rvert \mathbf{1}_{\{\lvert f(x) \rvert >K\}}\mu_n(\mathrm{d}x) &\leq  \lim_{K \rightarrow \infty} \limsup_{n \rightarrow \infty} \int \lvert f(x) \rvert \frac{\lvert f(x) \rvert^\varepsilon}{K^\varepsilon} \mathbf{1}_{\{\lvert f(x) \rvert >K\}}\mu_n(\mathrm{d}x) \\
&\leq \lim_{K \rightarrow \infty} \limsup_{n \rightarrow \infty} \int \lvert f(x) \rvert \frac{\lvert f(x) \rvert^\varepsilon}{K^\varepsilon} \mu_n(\mathrm{d}x) \\
&\leq \lim_{K \rightarrow \infty} \bigg( \sup_{n \in \N^\smallfont{\star}} \int \lvert f(x) \rvert^{1+\varepsilon}\mu_n(\mathrm{d}x) \bigg) \frac{1}{K^\varepsilon} =0.
\end{align*}
\end{remark}

Let us denote by $\Cc^2_b(\R^{d+1},\R)$ the set of all bounded twice continuously differentiable functions with bounded derivatives up to order 2 from $\R^{d+1}$ to $\R$ and further define for $\varepsilon>0$ and $M \geq 0$ the stopping times
\begin{align*}
\sigma^{M,\varepsilon}&\coloneqq
\inf \bigg\lbrace t \in [0,T] : \max \bigg\lbrace \lVert (X_t,\Yc_t) \rVert, \int_0^t \int_{\R^l} \lVert z \rVert^{2+\varepsilon} m_s^Z(\mathrm{d}z)\mathrm{d}s \bigg\rbrace \geq M \text{ or } t=T \bigg\rbrace.
\end{align*}

\begin{lemma}\label{tightness} Let $K \subset \big\{ \P\big\vert_{\Fc_\smallfont{T}^c} : \P \in \Rc \}.$ Then $K$ is tight if there exists $\varepsilon>0$ such that the following conditions hold
\begin{enumerate}[label = (\roman*)]
\item \label{T1} we have \begin{align*}
\lim_{M \rightarrow \infty} \sup_{\P \in K} \P \left[ \sigma^{M,\varepsilon}<T \right]=0;
\end{align*}
\item \label{T2}for any $M \in [0,\infty)$ and any $\phi \in \Cc^2_b(\R^{d+1},\R)$ with $\phi \geq 0,$ there exists a constant $A_{\phi,M} \geq 0$ such that the process
\[
\phi(X_{\cdot \wedge \sigma^\smallfont{M,\varepsilon}},\Yc_{\cdot \wedge \sigma^\smallfont{M,\varepsilon}})+A_{\phi,M} \int_0^{\cdot \wedge \sigma^\smallfont{M,\varepsilon}}\int_{\R^\smallfont{d}} \big(1+\lVert z \rVert^2\big) m_s^Z(\mathrm{d}z)\mathrm{d}s,
\]
is an $(\F^c,\P)$--sub-martingale for every $\P \in K.$ Moreover, if $\psi$ is a translate of $\phi$ $($\emph{i.e.} $\psi(\cdot)=\phi(\cdot - a)$ for some $a \in \R^{d+1})$, we can take $A_{\psi,M}=A_{\phi,M}.$
\end{enumerate}
\end{lemma}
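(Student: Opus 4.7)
The plan is to decompose tightness on the product space $\Omega^c=\Cc([0,T],\R^{d+1})\times\Mc([0,T],\R^d)$ into tightness of the two marginal laws on the respective Polish factors, and then treat each factor separately using conditions~\ref{T1} and~\ref{T2}.

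For the marginal on $\Mc([0,T],\R^d)$, I would exploit the classical fact that the sub-level sets
\[
\bigg\lbrace \nu \in \Mc([0,T],\R^d) : \int_0^T\int_{\R^d}\|z\|^{2+\varepsilon}\nu_s(\mathrm{d}z)\mathrm{d}s \leq M \bigg\rbrace
\]
are relatively compact in $\Mc([0,T],\R^d)$ for every $M\in[0,\infty)$, since $z\mapsto \|z\|^{2+\varepsilon}$ is inf-compact on $\R^d$ and $[0,T]$ is finite; this is the standard relaxed-control compactness criterion alluded to on page $863$ of~\cite{haussmann1990existence}. Condition~\ref{T1} then gives
\[
\sup_{\P\in K}\P\bigg[\int_0^T\int_{\R^d}\|z\|^{2+\varepsilon}m^Z_s(\mathrm{d}z)\mathrm{d}s>M\bigg]\leq \sup_{\P\in K}\P\big[\sigma^{M,\varepsilon}<T\big]\longrightarrow 0\; \text{as}\; M\to\infty,
\]
and tightness of the $m^Z$-marginals follows.

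For the marginal on $\Cc([0,T],\R^{d+1})$, the initial data is the deterministic $(x_0,0)$, so only a modulus of continuity needs to be controlled. Fix $\eta>0$ and pick a non-negative $\phi\in\Cc_b^2(\R^{d+1},\R)$ with $\phi(0)=0$ and $\phi(w)\geq 1$ for $\|w\|\geq \eta$. For any basepoint $a\in\R^{d+1}$, $\phi(\cdot-a)$ lies again in $\Cc_b^2(\R^{d+1},\R)$ and, with $A_{\phi,M}$ the constant from~\ref{T2} which by assumption does not depend on $a$, the process
\[
t \longmapsto \phi\big((X_{t\wedge\sigma^{M,\varepsilon}},\Yc_{t\wedge\sigma^{M,\varepsilon}})-a\big) + A_{\phi,M}\int_0^{t\wedge\sigma^{M,\varepsilon}}\int_{\R^d}\big(1+\|z\|^2\big)m^Z_s(\mathrm{d}z)\mathrm{d}s
\]
is an $(\F^c,\P)$-sub-martingale for every $\P\in K$. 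Applying optional sampling at $\F^c$-stopping times $\tau \leq \tau^\prime \leq T$ with $a=(X_{\tau\wedge\sigma^{M,\varepsilon}},\Yc_{\tau\wedge\sigma^{M,\varepsilon}})$, and using the choice of $\phi$ to dominate $\mathbf{1}_{\{\|\cdot\|\geq\eta\}}$, I obtain the Aldous-type estimate
\[
\P\Big[\big\|(X,\Yc)_{\tau^\prime\wedge\sigma^{M,\varepsilon}}-(X,\Yc)_{\tau\wedge\sigma^{M,\varepsilon}}\big\|\geq \eta\Big]\leq A_{\phi,M}\,\E^\P\bigg[\int_{\tau\wedge\sigma^{M,\varepsilon}}^{\tau^\prime\wedge\sigma^{M,\varepsilon}}\int_{\R^d}\big(1+\|z\|^2\big)m^Z_s(\mathrm{d}z)\mathrm{d}s\bigg].
\]
H\"older's inequality together with the bound $\int_0^T\int_{\R^d}\|z\|^{2+\varepsilon}m^Z_s(\mathrm{d}z)\mathrm{d}s \leq M$ on $\{\sigma^{M,\varepsilon}=T\}$ makes the right-hand side of order $\tau^\prime-\tau$, uniformly in $\P\in K$. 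Combined with tightness of the finite-dimensional marginals (furnished by~\ref{T1}), Aldous's criterion then yields tightness of the laws of $(X,\Yc)_{\cdot\wedge\sigma^{M,\varepsilon}}$ uniformly in $\P\in K$, and~\ref{T1} allows the localisation to be removed by letting $M\to\infty$.

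The main obstacle I anticipate lies in the second step, namely converting the pointwise sub-martingale bound into a quantitative modulus-of-continuity estimate uniform in $\P\in K$ and in the translation basepoint. The translation-invariance of $A_{\phi,M}$ asserted in~\ref{T2} is precisely what permits the change of basepoint inside optional sampling; without it the right-hand side of the Aldous-type estimate would depend on $\tau$ and the chaining argument would collapse. A further delicate point is that the limits $\tau^\prime-\tau\to 0$ and $M\to\infty$ must commute with the supremum over $\P\in K$, which is exactly what~\ref{T1} guarantees.
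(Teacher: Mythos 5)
Your overall route---splitting tightness into the two marginal families, handling the $\Mc([0,T],\R^d)$-component via inf-compactness of the sub-level sets of $\nu\longmapsto\int_0^T\int_{\R^d}\lVert z\rVert^{2+\varepsilon}\nu_s(\mathrm{d}z)\mathrm{d}s$ together with condition \ref{T1}, and handling the path component via a Stroock--Varadhan/Aldous-type modulus estimate built from condition \ref{T2} with translate-uniform constants, then removing the localisation by \ref{T1}---is precisely the argument behind \cite[Theorem A.8]{haussmann1990existence}, which is all the paper's proof consists of. So the strategy is the intended one, and your first step (including the inclusion $\{\int_0^T\int_{\R^d}\lVert z\rVert^{2+\varepsilon}m^Z_s(\mathrm{d}z)\mathrm{d}s>M\}\subset\{\sigma^{M,\varepsilon}<T\}$) is fine.

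There is, however, a genuine gap in the central estimate for the path marginal. If $S_t\coloneqq\phi\big((X_{t\wedge\sigma^{M,\varepsilon}},\Yc_{t\wedge\sigma^{M,\varepsilon}})-a\big)+A_{\phi,M}\int_0^{t\wedge\sigma^{M,\varepsilon}}\int_{\R^d}(1+\lVert z\rVert^2)m^Z_s(\mathrm{d}z)\mathrm{d}s$ is a sub-martingale, optional sampling gives $\E^\P[S_{\tau^\prime}\vert\Fc^c_\tau]\geq S_\tau$, i.e.\ a \emph{lower} bound on the conditional expectation of $\phi\big((X,\Yc)_{\tau^\prime\wedge\sigma^{M,\varepsilon}}-a\big)$, whereas your Aldous-type display needs an \emph{upper} bound on $\E^\P\big[\phi\big((X,\Yc)_{\tau^\prime\wedge\sigma^{M,\varepsilon}}-(X,\Yc)_{\tau\wedge\sigma^{M,\varepsilon}}\big)\big]$; as written the step fails because the inequality points the wrong way. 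The repair is the standard complementation trick: apply \ref{T2} to $\psi\coloneqq\lVert\phi\rVert_\infty-\phi$, which is again non-negative and in $\Cc^2_b(\R^{d+1},\R)$, and whose translates $\lVert\phi\rVert_\infty-\phi(\cdot-a)$ all share the single constant $A_{\psi,M}$; this makes $\phi\big((X,\Yc)_{\cdot\wedge\sigma^{M,\varepsilon}}-a\big)-A_{\psi,M}\int_0^{\cdot\wedge\sigma^{M,\varepsilon}}\int_{\R^d}(1+\lVert z\rVert^2)m^Z_s(\mathrm{d}z)\mathrm{d}s$ a super-martingale, and your estimate then holds with $A_{\psi,M}$ in place of $A_{\phi,M}$. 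A second, smaller, point: \ref{T2} concerns deterministic translates only, so substituting the random basepoint $a=(X,\Yc)_{\tau\wedge\sigma^{M,\varepsilon}}$ requires the conditional form of the super-martingale inequality for each fixed $a$ combined with the $\Fc^c_\tau$-measurability of the basepoint (e.g.\ approximation by countably valued $\Fc^c_\tau$-measurable variables and continuity of $\phi$); you invoke the conclusion but skip this step, which is exactly where the uniformity in $a$ is consumed. With these two repairs, the rest of your argument (the H\"older bound---which in fact yields a modulus of order $(\tau^\prime-\tau)^{\varepsilon/(2+\varepsilon)}$ rather than $\tau^\prime-\tau$, which is still sufficient---and the removal of the localisation via \ref{T1}) goes through.
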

\begin{proof}
Follows from the proof of \cite[Theorem A.8]{haussmann1990existence}. 
\end{proof}

\begin{lemma} \label{lem:continuity_of_phi} Let $\phi : [0,T] \times \Cc([0,T],\R^{d+1}) \times \R^d \longrightarrow \R$ be bounded and measurable. Let $M>0$ be fixed and assume further that $\phi(t,\cdot,\cdot)$ is continuous for $\lambda$--almost every $t \in [0,T].$ Then the map
\[ (x,m^Z) \longmapsto \int_0^T \int_{\R^d} \mathbf{1}_{\{ \|z \|<M \}} \phi(t,x_{\cdot \wedge t},z) m^{Z}_t(\mathrm{d}z)\mathrm{d}t,\] is continuous on the set \[ \bigg\{ (x,m^Z) \in \Cc([0,T],\R^{d+1})\times \Mc([0,T],\R) : \int_0^T \int_{\R^d} \mathbf{1}_{\{ \|z \|=M \}} m^Z_t(\mathrm{d}z)\mathrm{d}t=0 \bigg\}. \]
\end{lemma}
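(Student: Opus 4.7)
The plan is to fix a sequence $(x_n, m^{Z,n})_{n\in\N^\star}$ converging to $(x, m^Z)$ in the ambient space, with $(x,m^Z)$ satisfying the zero-mass condition on $\{\|z\|=M\}$, and to show that the corresponding integrals converge. The natural decomposition is $I_n - I = A_n + B_n$, where
\[
A_n \coloneqq \int_0^T\!\!\int_{\R^d} \mathbf{1}_{\{\|z\|<M\}}\bigl[\phi(t,x_{n,\cdot\wedge t},z)-\phi(t,x_{\cdot\wedge t},z)\bigr]\,m^{Z,n}_t(\mathrm{d}z)\mathrm{d}t,
\]
\[
B_n \coloneqq \int_0^T\!\!\int_{\R^d} \mathbf{1}_{\{\|z\|<M\}}\phi(t,x_{\cdot\wedge t},z)\bigl[m^{Z,n}_t-m^Z_t\bigr](\mathrm{d}z)\mathrm{d}t,
\]
and I would treat the two terms separately. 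The first concerns the continuity in $x$ (with the measures moving), and the second is a pure Portmanteau-type argument for the measure convergence with $x$ fixed.

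For $A_n$, I would set
\[
\varepsilon_n(t)\coloneqq \sup_{\|z\|\leq M}\bigl|\phi(t,x_{n,\cdot\wedge t},z)-\phi(t,x_{\cdot\wedge t},z)\bigr|.
\]
The set $K\coloneqq\{x_n : n\in\N^\star\}\cup\{x\}$ is compact in $\Cc([0,T],\R^{d+1})$, so $\{x_{\cdot\wedge t}:x\in K\}$ lies in a fixed compact subset of $\Cc([0,T],\R^{d+1})$ uniformly in $t$. For the $\lambda$-a.e.\ $t\in[0,T]$ at which $\phi(t,\cdot,\cdot)$ is continuous on $\Cc([0,T],\R^{d+1})\times\R^d$, continuity is uniform on $K_{\cdot\wedge t}\times\overline{B_M(0)}$, yielding $\varepsilon_n(t)\to 0$. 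Combined with the bound $\varepsilon_n(t)\le 2\|\phi\|_\infty$, dominated convergence gives $\int_0^T\varepsilon_n(t)\mathrm{d}t\to 0$, and since $m^{Z,n}_t$ is a probability measure,
\[
|A_n|\leq \int_0^T \varepsilon_n(t)\,m^{Z,n}_t\bigl(\{\|z\|<M\}\bigr)\,\mathrm{d}t \leq \int_0^T \varepsilon_n(t)\,\mathrm{d}t\longrightarrow 0.
\]

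For $B_n$, I would first reduce to the case $\phi\geq 0$ by decomposing $\phi=\phi^+-\phi^-$ (each piece remains bounded, measurable, and continuous in the last two variables for $\lambda$-a.e.\ $t$). For $\phi\geq 0$, I would introduce, for $\varepsilon\in(0,M)$, continuous cut-offs $\chi_\varepsilon^-,\chi_\varepsilon^+:\R^d\to[0,1]$ with $\chi_\varepsilon^-\leq\mathbf{1}_{\{\|z\|<M\}}\leq\chi_\varepsilon^+$, $\chi_\varepsilon^-\equiv 1$ on $\{\|z\|\leq M-\varepsilon\}$, $\chi_\varepsilon^-\equiv 0$ on $\{\|z\|\geq M\}$, $\chi_\varepsilon^+\equiv 1$ on $\{\|z\|\leq M\}$, and $\chi_\varepsilon^+\equiv 0$ on $\{\|z\|\geq M+\varepsilon\}$. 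The functions $(t,z)\mapsto\chi_\varepsilon^\pm(z)\phi(t,x_{\cdot\wedge t},z)$ are bounded and, after modification on a $\lambda$-null set in $t$, continuous in $z$ for every $t$; hence by the very definition of the topology on $\Mc([0,T],\R^d)$,
\[
\lim_{n\to\infty}\int_0^T\!\!\int_{\R^d}\chi_\varepsilon^\pm(z)\phi(t,x_{\cdot\wedge t},z)\,m^{Z,n}_t(\mathrm{d}z)\mathrm{d}t=\int_0^T\!\!\int_{\R^d}\chi_\varepsilon^\pm(z)\phi(t,x_{\cdot\wedge t},z)\,m^{Z}_t(\mathrm{d}z)\mathrm{d}t.
\]
Sandwiching and taking $\liminf$ and $\limsup$ in $n$ gives
\[
\int\chi_\varepsilon^-\phi\,\mathrm{d}(m^Z\otimes\lambda)\leq\liminf_n\int\mathbf{1}_{\{\|z\|<M\}}\phi\,\mathrm{d}(m^{Z,n}\otimes\lambda)\leq\limsup_n\int\mathbf{1}_{\{\|z\|<M\}}\phi\,\mathrm{d}(m^{Z,n}\otimes\lambda)\leq\int\chi_\varepsilon^+\phi\,\mathrm{d}(m^Z\otimes\lambda).
\]
Finally I would let $\varepsilon\downarrow 0$: monotone/dominated convergence shows the lower bound tends to $\int\mathbf{1}_{\{\|z\|<M\}}\phi\,\mathrm{d}(m^Z\otimes\lambda)$, while the upper bound tends to $\int\mathbf{1}_{\{\|z\|\leq M\}}\phi\,\mathrm{d}(m^Z\otimes\lambda)$, and the hypothesis $(m^Z\otimes\lambda)(\{\|z\|=M\})=0$ identifies the two limits, giving $B_n\to 0$. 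The main (minor) obstacle is purely the discontinuity of $\mathbf{1}_{\{\|z\|<M\}}$ at the sphere $\{\|z\|=M\}$, which is handled precisely by the sandwich together with the zero-mass hypothesis; combining with $A_n\to 0$ yields the desired continuity.
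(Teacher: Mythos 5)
Your proof is correct, but it takes a different route from the paper. The paper disposes of the lemma by citing Lemma A.3$(ii)$ of Haussmann and Lepeltier, whose hypothesis is uniform continuity of $\phi(t,\cdot,\cdot)$; the only original content of the paper's argument is the observation that, along a convergent sequence $x^n\to x^\infty$, one may restrict $\phi(t,\cdot,\cdot)$ to the compact set $\{x^n:n\in\N^\star\cup\{\infty\}\}\times\overline{B_M(0)}$, where continuity upgrades to uniform continuity, so the cited lemma applies. You instead give a self-contained proof: the split $I_n-I=A_n+B_n$, with $A_n$ (moving path, moving measure) controlled by uniform continuity on exactly that same compact set plus dominated convergence in $t$, and $B_n$ (fixed path, moving measure) controlled by a Portmanteau-type sandwich with continuous cut-offs $\chi_\varepsilon^\pm$, where the zero-mass hypothesis on $\{\|z\|=M\}$ identifies the two limits --- in effect you reconstruct the content of the cited lemma rather than invoke it, and your compactness step coincides with the paper's patch of the uniform-continuity assumption. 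What each approach buys: the paper's proof is two lines but opaque about where the zero-mass condition and the structure of the topology on $\Mc([0,T],\R^d)$ enter; yours makes both explicit and is checkable without the external reference. Two small points you should make precise: sequential arguments suffice because both $\Cc([0,T],\R^{d+1})$ and $\Mc([0,T],\R^d)$ are metrizable (Polish); and $\varepsilon_n(t)=\sup_{\|z\|\le M}|\cdot|$ should be replaced by the supremum over a countable dense subset of $\overline{B_M(0)}$ (they agree for a.e.\ $t$ by continuity in $z$) to guarantee measurability in $t$ before applying dominated convergence. Likewise, your ``modification on a $\lambda$-null set in $t$'' should be taken over a Borel null set so that the modified test function remains Borel, bounded and continuous in $z$ for \emph{every} $t$, as the definition of the topology requires; with these touches the argument is complete.
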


\begin{proof} Follows from \citeauthor*{haussmann1990existence} \cite[Lemma A.3, $(ii)$]{haussmann1990existence}. Note that while the authors assume that $\phi(t,\cdot,\cdot)$ is uniformly continuous, it is, in fact, not needed. Indeed, if $ \lim_{n \rightarrow \infty }x^n= x^\infty$ in $\Cc([0,T],\R^{d+1}),$ then we can restrict $\phi(t,\cdot,\cdot)$ to the set $K \coloneqq \{ (x^n,z) \in \Cc([0,T],\R^{d+1})\times \R^d : \| z \|\leq M,\; n \in \N^\star \cup \{ \infty \} \}.$ Then, $K$ is clearly a compact set in $\Cc([0,T],\R^{d+1})\times \R^d$ and thus $\phi(t,\cdot,\cdot)$ is uniformly continuous on $K.$ Applying \cite[Lemma A.3, $(ii)$]{haussmann1990existence}  to $\phi$ restricted to $[0,T]\times K$ then yields 
\[ \int_0^T \int_{\R^d} \mathbf{1}_{\{ \|z \|<M \}} \phi(t,x^n_{\cdot \wedge t},z) m^{Z,n}_t(\mathrm{d}z)\mathrm{d}t \longrightarrow \int_0^T \int_{\R^d} \mathbf{1}_{\{ \|z \|<M \}} \phi(t,x^\infty_{\cdot \wedge t},z) m^{Z,\infty}_t(\mathrm{d}z)\mathrm{d}t \] for any sequence $m^{Z,n}$ and $m^{Z,\infty}$ such that $\lim_{n \rightarrow \infty} m^{Z,n}=m^{Z,\infty}$ in $\Mc([0,T],\R^d)$ and \[\int_0^T \int_{\R^d} \mathbf{1}_{\{ \|z \|=M \}} m^{Z,\infty}_t(\mathrm{d}z)\mathrm{d}t=0.\] This gives the desired continuity.
\end{proof}

\section{Supportive results for Theorem \ref{minim}} \label{subsec:proof_of_THM}

In this section we present the technical results supporting the statement from the previous section. We also provide some complementary results. We assume throughout this whole subsection that {\rm Assumptions \ref{discF}} and {\rm \ref{assPrincipal}} hold. As such, we will not explicitly mention them in the statements of the results.

\medskip Note that, while the probability space in \Cref{sec:principal} supported a suitable martingale measure as the driving noise, in this section we consider a smaller space. The following lemma shows that we can always extend the probability space to obtain the dynamics of our processes involving a driving martingale measure.

\begin{lemma} \label{ext1} Let $(X,\Yc,m^Z)$ be an $\F$-adapted process with values in $\Omega^c$ defined on some filtered probability space $(\Omega,\Fc,\F,\P)$ satisfying that the process \[ M^\phi_t=\phi(X_t,\Yc_t)-\int_0^t \int_{\R^d} \mathscr{L}\phi (s,X_{\cdot \wedge s},\Yc_s,z) m^Z_s(\mathrm{d}z)\mathrm{d}s,\; t \in [0,T], \] is an $(\F,\P)$-martingale for every $\phi \in \Cc_c^2(\R^{d+1},\R).$ Then, there exists an extension of $(\Omega,\Fc,\F,\P),$ say $(\tilde{\Omega},\tilde{\Fc},\tilde{\F},\tilde{\P}),$ supporting a $k$-dimensional $(\tilde{\F},\tilde{\P})$-martingale measure $\widetilde{M}^0$ on $\Bc(V)$ with intensity $m^0$ satisfying
\begin{align*} 
X_t&=x_0+\int_0^t \int_V \sigma(s,X_{\cdot \wedge s}) \lambda\big(s,X_{\cdot \wedge s},a(s,X_{\cdot \wedge s},Z_s(v))\big) m^0(\mathrm{d}v,\mathrm{d}s)+\int_0^t\int_V \sigma(s,X_{\cdot \wedge s}) \widetilde{M}^0(\mathrm{d}v,\mathrm{d}s), \; t\in[0,T],\; \bar\P\text{\rm--a.s.},\\
\Yc_t &= \int_0^t \int_{V} \mathrm{e}^{-\int_\smallfont{0}^\smallfont{s} k(u,X_{\smallfont{\cdot} \smallfont{\wedge} \smallfont{u}}) \mathrm{d}u} f\big(s,X_{\cdot \wedge s},a(s,X_{\cdot \wedge s},Z_s(v))\big)m^0(\mathrm{d}v,\mathrm{d}s) \\
&\quad+\int_0^t \int_{V} \mathrm{e}^{-\int_\smallfont{0}^\smallfont{s} k(u,X_{\smallfont{\cdot} \smallfont{\wedge} \smallfont{u}}) \mathrm{d}u} Z_s(v)^\top \sigma(s,X_{\cdot \wedge s}) \widetilde{M}^0(\mathrm{d}v,\mathrm{d}s),\; t \in [0,T],\; \tilde{\P}\text{\rm-a.s.},
\end{align*} where $Z$ is determined by $m^Z=Z_\# m^0.$ If, moreover, $L$ is an $(\F,\P)$-martingale such that
$[ M^\phi,L ]^{\F,\P}\equiv 0,$ then the extension can be chosen such that the martingale measure $\widetilde{M}^0$ satisfies $[ \widetilde{M}^0(\cdot),L]^{\tilde{\F},\tilde{\P}} \equiv 0.$
\end{lemma}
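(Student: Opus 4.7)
The plan is to recover the driving martingale measure $\widetilde{M}^0$ from the martingale problem in three stages. First, I would extract the canonical semimartingale decomposition of $(X,\Yc)$ by feeding coordinate functions $\phi(x,y)\in\{x^i,y,x^ix^j,x^iy,y^2\}$ (suitably truncated to lie in $\Cc_c^2(\R^{d+1},\R)$) into the hypothesised martingale property of $M^\phi$. This immediately identifies the drifts of $X$ and $\Yc$ as the $m^Z$-integrated components of $b$, and yields continuous $(\F,\P)$-martingale parts $N^X$ and $N^\Yc$ whose quadratic covariation structure is prescribed by the matrix $a=\bar\sigma\bar\sigma^\top$ integrated against $m^Z$; in particular $\mathrm{d}[N^X]_t=\sigma\sigma^\top(t,X_{\cdot\wedge t})\,\mathrm{d}t$ is deterministic in the control, while $\mathrm{d}[N^X,N^\Yc]_t$ and $\mathrm{d}[N^\Yc]_t$ involve the first and second $m^Z_t$-moments of $z$. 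In parallel, applying \Cref{pf} to the $\F$-progressively measurable $\Pc(\R^d)$-valued process $m^Z$ (and using that $\mu$ is diffuse) furnishes a $\Pc\Mc(\F)\otimes\Bc(V)$-measurable map $Z:\Omega\times[0,T]\times V\to\R^d$ with $m^Z=Z_\#m^0$, so that \Cref{pf2} converts the $m^Z$-integrated drifts directly into the $m^0$-integrated form required in the statement.

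Second, on an extension of $(\Omega,\Fc,\F,\P)$ I would apply a martingale-measure representation result in the spirit of \cite[Theorem III-5]{el1990martingale} to the $(d+1)$-dimensional continuous martingale $(N^X,N^\Yc)$, whose covariation factors through the matrix $\bar\sigma\bar\sigma^\top$ integrated against $m^Z$. This produces a $k$-dimensional martingale measure $\bar M$ on $\R^d$ with intensity $m^Z$ such that $N^X=\int_0^\cdot\!\int_{\R^\smallfont d}\sigma(s,X_{\cdot\wedge s})\,\bar M(\mathrm{d}z,\mathrm{d}s)$ and $N^\Yc=\int_0^\cdot\!\int_{\R^\smallfont d}\mathrm{e}^{-\int_\smallfont 0^\smallfont s k(u,X_{\smallfont\cdot\smallfont\wedge\smallfont u})\mathrm{d}u}z^\top\sigma(s,X_{\cdot\wedge s})\,\bar M(\mathrm{d}z,\mathrm{d}s)$. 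The rank condition of \Cref{asssigmamu} is exactly what is needed to guarantee the measurable square-root structure of $\bar\sigma\bar\sigma^\top$ that makes this representation go through.

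Third, I would pull $\bar M$ back along $Z$: on a further extension carrying an independent $k$-dimensional martingale measure $\mathfrak{B}$ on $V$ with intensity $m^0$, the regular disintegration of $\mu$ along $v\mapsto Z_s(v)$ yields conditional kernels $(\mu_{s,z})_{z\in\R^\smallfont d}$, and I would then define $\widetilde{M}^0$ as the sum of $\bar M$ reweighted by these kernels and a correction built from $\mathfrak{B}$ whose role is to compensate for the variance dropped by the disintegration, chosen so that its $Z$-push-forward vanishes. The consistency of the disintegration gives $Z_\#\widetilde{M}^0=\bar M$ and therefore the desired dynamics for $X$ and $\Yc$ via \Cref{pf2}, while the intensity of $\widetilde{M}^0$ is $m^0$ by construction.

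The main obstacle will be this third step, which is the precise inverse of \Cref{PF1}: inverting the push-forward $Z_\#$ is not deterministic and genuinely requires the auxiliary noise $\mathfrak{B}$, and one must jointly verify the martingale property, orthogonality across disjoint Borel sets, and the correct intensity as in \Cref{WNMM}. As for the orthogonality clause in the lemma, the hypothesis $[M^\phi,L]\equiv 0$ for all $\phi\in\Cc_c^2(\R^{d+1},\R)$ forces $[N^X,L]\equiv 0$ and $[N^\Yc,L]\equiv 0$, whence $[\bar M(\cdot),L]\equiv 0$ after the second step; since $\mathfrak{B}$ lives on the independent extension factor and can therefore be chosen $\tilde\P$-independent of $L$, the pullback construction inherits $[\widetilde{M}^0(A),L]\equiv 0$ for every $A\in\Bc(V)$.
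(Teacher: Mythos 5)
Your plan is correct in substance, but it takes a genuinely different—and considerably longer—route than the paper. The paper's proof performs the change of variables \emph{first}: it invokes \Cref{pf} to produce $Z$ with $m^Z=Z_\#m^0$, so that (via \Cref{pf2}) the generator in the martingale problem is already written with the diffuse intensity $m^0$ on $V$, and then cites \cite[Theorem IV-2]{el1990martingale} once, which directly yields, on an extension, a martingale measure $\widetilde M^0$ with intensity $m^0$ driving the stated dynamics; orthogonality with $L$ is then immediate because the construction behind that theorem (\cite[Theorem III-7]{el1990martingale}) uses only the martingale parts of $(X,\Yc)$—orthogonal to $L$ by hypothesis—plus an auxiliary martingale measure chosen independent of the original space. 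You instead represent first, obtaining $\bar M$ on $\R^d$ with intensity $m^Z$, and only then pull back along $Z$, which is exactly the step you flag as the main obstacle and is precisely what the paper's ordering avoids: after the substitution there is nothing left to invert. Your pull-back can in fact be completed, e.g.\ componentwise by $\widetilde M^0_t(A)\coloneqq\int_0^t\int_{\R^d}\mu_{s,z}(A)\,\bar M(\mathrm{d}z,\mathrm{d}s)+\int_0^t\int_V\big(\mathbf{1}_A(v)-\mu_{s,Z_s(v)}(A)\big)\,\mathfrak{B}(\mathrm{d}v,\mathrm{d}s)$ with $\mathfrak{B}$ independent with intensity $m^0$; the disintegration identity shows the covariation of $\widetilde M^0(A)$ and $\widetilde M^0(B)$ is $\mu(A\cap B)t$ and that $Z_\#\widetilde M^0=\bar M$—but this amounts to re-proving by hand the degenerate-case representation already contained in the cited theorems, so your ordering buys only independence from the precise form of \cite[Theorem IV-2]{el1990martingale} at the cost of the hardest step being left as a sketch. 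Two points to tighten if you keep your route: identifying the drift and the relations $[N^X,L]\equiv 0$, $[N^\Yc,L]\equiv 0$ from compactly supported test functions requires a stopping-time localisation (coordinate functions are not in $\Cc^2_c$ and no integrability is assumed), and $[\bar M(\cdot),L]\equiv 0$ is not automatic from your second step alone—the auxiliary noise used in that representation must itself be taken independent of the whole original space, exactly as you later argue for $\mathfrak{B}$.
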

\begin{proof}
We use \Cref{pf} to find $Z$ such that $m^Z=Z_\#m^0.$ We then make use of \cite[Theorem IV-2]{el1990martingale} to get an extension supporting $M^0$. The last statement follows from the construction of $\widetilde{M}^0$ (see \cite[Theorem III-7]{el1990martingale}), since $\widetilde{M}^0$ is obtained using an auxiliary independent martingale measure.
\end{proof}

We have the following bounds on the moments of the process $(X,\Yc)$ under $\P \in \Rc.$

\begin{lemma} \label{bdMOMstatement} For every $t \in [0,T]$ and $p\geq 1$ there exists a constant $c>0$  such that for every $\P \in \Rc$ we have
\begin{align} \label{bdMOM}
\E^{\P} \bigg[\sup_{s \in [0,t]} \lVert (X_s,\Yc_s) \rVert^p\bigg] \leq c \bigg(1+ \lVert x_0 \rVert^p + \E^\P\bigg[ \bigg( \int_0^t \int_{\R^\smallfont{d}}  \lVert z^\top \sigma(t,X_{\cdot \wedge t}) \rVert^2  m^Z(\mathrm{d}z,\mathrm{d}s)\bigg)^\frac{p}{2}\bigg] \bigg).
\end{align}
\end{lemma}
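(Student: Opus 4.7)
The strategy is to reduce the martingale-problem characterisation in $\Rc$ to a concrete stochastic differential equation driven by a martingale measure via \Cref{ext1}, and then run a standard Burkholder--Davis--Gundy and Jensen argument using the linear growth built into \Cref{assPrincipal}$(i)$. Concretely, given $\P\in\Rc$, \Cref{ext1} yields an extension $(\tilde\Omega,\tilde\Fc,\tilde\F,\tilde\P)$ together with a martingale measure $\widetilde M^{0}$ of intensity $m^{0}=\mu\otimes\lambda$ and a predictable kernel $Z$ with $m^{Z}=Z_{\#}m^{0}$, for which $(X,\Yc)$ satisfies the two displayed SDEs. All moment estimates are then carried out on the extension, and transported back by \Cref{pf2} to convert integrals of the form $\int\! g(Z_s(v))\,m^{0}(\mathrm dv,\mathrm ds)$ into $\int g(z)\,m^{Z}(\mathrm dz,\mathrm ds)$.

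\textbf{Bound on $X$.} Because $\sigma$ and $\lambda$ are bounded by \Cref{asssigmamu}, the drift $\int_0^t\!\!\int_V\sigma\lambda\,m^0$ and the diffusion coefficient $\sigma$ of $X$ are uniformly bounded. A direct application of the BDG inequality together with Jensen's inequality on the drift term gives a control-free bound
\[
\E^{\tilde\P}\!\bigg[\sup_{s\in[0,t]}\lVert X_s\rVert^{p}\bigg]\leq c_{1}\bigl(1+\lVert x_0\rVert^{p}\bigr),
\]
for some $c_{1}=c_{1}(T,p,\sigma,\lambda)$ independent of $\P$.

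\textbf{Bound on $\Yc$.} For the stochastic integral part of $\Yc$, since $k$ is bounded from below (\Cref{assUkf}$(ii)$), the factor $\mathrm{e}^{-\int_0^s k(u,X)\mathrm du}$ is uniformly bounded, and BDG combined with the change-of-variable formula of \Cref{pf2} yields
\[
\E\!\bigg[\sup_{s\in[0,t]}\bigg\lvert\int_0^s\!\!\int_V\!\mathrm{e}^{-\int_0^r k}Z^{\top}\!\sigma\,\widetilde M^{0}\bigg\rvert^{p}\bigg]\leq C\,\E\bigg[\Big(\int_0^t\!\!\int_{\R^d}\lVert z^{\top}\!\sigma(s,X_{\cdot\wedge s})\rVert^{2}m^{Z}(\mathrm dz,\mathrm ds)\Big)^{p/2}\bigg].
\]
For the drift, the key observation is that the bound $\lVert b(s,x,z)\rVert\leq C(1+\lVert x\rVert+\lVert z^{\top}\!\sigma(s,x)\rVert)$ from \Cref{assPrincipal}$(i)$ controls the scalar component $\mathrm{e}^{-\int_0^s k}\,f(s,x,a(s,x,z))$ by the same right-hand side. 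Hence, by \Cref{pf2},
\[
\bigg\lvert\int_0^t\!\!\int_V\mathrm{e}^{-\int_0^s k}f(s,X,a(s,X,Z_{s}(v)))\,m^{0}\bigg\rvert\leq C\bigg(t+\int_0^t\lVert X_{s}\rVert\,\mathrm ds+\int_0^t\!\!\int_{\R^d}\lVert z^{\top}\!\sigma\rVert\,m^{Z}\bigg),
\]
and Jensen in time plus Cauchy--Schwarz (using that $m^{Z}_{s}$ is a probability measure, so $[0,t]\times\R^d$ has $m^{Z}$-mass $t$)
\[
\bigg(\int_0^t\!\!\int_{\R^d}\lVert z^{\top}\!\sigma\rVert\,m^{Z}\bigg)^{p}\leq t^{p/2}\bigg(\int_0^t\!\!\int_{\R^d}\lVert z^{\top}\!\sigma\rVert^{2}\,m^{Z}\bigg)^{p/2},
\]
reconcile the $\lVert z^{\top}\!\sigma\rVert$ from the drift with the $\lVert z^{\top}\!\sigma\rVert^{2}$ appearing in the right-hand side of the lemma. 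Combining with the bound on $X$ from the previous step gives
\[
\E\bigg[\sup_{s\in[0,t]}\lvert\Yc_{s}\rvert^{p}\bigg]\leq c_{2}\bigg(1+\lVert x_0\rVert^{p}+\E\bigg[\Big(\int_0^t\!\!\int_{\R^d}\lVert z^{\top}\!\sigma\rVert^{2}\,m^{Z}\Big)^{p/2}\bigg]\bigg),
\]
and the claim follows from $\lVert(X_{s},\Yc_{s})\rVert^{p}\leq c_{p}(\lVert X_{s}\rVert^{p}+\lvert\Yc_{s}\rvert^{p})$.

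\textbf{Main difficulty.} There is no truly hard step: the argument is a standard SDE moment estimate. The only subtlety is recognising that \Cref{assPrincipal}$(i)$ packages the possibly unbounded coefficient $f$ together with the linear growth in $\lVert z^{\top}\!\sigma\rVert$ needed to close the estimate, and that Cauchy--Schwarz handles the mismatch between $\lVert z^{\top}\!\sigma\rVert$ (from the drift) and $\lVert z^{\top}\!\sigma\rVert^{2}$ (in the statement). No Grönwall iteration is needed because, thanks to \Cref{discF}, the drift of $\Yc$ does not involve $\Yc$ itself.
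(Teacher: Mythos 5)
Your argument is sound in the setting where the coefficients of Section \ref{agent} are in force, but it is not quite a proof of the lemma as stated, and it differs from the paper's route. The lemma is asserted for every $\P\in\Rc$ under the standing assumptions of that subsection, namely \Cref{discF} and \Cref{assPrincipal} only. Your ``control-free'' bound $\E[\sup_s\lVert X_s\rVert^p]\leq c_1(1+\lVert x_0\rVert^p)$ invokes boundedness of $\lambda$ from \Cref{asssigmamu}, which is not among those hypotheses; under \Cref{assPrincipal}$(i)$ alone the drift of $X$ (the first $d$ components of $b$) is only controlled by $C(1+\lVert x\rVert+\lVert z^{\top}\sigma\rVert)$, so the estimate for $X$ cannot be closed without both a Gr\"onwall iteration (to absorb the $\lVert X\rVert$ term) and the $m^Z$-moment on the right-hand side (to absorb the $\lVert z^{\top}\sigma\rVert$ term). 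That is precisely what the paper does: it estimates $\sup_s\lVert(X_s,\Yc_s)\rVert^p$ jointly, bounds the drift and quadratic-variation contributions through $\lVert b\rVert$ and $\lVert a\rVert$ and the growth condition of \Cref{assPrincipal}$(i)$, localises with the stopping times $\tau_N=\inf\{t:\lVert(X_t,\Yc_t)\rVert\geq N\}$ so that the quantities entering Gr\"onwall are finite, and concludes by monotone convergence as $N\to\infty$. Your proof can be repaired to the stated generality simply by replacing the first step with this Gr\"onwall argument, at which point it essentially collapses to the paper's proof; alternatively, as written it proves the lemma under the additional hypothesis that $\sigma\lambda$ is bounded, which does hold in the composite model of \Cref{agent} but is not part of the lemma's hypotheses.

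Apart from this point, the rest is a legitimate variant. The paper works directly under $\P$ on the canonical space, reading the semimartingale decomposition off the martingale problem in \Cref{rule}, whereas you pass through \Cref{ext1} to an extension carrying a driving martingale measure and transport integrals back via \Cref{pf2}; since both sides of \eqref{bdMOM} are functionals of the law of $(X,\Yc,m^Z)$, which the extension preserves, this detour is harmless though unnecessary. Your observations that \Cref{discF} removes $\Yc$ from its own drift (so no Gr\"onwall is needed for the $\Yc$ part), that $m^Z_s\in\Pc(\R^d)$ makes Cauchy--Schwarz give the factor $t^{p/2}$ reconciling $\lVert z^{\top}\sigma\rVert$ with $\lVert z^{\top}\sigma\rVert^{2}$, and that the lower bound on $k$ controls the discount factor, are all correct and match the way the constants are produced in the paper's estimate.
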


\begin{proof}
Let us assume that the right-hand side is finite since the statement holds trivially otherwise. Let us define for $N\in\N^\star$ the stopping times $\tau_N\coloneqq \inf \lbrace t \in [0,T] : \lVert (X_t,\Yc_t) \rVert \geq N \text{ or } t=T \rbrace.$ Let us denote by $c$ a generic positive constant depending on $t \in [0,T]$ and $p \geq 1,$ which may change from line to line. Since $a$ and $b$ satisfy \eqref{boundFA}, we get for every $\P \in \Rc,$ $t \in [0,T]$ and $p\geq 1$ using the Burkholder--Davis--Gundy's inequality
\begin{align*}
\E^{\P}\bigg[ \sup_{s \in [0,t\wedge \tau_\smallfont{N}]}\lVert (X_s,\Yc_s) \rVert^p\bigg] &
 \leq c\bigg( \lVert x_0 \rVert^p + \E^{\P} \bigg[\bigg( \int_0^{t \wedge \tau_\smallfont{N}} \int_{\R^\smallfont{d}} \lVert  b(s,X_{\cdot \wedge s},z) \rVert  m^Z_s(\mathrm{d}z)\mathrm{d}s \bigg)^p\bigg]\bigg)\\
 &\quad+c\E^\P\bigg[\bigg(\int_0^{t \wedge \tau_\smallfont{N}} \int_{\R^\smallfont{d}}  \lVert a(s,X_{\cdot \wedge s},z) \rVert m^Z_s(\mathrm{d}z)\mathrm{d}s\bigg)^\frac{p}{2} \bigg]  \\
& \leq c\bigg(1+ \lVert x_0 \rVert^p + \E^{\P} \bigg[\bigg( \int_0^{t \wedge \tau_\smallfont{N}} \int_{\R^\smallfont{d}} \lVert X_{\cdot \wedge s} \rVert + \lVert z^\top \sigma(s,X_{\cdot \wedge s}) \rVert  m^Z_s(\mathrm{d}z)\mathrm{d}s \bigg)^p\bigg]\bigg) \\
&\quad + c \E^{\P} \bigg[\bigg( \int_0^{t \wedge \tau_\smallfont{N}} \int_{\R^\smallfont{d}} \lVert X_{\cdot \wedge s} \rVert + \lVert z^\top \sigma(s,X_{\cdot \wedge s}) \rVert^2  m^Z_s(\mathrm{d}z)\mathrm{d}s \bigg)^\frac{p}{2}\bigg] \\
&\leq c \bigg(1+ \lVert x_0 \rVert^p + \E^\P \bigg[\bigg( \int_0^t \int_{\R^\smallfont{d}}  \lVert z^\top \sigma(s,X_{\cdot \wedge s}) \rVert^{2}  m^Z_s(\mathrm{d}z)\mathrm{d}s \bigg)^\frac{p}{2}\bigg]\bigg) \\
&\quad + c \int_0^t  \E^\P\bigg[  \sup_{u \in [0,s \wedge \tau_\smallfont{N}]}\lVert (X_u,\Yc_u) \rVert^{p}\bigg] \mathrm{d}s .
\end{align*}
By Gronwall's lemma, we get
\begin{align*}
\E^{\P}\bigg[ \sup_{s \in [0,t \wedge \tau_\smallfont{N}]} \lVert (X_s,\Yc_s) \rVert^p\bigg] \leq c \bigg(1+ \lVert x_0 \rVert^p + \E^\P\bigg[ \bigg(  \int_0^{t} \int_{\R^\smallfont{d}}  \lVert z^\top \sigma(s,X_{\cdot \wedge s}) \rVert^{2}  m^Z_s(\mathrm{d}z)\mathrm{d}s\bigg)^\frac{p}{2}\bigg] \bigg),\; \P \in \Rc.
\end{align*} The result then follows from monotone convergence theorem by letting $N \longrightarrow \infty.$
\end{proof}

Since the orthogonality condition in \Cref{rule}.\ref{rule5} may be difficult to deal with, we reformulate it in the following way.

\begin{lemma} \label{OGmart} Assume that $\P \in \Pc(\Omega^c \times \R)$ satisfies all conditions of {\rm\Cref{rule}} with $1/q+1/q^\prime\leq 1$ except $(v)$. Then, $\P \in \Rc_{q,q^\smallfont{\prime}}$ if and only if 
\[
\E^{\P}\big[h_s U ( M^\phi_t - M^\phi_s )\big]=0,
\]
holds for all $\phi \in \Cc_c^2(\R^{d+1},\R),$ $s \leq t,$ and $h_s : \Omega^c \longrightarrow \R$, $\Fc_s^c$-measurable and bounded. Moreover, it suffices to verify the property with continuous functions $h_s.$
\end{lemma}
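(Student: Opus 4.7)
The plan is to reduce the covariation condition $[M^\phi_\cdot, \E^{\P}[U|\Fc_\cdot^c]]^{\F^\smallfont{c},\P}\equiv 0$ to the martingale property of $M^\phi U_\cdot$, where $U_t\coloneqq \E^{\P}[U|\Fc_t^c]$, and then rewrite that martingale property in terms of the variable $U$ via the tower property.

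First I would check the integrability needed. Set $U_t\coloneqq\E^\P[U\vert\Fc_t^c]$. Using $\phi\in\Cc^2_c(\R^{d+1},\R)$ together with the linear/quadratic growth of $b$ and $a$ from \Cref{assPrincipal}$(i)$, one obtains
\[
\sup_{t\in[0,T]}\lvert M^\phi_t\rvert\;\leq\;C_\phi\Bigl(1+\sup_{t\in[0,T]}\lVert X_t\rVert+\int_0^T\!\!\int_{\R^\smallfont{d}}\lVert z^\top\sigma(s,X_{\cdot\wedge s})\rVert^2 m^Z_s(\mathrm{d}z)\mathrm{d}s\Bigr).
\]
Combining \Cref{rule}$(i)$ with \Cref{bdMOMstatement} yields $\sup_{t\in[0,T]}\lvert M^\phi_t\rvert\in\L^q(\P)$, while Doob's inequality gives $\sup_{t\in[0,T]}\lvert U_t\rvert\in\L^{q^\smallfont{\prime}}(\P)$. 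Since $1/q+1/q^\prime\leq 1$, Hölder's inequality places $M^\phi_\cdot U_\cdot$ in $\L^1(\P)$ with integrable supremum, so it is uniformly integrable.

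Next, since $M^\phi_0=\phi(x_0,0)$ is deterministic and $M^\phi-M^\phi_0$ and $U$ are martingales starting at zero and $U_0$ respectively (the latter constant-shifted), a standard computation gives that $[M^\phi,U]^{\F^\smallfont{c},\P}\equiv 0$ if and only if the product $M^\phi_\cdot U_\cdot$ is an $(\F^c,\P)$--local martingale; in view of the uniform integrability from the previous step, this is equivalent to $M^\phi_\cdot U_\cdot$ being a true $(\F^c,\P)$-martingale. The latter amounts to
\[
\E^{\P}\bigl[h_s\bigl(M^\phi_t U_t-M^\phi_s U_s\bigr)\bigr]=0,\quad 0\leq s\leq t\leq T,\; h_s\;\F^c_s\text{-measurable and bounded},\; \phi\in\Cc_c^2(\R^{d+1},\R).
\]
Now apply the tower property: $h_sM^\phi_t$ is $\Fc_t^c$-measurable, so $\E^{\P}[h_s M^\phi_t U_t]=\E^{\P}[h_sM^\phi_t\,\E^\P[U\vert\Fc_t^c]]=\E^{\P}[h_sM^\phi_t U]$, and analogously for $s$. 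Hence the martingale identity becomes
\[
\E^{\P}\bigl[h_s U\bigl(M^\phi_t-M^\phi_s\bigr)\bigr]=0,
\]
which is exactly the claimed characterisation, establishing both implications at once once the integrability above is in place.

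For the final assertion that continuous $h_s$ suffice, I would invoke a monotone class argument. Let $\mathcal{H}$ be the space of bounded $\Fc^c_s$-measurable functions for which the identity holds; it is a vector space closed under bounded monotone convergence by dominated convergence (using $U(M^\phi_t-M^\phi_s)\in\L^1(\P)$). Continuous bounded $\Fc^c_s$-measurable functions form a multiplicative class: the coordinate maps $(X_u,\Yc_u)$ for $u\leq s$ are continuous on $\Cc([0,T],\R^{d+1})$, and the maps $m^Z\longmapsto \int_0^u\!\!\int_{\R^\smallfont{d}}\varphi(r,z)m^Z_r(\mathrm{d}z)\mathrm{d}r$ with $u\leq s$ and $\varphi$ Borel in time, bounded continuous in $z$, are continuous on $\Mc([0,T],\R^d)$ by definition of its topology; together these generate $\Fc_s^c$. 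The functional monotone class theorem then extends the identity from this algebra to all bounded $\Fc_s^c$-measurable functions. The only genuine obstacle is confirming that the integrability budget $1/q+1/q^\prime\leq 1$ is sharp enough to upgrade the local martingale to a true martingale; the rest is a direct conditioning computation and a routine approximation.
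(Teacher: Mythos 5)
Your route is the same as the paper's: reduce condition $(v)$ of \Cref{rule} to the true-martingale property of $M^\phi_\cdot\,\E^{\P}[U\,\vert\,\Fc^c_\cdot]$, rewrite that property with bounded $\Fc^c_s$-measurable test functions, use the tower property to replace $\E^{\P}[U\,\vert\,\Fc^c_t]$ by $U$, and justify the reduction to continuous $h_s$; the covariation/local-martingale equivalence, the conditioning computation, and the functional monotone class argument (which is a reasonable way to substantiate the paper's one-line remark about continuous $h_s$) are all fine.

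The gap is in the integrability estimate, i.e.\ exactly the step where the hypothesis $1/q+1/q^\prime\leq 1$ must do its work. Your pathwise bound controls $\sup_{t}\lvert M^\phi_t\rvert$ through the drift term $\int_0^T\int_{\R^d}\lvert \mathscr{L}\phi\rvert\, m^Z_s(\mathrm{d}z)\mathrm{d}s$, hence through $\int_0^T\int_{\R^d}\lVert z^\top\sigma(s,X_{\cdot\wedge s})\rVert^2 m^Z_s(\mathrm{d}z)\mathrm{d}s$ to the \emph{first} power. Condition $(i)$ of \Cref{rule} only gives a finite $q/2$-moment of that random variable, so together with \Cref{bdMOMstatement} your bound yields $\sup_t\lvert M^\phi_t\rvert\in\L^{q/2}(\Fc^c_T,\P)$, not $\L^{q}$ as you claim; with only the $q/2$-moment, the H\"older step needs $2/q+1/q^\prime\leq 1$, which is strictly stronger than the assumption (try $q=q^\prime=5/2$), so the upgrade from local to true martingale—and even the bare integrability of $U(M^\phi_t-M^\phi_s)$ appearing in the statement—is not justified as written. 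The repair is to estimate $M^\phi$ through its quadratic variation rather than through the drift: applying \Cref{rule}$(ii)$ to both $\phi$ and $\phi^2$ (or It\^o's formula on the extension of \Cref{ext1}) identifies
\[
[M^\phi]_T=\int_0^T\int_{\R^d}\big\lVert \bar{\sigma}^\top(s,X_{\cdot\wedge s},z)\mathrm{D}\phi(X_s,\Yc_s)\big\rVert^2 m^Z_s(\mathrm{d}z)\mathrm{d}s\leq C_\phi\int_0^T\int_{\R^d}\big(1+\lVert X_{\cdot\wedge s}\rVert+\lVert z^\top\sigma(s,X_{\cdot\wedge s})\rVert^2\big)m^Z_s(\mathrm{d}z)\mathrm{d}s,
\]
whose $q/2$-moment is finite by \Cref{rule}$(i)$ and \Cref{bdMOMstatement}; the Burkholder--Davis--Gundy inequality then gives $\E^{\P}[\sup_t\lvert M^\phi_t\rvert^{q}]<\infty$ (recall $M^\phi_0=\phi(x_0,0)$ is deterministic), after which your Doob/H\"older/uniform-integrability argument and the rest of the proof go through under $1/q+1/q^\prime\leq 1$.
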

\begin{proof}
Since $1/q+1/q^{\prime}\leq 1,$ it is easy to verify that that
\begin{align*}
&\big[ M^\phi_\cdot, \E^{\P}[ U \vert \Fc_\cdot^c ] \big]^{\F^\smallfont{c},\P} \equiv 0, \\
 \iff& M^\phi_\cdot \E^{\P}[ U \vert \Fc_\cdot^c ] \text{ is an } (\F^c,\P)\text{-martingale}, \\
 \iff& \forall s \in[0, t];\; \forall h_s \;\text{which are}\; \Fc_s^c\text{-measurable and bounded, } \E^{\P}\big[h_s\big(M^\phi_t \E^{\P}[ U \vert \Fc_t^c]-M^\phi_s \E^{\P}[ U \vert \Fc_s^c ] \big)\big]=0.
\end{align*}
Moreover
\begin{align*}
\E^{\P}\big[h_s\big(M^\phi_t \E^{\P}[ U \vert \Fc_t^c ]-M^\phi_s \E^{\P}[ U \vert \Fc_s^c ] \big)\big]=0 &\iff \E^{\P}\big[h_s M^\phi_t \E^{\P}[ U \vert \Fc_t^c ]\big]=\E^{\P}\big[h_s M^\phi_s \E^{\P}[ U \vert \Fc_s^c ]\big] \\
&\iff \E^{\P}[h_s M^\phi_t U] =\E^{\P}[h_s M^\phi_s U] \\
&\iff \E^{\P}\big[h_s U ( M^\phi_t - M^\phi_s )\big]=0.
\end{align*}
The proof of the last statement is straightforward. Since we work on Polish spaces, it is sufficient to consider those functions $h_s$ that are continuous.
\end{proof}

Because we work on a different probability space, one might wonder whether it is beneficial for the principal to consider larger filtrations. The following lemma shows that this is not the case. That is, the principal cannot profit from taking a filtration larger than the canonical one $\F^c.$ 

\begin{lemma} \label{largerfiltr} Let $(\tilde{\Omega},\tilde{\Fc}, \tilde{\F},\tilde{\P})$ be a filtered probability space with a $(d+1)$-dimensional adapted continuous process $(\tilde{X},\tilde{\Yc}),$ $\Mc([0,T],\R^d)$-valued adapted random variable $\tilde{m}^Z$ and $\tilde{\Fc}_T$-measurable random variable $\tilde{U}$ satisfying
\begin{enumerate}
\item[$(i)$]  there exist $q>1$ and $q^\prime>1$ such that 
\[
\E^{\tilde{\P}}\bigg[\bigg(\int_0^T\int_{\R^\smallfont{d}} \lVert z^\top \sigma(t,X_{\cdot \wedge t}) \rVert^2 \tilde{m}^Z_t(\mathrm{d}z)\mathrm{d}t\bigg)^\frac{q}{2}\bigg] < \infty, \; {\rm and}\; \E^{\tilde{\P}}\big[\lvert \tilde{U}\rvert^{q^\smallfont{\prime}}\big] < \infty;
\]
\item[$(ii)$] the process 
\[
\tilde{M}^\phi_t\coloneqq\phi(\tilde{X}_t,\tilde{\Yc}_t)-\int_0^t \int_{\R^\smallfont{d}} \mathscr{L}\phi (s,\tilde{X}_{\cdot \wedge s},\tilde{\Yc}_s,z) \tilde{m}^Z_s(\mathrm{d}z)\mathrm{d}s,\; t \in [0,T], 
\] is an $(\tilde{\F},\tilde{\P})$-martingale for every $\phi \in \Cc_c^2(\R^{d+1},\R);$
\item[$(iii)$]  $(\tilde{X}_0,\tilde{\Yc}_0,\tilde{m}^Z_0)=(x_0,0,\delta_0),\;\tilde{\P}${\rm--a.s.}$;$
\item[$(iv)$] for any $i\in I$, we have
\begin{equation*}
\E^{\tilde{\P}} \big[h^i(\tilde{X}_{\cdot \wedge T},\tilde{\Yc}_t,\tilde{U})\big] \leq 0;
\end{equation*}
\item[$(v)$]  $\forall \phi \in \Cc_c^2(\R^{d+1},\R)$, we have
\[ 
\big[ \tilde{M}^\phi_\cdot, \E^{\P}[ \tilde{U} \vert \tilde{\Fc}_\cdot ] \big]^{\tilde{\F},\tilde{\P}} \equiv 0,\; \text{\rm and}\; \E^{\tilde{\P}}[ \tilde{U}] \geq r_0. 
\]
\end{enumerate}
Then there exists $\P \in \Pc_{q,q^\smallfont{\prime}}$ and an $\Fc_T^c$-measurable random variable $\hat{U}$ such that $U=\hat{U}$, $\P$--{\rm a.s.} and
\[ 
\E^{\tilde{\P}} \big[F(\tilde{X}_{\cdot \wedge T},\tilde{\Yc}_T,\tilde{U}) \big]\leq \E^\P \big[F(X_{\cdot \wedge T},\Yc_T,U)\big]. 
\] 
\end{lemma}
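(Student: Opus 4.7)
The idea is to transport the law on the enlarged space down to the canonical space, replacing the auxiliary random variable $\tilde U$ by its projection onto the $\sigma$-algebra generated by the canonical coordinates. Concavity of $F$ and convexity of the $h^i$ will ensure that this projection does not decrease the objective and does not violate the constraints, while the orthogonality is preserved by the very definition of conditional expectation.

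More precisely, let $\tilde\sigma^c_t\coloneqq\sigma(\tilde X_s,\tilde\Yc_s,\tilde m^Z_s:s\in[0,t])$, set $\tilde U'\coloneqq\E^{\tilde\P}[\tilde U\mid\tilde\sigma^c_T]$, and choose, using the factorisation lemma, a Borel-measurable $\Fc_T^c$-measurable map $\hat U:\Omega^c\longrightarrow\R$ with $\tilde U'=\hat U(\tilde X,\tilde\Yc,\tilde m^Z)$, $\tilde\P$--a.s. Define $\P$ as the pushforward of $\tilde\P$ under the map $\omega\longmapsto\bigl(\tilde X(\omega),\tilde\Yc(\omega),\tilde m^Z(\omega),\tilde U'(\omega)\bigr)$. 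Then $U=\hat U$ $\P$--a.s., so $U$ has the desired measurability. The first moment condition of \Cref{rule} is inherited since the law of $(X,\Yc,m^Z)$ under $\P$ coincides with that of $(\tilde X,\tilde\Yc,\tilde m^Z)$ under $\tilde\P$, and $\E^{\P}[|U|^{q^\prime}]=\E^{\tilde\P}[|\tilde U'|^{q^\prime}]\leq\E^{\tilde\P}[|\tilde U|^{q^\prime}]<\infty$ by Jensen's inequality. The martingale-problem property \Cref{rule}.$(ii)$ under $\tilde\F$ descends to the smaller filtration $\tilde\sigma^c$ (as $\tilde M^\phi$ is $\tilde\sigma^c$-adapted), hence transports to an $(\F^c,\P)$-martingale. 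The initial condition and $\E^{\P}[U]=\E^{\tilde\P}[\tilde U]\geq r_0$ are immediate.

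For the constraints \Cref{rule}.$(iv)$, fix $i\in I$. Since $\tilde X$ and $\tilde\Yc_T$ are $\tilde\sigma^c_T$-measurable and $h^i(x,y,\cdot)$ is convex, conditional Jensen gives
\[
h^i(\tilde X,\tilde\Yc_T,\tilde U')=h^i\big(\tilde X,\tilde\Yc_T,\E^{\tilde\P}[\tilde U\mid\tilde\sigma^c_T]\big)\leq\E^{\tilde\P}\big[h^i(\tilde X,\tilde\Yc_T,\tilde U)\mid\tilde\sigma^c_T\big],
\]
and taking expectations yields $\E^{\P}[h^i(X,\Yc_T,U)]\leq\E^{\tilde\P}[h^i(\tilde X,\tilde\Yc_T,\tilde U)]\leq 0$. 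For orthogonality \Cref{rule}.$(v)$, I would invoke \Cref{OGmart}: it suffices to check $\E^{\P}[h_s U(M^\phi_t-M^\phi_s)]=0$ for bounded continuous $\Fc^c_s$-measurable $h_s$ and $\phi\in\Cc^2_c(\R^{d+1},\R)$. Transported back, this becomes $\E^{\tilde\P}[\tilde h_s\tilde U'(\tilde M^\phi_t-\tilde M^\phi_s)]=0$ with $\tilde h_s$ bounded and $\tilde\sigma^c_s$-measurable. Since $\tilde h_s(\tilde M^\phi_t-\tilde M^\phi_s)$ is $\tilde\sigma^c_T$-measurable, the defining property of $\tilde U'=\E^{\tilde\P}[\tilde U\mid\tilde\sigma^c_T]$ gives
\[
\E^{\tilde\P}\bigl[\tilde h_s(\tilde M^\phi_t-\tilde M^\phi_s)\tilde U'\bigr]=\E^{\tilde\P}\bigl[\tilde h_s(\tilde M^\phi_t-\tilde M^\phi_s)\tilde U\bigr],
\]
and the right-hand side vanishes by applying \Cref{OGmart} (version valid on the original space with the filtration $\tilde\F$) to the hypothesis $[\tilde M^\phi,\E^{\tilde\P}[\tilde U\mid\tilde\Fc_\cdot]]^{\tilde\F,\tilde\P}\equiv 0$.

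Finally, the objective inequality follows from concavity of $F(x,y,\cdot)$ in $u$ and conditional Jensen, exactly as for the constraints but with the reversed direction:
\[
F(\tilde X,\tilde\Yc_T,\tilde U')\geq\E^{\tilde\P}\big[F(\tilde X,\tilde\Yc_T,\tilde U)\mid\tilde\sigma^c_T\big],
\]
so that $\E^{\P}[F(X,\Yc_T,U)]=\E^{\tilde\P}[F(\tilde X,\tilde\Yc_T,\tilde U')]\geq\E^{\tilde\P}[F(\tilde X,\tilde\Yc_T,\tilde U)]$. The main delicate point is the orthogonality step: the projection argument relies crucially on $\tilde M^\phi$ being adapted to $\tilde\sigma^c$, which is why the martingale-problem formulation on $\Omega^c$ is a natural setting. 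The convexity and concavity assumptions in \Cref{assPrincipal} are exactly what one needs to guarantee that this projection-based reduction is admissible.
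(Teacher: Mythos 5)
Your construction is the same as the paper's: you condition $\tilde U$ on the $\sigma$-algebra generated by $(\tilde X,\tilde\Yc,\tilde m^Z)$, use conditional Jensen with the concavity of $F(x,y,\cdot)$ and convexity of $h^i(x,y,\cdot)$ to handle the objective and the constraints, reduce the orthogonality requirement to the testing identity of \Cref{OGmart}, and finally take the pushforward of $\tilde\P$ under $(\tilde X,\tilde\Yc,\tilde m^Z,\tilde U')$. All of these steps match the paper's proof, and the treatment of the moment bounds, the martingale problem, the initial condition and the participation constraint is fine.

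There is, however, one gap in the orthogonality step: you invoke \Cref{OGmart} unconditionally, both to reformulate the target property under $\P$ and to exploit hypothesis $(v)$ under $\tilde\P$, but that lemma is only stated (and its proof of the equivalence ``bracket $\equiv 0$ $\iff$ $M^\phi\,\E[U\,\vert\,\Fc^c_\cdot]$ is a martingale $\iff$ testing identity'' only works) under the integrability condition $1/q+1/q'\leq 1$, which guarantees that products such as $U(M^\phi_t-M^\phi_s)$ are integrable. The hypotheses of the present lemma only give some $q>1$ and $q'>1$, so in general these products need not be integrable and your chain of identities is not justified as written. The paper closes this gap by localisation: it introduces the stopping times $\tau_n\coloneqq\inf\{t\in[0,T]:\lvert\tilde M^\phi_t\rvert\geq n\ \text{or}\ t=T\}$, observes that these are stopping times of the smaller filtration $\tilde\sigma^c$ (since $\tilde M^\phi$ is continuous and $\tilde\sigma^c$-adapted), runs your projection/tower-property argument with the bounded stopped processes $\tilde M^\phi_{\cdot\wedge\tau_n}$, and then lets $n\to\infty$. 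Adding this localisation (or restricting the statement to $1/q+1/q'\leq1$) completes your argument; everything else coincides with the paper's proof.
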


\begin{proof}
Let us define \[\bar{U}\coloneqq\E^{\tilde{\P}}\big[ \tilde{U} \big\vert \Fc_T^{\tilde{X},\tilde{\Yc},\tilde{m}^Z} \big] \] and let us note that, since $F$ is concave in the last entry, Jensen's inequality yields

\begin{align*}
\E^{\tilde{\P}}\big[F(\tilde{X}_{\cdot \wedge T},\tilde{\Yc}_T,\tilde{U})\big]&\leq \E^{\tilde{\P}} \Big[F\big(\tilde{X}_{\cdot \wedge T},\tilde{\Yc}_T,\E^{\tilde{\P}}[ \tilde{U} \vert \Fc_T^{\tilde{X},\tilde{\Yc},\tilde{m}^Z} ]\big)\Big]=\E^{\tilde{\P}} \big[F(\tilde{X}_{\cdot \wedge T},\tilde{\Yc}_T,\bar{U})\big].
\end{align*} 
Similarly, by convexity, for any $i\in\{1,\dots,k\}$
\[ 
0 \geq  \E^\P \big[h^i(\tilde{X}_{\cdot \wedge T},\tilde{\Yc}_t,\tilde{U})\big] \geq \E^\P \big[h^i(\tilde{X}_{\cdot \wedge T},\tilde{\Yc}_t,\bar{U})\big], \]
and $\E^{\tilde{\P}}[\bar{U}]=\E^{\tilde{\P}}[\tilde{U}] \geq r_0.$ It remains to verify that $\bar{U}$ defines an orthogonal martingale.
Let us first assume that $1/q+1/q^\prime \leq 1.$ In this case, for $\phi \in \Cc_b^2(\R^{d+1},\R)$ the condition \[ \big[ \tilde{M}^\phi_\cdot, \E^{\tilde{\P}}[ \tilde{U} \vert \tilde{\Fc}_\cdot ] \big]^{\tilde{\F},\tilde{\P}} \equiv 0,\] is by \Cref{OGmart} equivalent to $\E^{\tilde{\P}}[\tilde{h}_s \tilde{M}^\phi_t\tilde{U}] =\E^{\tilde{\P}}[\tilde{h}_s \tilde{M}^\phi_s  \tilde{U}],$ for every $s \in[0, t]$ and $\tilde{h}_s$ which is $\tilde{\Fc}_s$-measurable and bounded. In particular, this implies that
\[\E^{\tilde{\P}}\big[\bar{h}_s \tilde{M}^\phi_t \tilde{U}\big]=\E^{\tilde{\P}}\big[\bar{h}_s \tilde{M}^\phi_s \tilde{U}\big].\] for every $s \in[0, t]$ and $\bar{h}_s$ being $\Fc^{\tilde{X},\tilde{\Yc},\tilde{m}^\smallfont{Z}}_s$-measurable and bounded. This in turn by the tower property of conditional expectation yields
\[\E^{\tilde{\P}}\big[\bar{h}_s \tilde{M}^\phi_t \bar{U}\big]=\E^{\tilde{\P}}\big[\bar{h}_s \tilde{M}^\phi_s \bar{U}\big],\] for every $s \in[0, t]$ and $\bar{h}_s$ that is $\Fc^{\tilde{X},\tilde{\Yc},\tilde{m}^\smallfont{Z}}_s$-measurable and bounded. This is, again by \Cref{OGmart}, equivalent to 
\[ \big[ \tilde{M}^\phi_\cdot, \E^{\tilde{\P}}[ \bar{U} \vert \Fc^{\tilde{X},\tilde{\Yc},\tilde{m}^\smallfont{Z}}_\cdot ] \big]^{F^{\smallfont{\tilde{X}}\smallfont{,}\smallfont{\tilde{\Yc}}\smallfont{,}\smallfont{\tilde{m}}^\tinyfont{Z}}{,}{\tilde{\P}}} \equiv 0.\] 

If $1/q+1/q^\prime \leq 1$ is not satisfied,  we can use localisation techniques by setting $\tau_n \coloneqq \inf\{ t \in [0,T] : \lvert \tilde{M}^\phi_t \rvert \geq n \; {\rm or }\; t =T \}$ for $n \in \N.$ It is clear to see that $\tau_n$ is then an $\F^{\tilde{X},\tilde{\Yc},\tilde{m}^\smallfont{Z}}$--stopping time and the proof of \Cref{OGmart} as well as the above goes through with $\tilde{M}^\phi$ replaced by the stopped process $\tilde{M}^\phi_{\cdot \wedge \tau_\smallfont{n}}$ for any $n \in \N.$ Since $\lim_{n \rightarrow \infty} \tau_n=T,$ we conclude by passing to the limit that \[ \big[ \tilde{M}^\phi_\cdot, \E^{\tilde{\P}}[ \bar{U} \vert \Fc^{\tilde{X},\tilde{\Yc},\tilde{m}^\smallfont{Z}}_\cdot ] \big]^{F^{\smallfont{\tilde{X}}\smallfont{,}\smallfont{\tilde{\Yc}}\smallfont{,}\smallfont{\tilde{m}}^\tinyfont{Z}}{,}{\tilde{\P}}} \equiv 0.\] 

\medskip In any case, it suffices to take $\P$ as the pushforward of $\tilde{\P}$ under the map $\omega \longmapsto (\tilde{X},\tilde{\Yc},\tilde{m}^Z,\bar{U})(\omega).$
\end{proof}

Let us now introduce the main compactification results. In \Cref{tight}, we show that a certain set is relatively compact, but not every element in the closure might have the right properties concerning the random variable $U$. Subsequently, in \Cref{lem:compactness}, we present a reinforced version of this set, where closedness is actually guaranteed.

\begin{lemma} \label{tight} The set 
\begin{align*}
K_{\varepsilon,R}\coloneqq\bigg\lbrace \P \in \Rc : \E^{\P}\bigg[ \int_0^T \int_{\R^\smallfont{d}} \lVert z \rVert^{2+\varepsilon} m^Z(\mathrm{d}z,\mathrm{d}s) + \lvert U \rvert^{1+\varepsilon} \bigg]\leq R \bigg\rbrace,
\end{align*} is relatively compact in the weak topology for every $\varepsilon>0$ and $R>0$. Moreover, every cluster point $\P^\prime \in \overline{K_{\varepsilon,R}}$ satisfies \[ 
\E^{\P^\smallfont{\prime}}\bigg[ \int_0^T \int_{\R^\smallfont{d}} \lVert z \rVert^{2+\varepsilon} m^Z(\mathrm{d}z,\mathrm{d}s) + \lvert U \rvert^{1+\varepsilon} \bigg]\leq R, \] as well as all conditions in {\rm \Cref{rule}} except possibly \ref{rule5}.
\end{lemma}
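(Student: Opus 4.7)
The plan is to invoke Prokhorov's theorem to establish relative compactness and then verify each remaining condition of \Cref{rule} at a cluster point through a combination of lower-semicontinuity, Portmanteau-type arguments, and a uniform-integrability truncation driven by the strictly super-quadratic moment control of $m^Z$. To obtain tightness of $K_{\varepsilon,R}$, I treat the three marginals on $\Cc([0,T],\R^{d+1})$, $\Mc([0,T],\R^d)$, and $\R$ separately. The $\R$-marginal of $U$ is tight because $|U|^{1+\varepsilon}$ is uniformly bounded in $L^1$. The $\Mc([0,T],\R^d)$-marginal is tight thanks to the coercive moment bound $\sup_{\P\in K_{\varepsilon,R}}\E^\P[\int_0^T\int\|z\|^{2+\varepsilon}m^Z(\mathrm{d}z,\mathrm{d}s)]\leq R$, via the standard relaxed-control compactness criterion (\cite[Theorem A.8]{haussmann1990existence}). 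For the path marginal of $(X,\Yc)$, I apply Aldous's criterion: combining Burkholder--Davis--Gundy with the growth estimates on $a$ and $b$ from \Cref{assPrincipal}$(i)$ and the moment bound from \Cref{bdMOMstatement}, I obtain uniform control of short-time increments sufficient for tightness of continuous paths.

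Let $(\P_n)_n\subset K_{\varepsilon,R}$ with $\P_n\Rightarrow\P'$ along a subsequence. The functionals $m^Z\mapsto\int_0^T\int\|z\|^{2+\varepsilon}m^Z(\mathrm{d}z,\mathrm{d}s)$ and $U\mapsto|U|^{1+\varepsilon}$ are non-negative and lower-semicontinuous, so Portmanteau transfers the moment bound to $\P'$. Condition \ref{rule1} of \Cref{rule} then follows for any $(q,q')$ with $q\in(1,1+\varepsilon/2)$ and $q'\in(1,1+\varepsilon)$, using that $\sigma$ is bounded. Condition \ref{rule3} is immediate from continuity of the evaluation maps at $t=0$. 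For condition \ref{rule4}, each $h^i$ is lower-semicontinuous and its negative part is dominated by a linear function of $(\|x\|,|y|,|u|)$; since $\sup_n\E^{\P_n}[\sup_{t\in[0,T]}\|(X_t,\Yc_t)\|+|U|]<\infty$ by \Cref{bdMOMstatement} together with Step~1, that negative part is uniformly integrable, and a generalised Portmanteau lemma for lower-semicontinuous functions with uniformly integrable negative part yields $\E^{\P'}[h^i(X_{\cdot\wedge T},\Yc_T,U)]\leq\liminf_n\E^{\P_n}[h^i(X_{\cdot\wedge T},\Yc_T,U)]\leq 0$.

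The main technical step, and the principal obstacle, is preserving the martingale condition \ref{rule2} at the limit. It suffices to check that for every $\phi\in\Cc_c^2(\R^{d+1},\R)$, every $0\leq s\leq t\leq T$, and every bounded continuous $\Fc_s^c$-measurable $h_s$, we have $\E^{\P'}[h_s(M^\phi_t-M^\phi_s)]=0$. The term $\phi(X_\cdot,\Yc_\cdot)$ is bounded and continuous, and hence passes to the limit trivially. The integrand $\mathscr{L}\phi(s,X_{\cdot\wedge s},\Yc_s,z)$ is continuous in $(x,z)$ and, because $\phi$ has compact support and $\sigma$ is bounded, its $z$-growth is controlled by the bound $\|a(t,x,z)\|\leq C(1+\|x\|+\|z^\top\sigma(t,x)\|^2)$, so at most quadratic in $\|z\|$. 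I truncate: for each $N\in\N^\star$, the functional $\int_0^\cdot\int_{\{\|z\|\leq N\}}\mathscr{L}\phi\,m^Z(\mathrm{d}z,\mathrm{d}s)$ is bounded and continuous on $\Omega^c$ and therefore passes to the limit, while the tail is dominated by $C\|\phi\|_{\Cc^2}\sup_n\E^{\P_n}[\int_0^T\int\|z\|^2\mathbf{1}_{\{\|z\|>N\}}m^Z(\mathrm{d}z,\mathrm{d}s)]\leq C'R\,N^{-\varepsilon}\to 0$. This uniform-integrability argument, powered by the strictly super-quadratic moment bound built into $K_{\varepsilon,R}$, is the crucial input and is precisely why $\varepsilon>0$ is required rather than merely a second-moment bound.

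Finally, condition \ref{rule5} involves the orthogonality of $M^\phi$ with the $(\F^c,\P)$-martingale $\E^\P[U\,|\,\Fc^c_\cdot]$; this depends on $\P$ through a conditional expectation, which is not continuous with respect to weak convergence of measures. Consequently, \ref{rule5} can in general fail at the cluster point $\P'$, which is exactly why the statement excludes this condition and why the refined set of \Cref{lem:compactness} is introduced subsequently to restore it.
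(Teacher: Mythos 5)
Your strategy is essentially the paper's: tightness of the marginals from the coercive $(2+\varepsilon)$/$(1+\varepsilon)$ moment bounds, transfer of the moment bound and of conditions $(i)$, $(iii)$, $(iv)$ of \Cref{rule} by lower-semicontinuity and a Fatou-type Portmanteau argument, preservation of the martingale condition $(ii)$ by truncating in $z$ and controlling the tail uniformly via the strictly super-quadratic bound, and the correct observation that $(v)$ may fail at the limit. There is, however, one step that does not hold as written: the functional $(x,y,m^Z)\longmapsto\int_0^t\int_{\{\|z\|\leq N\}}\mathscr{L}\phi(s,x_{\cdot\wedge s},y_s,z)\,m^Z_s(\mathrm{d}z)\mathrm{d}s$ is \emph{not} continuous on all of $\Omega^c$; the sharp indicator makes it discontinuous at any occupation measure charging $[0,T]\times\{\|z\|=N\}$. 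This is precisely the point of \Cref{lem:continuity_of_phi}, and the paper repairs it by choosing the truncation level $M$ with $\E^{\P^\prime}\big[\int_0^T\int\mathbf{1}_{\{\|z\|=M\}}m^Z_s(\mathrm{d}z)\mathrm{d}s\big]=0$, which is possible for all but countably many levels (a continuous cutoff would also do). Without this device the passage to the limit of the truncated term is unjustified, so you should add it; note also that the tail estimate must be applied to $\P^\prime$ as well as to the $\P_n$, which is legitimate only because the moment bound has already been transferred to $\P^\prime$.

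Two smaller points. For the path marginal you use Aldous's criterion with Burkholder--Davis--Gundy, a legitimate alternative to the paper's criterion (\Cref{tightness}, in the spirit of Haussmann--Lepeltier); but to apply BDG you first need the semimartingale representation of $(X,\Yc)$ on an extension carrying a martingale measure (\Cref{ext1}), and the small-time increment control does not follow from the moment bound alone: one needs the same truncation, $\E^{\P_\smallfont{n}}\big[\int_\tau^{\tau+\theta}\int\|z\|^2m^Z_s(\mathrm{d}z)\mathrm{d}s\big]\leq M^2\theta+RM^{-\varepsilon}$, since an $L^1$-in-time bound by itself gives no uniform modulus. Finally, uniform (asymptotic) integrability of $h^{i,-}$ does not follow from uniformly bounded \emph{first} moments, as you assert; it does follow here because \Cref{bdMOMstatement} together with the definition of $K_{\varepsilon,R}$ yields uniformly bounded $(1+\gamma)$-moments of $\sup_{t}\|(X_t,\Yc_t)\|$ and of $|U|$ for some $\gamma>0$, which is what \Cref{fatou} (via \Cref{rem:UI_convergence}) actually needs.
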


\begin{proof} Let us fix $\varepsilon>0$ and $R>0$ and let us denote $K\coloneqq K_{\varepsilon,R}$ for brevity. Let us assume that $K$ is non-empty, since the claim is trivial otherwise. For relative compactness, it is sufficient to verify that \[ 
\big\lbrace \P\big\vert_{\Fc_\smallfont{T}^c} : \P \in K \big\rbrace\; {\rm and}\; \big\lbrace \P\vert_{\Bc(\R)} : \P \in K \big\rbrace ,
\]
are tight. To prove the former, we verify Conditions $\ref{T1}$ and $\ref{T2}$ of \Cref{tightness}. Let us take $M > 0.$ Then (see \Cref{tightness} for the definition of $\sigma^{M,\varepsilon}$),
\begin{align*}
\P\left[ \sigma^{M,\varepsilon} < T \right] &\leq \P\bigg[ \sup_{t \in [0,T]} \lVert (X_t,\Yc_t) \rVert \geq M \bigg] +\P\bigg[ \int_0^T \int_{\R^\smallfont{d}} \lVert z \rVert^{2+\varepsilon} m^Z_s(\mathrm{d}z)\mathrm{d}s \geq M \bigg] \\
& \leq \frac{1}{M}\E^\P\bigg[\sup_{t \in [0,T]}\lVert (X_t,\Yc_t)\rVert + \int_0^T \int_{\R^\smallfont{d}} \lVert z \rVert^{2+\varepsilon} m^Z_s(\mathrm{d}z)\mathrm{d}s \bigg],\; \P \in K.
\end{align*} The bound \eqref{bdMOM} and the assumption yield
 \[ 
 \sup_{\P \in K} \E^{\P}\bigg[\sup_{t \in [0,T]}\lVert (X_t,\Yc_t)\rVert +  \int_0^T \int_{\R^\smallfont{d}} \lVert z \rVert^{2+\varepsilon} m^Z_s(\mathrm{d}z)\mathrm{d}s \bigg] < \infty, \] and thus 
\[ 
\lim_{M \rightarrow \infty} \sup_{\P \in K} \P\big[ \sigma^{M,\varepsilon} < T \big]=0.
\] 
Hence, Condition $\ref{T1}$ in \Cref{tightness} is established. Let us now have $\phi \in \Cc_b^2(\R^{d+1},\R),$ where $\Cc_b^2(\R^{d+1},\R)$ denotes the space of all twice continuously differentiable, bounded real--valued functions with bounded derivatives up to order $2$ on $\R^{d+1}.$ Using \eqref{boundFA}, we have the following bound for $s \in \llbracket 0,\sigma^{M,\varepsilon} \rrbracket$
\begin{align}
\begin{split} \label{eqn:bound_L}
\lvert\mathscr{L}\phi (s,X_{\cdot \wedge s},\Yc_s,z) \rvert &\leq \lvert b(s,X_{\cdot \wedge s},z)\cdot \mathrm{D}\phi(X_s,\Yc_s)\rvert + \frac{1}{2} \lvert a(s,X_{\cdot \wedge s},z) : \mathrm{D}^2\phi(X_s,\Yc_s) \rvert \\
&\leq C \sup_{t \in \llbracket 0,\sigma^{\smallfont{M,\varepsilon}} \rrbracket} \bigg( \lVert  \mathrm{D}\phi(X_t,\Yc_t) \rVert +\frac{1}{2}\lVert \mathrm{D}^2\phi(X_t,\Yc_t) \rVert + \|\sigma(t,X_{\cdot \wedge t})\|\bigg)\big(1 + \lVert X_{\cdot \wedge s} \rVert + \lvert \Yc_s \rvert + \lVert z \rVert^2 \big) \\
&\leq C \sup_{t \in \llbracket 0,\sigma^{\smallfont{M,\varepsilon}} \rrbracket} \bigg( \lVert  \mathrm{D}\phi(X_t,\Yc_t) \rVert +\frac{1}{2}\lVert \mathrm{D}^2\phi(X_t,\Yc_t) \rVert+ \|\sigma(t,X_{\cdot \wedge t})\| \bigg)\big(1 + 2M + \lVert z \rVert^2 \big).
\end{split}
\end{align} It is then straightforward to verify Condition $\ref{T2}$ in \Cref{tightness} since we can set 
\[
A_{\phi,M}= \mathrm{const.} (T) C \sup_{t \in \llbracket 0,\sigma^{\smallfont{M,\varepsilon}} \rrbracket} \bigg( \lVert  \mathrm{D}\phi(X_t,\Yc_t) \rVert +\frac{1}{2}\lVert \mathrm{D}^2\phi(X_t,\Yc_t) \rVert+ \|\sigma(t,X_{\cdot \wedge t})\| \bigg)(1+ 2M).
\]
If $\psi(\cdot)=\phi(\cdot-a),$ we can clearly take $A_{\psi,M}=A_{\phi,M}.$ Tightness of the set $\lbrace \P\vert_{\Bc(\R)} : \P \in K \rbrace$ follows directly from the assumption.

\medskip 
We now prove the second part. Let us have a sequence $(\P_n)_{n\in\N^\smallfont{\star}}$ valued in $K$ having a weak limit $\P^\prime.$ Clearly
\[ 
\E^{\P^\smallfont{\prime}}\bigg[ \int_0^T \int_{\R^\smallfont{d}} \lVert z \rVert^{2+\varepsilon} m^Z_s(\mathrm{d}z)\mathrm{d}s + \lvert U \rvert^{1+\varepsilon} \bigg]\leq \liminf_{n \rightarrow \infty}\E^{\P_\smallfont{n}}\bigg[ \int_0^T \int_{\R^\smallfont{d}} \lVert z \rVert^{2+\varepsilon} m^Z_s(\mathrm{d}z)\mathrm{d}s + \lvert U \rvert^{1+\varepsilon} \bigg] \leq R,\] by the Portmanteau theorem for lower-bounded lower-semicontinuous functions. Conditions \ref{rule1} and \ref{rule3} in \Cref{rule} are clearly satisfied by $\P^\prime.$ As for  \Cref{rule}.\ref{rule2}, let us define for $M>0$ and $\phi \in \Cc_c^2(\R^{d+1},\R)$
\begin{align*}
M_t^{\phi,M}&\coloneqq\phi(X_t,\Yc_t)-\int_0^t \int_{\lVert z \rVert < M} \mathscr{L}\phi (s,X_{\cdot \wedge s},\Yc_s,z) m^Z_s(\mathrm{d}z)\mathrm{d}s, \\
N_t^{\phi,M}&\coloneqq-\int_0^t \int_{\lVert z \rVert \geq M} \mathscr{L}\phi (s,X_{\cdot \wedge s},\Yc_s,z) m^Z_s(\mathrm{d}z)\mathrm{d}s,\; t \in [0,T].
\end{align*} We have for any  $ s \in[0, t]$ and $h_s$ which is $\Fc_s^c$-measurable, continuous and bounded, that
\begin{align} \label{mart} \begin{split}
\big\lvert \E^{\P^\smallfont{\prime}} \big[h_s(M_t^\phi-M_s^\phi )\big] \big\rvert &\leq \big\lvert \E^{\P^\smallfont{\prime}} [h_s(M_t^{\phi,M}-M_s^{\phi,M} )] -\E^{\P_\smallfont{n}} [h_s(M_t^{\phi,M}-M_s^{\phi,M} )] \big\rvert \\
&\quad +\big\lvert \E^{\P^\smallfont{\prime}} [h_s(N_t^{\phi,M}-N_s^{\phi,M})] \big\rvert + \big\lvert \E^{\P_\smallfont{n}} [h_s(N_t^{\phi,M}-N_s^{\phi,M} )] \big\rvert. \end{split}
\end{align}
Note that we have using analogous arguments as in \eqref{eqn:bound_L} that
\begin{align*}
\mathbf{1}_{\{\|z\|\geq M \}}\lvert\mathscr{L}\phi (s,x_{\cdot \wedge s},y_s,z) \rvert\leq const.(T,\phi,\sigma) C \mathbf{1}_{\{\|z\|\geq M \}} \lVert z \rVert^2 .
\end{align*} holds for every $s \in [0,T]$ and $(x,y_s) \in \Cc([0,T],\R^{d}) \times \R$ and $M\geq 1$ large enough such that \[ {\rm supp}( \phi )\subset \{ (x,y) \in \R^{d+1} : \|x \|+\|y\|\leq M \}. \]

Thus, for any $\P \in \Pc(\Omega^c \times \R)$ and $M>0$ large enough we have
\begin{align} \label{mar} \begin{split}
\big\lvert \E^{\P} [h_s(N_t^{\phi,M}-N_s^{\phi,M} )] \big\rvert \leq \| h \|_{\infty}  \E^{\P}\big[\big|  N_t^{\phi,M}-N_s^{\phi,M}\big|\big] &\leq \| h \|_{\infty}  \E^{\P}\bigg[ \int_s^t \int_{\lVert z \rVert \geq M}  \lvert\mathscr{L}\phi (s,X_{\cdot \wedge s},\Yc_s,z) \rvert m^Z_s(\mathrm{d}z)\mathrm{d}s\bigg]  \\
&\leq \mathrm{const.} (T,\phi,\sigma) C \| h \|_{\infty} \E^{\P}\bigg[ \int_0^T \int_{\lVert z \rVert \geq M} \lVert z \rVert^2  m^Z_s(\mathrm{d}z)\mathrm{d}s\bigg]. \end{split}
\end{align}
Since by analogous arguments as in \Cref{rem:UI_convergence} we obtain
\begin{equation} \label{eqn:unif_conv.}
\sup\bigg\lbrace \E^{\P} \bigg[\int_0^T \int_{\lVert z \rVert \geq M} \lVert z \rVert^2  m^Z_s(\mathrm{d}z)\mathrm{d}s\bigg] : \P \in \Pc(\Omega^c \times \R), \; \E^{\P}\bigg[  \int_0^T \int_{\R^d} \lVert z \rVert^{2+\varepsilon} m^Z_s(\mathrm{d}z)\mathrm{d}s\bigg] \leq R  \bigg\rbrace \overset{M \rightarrow \infty}{\xrightarrow{\hspace*{1cm}}}0,\end{equation} we have that the last two terms in \eqref{mart} go to $0$ as $M \longrightarrow \infty$, uniformly in $n \in \N^\star.$ Moreover, for any fixed $M>0$ such that \[ \E^{\P'}\bigg[\int_0^T \int_{\R^d} \mathbf{1}_{\{\|z\|=M\}} m^Z_s(\mathrm{d}z)\mathrm{d}s\bigg]=0 \] the first term in \eqref{mart} converges to $0$ due to \Cref{lem:continuity_of_phi} applied to $\phi=\mathscr{L}\phi$ and boundedness of $\mathscr{L}\phi$ on the given set. Hence, Condition \ref{rule2} is verified.

\medskip
 It remains to verify Condition \ref{rule4}. Because the $h^i,\;i\in I,$ are lower-semicontinuous, we have by \Cref{fatou} that 
 \[
 \E^{\P^\smallfont{\prime}} \big[h^i(X_{\cdot \wedge T},\Yc_T,U) \big]\leq \liminf_{n \rightarrow \infty} \E^{\P_\smallfont{n}} \big[h^i(X_{\cdot \wedge T},\Yc_T,U)\big] \leq 0,\; i \in I. \] Here, due to \Cref{assh} it is easy to verify that the negative part $h^{i,-}$ is uniformly asymptotically integrable with respect to the sequence $(\P_n)_{n \in \N^\smallfont{\star}}$ for every $i \in I.$ This concludes the proof.  
\end{proof}

\begin{lemma} \label{lem:compactness} The set 
\begin{align*}
K_{q,q^\smallfont{\prime},R}\coloneqq\bigg\lbrace \P \in \Rc : \E^{\P}\bigg[ \int_0^T \int_{\R^d} \lVert z \rVert^{2q} m^Z_s(\mathrm{d}z)\mathrm{d}s  + \lvert U \rvert^{q^{\prime}} \bigg]\leq R \bigg\rbrace,
\end{align*} is compact in the weak topology for every $ (q,q^\prime) \in (1,\infty)^2$ satisfying \[ \frac{1}{q}+\frac{1}{q^\prime}< 1. \]
\end{lemma}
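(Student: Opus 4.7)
\textbf{Relative compactness and easy properties.} The assumption $1/q+1/q'<1$ forces $q>1$ and $q'>1$, so one may pick $\varepsilon\in(0,\min(2q-2,q'-1))$. The elementary bounds $\|z\|^{2+\varepsilon}\le 1+\|z\|^{2q}$ and $|u|^{1+\varepsilon}\le 1+|u|^{q'}$ imply $K_{q,q',R}\subseteq K_{\varepsilon,2+2R}$, so \Cref{tight} supplies a weakly convergent subsequence $(\P_n)_{n\in\N^\star}$ with limit $\P$. The Portmanteau theorem applied to the nonnegative lower-semicontinuous maps $m^Z\mapsto\int_0^T\!\int_{\R^d}\|z\|^{2q}m^Z_s(\mathrm{d}z)\mathrm{d}s$ and $U\mapsto|U|^{q'}$ preserves the $R$-bound, and \Cref{tight} transfers conditions $(i)$--$(iv)$ of \Cref{rule} to $\P$.

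\textbf{Orthogonality.} It remains to verify condition $(v)$ of \Cref{rule} for $\P$. By \Cref{OGmart}, this reduces to showing, for every $\phi\in\Cc_c^2(\R^{d+1},\R)$, $0\le s\le t\le T$ and bounded continuous $\Fc_s^c$-measurable $h_s$, that
\[
\E^{\P}\bigl[h_s U(M^\phi_t-M^\phi_s)\bigr]=0,
\]
the analogous identity holding for each $\P_n$. I would follow the splitting used in the proof of \Cref{tight}: write $M^\phi=M^{\phi,M}+N^{\phi,M}$, where $M^{\phi,M}$ retains only the contribution of $\{\|z\|\le M\}$ in $\mathscr{L}\phi$. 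For the truncated piece, $M^{\phi,M}_t-M^{\phi,M}_s$ is continuous on the canonical space at $\P$-a.e.\ point (choosing $M$ in the co-countable set of continuity values of the $z$-marginal of $\P$), and the family $h_s U(M^{\phi,M}_t-M^{\phi,M}_s)$ is $\P_n$-uniformly integrable: the $q'$-moment bound on $U$ combines with the uniform control on any moment of $\sup_t\|(X_t,\Yc_t)\|$ up to order $2q$ supplied by \Cref{bdMOMstatement}, which absorbs the path-dependent growth of $b$ and $a$ entering $\mathscr{L}\phi$. Hence $\E^{\P_n}[h_sU(M^{\phi,M}_t-M^{\phi,M}_s)]\longrightarrow\E^{\P}[h_sU(M^{\phi,M}_t-M^{\phi,M}_s)]$.

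\textbf{Tail step and conclusion.} The essential use of $1/q+1/q'<1$ appears in the tail. Choosing $r>1$ with $r/q+r/q'=1$, H\"older gives
\[
\E^{\P_n}\bigl[|U|\,|N^{\phi,M}_t-N^{\phi,M}_s|\bigr]\le C(\phi,\sigma)\,\E^{\P_n}\bigl[|U|^{q'}\bigr]^{1/q'}\E^{\P_n}\!\bigg[\bigg(\int_0^T\!\!\int_{\|z\|>M}\!\|z\|^2\,m^Z_s(\mathrm{d}z)\mathrm{d}s\bigg)^{\!q}\bigg]^{\!1/q}.
\]
The first factor is bounded by $R^{1/q'}$, and the second tends to $0$ as $M\to\infty$ uniformly in $n$: the Chebyshev-type estimate $\|z\|^2\mathbf{1}_{\|z\|>M}\le M^{2-2q}\|z\|^{2q}$ together with Jensen's inequality applied to the probability kernel $m^Z/T$---yielding $\E^{\P_n}[(\int\|z\|^2 m^Z)^q]\le T^{q-1}R$---and an $L^1$--$L^q$ interpolation give the required uniform tail control; the same bound holds under $\P$. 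Letting first $n\to\infty$ and then $M\to\infty$ in $0=\E^{\P_n}[h_sU(M^{\phi,M}_t-M^{\phi,M}_s)]+\E^{\P_n}[h_sU(N^{\phi,M}_t-N^{\phi,M}_s)]$ produces the desired identity. The main obstacle is exactly this tail step: without the strict inequality $1/q+1/q'<1$ one would not have a H\"older exponent $r>1$ available to combine the $q'$-moment control on $U$ with the $q$-moment control on $\int\|z\|^2 m^Z$, so uniform integrability of $U(M^\phi_t-M^\phi_s)$---and therefore the stability of condition~$(v)$ under the weak limit---could fail.
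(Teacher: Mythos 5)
Your proof is correct and follows essentially the same route as the paper's: relative compactness plus conditions $(i)$–$(iv)$ of \Cref{rule} via \Cref{tight}, reduction of condition $(v)$ through \Cref{OGmart}, the same truncation of $\mathscr{L}\phi$ at level $M$, H\"older exploiting the slack $1/q+1/q'<1$ for asymptotic uniform integrability, and a vanishing tail; the paper implements your ``$L^1$--$L^q$ interpolation'' by redoing the estimate with $\tilde q=2q/(2+\gamma)$ for small $\gamma>0$. The only caveat is that in your displayed H\"older bound the tail factor should be measured in $L^{\tilde q}$ for some $\tilde q<q$ (exactly the room your choice of $r>1$ provides), since the uniform decay as $M\to\infty$ is available only strictly below exponent $q$; with that reading your argument coincides with the paper's.
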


\begin{proof} The assumptions of \Cref{tight} are clearly satisfied and hence its conclusion is valid as well. Let $(\P_n)_{n\in\N^\smallfont{\star}}$ be a sequence valued in $K_{q,q^\smallfont{\prime},R}$ such that $\P_n \longrightarrow \P^\prime$ weakly for some $\P^\prime.$ It hence suffices to show that $\P^\prime$ satisfies \Cref{rule}.\ref{rule5}. To this end, let us now fix $\phi \in \Cc_c^2(\R^{d+1},\R).$ From \Cref{OGmart} we have that \Cref{rule}.\ref{rule5} is equivalent to
 \[\E^{\P}\big[h_s U ( M^\phi_t - M^\phi_s )\big]=0,\] for all $s \in[0, t]$ and $h_s$ which is $\Fc_s^{c}$-measurable, continuous and bounded. Similarly as before, (see \eqref{mart} for definitions)
\begin{align}
\begin{split} \label{eqn:compactness_proof}
\big\lvert \E^{\P^\smallfont{\prime}}[ h_s U (M_t^\phi-M_s^\phi )]\big \rvert &\leq \big\lvert \E^{\P^\smallfont{\prime}} [h_s U (M_t^{\phi,M}-M_s^{\phi,M} )] -\E^{\P_\smallfont{n}}[ h_s U (M_t^{\phi,M}-M_s^{\phi,M} )] \big\rvert \\
&\quad +\big\lvert \E^{\P^\smallfont{\prime}} [h_s U (N_t^{\phi,M}-N_s^{\phi,M} )] \big\rvert + \big\lvert \E^{\P_\smallfont{n}}[ h_s U (N_t^{\phi,M}-N_s^{\phi,M} )]\big \rvert.
\end{split}
\end{align} 
We can use the same argument as in the proof of the last part of \Cref{tight}. We just need to verify the uniform asymptotic integrability condition from \Cref{fatou}.  We can find $\varepsilon>0$ such that $(1+\varepsilon)/q+(1+\varepsilon)/q^\prime=1.$ Using H\"{o}lder's inequality and similar argument as in \eqref{mar}, we get for any $\P \in \{\P^n : n \in \N^\star\} \cup \{\P^\prime\}$
\begin{align*}
 \E^{\P}\big[\lvert h_s U (M_t^\phi-M_s^\phi ) \rvert^{1+\varepsilon} \big]&\leq \mathrm{const.} (\phi,C,h,\sigma)\bigg(1+ \E^{\P}\bigg[\bigg\lvert U\int_0^T \int_{\R^\smallfont{d}} \lVert z \rVert^2  m^Z_s(\mathrm{d}z)\mathrm{d}s  \bigg\rvert^{1+\varepsilon}\bigg]\bigg) \\
 &\leq \mathrm{const.} (\phi,C,h,\sigma)\bigg(1+\big( \E^{\P}[ \lvert U\rvert^{q^\prime} ]\big)^\frac{1+\varepsilon}{q^\prime} \bigg( \E^{\P}\bigg[\bigg\lvert \int_0^T \int_{\R^\smallfont{d}} \lVert z \rVert^2  m^Z_s(\mathrm{d}z)\mathrm{d}s \bigg\rvert^{q}\bigg] \bigg)^\frac{1+\varepsilon}{q}\bigg) \\
  & \leq \mathrm{const.} (\phi,C,h, T, R,\sigma)\bigg(1+\big( \E^{\P}[ \lvert U\rvert^{q^\prime} ]\big)^\frac{1+\varepsilon}{q^\prime} \bigg( \E^{\P}\bigg[ \int_0^T \int_{\R^\smallfont{d}} \lVert z \rVert^{2q}  m^Z_s(\mathrm{d}z)\mathrm{d}s\bigg] \bigg)^\frac{1+\varepsilon}{q}\bigg) \\
 & \leq \mathrm{const.} (\phi,C,h, T, R,\sigma).
\end{align*} The rest of the proof can be done in a similar fashion as what follows after \eqref{mart}. Note that in this case the function $(X,\Yc,m^Z,U) \longmapsto h_s U (M_t^{\phi,M}-M_s^{\phi,M} )$ is still continuous on the appropriate set due to \Cref{lem:continuity_of_phi}, but is not bounded. To conclude the convergence of the first term in \Cref{eqn:compactness_proof} to 0, we thus employ \Cref{fatou} together with the bound above, which shows asymptotic uniform integrability of the given function. Further, as for the second two terms, we can write
\[ \E^{\P} \bigg[\lvert U \rvert \int_0^T \int_{\lVert z \rVert \geq M} \lVert z \rVert^2  m^Z_s(\mathrm{d}z)\mathrm{d}s\bigg] \leq \frac{1}{M^\gamma} \E^{\P}  \bigg[\lvert U \rvert \int_0^T \int_{\R^d} \lVert z \rVert^{2+\gamma}  m^Z_s(\mathrm{d}z)\mathrm{d}s\bigg],\; \gamma>0. \] The expectation on the right-hand side can, as before, be bounded from above for some $\gamma>0$ sufficiently small. Indeed, we can do the computations above with $\tilde{q}\coloneqq (2q)/(2+\gamma)$ instead of $q,$ which still satisfies $1/\tilde{q}+1/{q^\prime}<1$ for $\gamma>0$ small enough. Thus, we obtain uniform convergence for $M \longrightarrow \infty$ as in \eqref{eqn:unif_conv.}.
\end{proof}

To conclude the argument, we show that the objective function is upper-semicontinuous.

\begin{lemma} \label{lem:upper-sc}
The map $\P \longmapsto \E^{\P}[F(X_{\cdot \wedge T},\Yc,U)]$ is upper-semicontinuous on $K_{\varepsilon,R}$ for any $\varepsilon>0$ and $R>0.$
\end{lemma}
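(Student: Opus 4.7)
The plan is to take a sequence $(\P_n)_{n\in\N^\star}\subset K_{\varepsilon,R}$ converging weakly to some $\P'\in K_{\varepsilon,R}$, and to establish $\limsup_{n\rightarrow\infty}\E^{\P_n}[F(X_{\cdot\wedge T},\Yc_T,U)]\leq\E^{\P'}[F(X_{\cdot\wedge T},\Yc_T,U)]$. The obstacle is that $F$ is only upper-semicontinuous and merely dominated above by an affine functional, rather than globally bounded, so the standard Portmanteau statement for bounded upper-semicontinuous integrands does not apply directly. The strategy is to dominate $F$ by a continuous linear functional admitting uniform integrability along $(\P_n)$, and to recover the $\limsup$-inequality through Portmanteau applied to a non-negative lower-semicontinuous auxiliary function.

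The first technical step is to upgrade the defining inequality of $K_{\varepsilon,R}$, which only controls $\E^{\P}[\int_0^T\int_{\R^d}\|z\|^{2+\varepsilon}m^Z_s(\mathrm{d}z)\mathrm{d}s]$, to a uniform bound on the $(2+\varepsilon)/2$-th moment of $\int_0^T\int_{\R^d}\|z\|^2 m^Z_s(\mathrm{d}z)\mathrm{d}s$. This is a single application of Jensen's inequality with the convex map $t\mapsto t^{(2+\varepsilon)/2}$ against the probability measure $T^{-1}m^Z_s(\mathrm{d}z)\mathrm{d}s$. Plugging the resulting estimate into \Cref{bdMOMstatement} with $p=2+\varepsilon$ and using boundedness of $\sigma$, one obtains $\sup_{n\in\N^\star}\E^{\P_n}[\sup_{t\in[0,T]}\|(X_t,\Yc_t)\|^{2+\varepsilon}]<\infty$. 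Combined with the defining bound $\E^{\P_n}[|U|^{1+\varepsilon}]\leq R$, this suffices to make the continuous functional $\Psi\coloneqq 1+\|X\|+|\Yc_T|+|U|$ uniformly integrable along $(\P_n)$, so that weak convergence yields $\E^{\P_n}[\Psi]\longrightarrow\E^{\P'}[\Psi]$.

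The concluding step is to define $G\coloneqq C\Psi-F(X_{\cdot\wedge T},\Yc_T,U)$, where $C$ is the constant from \Cref{assPrincipal}.$(ii)$. By that assumption $G\geq 0$ (with the convention $G=+\infty$ wherever $F=-\infty$), and $G$ is lower-semicontinuous as the sum of a continuous function and a lower-semicontinuous one. The Portmanteau theorem for lower-semicontinuous functions bounded below then gives $\liminf_{n\rightarrow\infty}\E^{\P_n}[G]\geq\E^{\P'}[G]$, which, combined with the convergence $\E^{\P_n}[\Psi]\to\E^{\P'}[\Psi]$, rearranges into the claimed upper-semicontinuity inequality. The main delicate point lies in the second step, namely converting the single $L^1$-bound on $\|z\|^{2+\varepsilon}$ into a higher-moment bound on $\sup_t\|(X_t,\Yc_t)\|$ strong enough to force uniform integrability of the linear dominant $\Psi$; the Jensen-type trick above handles this, leaving the remainder of the argument routine.
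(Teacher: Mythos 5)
Your argument is correct, and it shares the paper's skeleton—upper-semicontinuity of $F$ from \Cref{assPrincipal}, plus uniform moment bounds obtained by feeding the defining bound of $K_{\varepsilon,R}$ into \Cref{bdMOMstatement}—but it finishes differently. The paper bounds $\sup_{\P\in K_{\smallfont{\varepsilon},\smallfont{R}}}\E^{\P}[(F^+(X_{\cdot\wedge T},\Yc_T,U))^{1+\gamma}]$ for a small $\gamma>0$ using the linear growth of $F^+$ together with \eqref{bdMOM}, and then invokes the Fatou-type result of \Cref{fatou} (upper-semicontinuous integrand with asymptotically uniformly integrable positive part) in one stroke. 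You instead re-derive that Fatou statement by hand: you dominate $F$ by the continuous functional $C\Psi$ with $\Psi=1+\lVert X\rVert+\lvert\Yc_T\rvert+\lvert U\rvert$, prove uniform integrability of $\Psi$ along the sequence, and apply the Portmanteau inequality to the non-negative lower-semicontinuous function $C\Psi-F$, which rearranges (legitimately, since $\E^{\P_\smallfont{n}}[C\Psi]\to\E^{\P'}[C\Psi]$ is finite and $\E[F^+]<\infty$, so $\E[F]\in[-\infty,\infty)$ is well defined) into the desired $\limsup$ inequality. Two remarks: your Jensen upgrade to a $(2+\varepsilon)$-moment of $\int_0^T\int_{\R^\smallfont{d}}\lVert z\rVert^2 m^Z_s(\mathrm{d}z)\mathrm{d}s$ is valid but more than needed—the paper gets by with moments of order $1+\gamma$ for $\gamma$ small, for which $\|z\|^2\le 1+\|z\|^{2+\varepsilon}$ already suffices; and the step $\E^{\P_\smallfont{n}}[\Psi]\to\E^{\P'}[\Psi]$ is itself the ``continuous plus asymptotically uniformly integrable'' half of \Cref{fatou} (cf. \Cref{rem:UI_convergence}), so your route is more elementary and self-contained in its treatment of the upper-semicontinuous part, at the cost of redoing in the continuous case what the cited lemma already provides. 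Either way the conclusion holds.
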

\begin{proof} Let us set $K\coloneqq K_{\varepsilon,R}$ for brevity. We have that the map $ (x,y,u) \longmapsto F(x,y,u)$ is upper-semicontinuous by assumption. Moreover
\begin{align*}
&\sup_{\P\in K}\E^{\P}\big[(F^+(X_{\cdot \wedge T},\Yc,U))^{1+\gamma}\big]  \leq \mathrm{const.} (\gamma) \bigg(1+  \sup_{\P\in K} \E^{\P}\big[\lVert X_{\cdot \wedge T} \rVert^{1+\gamma}\big] +  \sup_{\P\in K} \E^{\P}\big[\lvert Y_T \rvert^{1+\gamma} \big]+  \sup_{\P\in K} \E^{\P}\big[\lvert U \rvert^{1+\gamma}\big] \bigg) < \infty,
\end{align*} 
from \eqref{bdMOM} for some $\gamma>0$ sufficiently small. The result is then immediate from \Cref{fatou}.
\end{proof}

\end{document}